\renewcommand{\qed}{{\nopagebreak \hfill $\dashv$
 \par\bigskip}}
\date{}
\newcommand{\diff}[1]{\mbox{Diff}{#1}}
\newcommand{\gfs}{\emph{Graphs}}
\newcommand{\la}{\langle}
\newcommand{\ra}{\rangle}
\newtheorem{theorem}{Theorem}
\newtheorem*{theorem*}{Theorem}
\newtheorem{remark}[theorem]{Remark}
\newtheorem{lemma}[theorem]{Lemma}
\newtheorem{prop}[theorem]{Proposition}
\newtheorem{coro}[theorem]{Corollary}
\newtheorem{definition}[theorem]{Definition}
\newtheorem{proposition}[theorem]{Proposition}
\newcommand{\RN}[1]{\textup{\uppercase\expandafter{\romannumeral#1}}%
}
\newcommand{\poZ}{\mathbb Z}
\newcommand{\nn}{{\mathbb N}}
\newcommand{\mck}{{\mathcal K}}
\newcommand{\mcb}{\mathcal B}
\newcommand{\bfni}[1]{\noindent {{\bf{#1}}}}
\renewcommand{\qed}{{\nopagebreak \hfill $\dashv$
 \par\bigskip}}
\newcommand{\pf}{{\par\noindent{$\vdash$\ \ \ }}}
\title[Anti-classification for smooth dynamical systems]{Anti-classification results \\ for smooth dynamical systems.\\ Preliminary report.}
\author[M.\ Foreman]{Matthew Foreman}
\address{Department of Mathematics, University of California, Irvine, CA~92697, USA}
\author[A.\ Gorodetski]{Anton Gorodetski}
\address{Department of Mathematics, University of California, Irvine, CA~92697, USA}
\thanks{\noindent M.\ F.\ was supported in part by NSF grant DMS-2100367, \\
 A.\ G.\ was supported in part by NSF grant DMS--1855541 }
\begin{document}
\maketitle
\date{}
 \tableofcontents

\noindent{\bf Author's Note} The main results in this paper, Theorems \ref{dim at least five} and \ref{dim two}, were made obsolete by joint results of the authors in March 2022.  The improved results showed the unclassifiability of diffeomorphisms in every dimension.  The results in this paper were proved in the fall of 2019 and winter of 2020.  The authors post this manuscript on arXiv because the proofs in this manuscript illustrate fundamentally different techniques and the manuscript contains expository information that may be useful to researchers not familiar with Borel reducibility.
\bigskip

\section{Introduction}

In his ICM talk in Stockholm in 1962, Stephen Smale proposed a program of studying the topological conjugacy
classes of diffeomorphisms of a given manifold, \cite{S63ICM}. The program was motivated by the related problem on qualitative behaviour of solutions of differential equations on a given manifold.
The roots of the problems came from  Poincare \cite{P} and Birkhoff \cite{B} who studied diffeomorphisms as cross sections of smooth flows--differentiable $\mathbb R$-actions. Smale pointed out that the topological conjugacy class of a diffeomorphism preserved all of its \emph{qualitative behavior}. This includes properties such as having attractors, repelling points, being minimal, or transitive, or topologically mixing, or uniquely ergodic, and so forth. Quoting \cite{S}:

	\begin{quotation}
	``\dots the same phenomena and problems of the qualitative theory of ordinary differential equations are present in their simplest form in the diffeomorphism problem.  Having first found theorems in the diffeomorphism case, it is usually a secondary task to translate the results back into the differential equations framework."
	\end{quotation}

The problem is stated precisely as follows. Fix a compact manifold $M$ and $f, g\in \mbox{Diff}(M)$.  Put $f\sim_{top} g$ if and only if there is a homeomorphism $h:M\to M$ such that $f\circ h=h\circ g$. This equivalence relation is referred to as \emph{topological conjugacy}. Smale suggested classifying  the equivalence classes by ``numerical and algebraic invariants." 

An obvious problem with this definition is that the equivalence relation does not arise as the orbit equivalence relation of a group action. On the surface then, a better notion might be that $f$ is equivalent to $g$ if and only if they are conjugate by a \emph{diffeomorphism} $h$. However Smale rejected this as being too fine an equivalence relation, in part because ``the eigenvalues of the derivative at a fixed point are differentiable conjugacy invariants", and suggested to consider topological conjugacy as the most relevant equivalence relation for smooth dynamical systems.

\medskip
\noindent{\bf Author's Update} In the winter of 2022, P. Kunde showed that the equivalence relation $E_0$ is continuously reducible to the equivalence relation of \emph{conjugacy by diffeomorphisms} on diffeomorphisms of the unit circle with Liouvillean rotation numbers. Thus even in one dimension, that equivalence relation cannot be understood by complete numerical invariants. (See below \hyperlink{SE0}{why $E_0$ is relevant.})

\subsection{Anti-classification: Main results} The main result of this paper is that \emph{Smale's Program} is hopeless in the rigorous sense that there is no \emph{Borel} classification of topological conjugacy, even in the very regular case of $C^\infty$-diffeomorphisms. More precisely we show the following\footnote{After this text was written, we realized that using a different construction one can show that Theorem \ref{dim at least five} in fact holds for any closed manifold (even for the circle). We intend to provide that stronger result in the final version of the paper.}:

\begin{theorem}\label{dim at least five}
Let $M$ be a manifold of dimension at least 5. Consider the space $\mbox{Diff}(M)$ of $C^\infty$-diffeomorphisms of $M$ with the $C^\infty$-topology. Then
$\sim_{top}\subseteq \diff(M)\times \diff(M)$ is not a Borel set in the product topology.

\end{theorem}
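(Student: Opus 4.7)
\bigskip
\noindent\textbf{Proof proposal.}

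The plan is to prove non-Borelness by exhibiting a Borel reduction from a known complete analytic equivalence relation into $\sim_{top}$. Since $\sim_{top}$ is itself $\Sigma^1_1$ (the existence of a conjugating homeomorphism is an analytic quantification over the Polish space $\mathrm{Homeo}(M)$), if it were Borel and we could Borel-reduce a complete analytic relation to it, that relation would be Borel, a contradiction. The canonical target to reduce from is the isomorphism relation $\cong_{\mathrm{tr}}$ on the Polish space of well-founded / ill-founded trees on $\N$, or equivalently the orbit equivalence relation of the $S_\infty$-action on countable structures; both are known to be $\Sigma^1_1$-complete and hence not Borel.

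The heart of the proof is the construction of a Borel map $\Phi: \mathcal{T} \to \mbox{Diff}(M)$ from an appropriate space of combinatorial objects (trees, sequences, or symbolic systems built from trees) into $C^\infty$-diffeomorphisms of $M$, such that $\Phi(T_1) \sim_{top} \Phi(T_2)$ if and only if $T_1$ and $T_2$ are isomorphic. First I would realize the combinatorial data as a symbolic system $(\Sigma_T, \sigma)$ via a cutting-and-stacking / rank-one construction indexed by $T$. Next I would promote this symbolic system to a smooth diffeomorphism using an Anosov--Katok style approximation-by-conjugation on a model manifold (say $\mathbb{T}^2$), so that the dynamics of $\Phi(T)$ contains a canonical minimal invariant subsystem topologically isomorphic to $(\Sigma_T, \sigma)$. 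The dimension hypothesis $\dim M \geq 5$ enters by allowing one to embed this 2-dimensional model equivariantly into $M$ and to dampen it out smoothly in the remaining coordinates using a bump-function construction, producing a genuine element of $\mbox{Diff}(M)$ whose nonwandering set contains a \emph{topologically distinguished} invariant set carrying the symbolic data.

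The easy direction of the reduction is equivariance: isomorphic trees yield isomorphic symbolic systems, whose smooth realizations are conjugate by a diffeomorphism (built along the isomorphism) and in particular topologically conjugate. The hard direction, which I expect to be the main obstacle, is rigidity: one must show that any homeomorphism $h$ conjugating $\Phi(T_1)$ to $\Phi(T_2)$ necessarily carries the canonical invariant subsystem of the first onto that of the second, and that the induced conjugacy between the symbolic systems $(\Sigma_{T_1}, \sigma)$ and $(\Sigma_{T_2}, \sigma)$ forces $T_1 \cong T_2$. The first half of this requires identifying the distinguished invariant set by a topologically intrinsic property (e.g.\ being the unique minimal set supporting the unique invariant measure of positive entropy, or the unique attractor with prescribed rotation behaviour) so that any topological conjugacy must respect it. The second half requires that the symbolic construction $T \mapsto \Sigma_T$ be faithful at the level of \emph{topological} conjugacy of symbolic systems, not merely at the measure-theoretic level; this is typically arranged by building in enough combinatorial rigidity (unique ergodicity, prescribed complexity along subsequences, coding of tree branches as dynamically recognizable orbit closures) so that a topological conjugacy of the symbolic systems literally reads off an isomorphism of the underlying trees.

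Finally, once the reduction $T \mapsto \Phi(T)$ is defined, Borelness of $\Phi$ is straightforward from the fact that each step (building $\Sigma_T$ from $T$, approximating by conjugation, smoothing in the transverse directions) is effective in the combinatorial data and converges in the $C^\infty$-topology at a rate controlled by $T$. Combined with the reduction property, this yields $\cong_{\mathrm{tr}}\; \leq_B\; \sim_{top}$, whence $\sim_{top}$ is not Borel, proving Theorem \ref{dim at least five}.
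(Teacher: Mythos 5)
Your top-level logic is fine and matches the paper's frame: $\sim_{top}$ is analytic, so it suffices to Borel-reduce a complete analytic isomorphism relation of countable structures to it (the paper uses countable graph isomorphism; your trees are an equivalent benchmark). But the actual content of the theorem is the construction of the reduction, and on that point your proposal has genuine gaps on both directions. The ``easy direction'' is not mere equivariance in an Anosov--Katok/cutting-and-stacking realization: the diffeomorphism you build depends on a labelled presentation of the tree, and showing that isomorphic but differently presented trees yield \emph{topologically conjugate} diffeomorphisms is itself a substantial construction (in the paper this is the hard half, occupying all of Section 3.4: the conjugacy is assembled as a convergent infinite composition of homeomorphisms realizing transpositions, with delicate control to ensure every point moves only finitely often). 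The rigidity direction is likewise only asserted; moreover the intrinsic characterization you propose for the distinguished invariant set (``unique minimal set supporting the unique invariant measure of positive entropy'') is incompatible with the rank-one symbolic systems you invoke, which have zero entropy, so the mechanism by which a topological conjugacy is forced to respect the coded data is not actually identified. Finally, your use of the hypothesis $\dim M\ge 5$ is not load-bearing: a two-dimensional model damped out in transverse coordinates embeds in any manifold of dimension at least two, so as written your sketch neither uses the hypothesis nor explains why it appears.

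For contrast, the paper's reduction is static and geometric rather than symbolic: the vertices of the complete countable graph are embedded in general position in a $3$-plane $\mathbb{E}_3\subset\mathbb{R}^5$, accumulating at a point, and each potential edge is surrounded by a thin ``cigar'' on which the diffeomorphism either attracts to or repels from the central axis according to whether the edge is present. Then ``conjugate implies isomorphic'' becomes an elementary fixed-point-set argument (the cigars are the components of the complement of $\mathrm{Fix}$, and $f^+$ is not conjugate to $f^-$), while the two extra dimensions $\mathbb{E}_2$ are used essentially in the converse: they provide pairwise disjoint cones along which cigars are transported when realizing each transposition of a decomposition of the graph isomorphism, which is what makes the infinite composition converge to a homeomorphism. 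If you want to pursue your route, the work you must supply is exactly the analogue of these two halves: a topologically intrinsic (not measure-theoretic) characterization of the coded subsystem that any conjugacy must preserve and that recovers the tree, and a proof that the realization map is well defined up to topological conjugacy under tree isomorphism, with the role of the dimension hypothesis made explicit.
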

For dimension 2 we show that complete numerical invariants don't exist\footnote{In Theorem \ref{dim two} one can replace topological conjugacy by a smooth ($C^r$ for any given $r=0,1, \ldots, \infty$) change of coordinates.}.

\begin{theorem}\label{dim two}  Let $M$ a manifold of dimension at least two. Then there is no Borel function $f$ from $\diff(M)$ to $Y$, where $Y$ is a Polish Space such that $S\sim_{top}T$ if and only if $f(s)=f(t)$.  In particular there is no Borel function $f:\diff(M)\to \mathbb R^n$ that gives complete numerical invariants for the equivalence relation $\sim_{top}$ on  $\diff(M)$.
\end{theorem}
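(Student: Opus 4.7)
The plan is to show that $E_0$, the equivalence relation of eventual equality on $2^{\mathbb{N}}$, is Borel reducible to $\sim_{top}$ on $\diff(M)$. This suffices to prove the theorem: if a Borel $f:\diff(M)\to Y$ with $f(S)=f(T) \iff S \sim_{top} T$ existed, then composing with the reduction would give a Borel complete invariant for $E_0$, contradicting the classical fact that $E_0$ has no such invariants in any Polish space.

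First I would reduce to the case of a smoothly embedded closed ball $B\subset M$ of dimension $\dim M$, and arrange that all constructed diffeomorphisms equal the identity outside $B$. Since the $\dim M\geq 2$ hypothesis ensures $B$ contains a smoothly embedded $2$-disk (or $\mathbb{T}^2$ chart), it suffices to build the family supported on a $2$-dimensional piece inside $B$, extended by identity elsewhere. Topological conjugacies between such diffeomorphisms must preserve the support up to topological considerations, so the reduction problem on $M$ reduces to an essentially $2$-dimensional construction.

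For each $x\in 2^{\mathbb{N}}$, I would construct $T_x$ via an Anosov--Katok approximation-by-conjugation scheme: $T_x = \lim_{n\to\infty} H_n \circ R_{\alpha_n} \circ H_n^{-1}$, where $R_{\alpha_n}$ is a periodic rotation with $\alpha_n \to \alpha$ for a carefully chosen Liouvillean $\alpha$, and $H_n = h_1\circ\cdots\circ h_n$, with each $h_n$ a smooth diffeomorphism whose combinatorial form (for instance, a specific permutation of a grid of small disks) depends on the bit $x_n$. By taking $\alpha_n$ to converge to $\alpha$ sufficiently fast relative to the $C^\infty$-norms of the $H_n$, the limit $T_x$ exists in $C^\infty$ and the map $x\mapsto T_x$ is continuous, hence Borel. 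The scheme needs two complementary properties. The \emph{flexibility} half: if $x \mathrel{E_0} y$, then $T_x \sim_{top} T_y$ (indeed smoothly conjugate), because a disagreement in finitely many $x_n$'s can be absorbed into a single smooth conjugacy applied at the last stage where $x$ and $y$ differ. The \emph{rigidity} half: if $T_x \sim_{top} T_y$, then $x \mathrel{E_0} y$. This requires arranging that, for each sufficiently large $n$, the diffeomorphism $T_x$ carries a canonical topological feature at the scale associated with $\alpha_n$ --- for example, a distinguished collection of periodic orbits of period $q_n$ whose topological arrangement, rotation numbers, or nesting structure intrinsically encodes the bit $x_n$.

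The main obstacle is the rigidity half. The encoding must be robust under arbitrary homeomorphic conjugacy (which is far weaker than smooth conjugacy), yet coexist with the flexibility half and with the $C^\infty$ convergence constraints of the Anosov--Katok method. Concretely, one must produce topological invariants at each stage that can be canonically extracted from the dynamics of $T_x$ alone --- without reference to the construction --- and that distinguish the two possible values of $x_n$ for all but finitely many $n$. Designing features that are simultaneously $C^\infty$-achievable, $E_0$-invariant, and topologically intrinsic is the technical core of the proof.
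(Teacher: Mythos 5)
Your overall strategy (reduce $E_0$ to $\sim_{top}$, then invoke the Baire-category/Harrington--Kechris--Louveau fact that $E_0$ admits no Borel complete invariants, and localize the construction to a $2$-disk inside a chart, extending by the identity) matches the paper's framing exactly. The gap is in the construction itself: the rigidity half is not proved, it is only described as a desideratum. You propose an Anosov--Katok approximation-by-conjugation scheme $T_x=\lim H_n\circ R_{\alpha_n}\circ H_n^{-1}$ and then say one must ``arrange'' that $T_x$ carries, at each scale, a canonical topological feature (e.g.\ periodic orbits of period $q_n$ whose arrangement encodes the bit $x_n$) that survives arbitrary homeomorphic conjugacy. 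No such feature is constructed, and in the AK framework this is exactly where the difficulty concentrates: the limit diffeomorphism is obtained by conjugating rotations by wilder and wilder maps, the periodic structure of the finite-stage approximations $H_nR_{\alpha_n}H_n^{-1}$ is generally destroyed (or drastically altered) in the limit, and topological conjugacy is coarse enough that the combinatorial choices hidden inside the conjugating maps $h_n$ leave no obvious topologically intrinsic trace. So as written the proposal establishes the easy implication (eventual agreement of bits gives conjugacy, plausibly) but not the converse, which is the content of the theorem.

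The paper avoids AK machinery entirely and uses a much more elementary device that makes rigidity automatic. For each $\bar\omega$ one takes two points $a,b\in\mathbb R^2$ and four sequences of pairwise disjoint closed disks shrinking to $a$ and to $b$, and defines $f_{\bar\omega}$ to be the identity outside these disks and, on each disk, a compactly supported twist which is an honest rotation by a rational angle on the inner half of the disk; the angles are chosen so that points near the centers of the various disks at stage $n$ have pairwise distinct periods ($m_n$, $2m_n$, $8m_n$, $16m_n$), and the bit $\omega_{n-1}$ decides which of the two ``coding'' disks (the one near $a$ or the one near $b$) receives which period. Since any topological conjugacy must preserve the set of periodic points of each period, it is forced to match the disks of a given stage to each other, to fix $a$ and $b$ (anchored by the auxiliary disks whose dynamics do not depend on $\bar\omega$), and hence, by continuity, to respect the $a$/$b$ labelling for all but finitely many $n$; this yields $\bar\omega\sim_{E_0}\bar\omega'$ directly. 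Smoothness at $a$ and $b$ is arranged simply by letting the twist angles decay fast. If you want to salvage your write-up, replacing the AK scheme by this kind of ``periodic-data coding on shrinking disks'' is the missing idea; with it, both the flexibility and rigidity halves become short arguments, whereas with the AK scheme the rigidity half remains an unsolved design problem.
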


\medskip

It remains to explain why these results show that classification is impossible.

In Section \ref{s.2} we prove Theorem \ref{dim two}. In Section \ref{s.5} we prove Theorem \ref{dim at least five}. In Section \ref{s.questions} we formulate some open questions and conjectures related to our results.

\subsection{Impossibility Results and Generalized Church's Thesis} Many of the most famous results in mathematics are \emph{impossibility results}: You can't square the circle using a ruler and compass, you can't solve the general quintic polynomial using radicals and so forth.  What these results have in common is a precise description of what it would mean to solve the problem.

In the 20th century, impossibility results took a different turn.  The tools of G\"odel and Turing made it possible to show results of the form:
	\begin{quotation}
	\noindent It is impossible to solve the problem P using purely finitary methods.
	\end{quotation}
But this demands an explanation of what the definition of \emph{finitary methods} means.  The rigorous interpretation of a problem being solvable with finitary methods is that it can be solved by a  \emph{recursive algorithm}:  there is a Turing machine that can check each instance and correctly give the answer.\footnote{More formally, there is a Turing machine with a finite set of instructions that, on input of the appropriate data always halts in finite time and outputs the correct answer. There are no time or space limitations, such as \emph{polynomial time} on the computation.}

Some famous examples of impossibility results include:
	\begin{description}
	\item[Novikov:] The word problem is not recursive: Given a finite presentation of a group it is impossible to tell whether an arbitrary given word is trivial in the group.
	\item[Davis, Matiysevich, Putnam, Robinson:] Hilbert's 10th problem is undecidable:  it is impossible to tell whether an arbitrary given diophantine polynomial has a solution with strictly finitary methods.
	\end{description}
The meta-assertion that all notions of finitary computation are equivalent is referred to as \emph{Church's Thesis}.
\medskip

\bfni{A Generalized Church's Thesis.} Theorems \ref{dim at least five} and \ref{dim two} are from a genre of  even stronger impossibility results.   They can be interpreted as saying:

	\begin{quotation}
	\noindent It is impossible to solve a problem P using inherently countable techniques.
	\end{quotation}
For this to make sense the problem has to be posed in the context of a Polish space-- a
topological space $X$ whose topology is compatible with a complete separable metric.

The notion of a Polish space captures the idea that a countable amount of information can be used to completely specify the problem P.  The countable dense set $D$ and a compatible complete metric $d(x,y)$ on $D\times D$ constitutes a countable amount of information. Since $D$ is dense
every point of $X$ is a limit of elements of a fixed countable set.

Having fixed $D$ and $d$, basic open intervals, consisting of rational neighborhoods of this dense set are then viewed as countable information.  Having basic open intervals, countable Boolean operations (union, intersection and complement) then exhaust what one can do inherently countably. The result is the collection of Borel subsets of $X$.

As summarized above, a Church's Thesis can be viewed as stating that a problem P  is recursive if and only if it can be solved using inherently finite techniques in any sense of \emph{inherently finite}. The generalized Church's thesis we are suggesting is that a problem can be solved with \emph{inherently countable techniques} just in case it can be viewed as  \emph{Borel in a Polish Space}.

Is this a reasonable definition of inherently countable techniques? It may be too liberal--perhaps the metric on the Polish Space itself uses the uncountable axiom of choice to construct. But we will be showing things are \emph{not Borel} so our results are even stronger if the notion is too liberal.

But is it too restrictive? Are there methods that should be considered inherently countable that are not Borel?  Any construction that involves taking countable limits preserves the property of being Borel.  Anything that can reasonably be called a \emph{construction} that takes countably many steps is Borel.

Moreover Borel sets are preserved under pre-images of Borel functions. Hence any computation that
results in some sort of complete computational invariant for an equivalence relation implies that that
equivalence relation is Borel.  As a non-trivial example, the Spectral Theorem for Normal Operators
gives complete invariants for the equivalence relation of Unitary Equivalence, showing that
equivalence relation of Unitary Equivalence of Normal Operators is a Borel equivalence relation.

To clarify the idea, we give  example of some invariants commonly attached to a diffeomorphism.

We will denote by $\text{Fix}(f)$ the set of fixed points of the map $f:M\to M$, i.e. $$\text{Fix}(f)=\{x\in M\ |\ f(x)=x\},$$ and by $\text{Per}_n(f)$ the set of periodic points of period $n$, i.e. $$\text{Per}_n(f)=\{x\in M\ |\ f^n(x)=x\}.$$

\begin{prop}\label{p.pp} Let $M$ be a compact manifold.
For any $r=0, 1, 2, \ldots, \infty$ the following statements hold:
	\begin{enumerate}
	\item The set $\mathcal{U}_0=\{f\in \text{Diff}^{\,\, r}(M)\ |\ \text{Fix}\,(f)=\emptyset\}$ is open; 
	
\vspace{4pt}

	\item For each $m\in \mathbb{N}$ the set $\mathcal{U}_m=\{f\in \text{Diff}^{\,\, r}(M)\ |\ \#\text{Fix}\,(f)=m\}$ is Borel; %
	
\vspace{4pt}

	\item For each $m, n\in \mathbb{N}$ the set $\mathcal{U}_m^n=\{f\in \text{Diff}^{\,\, r}(M)\ |\ \#\text{Per}_n(f)=m\}$ is Borel;  
\vspace{4pt}

    \item  For any sequence of integers $A=\{a_n\}$, $a_n\ge 0$, the set $$\mathcal{U}_A=\{f\in \text{Diff}^{\,\, r}(M)\ |\ \#\text{Per}_n(f)=a_n, \ n=1, 2, \ldots \}$$ is Borel;

    \vspace{4pt}

    \item The set $$\{(A, f)\ |\  \#\text{Per}_n(f)=a_n, \ n=1, 2, \ldots,\ \text{where}\ A=\{a_n\}  \}$$ in the space $\mathbb{N}^{\mathbb{N}}\times \text{Diff}^{\,\, r}(M)$ is Borel;

    \vspace{4pt}

    \item The function $G:\text{Diff}^{\,\, r}(M) \to \mathbb{R}\cup\{\pm\infty\}$ defined by
    $$
    G(f)=\limsup_{n\to \infty}\frac{1}{n}\log \#\text{Per}_n(f)
    $$
    (the exponential rate of growth of the number of periodic points) is Borel. In particular, the set of Artin-Mazur diffeomorphisms (defined by $\{f:G(f)<\infty\}$) is Borel.
	\end{enumerate}
\end{prop}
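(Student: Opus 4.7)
The plan is to derive all six statements from a single elementary observation: for any finite tuple $x_1,\dots,x_m\in M$ and any $\varepsilon>0$, the condition ``$\text{Fix}(f)\subseteq \bigcup_{i=1}^m B(x_i,\varepsilon)$'' is an \emph{open} condition on $f\in\text{Diff}^{\,\,r}(M)$. The reason is that the complement $K=M\setminus\bigcup_i B(x_i,\varepsilon)$ is a compact subset of the compact manifold $M$, and the functional $f\mapsto \min_{x\in K}d(x,f(x))$ is continuous in the $C^0$ topology (hence in the finer $C^r$ topology) by joint continuity of evaluation on $K$. The condition in question is precisely $\{f:\min_{x\in K}d(x,f(x))>0\}$, which is open. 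Taking $K=M$ already yields (1).

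For (2), I use that $\text{Fix}(f)$ is closed (hence compact) in $M$ to argue that $\#\text{Fix}(f)\le m$ is equivalent to the assertion that for every rational $\varepsilon>0$ the set $\text{Fix}(f)$ is covered by $m$ open balls of radius $\varepsilon$ centered at some $m$-tuple from a fixed countable dense set $D\subseteq M$. Combined with the observation above this yields
\[
\{f:\#\text{Fix}(f)\le m\}=\bigcap_{\varepsilon\in\mathbb{Q}_+}\ \bigcup_{(x_1,\dots,x_m)\in D^m}\Bigl\{f:\text{Fix}(f)\subseteq\bigcup_{i=1}^m B(x_i,\varepsilon)\Bigr\},
\]
a countable intersection of countable unions of open sets, hence Borel. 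Writing $\mathcal{U}_m=\{f:\#\text{Fix}(f)\le m\}\setminus\{f:\#\text{Fix}(f)\le m-1\}$ then proves (2).

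For (3)--(5), I note that $\text{Per}_n(f)=\text{Fix}(f^n)$ and that the iteration map $f\mapsto f^n$ is continuous in the $C^r$ topology, so $\mathcal{U}_m^n$ is the preimage of $\mathcal{U}_m$ under a continuous map and is therefore Borel. The set $\mathcal{U}_A=\bigcap_n \mathcal{U}_{a_n}^n$ is then a countable intersection of Borel sets, proving (4). For (5), the subset of $\mathbb{N}^{\mathbb{N}}\times\text{Diff}^{\,\,r}(M)$ in question decomposes as $\bigcap_n \bigsqcup_{k\ge 0}\bigl(\{A:a_n=k\}\times \mathcal{U}_k^n\bigr)$, which is manifestly Borel in the product Polish space. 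Finally, for (6), the function $f\mapsto \#\text{Per}_n(f)\in\mathbb{N}\cup\{\infty\}$ is Borel by (3) (its level sets are the $\mathcal{U}_m^n$), so each $f\mapsto \tfrac{1}{n}\log\#\text{Per}_n(f)$ is Borel into $[-\infty,\infty]$, and $G=\limsup_n$ of a countable family of Borel functions is Borel; in particular $\{f:G(f)<\infty\}$ is Borel. The only mildly delicate step is the equivalence used in (2) between ``$\#\text{Fix}(f)\le m$'' and the covering condition by $m$ balls of arbitrarily small radius; this rests on the compactness of $\text{Fix}(f)$ and on the fact that any $m+1$ distinct fixed points are pairwise separated by some positive distance, so for small enough $\varepsilon$ they cannot be jointly covered by $m$ balls of radius $\varepsilon$.
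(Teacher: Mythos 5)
Your proposal is correct and follows essentially the same route as the paper: the key point in both is that the condition ``$\text{Fix}(f)$ (resp.\ $\text{Fix}(f^n)$) is contained in a prescribed finite union of small sets'' is open (via compactness of the complement), and the sets $\mathcal{U}_m$, $\mathcal{U}_m^n$ are then obtained by countable intersections, unions, and differences over a countable family of such conditions, with (4)--(6) following by countable Boolean operations, continuity of $f\mapsto f^n$, and countable limits. The differences are only cosmetic ($\varepsilon$-balls centered in a countable dense set versus a shrinking countable base, the clean preimage formulation for $\mathcal{U}_m^n$, and the general ``$\limsup$ of Borel functions is Borel'' fact in place of the paper's explicit sublevel-set decomposition of $G$).
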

\begin{remark}
Certainly, one could add many other Borel sets and Borel functions that can be expressed in turns of the number of periodic point to the statement of Proposition \ref{p.pp}. We specifically mentioned the rate of growth of the number of periodic points, since it is the function that was heavily studied in the theory of dynamical systems (e.g. see \cite{AM, HK, K1, K2, KS}).
\end{remark}
\begin{proof}[Proof of Proposition \ref{p.pp}]
It is clear that the set $\mathcal{U}_0$ is open.

To see that $\mathcal{U}_1$ is Borel, let us choose a countable base of topology on $M$, $\{V_k\}_{k\in \mathbb{N}}$, such that
$$
\text{diam}\, V_1\ge \text{diam}\, V_2\ge \text{diam}\, V_3\ge \ldots, \ \ \text{and}\ \ \text{diam}\, V_k\to 0\ \ \text{as}\ k\to \infty.
$$
The set
$$
\mathcal{V}_k=\{f\in \text{\it Diff}^{\,\, r}(M)\ |\ f\ \ \text{has no fixed points outside of}\  V_k\}
$$
is open, and hence the set
$$
\mathcal{U}_1=\left(\bigcap_{K\ge 1} \bigcup_{k=K}^\infty \mathcal{V}_k\right)\backslash \mathcal{U}_0
$$
is Borel.

To see that $\mathcal{U}_2$ is Borel, notice that each of the sets
$$
\mathcal{V}_{k_1, k_2}=\left\{f\in \text{\it Diff}^{\,\, r}(M)\ |\ f\ \ \text{has no fixed points outside of}\  V_{k_1}\cup V_{k_2}\right\}.
$$
is open, and hence
$$
\mathcal{U}_2=\left(\bigcap_{K\ge 1} \bigcup_{k_1, k_2\ge K} \mathcal{V}_{k_1, k_2}\right)\backslash \left(\mathcal{U}_0 \cup \mathcal{U}_1\right)
$$
is Borel. Similarly one can show that $\mathcal{U}_m$ is Borel for each $m\in \mathbb{N}$.

Since for each given $n\ge 2$ the map $f\mapsto f^n$ is continuous, the set $\{f^n\ \text{has no fixed points outside of}\ V_k\}$ is open, and the arguments similar to those above prove that the sets $\mathcal{U}_m^n$ are Borel. Also, one has $\mathcal{U}_A=\bigcap_{n}\mathcal{U}_{a_n}^n$, hence $\mathcal{U}_A$ is Borel.

To show that the set $$\{(A, f)\ |\  \#\text{Per}_n(f)=a_n, \ n=1, 2, \ldots,\ \text{where}\ A=\{a_n\}  \}$$ is Borel, notice that it can be represented as $\bigcup_{n\in \mathbb{N}}X_n$, where
$$
X_n=\bigcup_{m\in \mathbb{N}}\left\{(A, f)\ |\ \text{$n$-th element of $A$ is $m$, and }\, f\in U_m^n \right\}
$$
is a Borel set.
Finally,
$$
\{f\in \text{Diff}^{\,\, r}(M)\ |\ G(f)<C\}=\bigcup_{k\in \mathbb{N}}\bigcup_{M\in \mathbb{N}}\bigcap_{n\ge M}\bigcup_{m<\exp(nC-\frac{n}{k})}U_m^n,
$$
hence $G:\text{Diff}^{\,\, r}(M)\to  \mathbb{R}\cup\{\pm\infty\}$ is a Borel function.
\end{proof}

In contrast, Theorem \ref{dim at least five} says that the equivalence relation of topological conjugacy is \emph{not} Borel for manifolds of dimension 5 or greater, and Theorem \ref{dim two} claims that there is no way to provide complete numerical invariants that would be Borel functions of a diffeomorphism and provided classification up to a topological conjugacy for manifolds of dimension 2 or greater. Thus it precludes any reasonable complete invariants for the qualitative behavior of these diffeomorphisms.

Certainly, if we restrict ourselves to some smaller class of diffeomorphisms, or consider phase space of small dimension (say, the circle), then it is reasonable to expect that a classification up to topological conjugacy can exist. In Section \ref{ss.examples} below we give some examples of such results.

\subsection{Examples}\label{ss.examples}

To put our results into context, here we provide some examples of successful classification of some classes of smooth dynamical systems, as well as some known results on absence of such classification (mostly regarding topological and measure preserving dynamics). We would like to emphasize that both lists are far from being even remotely complete, and are provided as an illustration rather than a survey of all relevant results. Another  list of problems will appear in the forthcoming paper by Foreman, \emph{The complexity of the Structure and Classification of Dynamical Systems}.

\medskip

\noindent{\bf Examples of successful classifications in smooth dynamics:}

\vspace{5pt}

$\bullet$ Real analytic diffeomorphisms of an interval: Any two homeomorphisms of an interval $[0,1]$ that do not have any fixed points except $0$ and $1$ are topologically conjugate, as can be shown using a method of fundamental domain. Using this fact, one can show that there exist complete numerical invariants for a topological conjugacy on the space of real analytic diffeomorphisms of the unit interval. 

\vspace{5pt}

$\bullet$ Classification of real analytic diffeomorphisms of the circle up to a topological conjugacy: The rotation number is an invariant of (orientation preserving) topological conjugacy. Due to Denjoy's Theorem \cite{D}, any $C^2$ diffeomorphism with irrational rotation number is topologically conjugate to a rotation of the circle. 
For real analytic diffeomorphisms with rational rotation number the number, type (stable/unstable/semistable), and relative order of the periodic points form a complete invariant of topological conjugacy.
See \cite{KH} for more details on rotation numbers and Poincare's classification.

\vspace{5pt}

$\bullet$  Franks--Newhouse Theorem on Anosov (uniformly hyperbolic) diffeomorphisms: If $f$ is an Anosov diffeomorphism
of co-dimension one (i.e. either stable or unstable direction has dimension one) on a compact Riemannian manifold $M$, then $f$ is topologically
conjugate to a hyperbolic toral automorphism, see \cite{Fr} and \cite{N_Anosov}, and also \cite{Hi} for an alternative proof.

\vspace{5pt}

$\bullet$  Anosov maps on tori: Any Anosov  diffeomorphism of a torus is topologically conjugate to a linear automorphism of a torus, and two linear automorphisms of a torus are topologically conjugate if and only if the corresponding matrices are conjugate by elements of $GL_2(\mathbb Z)$, see \cite{Fr69}, and also \cite[Section 18.6]{KH}. The linear automorphism of the torus associated with an Anosov Diffeomorphism can be computed with a Borel function. 
This provides an explicit classification of Anosov maps of a torus.

\vspace{5pt}

$\bullet$  Structurally stable diffeomorphisms: Fix a {compact} manifold $M$.  A $C^r$ diffeomorphism $f:M\to M$ is \emph{structurally stable} if there is a $C^1$-open set $O$ containing $f$ such that every $g\in O$ is topologically conjugate to $f$.
The collection of structurally stable diffeomorphisms is open and there can only be countably many
$\sim_{top}$ equivalence classes of structurally stable diffeomorphisms. 
It follows immediately that they are classifiable in the sense that there is a (locally constant) function $R$ from the $C^r$-structurally stable diffeomorphisms to the natural numbers so that two structurally stable diffeomorphisms $f, g$ are conjugate if and only if $R(f)=R(g)$. Extending this function so that it takes value $-1$ on the complement  of the open set of structurally stable diffeomorphisms gives a Borel classification of the structurally stable diffeomorphisms by complete numerical invariants. 

However, it is not a classification of all diffeomorphisms that are topologically conjugate to a structurally stable diffeomorphism. This is an open problem, as of this writing.

\vspace{5pt}

$\bullet$ An important case of structurally stable diffeomorphisms that in lower dimensions can be classified by very explicit invariants is the collection of Morse-Smale diffeomorphisms.  To define Morse-Smale diffeomorphisms, we need the following definition:
\bigskip 

\noindent{\bf Definition} A point $x$ is a wandering point of the map $f$ if it has a neighborhood $U=U(x)$ such that $f^n(U)\cap U=\emptyset$ for all $n\ge 1$. The non-wandering set of $f$ is the set of all points that are not wandering.
\medskip

\begin{definition}\label{d.MS}
A diffeomorphism $f$ is \emph{Morse-Smale} if
\begin{enumerate}
\item $f$ has only finite number of periodic orbits {and these} form the non-wandering set\ of $f$,
\item all of the periodic orbits are hyperbolic, and
\item the stable and unstable manifolds of any two periodic points intersect transversally.
\end{enumerate}
\end{definition}

\begin{remark}\label{r.MS}
A well-understood method of constructing some of the Morse-Smale diffeomorphisms uses gradient flows.
Let $\{T_t:t\in \mathbb R\}$ be a
flow on $M$ such that
	\begin{enumerate}
	\item There is a function $L:M\to \mathbb R$ such that $L$ has a finite number of critical points
	$x_1, \dots x_n$, all of the critical points are non-degenerate and are such that for $i\ne j$, $L(x_i)\ne L(x_j)$.
		\item $T_t$ is the gradient flow for $L$.
	\end{enumerate}
Then the time-1 shift $f=T_1$ for this flow is a Morse-Smale diffeomorphism.
\end{remark}

Because Morse-Smale flows are structurally stable, the previous discussion shows that they have an abstract classification by numerical invariants.  However, more is known: there are classifications that give conceptual information about their properties.

Morse-Smale flows on surfaces were classified in \cite{Pei}. A different set of invariants was suggested in \cite{OS}. A complete topological classification of Morse-Smale diffeomorphisms on compact orientable surfaces was obtained in \cite{BL}, see also \cite{BG} and \cite{G93}. 
Topological classification of Morse-Smale diffeomorphisms on 3-manifolds was given in \cite{BGP}; a complete topological invariant in this case is shown to be an equivalence class of a ``scheme'', which contains an
information on a periodic data and a topology of embedding of two-dimensional invariant
manifolds of the saddle periodic points of a diffeomorphism into the ambient manifold. For a detailed historical background of the topological classification of Morse-Smale systems see Section 1.4 in \cite{BGP}.

In Remark \ref{on the boundary} we discuss how to give a family of examples of unclassifiable diffeomorphisms that live on the boundary of the Morse-Smale diffeomorphisms.

\medskip

\noindent{\bf Examples of anti-classification:}
 
\vspace{5pt}

  An early dramatic success of anticlassification results was the theorem due independently to Kaufman and
Solovay  (building on work of Debs, St. Raymond, Kechris and Louveau) who showed in 1983-84,
 that the collection of closed \emph{sets of uniqueness} for trigonometric functions is not a
Borel set. (A set $E\subseteq [0,1]$ is a \emph{set of uniqueness} if whenever $\sum c_ne^{2\pi inx}$
on $[0,1]\setminus E$ the series $\sum c_ne^{2\pi inx}$ is identically 0.)  Hence determining whether
the complement of a  given closed set determines the values of a trigonometric series (a problem
posed by Kronecker, and studied by Cantor and many others) is simply not possible using anything
resembling even a countable transfinite  computation.

 The first applications of the Borel/non-Borel distinction in dynamical systems were the results in \cite{BF1} and \cite{BF2} which showed the the collections of minimal topologically  distal transformations and, respectively,  generalized discrete spectrum transformations are not Borel.\footnote{For a compact separable metric space $X$, a homeomorphism of $X$ is \emph{distal} if and only if for all  $x\ne y$ there is an
$\epsilon>0$ for all $n, d(T^nx,T^ny)>\epsilon$. The definition dates to Hilbert.}  It has been used more recently in \cite{part1}, \cite{part2} and \cite{part3} to show that ergodic diffeomorphisms of compact manifolds are not classifiable.

\subsection{Borel sets, analytic sets and reductions}
 We now give some basic definitions.
Fix a Polish space $(X,\tau)$.  A set is \emph{Borel} if and only if it is in the smallest $\sigma$-algebra containing the open set $\tau$.  If $X, Y$ are Polish spaces and $f:X\to Y$ is continuous, then for all Borel sets $B$ the $f$-image of $B$, $f[B]$ is called an \emph{analytic} set.

The simplest way of generating an analytic set is to start with a Borel subset $B\subseteq X\times Y$ and taking its projection to the $Y$-axis.  Indeed all analytic subsets of $Y$ can be built this way.\footnote{For this and other information about Borel and analytic sets see \cite{naive DST}, \cite{Marker website} or \cite{Kechris}.}
The claim that all projections of Borel sets is Borel is a famous mistake of Lebesgue that
was rectified by examples of Suslin.

Results such as Theorems \ref{dim at least five} and \ref{dim two} are shown using \emph{reductions},  a technique due to Friedman and Stanley  and developed independently by Harrington, Kechris and Louveau.

The idea starts with the clich\'e  that:
\begin{quotation} To solve $A$ you reduce it to a
problem $B$ which you already know how to solve.
\end{quotation}
Taking the contrapositive:
\begin{quotation} To show that solving $B$ is impossible, you start with a
\emph{known} impossible problem $A$ and reduce it to $B$.
\end{quotation}

\noindent Formally:
\begin{definition}\notag
Let $X$ and $Y$ be Polish spaces, $A\subseteq X$, $B\subseteq Y$. Then $A$ is Borel
 reducible to $B$ if and only if there is a Borel
   function $f:X\to Y$ such that for $x\in X$:
\[x\in A \mbox { if and only if } f(x)\in B.\]
\end{definition}
Thus if $A$ is not Borel, $B$ cannot be either, since the inverse image of a Borel set by a Borel function is not Borel. The function $f$ is a \emph{Borel reduction}.

Define $A\preceq_\mcb B$ if $A$ is Borel reducible to $B$. Then $\preceq_\mcb$ is transitive since one can compose Borel reductions.  Defining the equivalence relation $A\sim_\mcb B$ if $A\preceq_\mcb B$ and $B\preceq_\mcb A$ we see that $\preceq_\mcb$ induces a partial ordering of the $\sim_\mcb$ equivalence classes.

 The heuristic above interprets $A\preceq_\mcb B$ as saying that $B$ is at least as complicated as $A$ (with respect to countably feasible computations) and $A\sim_\mcb B$ as saying that they have the same complexity.  Among analytic sets, there is a $\preceq_\mcb$-maximal equivalence class, the \emph{complete} analytic sets. Since the Borel sets are $\preceq_\mcb$-downwards closed, and there are non-Borel analytic sets, any complete analytic set cannot be Borel.
 \medskip

\bfni{Two dimensional reductions and equivalence relations}

\vspace{5pt}

Let $E$ and $F$ be equivalence relations on Polish spaces $X$ and $Y$ respectively. We define
$E\preceq_\mcb F$ if and only if there is a Borel function $f:X\to Y$ such that for all $x_1, x_2\in X$,
	\begin{quotation} \noindent
	$x_1 E x_2$ if and only if $f(x_1) F f(x_2)$.
	\end{quotation}
We note that this differs from the definition  above because it relates pairs of elements of $X$ and
$Y$ that are connected by a unary function--hence a \emph{two dimensional reduction}.

For the notion of reduction to be useful, one must have complicated examples. Indeed \cite{classDS}
describes a whole collection of benchmark equivalence relations that can be used to calibrate the complexity of a given equivalence relation.
\bigskip

\bfni{All uncoutable Polish Spaces are Borel Isomorphic.} It is a classical fact that all uncountable Polish spaces are Borel isomorphic. (See, for example \cite{naive DST}.) Hence the particular spaces being used for  reductions are chosen so the the mathematical tools appropriate for the context are available.  The strongest classifications are Borel reductions to the identity  equivalence relation on a Polish space.  Because every Polish space can be injected into $[0,1]$ (or the irrational numbers in $[0,1]$), we call these \emph{numerical invariants}.  In particular to show that there is \emph{no} reduction to the identity relation on any Polish space it suffices to show there is no reduction to the identity equivalence relation on $[0,1]$.

\subsection{The space of countable graphs and Theorem \ref{dim at least five}}
\hypertarget{graphs}{One prominent example consists of the space}
$\mbox{\emph{Graphs}}\subseteq \{0, 1\}^{\nn\times \nn}$ consisting of those
$f:\nn\times \nn\to \{0,1\}$ such that for all $(n,m), f(n,m)=f(m,n)$.  If $G=\la \nn, E\ra$ is a graph with vertices
$\nn$ and edges $E$, then we can associate the function $f_G:\nn\times\nn\to \{0,1\}$ by putting $f_G(m,n)=1$
just in case $m$ and $n$ are connected by an edge $E$.  This process can be reversed:  any symmetric 
$f:\nn\times \nn\to \{0, 1\}$ defines a graph $G_f$ by putting an edge between $m$ and $n$
if $f(m,n)=1$. Hence we  can identify graphs with vertex set $\nn$ with elements of the subspace \emph{Graphs}$\ \subseteq \{0,1\}^{\nn\times \nn}$. We will say that the function $f_G$ \emph{codes} the graph $G$.

Since the space $\{0,1\}^{\nn\times \nn}$ can be viewed as a countably infinite product of the two point
space $\{0,1\}$, it can be endowed with the product topology. With this topology,  $\{0,1\}^{\nn\times \nn}$ is
homeomorphic to the Cantor set. The collection \emph{Graphs} $\subseteq \{0,1\}^{\nn\times \nn}$ is a
closed subset, and hence the induced topology makes it a Polish space.
\smallskip

Let $S_\infty$ be the space of (arbitrary) permutations of $\nn$. Then $S_\infty$ acts on \emph{Graphs} by setting, for $\phi\in S_\infty$ and $f_G\in $ \gfs,
	\begin{equation}
	\label{action}
	(\phi f_G)(m,n)=1 \emph{ if and only if } f_G(\phi^{-1}m,\phi^{-1}n)=1.
	\end{equation}
Then $\phi f_G$ is a graph with vertex set $\nn$.

It is routine to check that for graphs $G, H$ with vertices $\nn$, $G$ and $H$ are isomorphic just in case $f_G$ and $f_H$ are in the same $S_\infty$ orbit.\footnote{In the jargon of Hjorth's influential book, \emph{Isomorphism of Countable Graphs} is an example of an equivalence relation ``classifiable by countable models."  Indeed the graphs themselves are the models.}
\medskip 

Here are two standard results:

\begin{theorem*}
Consider the isomorphism relation $\sim_{iso}$ of \emph{Graphs}. Then:
	\begin{enumerate}
	\item As a subset of the Polish space \emph{Graphs} $\times$ \emph{Graphs}, the relation
	 $\sim_{iso}$ is complete analytic.
	\item If $X$ is a Polish space and $E\subset X\times X$ is the orbit equivalence relation arising from an $S_\infty$-action then $E\preceq_\mcb \sim_{iso}$.
	\end{enumerate}
\end{theorem*}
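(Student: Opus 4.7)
The plan is to establish (2) via a standard gadget coding and then derive (1) from it together with a known complete analytic $S_\infty$-orbit equivalence relation. Analyticity of $\sim_{iso}$ is immediate: using the action in (\ref{action}), the set
\[
\{(f_G,f_H,\phi)\in \gfs \times \gfs \times S_\infty : \phi\cdot f_G = f_H\}
\]
is closed (its defining equation is the intersection of countably many clopen conditions on the Polish product space), so $\sim_{iso}$ is its projection to the first two coordinates and therefore $\Sigma^1_1$. Both parts of the theorem build on this observation.

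For (2), I would first invoke the Becker--Kechris theorem that every Borel $S_\infty$-action on a Polish space is Borel isomorphic to the logic action of $S_\infty$ on the space $X_\mathcal{L}$ of countable $\mathcal{L}$-structures on $\nn$ for some countable relational language $\mathcal{L}$; this reduces the task to producing a Borel reduction from isomorphism on $X_\mathcal{L}$ to $\sim_{iso}$. For the latter I would construct a uniform Borel map $\mathcal{A}\mapsto \Gamma_\mathcal{A}$ from $X_\mathcal{L}$ into $\gfs$ whose vertex set is a Borel disjoint union consisting of (i) a copy of the universe $\nn$ of $\mathcal{A}$, (ii) a distinguished vertex for each relation symbol of $\mathcal{L}$, and (iii) rigid combinatorial gadgets---for instance finite paths of pairwise distinct lengths---attached so as to record, for each relation $R$ and each tuple $\bar a$, whether $R(\bar a)$ holds in $\mathcal{A}$. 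The main obstacle is designing these gadgets to be Borel-uniform in $\mathcal{A}$ and simultaneously asymmetric enough that any graph isomorphism $\Gamma_\mathcal{A}\to \Gamma_\mathcal{B}$ is forced to carry universe to universe and gadget to gadget; once this rigidity is in place, $\mathcal{A}\cong \mathcal{B}$ if and only if $\Gamma_\mathcal{A}\cong \Gamma_\mathcal{B}$, and composing with the Becker--Kechris reduction yields $E\preceq_\mcb \sim_{iso}$.

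For (1), claim (2) will promote the completeness of any $\Sigma^1_1$-complete $S_\infty$-orbit equivalence relation to $\sim_{iso}$, so what remains is to identify such an equivalence relation. A convenient witness is isomorphism of countable linear orders on $\nn$: given a tree $T\subseteq \nn^{<\nn}$, the Kleene--Brouwer ordering $KB(T)$ defines a Borel map such that $T$ is well-founded if and only if $KB(T)$ is a well-order, and a further standard move (comparing $KB(T)$ against a fixed reference linear order such as $\mathbb{Q}\cdot KB(T)$) collapses the $\Sigma^1_1$-complete set of ill-founded trees to a one-dimensional Borel reduction into the isomorphism relation on linear orders. Feeding this through (2) gives a Borel reduction from the ill-founded tree problem into $\sim_{iso}\subseteq \gfs \times \gfs$, so $\sim_{iso}$ is complete analytic; the only technically delicate point throughout is the gadget construction in (2), with the rest being standard descriptive set-theoretic machinery.
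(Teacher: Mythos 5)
A preliminary remark: the paper does not prove this theorem at all --- it is quoted as a pair of ``standard results'' (Friedman--Stanley and Becker--Kechris style facts), so there is no internal proof to compare against. Your outline of part (2) is the standard route: reduce an arbitrary Borel $S_\infty$-action to the logic action on countable $\mathcal{L}$-structures via Becker--Kechris, then code $\mathcal{L}$-structures into graphs by a uniform gadget construction whose rigidity forces any graph isomorphism to respect universe and tags. That is correct as a sketch, and your way of composing for part (1) is also organized correctly: a one-dimensional Borel reduction of the ill-founded trees into the isomorphism relation of some class of countable structures, composed with the two-dimensional reduction from (2), does give a one-dimensional reduction into $\sim_{iso}$, hence complete analyticity.

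The genuine gap is the specific ``standard move'' you invoke to produce that one-dimensional reduction into isomorphism of linear orders. Under the literal reading, the map $T\mapsto\bigl(KB(T),\ \mathbb{Q}\cdot KB(T)\bigr)$ is not a reduction of ill-foundedness: take the single-branch tree $T=\{0^n : n\in\nn\}$, which is ill-founded; then $KB(T)\cong\omega^{*}$ is scattered, while $\mathbb{Q}\cdot KB(T)$ (under either convention for the product) contains a dense suborder, so the two are not isomorphic. Under the other reading, where $\mathbb{Q}\cdot KB(T)$ is compared with a genuinely fixed order $L_0$, the move also fails: replacing each point of $KB(T)$ by a copy of $\mathbb{Q}$ yields an order isomorphic to $\mathbb{Q}$ for every nonempty $T$, so it retains no information, while taking $\mathbb{Q}$-many copies of $KB(T)$ sends the single-branch tree and the full tree $\nn^{<\nn}$ --- both ill-founded --- to non-isomorphic orders (one has consecutive pairs, the other is dense), so no fixed $L_0$ can witness ill-foundedness. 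The fact you actually need, that ill-foundedness Borel-reduces to isomorphism of countable linear orders (equivalently, that isomorphism of countable linear orders or trees is complete analytic), is true, but it is itself a nontrivial classical theorem whose proof requires a construction that washes out the isomorphism type of $KB(T)$ precisely in the ill-founded case; it is not achieved by multiplying by $\mathbb{Q}$. Citing that classical completeness result, or reproducing one of its genuine proofs, would close the gap; as written, the key step of your part (1) is false.
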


We can now restate the main theorem of this paper:

\begin{theorem}\label{main result} Let $M$ be a manifold of dimension at least 5. Then
the isomorphism relation 
on \emph{Graphs} is Borel reducible to the equivalence relation $\sim_{top}$ on $\diff(M)$.
\end{theorem}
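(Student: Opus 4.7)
\medskip

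\noindent\textbf{Proof proposal.} The plan is to construct a Borel map $G \mapsto T_G$ from \emph{Graphs} into $\diff(M)$, in which each graph is encoded by a collection of heteroclinic connections between distinguishable saddle fixed points of a background Morse-Smale-like diffeomorphism. Dimension $\ge 5$ will be used crucially to guarantee the transversality needed to place and preserve these connections.

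First I would fix a diffeomorphism $T_0 \in \diff(M)$ together with a discrete sequence $(p_i)_{i\in \nn}$ of hyperbolic saddle fixed points of $T_0$, all with the same index (stable/unstable dimension splitting), lying in some disk $D \subseteq M$ and accumulating on a single hyperbolic sink $p_\infty$. The remaining dynamics outside a small neighborhood of $D \cup \{p_\infty\}$ is arranged to be standard Morse-Smale, with no other recurrence. Given a graph $G$ coded by $f_G : \nn \times \nn \to \{0,1\}$, I would obtain $T_G$ from $T_0$ by performing, for each edge $\{i,j\}$ of $G$, a compactly supported $C^\infty$ perturbation inside a prescribed small ball disjoint from all $p_k$, chosen to create a pair of transverse heteroclinic orbits in $W^u(p_i)\pitchfork W^s(p_j)$ and $W^u(p_j)\pitchfork W^s(p_i)$. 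Because $\dim M \ge 5$, for any choice of indices one has enough room to realize these transverse intersections and to do so by perturbations whose supports can be chosen pairwise disjoint and shrinking rapidly enough that $T_G$ is a well-defined $C^\infty$ diffeomorphism depending in a Borel (indeed continuous) way on $f_G$.

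The easy direction is to check that if $\pi : \nn \to \nn$ is a graph isomorphism $G \to H$, then one can patch together local homeomorphisms of the blocks around each $p_i$ and its outgoing heteroclinic configuration with the analogous block around $p_{\pi(i)}$ in $T_H$, using the fact that two transverse heteroclinic intersections between saddles of the same index are locally topologically conjugate; gluing these with the identity on the common Morse-Smale skeleton gives a homeomorphism $h$ with $h \circ T_G = T_H \circ h$. The hard direction is the rigidity half: given a topological conjugacy $h$ between $T_G$ and $T_H$, I need to extract an honest isomorphism $G \cong H$. Since $h$ sends fixed points to fixed points, sinks to sinks, and saddles to saddles of the same topological type, $h$ will send the accumulation sink of $T_G$ to that of $T_H$ and thereby induce a bijection between the saddle sequences $(p_i)$ and $(p_i')$. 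Using that $h$ sends $W^u(p_i)$ to $W^u(h(p_i))$ and $W^s(p_i)$ to $W^s(h(p_i))$, transverse heteroclinic intersections map to transverse heteroclinic intersections, and this bijection must preserve the edge relation.

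The main obstacle, and where most of the actual work will live, is the rigidity step: one must arrange the background dynamics so that the only topological invariants encoded are the heteroclinic adjacencies, not spurious information from the particular enumeration, the relative positions of the $p_i$, or any asymptotic behavior near $p_\infty$. Concretely, this means choosing the locations and perturbation profiles in a \emph{symmetric} enough way that permuting the $p_i$ can be globally realized by a homeomorphism, while simultaneously arranging that the heteroclinic data of $T_G$ is preserved \emph{only} by such permutation-realizing homeomorphisms. I expect this to require a careful normalization of the local perturbations (making each edge contribution topologically identical) together with a demonstration that no unintended heteroclinic or recurrent orbits are created by the infinite family of local perturbations, which is where the dimension hypothesis and control of the $C^\infty$ norms of the perturbations enter decisively.
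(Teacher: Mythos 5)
Your encoding (graphs via heteroclinic connections between saddles) is a genuinely different construction from the paper's (which uses ``cigar''-shaped domains around the edges of a graph embedded in a $3$-plane of $\mathbb R^5$, with attracting versus repelling dynamics toward the central segment marking presence or absence of an edge), but as it stands it has two real gaps. First, the background picture is inconsistent: hyperbolic saddle fixed points $p_i$ cannot accumulate on a \emph{hyperbolic} sink $p_\infty$, since a hyperbolic sink has a trapping neighborhood in its basin containing no other fixed points. The accumulation point must be a degenerate (flat) fixed point, exactly as in the paper, and then the smoothness of $T_G$ and the absence of unintended recurrence near $p_\infty$ become delicate and need the decay estimates you only allude to. (A smaller repairable point: in your rigidity step you invoke preservation of \emph{transverse} intersections, but transversality is not a topological invariant; what a conjugacy preserves is the nonemptiness of the topologically defined stable and unstable sets, which is all you actually need.)

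The more serious gap is that you have the difficulty backwards: the direction ``graph isomorphism $\Rightarrow$ topological conjugacy,'' which you dispose of by ``patching local homeomorphisms of the blocks and gluing with the identity on the common Morse--Smale skeleton,'' is where essentially all the work lies, and the patching argument fails as described. A graph isomorphism is an arbitrary permutation of $\nn$, so it may move infinitely many of the $p_i$, which accumulate at $p_\infty$; a block-by-block gluing is then not even continuous at the accumulation point, and one must prove convergence of an infinite process. Moreover the ``blocks'' are not disjoint: a heteroclinic orbit from $p_i$ to $p_j$ belongs to the configuration of both vertices, so there is no skeleton on which your local conjugacies agree with the identity, and the homeomorphism must intertwine the dynamics \emph{exactly}, not just match up the combinatorics. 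The paper even flags that one cannot simply extend a homotopy of the embedded graph to the ambient manifold (Melikhov's Example~1.14), which is why it gives an explicit construction: the permutation is decomposed into a convergent sequence of transpositions (Lemma~\ref{l.permrepr}), each realized by a conjugacy $h_i$ supported in carefully separated regions obtained by pushing shrunken cigars into the two extra dimensions $\mathbb E_2$ along pairwise disjoint cones $K_i$, with summable $C^0$-displacements and the property that every point is moved by only finitely many $h_i$ (Proposition~\ref{p.key}). Note also that in that scheme the hypothesis $\dim M\ge 5$ is used precisely to create this extra room for the moves, not (as in your sketch) merely to obtain transversality of heteroclinic intersections; your proposal currently contains no mechanism playing this role, so the key half of the equivalence is unproved.
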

 Indeed, Theorem \ref{dim at least five} follows immediately from this result.

\medskip

\subsection{The relation $E_0$ and Theorem \ref{dim two}} 

In this section we discuss assigning complete numerical invariants to an equivalence relation. Given any two uncountable Polish spaces $X,Y$ there is a Borel bijection between $X$ and $Y$. Hence all uncountable Polish spaces are Borel isomorphic to the irrational numbers in $[0, 1]$. For this reason the phrase \emph{complete numerical invariant} is used to refer to invariants $x$ belonging to any fixed Polish space $X$.
\medskip

\hypertarget{SE0}{
\bfni{The  first anti-classification result in dynamical systems}}  The Kolmogorov-Sinai entropy is a function that computes a positive real number in a Borel way. Ornstein \cite{O} showed that for Bernoulli shifts, the entropy is a complete invariant: two Bernoulli shifts that have the same entropy are isomorphic by a measure preserving transformation. 
After Ornstein's classification there was optimism that the methods could be extended to larger collections.

The natural first step was to $\mck$-automorphisms, those ergodic measure preserving
transformations where ``the present is asymptotically independent of the past."
Ornstein and Shields produced an uncountable family of non-isomorphic $\mck$-automorphisms of
any given entropy--thus ruling out entropy as a potential complete invariant.

Feldman (\cite{JF}) gave the first general anti-classification result that showed that there is \emph{no} numerical invariant that completely classifies the $\mck$-automorphisms up to measure isomorphism.
We describe his result in contemporary language.

 \hypertarget{defE0}{The benchmark equivalence relation he considers is called $E_0$.} It is an equivalence relation on the Cantor set. Viewing the Cantor set as $\{0,1\}^\nn$,  two elements $f, g$ are $E_0$ equivalent if and only if there is an $N$ for all $n\ge N, f(n)=g(n)$.\footnote{In general we use the notation $X^Y$ for the space of functions from $Y$ to $X$, so $\{0, 1\}^\nn$ consists of all functions from $\nn$ to $\{0, 1\}$; all countable sequences of $0$'s and $1$'s.} 

The significance of $E_0$ is captured by the following theorem:

\begin{theorem*}(Harrington, Kechris, Louveau)\label{HKL}
Let $E$ be an analytic equivalence relation on a Polish space $X$.  Then exactly one of the following holds:
	\begin{description}
	\item[$E$ is smooth] There is a Borel function $f:X\to Y$ where $Y$ is a Polish space that reduces $E$ to the identity equivalence relation on $Y$.
	\item[$E_0$ reduced to $E$] The equivalence relation $E_0$ is reducible to $E$.
	\end{description}
\end{theorem*}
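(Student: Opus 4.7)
My strategy is effective descriptive set theory in the style of Harrington. By relativizing to a real parameter coding $E$, I may assume without loss of generality that $E$ is lightface $\Delta^1_1$; this has no effect on the Borel reduction produced at the end. I will then work throughout with the Gandy--Harrington topology $\tau_{GH}$ on $X$, generated by the lightface $\Sigma^1_1$ subsets of $X$. Two facts will do the heavy lifting: every nonempty $\Sigma^1_1$ set is $\tau_{GH}$-open, and on the set $X^*$ of reals of non-hyperarithmetic degree (the complement of the largest countable $\Pi^1_1$ thin set), $\tau_{GH}$ is strong Choquet, so any Cantor-scheme construction using $\Sigma^1_1$ pieces with shrinking diameters has nonempty intersection.

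\textbf{The dichotomy set and the smooth alternative.} Consider the $\Pi^1_1$ set
\[ B \;=\; \{\,x \in X : \text{some } \Delta^1_1 \text{ set containing } x \text{ is contained in } [x]_E\,\}, \]
and put $A = X \setminus B$, which is $\Sigma^1_1$ and $E$-invariant. If $A = \emptyset$, then every $x$ lies in a $\Delta^1_1$ set $D_x \subseteq [x]_E$. Using the standard $\Pi^1_1$-norm on codes of $\Delta^1_1$ subsets of $X$ together with Kreisel's uniformization of $\Pi^1_1$ relations on $\omega^\omega$, I select such a code in a Borel way, producing a Borel map $f : X \to \omega^\omega$ with $x\,E\,y \iff f(x) = f(y)$. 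This establishes the first horn of the dichotomy.

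\textbf{The $E_0$ alternative.} If $A \neq \emptyset$, I construct a continuous injection $\varphi : 2^\omega \to A$ that reduces $E_0$ to $E$. The construction is a recursive Cantor scheme $\{U_s\}_{s \in 2^{<\omega}}$ of nonempty $\Sigma^1_1$ subsets of $A$, together with $\Delta^1_1$ partial ``shift'' maps $\psi_s$ that realize $E$-equivalences between $U_{s\frown 0}$ and $U_{s \frown 1}$. At stage $n$ I require that $U_{s \frown 0}, U_{s \frown 1} \subseteq U_s$ are nonempty $\Sigma^1_1$ of $\tau_{GH}$-diameter at most $2^{-n}$ in a fixed Polish metric, that distinct branches are $E$-separated at sufficiently high levels, and that the $\psi_s$ compose coherently so that any two $\alpha, \beta \in 2^\omega$ agreeing from some coordinate onward yield points $\varphi(\alpha), \varphi(\beta)$ joined by a finite chain of the witnessed $E$-equivalences. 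Strong Choquetness of $\tau_{GH}$ on $X^*$ forces each branch intersection $\bigcap_n U_{\alpha\rest n}$ to contain a unique point, which I take as $\varphi(\alpha)$.

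\textbf{Main obstacle.} The delicate step is the inductive refinement, where I must split $U_s$ into two $\Sigma^1_1$ pieces that are $E$-separated \emph{and} simultaneously arrange a $\Delta^1_1$ shift $\psi_s$ witnessing an $E$-link between the two pieces, coherently with shifts chosen at earlier stages. The $E$-separation is possible precisely because $U_s \subseteq A$ contains no $\Delta^1_1$ piece of a single $E$-class, so Lusin's effective separation theorem applied to two $E$-inequivalent points of $U_s$ yields the split; the $E$-link is possible because the rejection of the smooth alternative forces the $E$-saturation of any $\Sigma^1_1$ subset of $A$ to meet any other in a $\Sigma^1_1$ way, after which Kreisel--Harrington selection supplies the $\Delta^1_1$ map $\psi_s$. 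Coordinating these opposite pulls of ``separate'' and ``merge'' in a single effective construction is what makes the Gandy--Harrington topology essential: it is the unique setting where $\Sigma^1_1$ sets serve simultaneously as basic open neighborhoods (for Baire-category Cantor schemes) and as the domain of all available effective selection theorems.
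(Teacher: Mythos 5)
First, a point of comparison: the paper does not prove this theorem at all --- it is quoted from Harrington--Kechris--Louveau, and the only portion the paper proves (and uses) is the mutual exclusivity of the two alternatives, via the Baire-category argument of Proposition~\ref{p.reduction}. Your proposal instead attacks the hard half (that at least one alternative holds) by the Gandy--Harrington method, which is indeed the strategy of the original HKL paper, and it never addresses exclusivity, which is part of ``exactly one.'' The more serious problem is your opening normalization: ``relativize and assume $E$ is lightface $\Delta^1_1$'' silently replaces \emph{analytic} by \emph{Borel}. An analytic equivalence relation need not be Borel, and the dichotomy as literally stated is false for analytic equivalence relations: the standard counterexample is the $\Sigma^1_1$ relation $x\,E\,y \iff \omega_1^x=\omega_1^y$, which is not smooth (a smooth relation is Borel, being the preimage of the diagonal under $f\times f$, and this relation is properly $\Sigma^1_1$), while $E_0\not\le_B E$ (the existence of a Borel reduction is a $\Pi^1_2$ statement, hence by Shoenfield absoluteness it would persist to a forcing extension in which CH fails, where it would inject continuum many $E_0$-classes into the $\aleph_1$ many classes of $E$). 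So no argument can establish the statement in the stated generality; what your outline can aim at is the Borel case, which is what HKL actually prove (for analytic relations the correct dichotomy, due to Hjorth--Kechris, replaces smoothness by Ulm-type classifiability), and this restriction must be made explicit rather than smuggled in as a ``without loss of generality.''

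Second, even in the Borel case both halves of your sketch have gaps. In the smooth case, your set $B$ is built from arbitrary $\Delta^1_1$ subsets of classes: such a $B$ is not $E$-invariant (a $\Delta^1_1$ set $D$ with $x\in D\subseteq [x]_E$ witnesses $x\in B$ but says nothing about other points of $[x]_E$), and, worse, the Kreisel-selected code of such a $D$ is not an invariant of the class, so two $E$-equivalent points may select different codes and your $f$ fails the implication $x\,E\,y\Rightarrow f(x)=f(y)$; the selection is moreover only $\sigma(\Pi^1_1)$-measurable, not visibly Borel. The standard Case 1 works instead with the $\Delta^1_1$ \emph{$E$-invariant} sets: one shows $E$ coincides with agreement on countably many such (Borel) sets and sends $x$ to its membership pattern in them, which is manifestly Borel and manifestly constant on classes. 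In the $E_0$ case, the heart of the HKL argument is the inductive condition that forces $E$-inequivalence of branches through non-$E_0$-equivalent reals --- one keeps the relevant products $U_s\times U_t$ inside the nonempty $\Sigma^1_1$ set of pairs that are $E$-inequivalent yet not separated by any $\Delta^1_1$ invariant set, and the nonemptiness of that set is exactly what the failure of Case 1 provides --- whereas your sketch states the desired conclusion (``distinct branches are $E$-separated at sufficiently high levels'') as though it were a condition one may simply impose; the ``Main obstacle'' paragraph (Lusin separation of two inequivalent points, saturations ``meeting in a $\Sigma^1_1$ way'') does not supply that mechanism. A smaller but symptomatic slip: the Gandy--Harrington topology is strong Choquet on the $\Sigma^1_1$ set $\{x:\omega_1^x=\omega_1^{CK}\}$ (suitably relativized), not on the complement of the largest thin $\Pi^1_1$ set.
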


The first case means that $f$ attaches  complete numerical invariants (elements of $Y$) to $E$. The theorem then says that either there is a Borel function that computes complete numerical invariants for $E$ or $E_0$ is reducible to $E$.

Feldman's theorem can be restated as saying:
\begin{theorem*}
There is a continuous map $f:2^\nn\to \{\mck$-automorphisms$\}$ that reduces $E_0$ to measure isomorphism.
\end{theorem*}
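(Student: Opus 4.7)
The plan is to construct a continuous family $(T_x)_{x\in 2^\nn}$ of measure-preserving transformations, each a $\mck$-automorphism, via a stage-by-stage cutting-and-stacking construction in which the bit $x(n)$ selects between two fixed ``building blocks'' $G^0$ and $G^1$ at stage $n$. The blocks are chosen of equal entropy and arranged so that in the limit every $T_x$ has trivial Pinsker algebra (hence is $\mck$); a standard way to guarantee this is to insert into every stage a Bernoulli scaffold of sufficient entropy, relative to which the $G^i$-modifications are small perturbations. Continuity of $x\mapsto T_x$ in the weak topology on measure-preserving transformations is then automatic: changing $x$ on finitely many coordinates modifies only finitely many stages of the tower construction, and hence affects $T_x$ only on a set of small measure.

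For the easy direction, suppose $x\,E_0\,y$, so that $x$ and $y$ agree from some index $N$ onward. Then the stage-$\ge N$ portions of the two constructions are literally identical as named processes, and the systems differ only by a finite combinatorial rearrangement of the base partition. Such a rearrangement can be absorbed into a measure-preserving isomorphism $T_x\to T_y$: one matches the finitely many differing early stages via a back-and-forth on equal-measure Rokhlin towers and extends equivariantly. This uses only finite measure-theoretic bookkeeping, since the infinite-tail portions of $T_x$ and $T_y$ already coincide.

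The main obstacle is the converse direction, showing that non-$E_0$-equivalent parameters yield non-isomorphic $T_x$. An arbitrary measure-theoretic isomorphism $T_x\to T_y$ need not respect the chosen cutting-and-stacking presentations, so one must extract from the abstract dynamics of $T_x$ an isomorphism invariant that recovers the tail of $x$ up to finite modification. The approach I would follow, due to Feldman, is to arrange that $G^0$ is loosely Bernoulli while $G^1$ is not, and then to define a quantitative ``tail invariant'' via asymptotic $\bar f$-distance estimates applied to a suitable sequence of factor subalgebras inside $T_x$. The technical heart of the proof is verifying that this invariant is genuinely isomorphism-invariant (rather than merely construction-dependent), which rests on the fact that the loosely Bernoulli property and its quantitative refinements pass through arbitrary measure-preserving conjugacies of the corresponding factor subalgebras. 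Once such an invariant is in place, if the tails of $x$ and $y$ disagree then the invariants computed on $T_x$ and $T_y$ differ, precluding isomorphism and completing the reduction.
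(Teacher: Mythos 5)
This theorem is not proved in the paper; it is a restatement of a result of Feldman \cite{JF}, which the paper cites without proof and which itself rests on the Ornstein--Shields construction of an uncountable family of non-isomorphic $\mck$-automorphisms. So there is no internal proof to compare against; I can only assess your sketch on its own terms.

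As written, your proposal is a plan rather than an argument, and the plan has real gaps. The ``easy direction'' is not as easy as you suggest. If $x$ and $y$ agree from stage $N$ onward, the cutting-and-stacking towers for $T_x$ and $T_y$ at stages $\geq N$ are \emph{not} literally identical named processes: every later stage is built on top of the earlier ones, so the two constructions diverge as labelled processes as soon as the first differing bit appears. What you need is that the measure-theoretic structure is insensitive to the initial segment, and you propose to get this from a ``back-and-forth on equal-measure Rokhlin towers,'' but you do not explain why such a back-and-forth closes up or why it produces an equivariant map in the limit; this is precisely the kind of step that requires a careful coding argument in the Ornstein--Shields setting.

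The ``hard direction'' is where the entire content of the theorem lives, and your sketch is essentially circular there. You posit a ``tail invariant'' extracted from $\bar f$-distance estimates ``applied to a suitable sequence of factor subalgebras'' and then assert that the invariant is isomorphism-invariant because loose Bernoullicity passes through conjugacies. But the hard part is identifying those factor subalgebras \emph{from the abstract dynamics of $T_x$ alone}, without reference to the specific cutting-and-stacking presentation, and then proving that the extracted quantity actually recovers the tail of $x$ up to $E_0$. Asserting that this ``rests on the fact that \dots'' is assuming exactly what must be proved. There is also a tension you do not address: inserting a high-entropy Bernoulli scaffold at every stage, with the $G^i$-modifications ``small perturbations,'' risks forcing every $T_x$ to be Bernoulli (and hence all isomorphic by Ornstein's theorem once entropies match), destroying the distinction you want. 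The Ornstein--Shields construction that Feldman uses navigates this trade-off delicately, distinguishing the $T_x$ by $\bar d$-distance estimates on carefully chosen factor partitions, not by loose Bernoullicity (a notion Feldman introduced only later). Your loosely-Bernoulli variant might be salvageable, but it is not a proof in its present form.
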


For the reader's benefit we sketch the argument for the incompatibility of the two alternatives described in the Harrington, Kechris, Louveau theorem. This is the portion of their theorem we use.

\begin{proposition}\label{p.reduction}
Suppose that $E$ is an equivalence relation on a Polish space $X$ and $E_0$ is Borel reducible to $E$.  Then there is no Borel function $g:X\to Y$ with $Y$ a Polish space such that for all $x_1, x_2\in X$,

	\[x_1 E x_2 \mbox{ if and only if } g(x_1)=g(x_2).\]
\end{proposition}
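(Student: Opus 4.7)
The plan is to argue by contradiction, reducing everything to the classical fact that $E_0$ is not smooth. Assume $h:2^\nn\to X$ is a Borel reduction of $E_0$ to $E$, and suppose toward contradiction that there is a Borel $g:X\to Y$ with $x_1 E x_2 \iff g(x_1)=g(x_2)$. Then $F:= g\circ h: 2^\nn \to Y$ is Borel and satisfies $x E_0 y \iff F(x)=F(y)$. So it suffices to show that no such Borel $F$ can exist; that is, $E_0$ itself is not smooth.

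The heart of the argument is a generic ergodicity statement for $E_0$. Consider the group $G = \bigoplus_{n\in\nn}\Z/2\Z$ acting on $2^\nn$ by coordinatewise addition; each element of $G$ acts as a homeomorphism of $2^\nn$, and the $G$-orbits are exactly the $E_0$-classes. I would prove: if $A\subseteq 2^\nn$ is $E_0$-invariant and has the Baire property, then $A$ is either meager or comeager. The proof is a short category argument: if $A$ is nonmeager then $A$ is comeager in some basic clopen cylinder $[s]$; for any other cylinder $[t]$ of the same length, the canonical homeomorphism $[s]\to[t]$ obtained by overwriting the initial segment is given by an element of $G$, hence sends $A\cap[s]$ into $A\cap[t]$, so $A$ is comeager in $[t]$. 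Since this holds for all cylinders of every length, $A$ is comeager.

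With that in hand, I would finish as follows. Fix a countable basis $\{U_n\}_{n\in\nn}$ for $Y$. For each $n$, $F^{-1}(U_n)$ is Borel and $E_0$-invariant, so by generic ergodicity it is either meager or comeager. Define
\[
C = \bigcap_{n\in\nn} B_n, \qquad B_n = \begin{cases} F^{-1}(U_n) & \text{if this set is comeager},\\ 2^\nn\setminus F^{-1}(U_n) & \text{otherwise}.\end{cases}
\]
Then $C$ is a countable intersection of comeager sets, hence comeager, hence uncountable (in fact contains a perfect set). By construction, for $x,x'\in C$ and every basic open $U_n$ we have $F(x)\in U_n \iff F(x')\in U_n$; since the $U_n$ separate points of $Y$, this forces $F(x)=F(x')$. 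Thus all points of $C$ lie in a single $E_0$-class, contradicting the fact that every $E_0$-class is countable.

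The main obstacle is the generic ergodicity step: one needs to observe that although $E_0$-classes are meager (they are countable), the equivalence relation is rich enough that no Borel invariant set can be split nontrivially in the sense of category. Once this $0$-$1$ law is established, the rest is a routine combination of second countability of $Y$ and the Baire category theorem; the key conceptual content is that Borel $E_0$-invariant sets behave like tail events under a "flip a finite number of coordinates" action, giving a topological analogue of the Kolmogorov $0$-$1$ law.
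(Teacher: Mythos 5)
Your proof is correct and follows essentially the same Baire-category route as the paper: both arguments show that the composition reducing $E_0$ to equality is generically constant, using invariance of the relevant Borel sets under finite coordinate changes, and then derive a contradiction from the fact that a comeager set cannot lie in a single $E_0$-class. The differences are only organizational: the paper first passes to $Y=[0,1]$ and tracks decimal digits via a Banach--Mazur localization, whereas you work with a countable basis of $Y$ directly and isolate the topological zero--one law as a lemma proved by finite-support translations, and your endgame (comeager sets are uncountable while $E_0$-classes are countable) is interchangeable with the paper's perfect-set argument.
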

\pf By the earlier remarks after the definition of two dimensional reductions, we can assume that $Y=[0,1]$.

If $f:\{0,1\}^\nn\to X$ is the reduction of $E_0$ to $E$ then $g\circ f:\{0,1\}^\nn\to [0,1]$ and for all $\vec{x}, \vec{y}\in \{0,1\}^\nn$, we have $\vec{x}$ eventually agrees with $\vec{y}$ if and only if $(g\circ f)(\vec{x})=(g\circ f)(\vec{y})$. So without loss of generality we can take $E=E_0$ and $f$ is a reduction to the identity equivalence relation on $[0,1]$.

Since $f$ is a Borel map, for each subinterval $I$ of $[0,1]$, $f^{-1}(I)$ has the property of Baire. Fix
an $n$ and look at decimal expansions of elements of $[0,1]$ of length $n$.  These partition $[0,1]$ into $10^n$ many sets. For each $\vec{a}=a_0a_1\dots a_n$ with $0\le a_i\le 9$, let $A_{\vec{a}}$ be the set of $x$ in $[0,1]$ such that the first $n$ digits of $x$ are $a_0a_1\dots a_n$.
 By the Baire Category Theorem,  there must be some $\vec{a}$  such that $f^{-1}(A_{\vec{a}})$  is not first category.\footnote{In Descriptive Set Theory the terms ``meager'' and ``comeager'' are usually used, while in Dynamical Systems their synonyms, ``first category'' and ``residual,'' are mostly preferred.}

A standard result proved using the  Banach-Mazur game \cite{wikipediaBM} says that there must be a basic open interval $[s]$  in the product topology on $\{0,1\}^\nn$ given by a finite sequence of $0's$ and $1's$ such that $f^{-1}(A_{\vec{a}})$ is residual in $[s]$. Since equivalence by $E_0$  just depends on tails of elements $\vec{x}\in \{0,1\}^\nn$, $f^{-1}(A_{\vec{a}})$ must be residual in $\{0,1\}^\nn$.

Hence, again by the Baire Category theorem, $\vec{a}$ must be unique. Denote the sequence of length $n$ by $\vec{a}_n$. Since  $f^{-1}(A_{\vec{a}_n})$ and $f^{-1}(A_{\vec{a}_{n+1}})$ are both residual we must have $\vec{a}_{n+1}$ extending $\vec{a_n}$.

This defines a collection $\la \vec{a}_n:n\in\nn\ra$ of  finite subsequences of $\{0, 1, \dots 9\}$ that extend each other.  If $a\in [0,1]$  has decimal expansion given by $\la \vec{a}_n:n\in\nn\ra$  then, again by the Baire Category Theorem,  $f^{-1}(a)$ is residual.

Let $B=f^{-1}(a)\subseteq \{0,1\}^\nn$.  Since $B$ is residual, it contains a perfect set--in particular it contains $x, y$ that disagree infinitely often. But then $x\not\!\! E_0 y$ and $f(x)=f(y)$, a contradiction.\qed

We can now state the content of Theorem \ref{dim two} more precisely:

\begin{theorem}\label{E0 dim two}
Let $M$ be a manifold of dimension at least two and $\sim_{top}$ the relation of topological equivalence on $\diff(M)$.   Then there is a Borel reduction of $E_0$ to
$\diff/\sim_{top}$.
\end{theorem}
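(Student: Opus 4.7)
The plan is to construct a continuous (hence Borel) map $\Phi\colon\{0,1\}^{\mathbb N}\to\diff(M)$, $\vec x\mapsto f_{\vec x}$, such that $\vec x\,E_0\,\vec y$ implies $f_{\vec x}\sim_{top}f_{\vec y}$ and $\neg(\vec x\,E_0\,\vec y)$ implies $f_{\vec x}\not\sim_{top}f_{\vec y}$. Since any manifold $M$ with $\dim M\ge 2$ contains a smoothly embedded 2-disk and we may take $f_{\vec x}$ to equal a fixed background diffeomorphism off that disk (and the identity on a complementary factor when $\dim M>2$), it suffices to carry out the construction on a 2-disk.

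Concretely, I would fix a $C^\infty$-diffeomorphism $g_0$ of the closed 2-disk with a distinguished hyperbolic attracting fixed point $p$ and a distinguished forward $g_0$-orbit $\{q_n\}_{n\ge 1}$ with $q_n\to p$. Select pairwise disjoint small disks $V_n\ni q_n$ with $\mathrm{diam}(V_n)\to 0$ and $g_0(V_{n+1})\subset V_n$, and into each $V_n$ insert a localized $C^\infty$ bump $\psi_n^{x_n}$ that creates one of two topologically distinguishable local dynamical features indexed by the bit $x_n\in\{0,1\}$---for instance, a hyperbolic periodic orbit of one of two prescribed topological linking types inside $V_n$. Choosing the amplitudes of the $\psi_n^{x_n}$ to decay rapidly in $n$ yields a $C^\infty$-diffeomorphism $f_{\vec x}$ depending continuously on $\vec x$ in the $C^\infty$-topology. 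The widget design is governed by two competing desiderata: each pair $(\psi_n^0,\psi_n^1)$ must be topologically distinguishable once placed in a single $V_n$, but the widget types at two adjacent positions $V_n,V_{n+1}$ should be freely swappable by an ambient homeomorphism of $M$ supported in a neighborhood of $V_n\cup V_{n+1}$.

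The forward direction then follows from the swappability: when $\vec x\,E_0\,\vec y$, only finitely many bits differ, and a finite composition of adjacent-swap homeomorphisms---using a region of the eventually-common tail as scratch space---converts the widget pattern of $f_{\vec x}$ into that of $f_{\vec y}$ and returns the scratch bits to their original values, producing the desired conjugating homeomorphism. For the reverse direction, given a putative conjugacy $h\circ f_{\vec x}=f_{\vec y}\circ h$, first note that $h(p)=p$ because $p$ is characterized topologically (as the unique attracting fixed point with a given basin type). The collection $\{V_n\}$, characterized intrinsically as the loci of accumulating nontrivial local features along the distinguished $g_0$-orbit accumulating at $p$, is then mapped by $h$ to itself, and dynamical considerations (preservation of the shift structure relating consecutive $V_n$) force the induced bijection of indices to be shift-compatible up to a finite correction. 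Combined with the topological distinguishability of widget types, this forces $x_n=y_n$ for all but finitely many $n$, i.e., $\vec x\,E_0\,\vec y$.

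The main obstacle is realising the delicate widget design on a 2-disk: the two types must be topologically distinguishable as local dynamical objects yet their placements along the accumulating orbit must be interchangeable by ambient homeomorphisms; and both features must survive under the global rigidity imposed by the hyperbolic fixed point at $p$. This balance---rigidity of each individual widget against the flexibility of adjacent pairs---is the crux of Section~\ref{s.2}. Routine verifications that $\Phi$ is continuous in the $C^\infty$-topology, that extending from the 2-disk to higher-dimensional $M$ preserves the relevant invariants, and that all invoked invariants are genuinely topological then complete the proof.
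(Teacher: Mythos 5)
Your overall strategy (a continuous map from $\{0,1\}^{\mathbb N}$ into diffeomorphisms supported in a $2$-disk, bit-encoding ``widgets'' whose rigidity/flexibility balance yields both directions, then extension to any $M$ with $\dim M\ge 2$) is the right genre, but the concrete design has a gap that is structural, not technical. First, the placement of the widgets is untenable: you put the bump for bit $n$ in a disk $V_n$ around a point $q_n$ of a \emph{wandering} orbit of $g_0$ converging to an attracting fixed point $p$, with $g_0(V_{n+1})\subset V_n$. Then the perturbed map still carries $V_n$ into $V_{n-1}$ and every forward orbit eventually settles toward $p$, so there is no periodic orbit contained in $V_n$ at all; the proposed invariant (``a hyperbolic periodic orbit of one of two prescribed linking types inside $V_n$'') cannot exist. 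Worse, small rapidly-decaying bumps supported in wandering fundamental domains of a hyperbolic attractor tend not to change the topological conjugacy class in the first place (this is the same fundamental-domain phenomenon the paper cites for interval maps), so the map $\vec x\mapsto f_{\vec x}$ risks being constant on conjugacy classes.

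Second, even granting usable widgets, encoding bit $n$ at a \emph{single} site breaks the forward direction. A conjugacy can only transport topologically distinguishable local features, never create, destroy, or retype them; so conjugate maps must exhibit the same multiset of feature types. But $E_0$-equivalent sequences such as $0^\infty$ and $10^\infty$ give patterns with different multisets (no type-$1$ widget versus exactly one), so no homeomorphism --- in particular no composition of adjacent swaps, with or without ``scratch space'' --- can conjugate $f_{0^\infty}$ to $f_{10^\infty}$. There is also an unresolved shift ambiguity in your reverse direction: if all widgets along the single orbit are mutually swappable enough for adjacent transpositions, nothing prevents a conjugacy from shifting the index by a fixed $k$, which would only give tail-agreement up to a shift, strictly weaker than $E_0$; making the widgets position-rigid to kill the shift destroys the swappability you need. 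The paper resolves exactly this tension with a different combinatorial architecture: each bit $n$ owns \emph{two} isometric disks $D(z_n^a,r_n)$ and $D(z_n^b,r_n)$ accumulating at two distinct points $a$ and $b$, always carrying the same fixed pair of twist maps (rotation-type maps with angles $q_n$ and $q_n/2$, hence periodic points of periods $m_n$ and $2m_n$), with the bit recorded only in \emph{which} disk gets which twist. Thus every $f_{\bar\omega}$ has the identical multiset of local pieces, a finite isometric swap handles the forward direction, the distinct periods $m_n$ (all $m(n)$ odd, rapidly increasing) forbid cross-index identifications, and the $\omega$-independent anchor disks $D(c_n^a,r_n)$, $D(c_n^b,r_n)$ with periods $8m_n$ and $16m_n$ pin $a$ and $b$ so the conjugacy cannot swap the two accumulation points (which would flip every bit). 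Some device of this two-sites-per-bit, anchored kind is what your construction is missing.
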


\section{Two dimensional diffeomorphisms}\label{s.2}

Here we prove Theorem \ref{E0 dim two}. By Proposition \ref{p.reduction} that will imply Theorem \ref{dim two}.

\vspace{5pt}

Let $$
\Sigma=\{0,1\}^{\mathbb{N}}=\{\ \bar \omega=\omega_1\omega_2\ldots, \ \omega_i\dots \in \{0,1\}\ \}.
$$

The standard topology makes  $\Sigma$ homeomorphic to the Cantor set.  This topology is induced by the metric: 
$$d(\bar\omega, \bar\omega')=\left\{
                                                                                            \begin{array}{ll}
                                                                                              0, & \hbox{if $\bar\omega=\bar \omega'$;} \\
                                                                                              2^{-k}, & \hbox{if $\omega_i=\omega'_i$ for $i<k$, and $\omega_k\ne \omega_k'$.}
                                                                                            \end{array}
                                                                                          \right.
$$

Recall the \hyperlink{defE0}{definition of $E_0$}:
$$
\bar\omega\sim_{E_0} \bar\omega'\ \ \ \text{if and only if}\ \ \ \omega_i=\omega_i'\ \ \ \text{for all sufficiently large} \ \ i\in \mathbb{N}.
$$
In other words, $\bar \omega\sim_{E_0} \bar \omega'$ iff $\bar \omega$ and $\bar \omega'$ belong to the inverse orbit of the same point under the topological Bernoulli shift.
\begin{theorem}\label{l.1}
There exists a continuous map $\mathbf{F}:\Sigma\to \text{Diff}^\infty(\mathbb{R}^2)$ such that $\mathbf{F}(\bar\omega)$ is topologically conjugate to $\mathbf{F}(\bar \omega')$ if and only if $\bar\omega\sim_{E_0} \bar\omega'$.
\end{theorem}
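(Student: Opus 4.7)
I would realize $\mathbf{F}(\omega)$ as a perturbation of a fixed background $C^\infty$-diffeomorphism $F_0$ of $\mathbb{R}^2$. The diffeomorphism $F_0$ carries a distinguished fixed point $p_\infty$ at which accumulates a sequence of fixed points $p_n$ (say $p_n\to p_\infty$ along a smooth curve with $\text{dist}(p_n,p_\infty)\to 0$ at a prescribed rate). Around each $p_n$ one fixes a shrinking disk $D_n$, the $D_n$ pairwise disjoint and with $\text{diam}(D_n)\to 0$ very fast. The map $\mathbf{F}(\omega)$ is obtained from $F_0$ by a compactly supported modification inside each $D_n$ of one of two $C^\infty$-types $M^0_n$, $M^1_n$ according to whether $\omega_n=0$ or $\omega_n=1$. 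Rapid shrinking of the $D_n$ and of the $C^\infty$-norms of the supported perturbations gives that $\mathbf{F}\colon \Sigma\to \text{Diff}^\infty(\mathbb{R}^2)$ is continuous in the $C^\infty$-topology.

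The design of the local pieces has two independent layers. The \emph{rigid layer}, placed in an annular collar of $D_n$ and not depending on $\omega_n$, equips the point $p_n$ with a topologically canonical marker that identifies the index $n$ for all $n$ larger than some threshold (e.g.\ a nested cluster of periodic orbits whose combinatorial type strictly grows with $n$, so that the germ of $F_0$ at $p_n$ becomes topologically non-conjugate to the germ at $p_m$ for $m\neq n$ once $n$ is large). This makes $p_\infty$ topologically distinguished as the unique accumulation point of these markers. The \emph{bit layer}, placed in a small inner disk of $D_n$, records $\omega_n$ by a feature that is topologically detectable within $D_n$ (e.g.\ presence vs.\ absence of a period-$2$ orbit of a specified unfolding type).

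\emph{Direction $\omega\sim_{E_0}\omega'\Rightarrow \mathbf{F}(\omega)\sim_{top}\mathbf{F}(\omega')$.} If $\omega_n=\omega'_n$ for all $n\ge N$, then $\mathbf{F}(\omega)$ and $\mathbf{F}(\omega')$ agree outside the compact set $K=\overline{\bigcup_{n<N} D_n}$. I would construct an ambient homeomorphism $h$ of $\mathbb{R}^2$ supported in a small neighborhood of $K$ that conjugates the two. To make this possible in spite of the local detectability of each $\omega_n$, the bit layer is arranged so that its global configuration across a \emph{finite} collection of disks can be realized by an ambient homeomorphism that moves the bit-carrying auxiliary features between disks without touching the rigid markers. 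In other words, on any finite initial segment the bit pattern is combinatorially ``shuffleable'' by a compactly supported homeomorphism of $\mathbb{R}^2$.

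\emph{Direction $\mathbf{F}(\omega)\sim_{top}\mathbf{F}(\omega')\Rightarrow \omega\sim_{E_0}\omega'$.} Any topological conjugacy $h$ must preserve topologically canonical structures: it sends $p_\infty$ to $p_\infty$ and permutes the indexed markers. By the rigid layer, the marker at $p_n$ is not topologically equivalent to that at $p_m$ for all sufficiently large $n\neq m$, forcing $h(p_n)=p_n$ for every $n\ge N_0$. Restricted to $D_n$ with $n\ge N_0$, $h$ then conjugates the bit layer of $\mathbf{F}(\omega)$ to that of $\mathbf{F}(\omega')$, and topological detectability of the bit layer forces $\omega_n=\omega'_n$ for all $n\ge N_0$, i.e.\ $\omega\sim_{E_0}\omega'$.

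\emph{Main obstacle.} The subtle point, and the technical core of the argument, is the simultaneous fulfillment of two competing requirements on the bit layer: it must be (i) topologically detectable inside each $D_n$ in a way that survives any topological conjugacy, so that tail disagreements obstruct conjugacy; yet (ii) globally ``shuffleable'' over any finite set of indices by an ambient homeomorphism compactly supported away from the rigid markers, so that disagreements on a finite initial segment can be repaired. Designing the local models $M_n^0, M_n^1$ and the way they are embedded in $\mathbb{R}^2$ to achieve both, while maintaining smoothness and continuity of $\omega\mapsto\mathbf{F}(\omega)$, is where the real work lies.
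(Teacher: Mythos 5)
Your overall strategy --- $\omega$-independent index-identifying markers plus a bit-encoding feature per index, with the forward direction handled by a finite repair and the backward direction by rigidity of the markers --- has the same shape as the paper's argument, but the specific architecture you propose cannot satisfy your own requirements (i) and (ii) simultaneously, and you leave exactly that point (your ``main obstacle'') unresolved; that unresolved point is the actual content of the theorem. Concretely: your rigid layer is part of the fixed background $F_0$, so the threshold beyond which the germ at $p_n$ is topologically distinct from the germ at $p_m$ ($n\neq m$) is a fixed $n_0$ independent of the pair $\omega,\omega'$. Any conjugacy $h$ between $\mathbf{F}(\omega)$ and $\mathbf{F}(\omega')$ must carry the marker configuration at $p_n$ to the marker configuration at $p_n$ for every $n\ge n_0$, and since those markers sit in a collar enclosing the inner disk that carries the bit, $h$ maps the inner region of $D_n$ to the inner region of $D_n$. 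If the bit (presence vs.\ absence of a period-$2$ orbit, say) is topologically detectable there, this forces $\omega_n=\omega'_n$ for \emph{every} $n\ge n_0$, not merely for all sufficiently large $n$. Hence ``shuffling'' a finite bit pattern by an ambient homeomorphism is impossible at any index $\ge n_0$, the forward implication fails for $E_0$-equivalent sequences that disagree somewhere beyond $n_0$, and the relation your $\mathbf{F}$ actually reduces is ``agreement from the fixed index $n_0$ on,'' which is smooth --- so no reduction of $E_0$ is obtained. Requirements (i) and (ii) are genuinely contradictory in a single-accumulation-point design where the bit is a local conjugacy invariant of an individually indexed disk.

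The paper escapes this tension by not making the bit a local invariant of a single disk. It uses two accumulation points $a,b$ with $\mathrm{dist}(a,b)=\tfrac12$ and, for each $n$, a \emph{pair} of disks $D(z_n^a,r_n)$, $D(z_n^b,r_n)$; the bit $\omega_{n-1}$ only decides which of two fixed, mutually non-conjugate rotation models ($\psi(\cdot,r_n,q_n)$ versus $\psi(\cdot,r_n,q_n/2)$) sits on the $a$-side and which on the $b$-side. Separate $\omega$-independent marker disks $D(c_n^a,r_n)$, $D(c_n^b,r_n)$ (rotations by $\tfrac18 q_n$ and $\tfrac1{16}q_n$, giving distinct periods) force any conjugacy to respect the index $n$ and to fix $a$ and $b$, but they do \emph{not} pin down which member of the $n$-th pair goes where. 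Finitely many disagreements are then repaired by a homeomorphism that isometrically swaps the finitely many affected pairs, while infinitely many disagreements would force $h$ to exchange disks accumulating at $a$ with disks accumulating at $b$, contradicting continuity of $h$ at $a$ together with $h(a)=a$. To salvage your design you would need to replace your locally detectable bit by a relational encoding of this kind, detectable only through the tail behaviour of the conjugacy rather than inside each $D_n$ individually.
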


Since the support of each diffeomorphism in the range of $F$ is compact, 
 the proof of Theorem \ref{l.1} for $\text{Diff}^\infty(\mathbb{R}^2)$ works for $\text{Diff}^{\,r}(M)$ for any $r=0, 1, \ldots, \infty$, and any two dimensional surface $M$.  It is easily adapted to any manifold of dimension at least two as discussed in Corollary \ref{E0anydim}.
 \medskip

Before giving a formal proof, let us provide an informal description of the construction.

{First we will choose two points $a, b\in \mathbb{R}^2$, and two sequences of disjoint closed discs $D(c_n^a, r_n)$ and $D(z_n^a, r_n)$   converging to the point $a$ and corresponding discs  $D(c_n^b, r_n)$ and $D(z_n^b, r_n)$ converging to the point $b$. These are shown in figure \ref{basicdiscs}.}

For each $\bar \omega\in \Sigma$, the map $\mathbf{F}(\bar \omega)$ will be supported on the union of these discs, and
the restrictions of $\mathbf{F}(\bar \omega)$ to any two of those discs are going to be different (i.e. not topologically
conjugate). {The restrictions $\mathbf{F}(\bar \omega)|{D(c_n^{a}, r_n)}$ and
$\mathbf{F}(\bar \omega)|{D(z_n^{a,}, r_n)}$ will  tend to the identity much faster than
$c_n^{a}$ or $z^{a}_n$ converge to $a$ and the analogous fact holds for the discs corresponding to $b$. This  ensures
smoothness of the map at the points $a$ and $b$. The restrictions $\mathbf{F}(\bar \omega)$ to the discs
${D(c_n^{a}, r_n)}$ and ${D(c_n^{b}, r_n)}$ will not depend on $\omega$.  They are needed to make sure that if
$\mathbf{F}(\bar \omega)$ and $\mathbf{F}(\bar \omega')$ are topologically conjugate, then the conjugacy fixes the
points $a$ and $b$. }

\begin{figure}[htp] \includegraphics[width=0.8\textwidth]{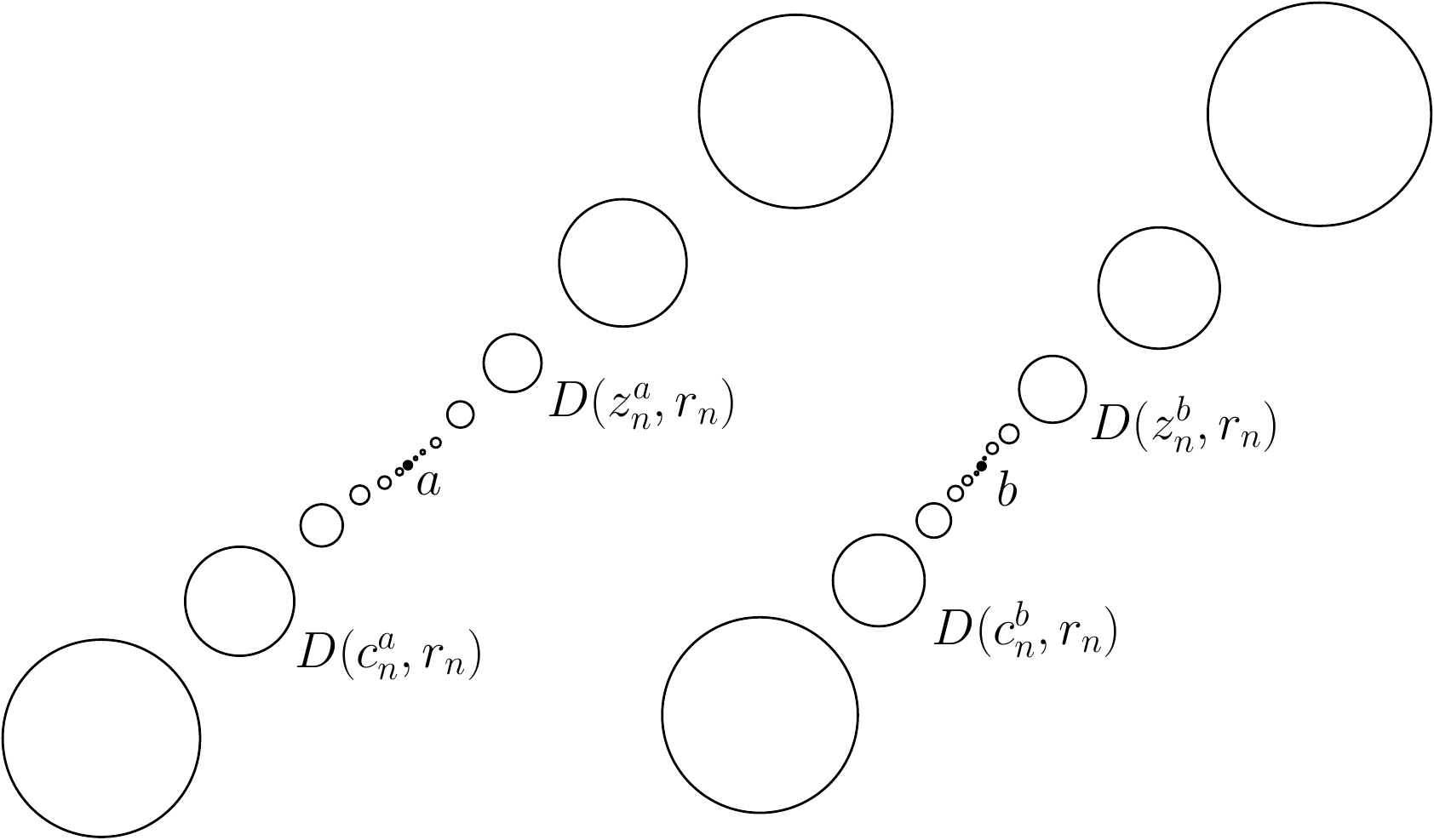}
 \caption{The construction of $\mathbf{F}(\bar\omega)$.}
 \label{basicdiscs}
\end{figure}

Let us now explain how we choose the {restrictions of  $\mathbf{F}(\bar \omega)$ to ${D(z_n^{a}, r_n)}$ and
${D(z_n^{b}, r_n)}$. First fix pairwise different maps $f_{n}, g_{n}$ on discs of radius $r_n$.
 For any sequence $\bar \omega=\omega_1 \omega_2\ldots$ with $\omega_i\in \{0,1\}$:
 \begin{enumerate}
 \item if $\omega_n=0$ we let  $\mathbf{F}(\bar \omega)|{D(z_n^{a}, r_n)}=f_n$ and $\mathbf{F}(\bar \omega)|{D(z_n^{b}, r_n)}=g_n$,
 \item if $\omega_n=1$, then we will swap them:  $\mathbf{F}(\bar \omega)|{D(z_n^{a}, r_n)}=g_n$ and
 $\mathbf{F}(\bar \omega)|{D(z_n^{b}, r_n)}=f_n$.
 \end{enumerate}
 We will show that $\bar \omega\sim_{E_0}\bar \omega'$, if and only if  $\mathbf{F}(\bar \omega)$ is topologically conjugate to $\mathbf{F}(\bar \omega')$.}

Let us now provide the formal proof.

\begin{proof}[Proof of Theorem \ref{l.1}] {For each $\bar\omega\in \Sigma$ we need to define a diffeomorphism $\mathbf{F}(\bar\omega)$ of $\mathbb{R}^2$.
We denote $\mathbf{F}(\bar\omega)$ by $f_{\bar\omega}$.}

{The diffeomorphism  $f_{\bar\omega}$  is the identity on the complement of the union
\[ D(c_n^a, r_n)\cup D(c_n^b, r_n)\cup D(z_n^a, r_n)\cup D(z_n^b, r_n).\]
It remains to define $f_{\bar\omega}$ on the discs themselves.}

 Fix two points $a, b\in \mathbb{R}^2$ such that $\text{dist}\, (a, b)=\frac{1}{2}$. Choose four sequences $\{c_n^a\}$, $\{c_n^b\}$, $\{z_n^a\}$, and $\{z_n^b\}$, $n\ge 2$,  of points on the torus such that for each $n\ge 2$ we have
$$
\text{dist}\, (a, c_n^a)=\text{dist}\, (b, c_n^b)=\frac{1}{n^4},
$$
and
$$
\text{dist}\, (a, z_n^a)=\text{dist}\, (b, z_n^b)=\frac{1}{2n^4}.
$$
Set $r_n=\frac{1}{n^{10}}$, $n\ge 2$.

For any $c\in \mathbb{R}^2$ and $r>0$ denote by $D(c, r)$ a closed disc centered at $c$ of radius $r$.

Fix a $C^\infty$ function  $\varphi:[0,1]\to [0,1]$ in such a way that the following properties hold:

\vspace{4pt}

1) $\varphi$ is non-increasing;

\vspace{4pt}

2) $\varphi|{[0,1/4]}\equiv 1$;

\vspace{4pt}

3) $\varphi|{[3/4,1]}\equiv 0$.

\vspace{4pt}

{For any $c\in \mathbb{R}^2$, $r>0$, and $q>0$ denote by $\psi(c, r, q)$ the diffeomorphism
$$
\psi(c, r, q):D(c, r)\to D(c, r)$$
defined by setting
$$\psi(c,r,q)(c+ \rho e^{i\theta})= c+ \rho e^{i(\theta+q\varphi(\frac{\rho}{r}))}.
$$}

Fix a strictly increasing function $m:\mathbb{N}\to \mathbb{N}$ such that $m(n)$ takes only odd values, and
grows sufficiently fast (for example $m(n)>e^n$ for all $n\in \mathbb{N}$). Set $q_n=\frac{1}{m(n)}$.

For $n\ge 2$ define $f_{\bar\omega}$ on the discs $D(c_n^a, r_n)$ and $D(c_n^b, r_n)$ by setting
\begin{eqnarray*}
f_{\bar\omega}|{D(c_n^a, r_n)}&=&\psi(c_n^a,r_n, \frac{1}{8}q_n)(x)\\
f_{\bar\omega}|{D(c_n^b, r_n)}&=&\psi(c_n^b,r_n, \frac{1}{16}q_n)(x).
\end{eqnarray*}
Then on the discs $D(c_n^a, r_n/4)$ and $D(c_n^b, r_n/4)$ $f_{\bar\omega}$ is a rotation by $\frac{1}{8}q_n(x)$ and
$\frac{1}{16}q_n(x)$ respectively. Hence they have periodic points of different orders. It follows easily that
$f_{\bar\omega}|D(c_n^a, r_n)$ and $f_{\bar\omega}|D(c_n^b, r_n)$ are not conjugate by homeomorphisms, nor are the restrictions of $f_{\bar\omega}$ to discs of different radii conjugate.

On the discs having $z_n^a$ and $z_n^b$ as centers and $n\ge 2$ we define
\begin{equation*}
f_{\bar\omega}|{D(z_n^a, r_n)}=
\begin{cases}
\psi(z_n^a, r_n, q_n) &\mbox{ if }\omega_{n-1}=0\\
\psi(z_n^a, r_n, q_n/2)&\mbox{ if }\omega_{n-1}=1
\end{cases}
\end{equation*}
and
\begin{equation*}
f_{\bar\omega}|{D(z_n^b, r_n)}=
\begin{cases}
\psi(z_n^b, r_n, q_n/2)&\mbox{ if }\omega_{n-1}=0\\
\psi(z_n^b, r_n, q_n) &\mbox{ if }\omega_{n-1}=1.
\end{cases}
\end{equation*}

The reason for these definitions is that  under the map $f_{\bar{\omega}}$
	\begin{itemize}	
	\item points within radius $r_n/4$ of $c_n^a$ have period $8m_n$
	\item points within radius $r_n/4$ of $c_n^b$ have period $16m_n$
	\item if $\omega_{n-1}=0$, then points within radius $r_n/4$ of $z_n^a$ have period $m_n$ and points within radius $r_n/4$ of $z_n^b$ have period $2m_n$
	\item if $\omega_{n-1}=1$, the periodicities of the points within radius $r_n/4$ of $z^a_n$ and $z_n^b$ are reversed.
	\end{itemize}

Thus the restriction of $f_{\bar\omega}$ to the $z_n^a$ discs are not conjugate to the restriction of $f_{\bar\omega}$ to the $z_n^b$ discs, nor to restrictions of $f_{\bar\omega}$ to discs of different radii or discs centered at the $c^a_n$'s or $c^b_n$'s.

Everywhere but at the points $a$ and $b$ the map $f_{\bar \omega}$ is $C^\infty$ by definition. It is not hard to see that the points $a$ and $b$ are flat fixed points of $f_{\bar \omega}$ (since $q_n$ is exponentially small). Therefore $f_{\bar \omega}$ is a $C^\infty$ diffeomorphism of $\mathbb{R}^2$.

Let us show that

\vspace{4pt}

 I. If $\bar \omega\sim_{E_0} \bar \omega'$, then $f_{\bar\omega}$ is topologically conjugated to $f_{\bar\omega'}$.

\vspace{4pt}

II. If  $f_{\bar\omega}$ is topologically conjugated to $f_{\bar\omega'}$, then $\bar \omega\sim_{E_0} \bar \omega'$.

\vspace{4pt}

Let us start with the first statement. If $\bar \omega\sim_{E_0} \bar \omega'$, then for some $k\in \mathbb{N}$, $\omega_n=\omega_n'$ for all $n\ge k$. The following statement is {immediate}.

\begin{lemma}
There is a homeomorphism $h:\mathbb{R}^2\to \mathbb{R}^2$ such that

\vspace{4pt}

a) $h|{D}=Id$ for all discs of radius $r_n$ centered at $c_n^a$ or $c_n^b$,  $n\ge2$;

\vspace{4pt}

b) $h|D$ is an isometry of discs $D$ of radius $r_n$ centered at $z_n^a$ or $z_n^b$,  all $n\ge2$;

\vspace{4pt}

c) for each $n\le k$ one has

\vspace{4pt}

$\qquad$ $h|{D(z_n^{a}, r_n)}$, and $h|{D(z_n^{b}, r_n)}$  are the identity maps if
$\omega_n=\omega_n'$ and
\vspace{3pt}

$\qquad$ $h[{D(z_n^{a}, r_n)}]=D(z_n^{b}, r_n)$,  and $h[{D(z_n^{b}, r_n)}]=D(z_n^{a}, r_n)$ if
$\omega_n\ne\omega_n'$;

\vspace{4pt}

d) for  $n>k$, $h|D(z_n^{a}, r_n))$ and  $h|D(z_n^{b}, r_n))$ are the identity maps.

\end{lemma}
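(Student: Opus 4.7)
Since $\omega_n = \omega_n'$ for all $n \ge k$, the set $S = \{n \le k : \omega_n \ne \omega_n'\}$ is finite (in fact $S \subseteq \{1, \ldots, k-1\}$). For $n \notin S$ with $n \le k$, and for all $n > k$, the required $h$ must be the identity on the corresponding $z$-discs, and $h$ must always be the identity on every $c$-disc. For each $n \in S$, we need $h$ to swap $D(z_n^a, r_n)$ with $D(z_n^b, r_n)$ by an isometry. I will build $h$ as a composition $h = h_{n_1} \circ \cdots \circ h_{n_\ell}$, where $S = \{n_1, \ldots, n_\ell\}$ and each $h_{n_i}$ is a local homeomorphism realizing that one swap, supported on an open set disjoint from the supports of the other factors and from every disc not participating in its swap.

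For each $n \in S$, I would route a simple arc $\gamma_n$ from $z_n^a$ to $z_n^b$ that meets the union of all $c$- and $z$-discs only near its two endpoints (inside $D(z_n^a, r_n) \cup D(z_n^b, r_n)$), and take the $\gamma_n$ pairwise disjoint across $n \in S$. This is possible because the obstacle discs form a countable disjoint family with accumulation only at the two points $a$ and $b$, and because the rapid decay $r_m = m^{-10}$ compared to the spacing of order $m^{-4}$ of the centers near $a$ and $b$ ensures that the obstacles occupy a vanishing angular fraction of any small annulus around those accumulation points, so arcs can be threaded through the gaps. I then thicken each $\gamma_n$ to a sufficiently thin tubular open neighborhood $U_n$ so that $U_n \setminus (D(z_n^a, r_n) \cup D(z_n^b, r_n))$ avoids every other disc and the $U_n$'s remain pairwise disjoint.

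Each $U_n$ is homeomorphic to an open disc containing two disjoint closed sub-discs at the two ends of the tube. Define $h_n$ on $U_n$ as a standard ``half twist'': on a neighborhood of $D(z_n^a, r_n)$ it equals the rigid translation sending $D(z_n^a, r_n)$ isometrically onto $D(z_n^b, r_n)$; on a neighborhood of $D(z_n^b, r_n)$ it equals the inverse translation; on the connecting annulus it interpolates homeomorphically to the identity on $\partial U_n$. Extending $h_n$ by the identity outside $U_n$ and composing the $h_n$'s over $n \in S$ produces $h$, and conditions (a)--(d) follow by inspection of the supports.

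The principal obstacle is the geometric routing step: since infinitely many obstacle discs cluster at each of the accumulation points $a$ and $b$, one must argue that the arcs can enter and leave arbitrarily small punctured neighborhoods of those points through the gaps between obstacles. The fast decay $r_m = m^{-10}$ relative to center spacing $m^{-4}$ is precisely what makes such gaps ubiquitous, and formalizing this is the one nontrivial ingredient; everything else is a routine application of the ``half-twist'' construction and the fact that homeomorphisms supported on disjoint open sets can be freely composed.
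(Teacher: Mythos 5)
Your construction is correct and is essentially the argument the paper leaves implicit (the lemma is stated there without proof, as ``immediate''): since $\bar\omega\sim_{E_0}\bar\omega'$ forces only finitely many indices $n$ with $\omega_n\ne\omega_n'$, the desired $h$ is a finite composition of compactly supported disc-swap homeomorphisms with pairwise disjoint supports, each restricting to an (orientation-preserving) isometry between $D(z_n^a,r_n)$ and $D(z_n^b,r_n)$ and to the identity on all other discs. Note only that the routing step is easier than you suggest: the obstacle discs accumulate solely at $a$ and $b$, and since the finitely many arcs you need have endpoints at a definite positive distance from $a$ and $b$, each arc can be kept outside fixed small balls around these two points, where it meets only finitely many obstacle discs, so no quantitative angular-gap estimate near the accumulation points is needed.
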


Now it is easy to see that
$$
h\circ f_{\bar \omega}=f_{\bar \omega'}\circ h.
$$

Thus if $\bar \omega\sim_{E_0} \bar \omega'$, then $f_{\bar\omega}$ is topologically conjugated to $f_{\bar\omega'}$.

\bigskip

\noindent For the converse, let us assume now that $f_{\bar\omega}$ is topologically conjugated to $f_{\bar\omega'}$, i.e. there exists a homeomorphism $h:\mathbb{R}^2\to \mathbb{R}^2$ such that
$$
h\circ f_{\bar \omega}=f_{\bar \omega'}\circ h.
$$

The homeomorphism $h$   must send the set of periodic points of order $p$ for $f_{\bar\omega}$  to the
set of periodic points of order $p$ for $f_{\bar{\omega}'}$. Hence $h$ sends the compliment of the union
of the discs $D(z_n^{a}, \frac{3}{4}r_n)$,  $D(z_n^{b}, \frac{3}{4}r_n)$, $D(c_n^{a}, \frac{3}{4}r_n)$ and
$D(c_n^{a}, \frac{3}{4}r_n)$ as well as the
 collection  of points $\{z_n\}, \{c_n\}$ to itself.
In particular this implies that $h$ must map the discs $D(z_n^a, \frac{3}{4}r_n)$, $D(z_n^b, \frac{3}{4}r_n)$,
$D(c_n^a, \frac{3}{4}r_n)$, $D(c_n^b, \frac{3}{4}r_n)$ to each other one by one.

Since $h$ preserves the period of points $D(c_n^{a}, \frac{3}{4}r_n)$ must be sent to $D(c_n^{a}, \frac{3}{4}r_n)$  and  similarly for the $D(c_n^b, \frac{3}{4}r_n)$'s.  Because the $c_n^a$'s converge to $a$ and the $c_n^b$'s converge to $b$, $h$ is the identity on $\{a, b\}$.

Finally $h$ if $\bar{\omega}_n=\bar{\omega}_n'$ then $h$ must send the discs
$D(z_n^a, \frac{3}{4}r_n)$ and $D(z_n^b, \frac{3}{4}r_n)$ to themselves and transpose them if $\bar{\omega}_n\ne \bar{\omega}_n'$.

By continuity, for all but finite number of $n\ge 2$, we must have
$$
h(D(z_n^{a}, \frac{3}{4}r_n))=D(z_n^{a}, \frac{3}{4}r_n), \ \ \text{and}\ \ \ h(D(z_n^{b}, \frac{3}{4}r_n))=D(z_n^{b}, \frac{3}{4}r_n).
$$
Hence, $\bar \omega \sim_{E_0} \bar \omega'$. The proof of Theorem \ref{l.1} is complete.
\end{proof}

\begin{coro}\label{E0anydim} Let $M$ be a $C^\infty$ manifold of dimension at least 2.  Then there is a continuous map
$\mathbf{F}:\Sigma\to \text{Diff}^\infty(M)$ such that $\mathbf{F}(\bar\omega)$ is topologically conjugate to $\mathbf{F}(\bar \omega')$ if and only if $\bar\omega\sim_{E_0} \bar\omega'$.
\end{coro}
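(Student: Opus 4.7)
The plan is to adapt the two-dimensional construction of Theorem \ref{l.1} to dimension $n=\dim M$ and then transport the resulting diffeomorphism onto $M$ via a single coordinate chart. Since the map $f_{\bar\omega}$ of Theorem \ref{l.1} is supported in a bounded region of $\mathbb{R}^2$, the $n$-dimensional analog will likewise be compactly supported and can be placed inside any chart of $M$.

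For the $n$-dimensional building block, replace each planar disc $D(c,r)\subset\mathbb{R}^2$ with the Euclidean ball $B(c,r)\subset\mathbb{R}^n$, and replace each map $\psi(c,r,q)$ with the diffeomorphism $\Psi(c,r,q):B(c,r)\to B(c,r)$ defined by
$$\Psi(c,r,q)(c+v)\;=\;c+R_{q\varphi(|v|/r)}\,v,$$
where $|v|$ denotes the Euclidean norm and $R_\theta$ rotates the first two coordinates of $v\in\mathbb{R}^n$ by $\theta$ while fixing the remaining $n-2$ coordinates. Because $\varphi(1)=0$, this is the identity on $\partial B(c,r)$ and extends by identity to a $C^\infty$ diffeomorphism of $\mathbb{R}^n$. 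Substituting $\Psi$ for $\psi$ throughout the proof of Theorem \ref{l.1} (with the centers $c_n^{a},c_n^{b},z_n^{a},z_n^{b}$ placed in $\mathbb{R}^n$) produces $f^n_{\bar\omega}\in\text{Diff}^\infty(\mathbb{R}^n)$ with compact support, flat at the accumulation points $a$ and $b$ for the same reason as in the planar case.

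The key observation for the conjugacy argument is that for the rotation angles used, the set of periodic points of period exactly $p$ for $\Psi(c,r,q)$ inside $B(c,r)$ contains the open $n$-dimensional set $\{c+v:|v|<r/4,\ (v_1,v_2)\ne 0\}$, which accumulates at $c$. With this in hand, the proof of Theorem \ref{l.1} transfers essentially verbatim: for $\bar\omega\sim_{E_0}\bar\omega'$ one exhibits a conjugating homeomorphism of $\mathbb{R}^n$ that swaps $z_n^{a}$- and $z_n^{b}$-balls precisely where $\omega_n\ne\omega_n'$. Conversely, any conjugating $h$ must preserve periods; since the periods $\{8m_n\},\{16m_n\},\{m_n\},\{2m_n\}$ are all distinct, $h$ maps each $B(c_n^{a},r_n)$ and $B(c_n^{b},r_n)$ to itself, forcing $h(a)=a$ and $h(b)=b$ as the unique accumulation points of these period sets, after which continuity forbids $h$ from swapping $z_n^{a}$- and $z_n^{b}$-balls for all but finitely many $n$.

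To transport onto $M$, fix a chart $\phi:U\to V\subseteq\mathbb{R}^n$ whose image contains the support of every $f^n_{\bar\omega}$, and set $\mathbf{F}(\bar\omega)$ equal to $\phi^{-1}\circ f^n_{\bar\omega}\circ\phi$ on $U$ and the identity on $M\setminus U$. Continuity of $\bar\omega\mapsto\mathbf{F}(\bar\omega)$ is inherited from that of $\bar\omega\mapsto f^n_{\bar\omega}$. The main obstacle at this stage is ruling out a conjugacy of $M$ that fails to respect the chart: this is handled by noting that the dynamically nontrivial structure (the closure of the non-fixed set together with the distinguished period sets) lies entirely in $\phi^{-1}(V)$, and that $\phi^{-1}(a),\phi^{-1}(b)$ are topologically characterized by their period-accumulation properties, so the Euclidean argument localizes inside $U$ without loss.
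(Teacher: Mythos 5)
Your proposal is correct, but it upgrades the dimension by a genuinely different mechanism than the paper. The paper's proof of Corollary \ref{E0anydim} keeps the two-dimensional diffeomorphism of Theorem \ref{l.1} untouched: it takes an open subset of a ball in $\mathbb{R}^n$ diffeomorphic to $D^2\times T^{n-2}$ and lets $\mathbf{F}(\bar\omega)$ act there as (2D map) $\times$ (identity on the $(n-2)$-torus), extending by the identity; the closed torus factor is exactly what makes the extension by the identity smooth (a product with the identity on a disc $D^{n-2}$ would fail to be the identity near the boundary in the new directions), and since the dynamics is literally a product, the period and fixed-point analysis of Theorem \ref{l.1} transfers with no change. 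You instead re-engineer the building blocks themselves: each planar disc becomes a round $n$-ball and each $\psi(c,r,q)$ becomes a rotation of the first two coordinates damped radially by $\varphi(|v|/r)$, so everything is compactly supported in round balls inside a single chart, with no auxiliary torus. The price is that the verification must be redone, because inside each ball the fixed-point set is now an $(n-2)$-dimensional disc rather than a single point, and the components of the non-fixed set are balls minus codimension-two discs; your observation that the exact-period sets still contain open $n$-dimensional cores (so that a conjugacy, which permutes components of the non-fixed set and preserves periods, must match the balls one by one, since transition-shell periods always exceed the core period) is precisely the replacement fact needed, and the accumulation/continuity argument at $a$ and $b$ then goes through as in the planar case. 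Your explicit treatment of the chart-transport step (a conjugacy of $M$ need not respect the chart, but the entire non-fixed set and its period structure sit inside $\phi^{-1}(V)$, so the argument localizes) addresses a point the paper leaves implicit. In short: the paper's route buys reuse of Theorem \ref{l.1} as a black box at the cost of the embedded-torus trick; your route buys a self-contained, chart-level construction at the cost of rechecking the conjugacy analysis with degenerate (codimension-two) fixed sets, and both are sound.
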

\pf It suffices to show this for a bounded open ball of $\mathbb R^n$ for $n\ge 2$. It contains an open subset diffeomorphic to a product of two-dimensional disc and a torus of dimension $n-2$. The map $\mathbf{F}$ can be constructed in such a way that $\mathbf{F}(\bar\omega)$  is identity away from that open set, and on that open set it acts as a product of a map on the disc that was constructed above, in the proof of Theorem \ref{l.1}, and identity map on the $(n-2)$-dimensional torus.\qed

\begin{remark}\label{on the boundary}
 The analog of Theorem \ref{dim two} holds even  if we restrict ourselves to the boundary of the set structurally stable, or even stronger, of the set of Morse-Smale diffeomorphisms.

 One can show that exact construction of $\mathbf{F}$ described above can be carried out on the boundary of the set of Morse-Smale diffeomorphisms, but it becomes more transparent in case we make a slight modification in the construction. Namely, rather than  the construction above define the map $F(\bar \omega)$ in the discs $D(c_n^a, r_n), D(c_n^b, r_n), D(z_n^a, r_n)$, and $D(z_n^b, r_n)$ as time one shifts of $C^\infty$-gradient vector fields that vanish outside of a corresponding disc, decay exponentially fast as $n$ goes to $\infty$, and have pairwise non-homeomorphic sets of critical points (e.g. as in Fig. \ref{MScritical}). The argument for $\mathbf{F}$ being a reduction remains substantially the same.

\begin{figure} \begin{center} \includegraphics[width=0.6\textwidth]{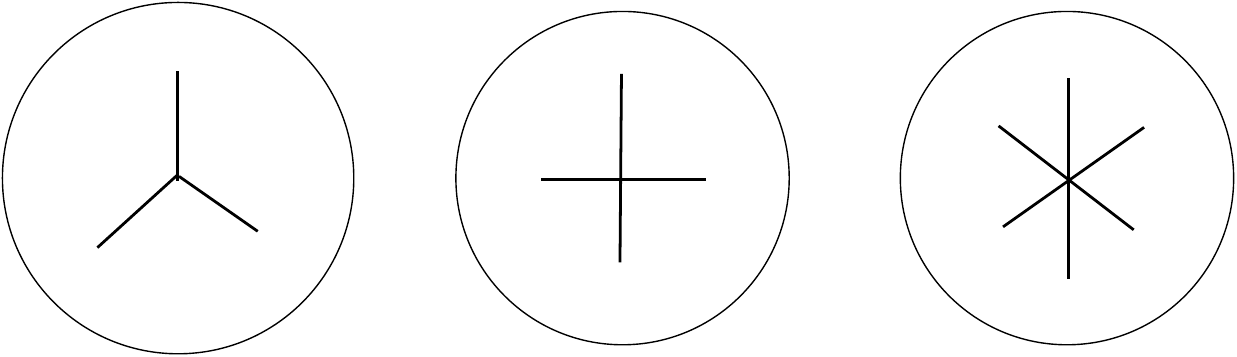} \end{center}
 \caption{Non-homeomorphic sets of critical points inside of the discs.}  \label{MScritical}
\end{figure}

With this change the diffeomorphisms in the range of the reduction  belong to the  boundary of the set of 
Morse-Smale diffeomorphisms. Indeed, the set of functions $L$ satisfying the first condition of the construction 
of Morse-Smale diffeomorphisms given in Remark \ref{r.MS}  forms an open and dense subset in the space of 
$C^\infty$ functions, and the  time one shift along a gradient vector field generated by such a function $L$ must 
be a Morse-Smale diffeomorphism. Therefore, the time one shift along a gradient vector field (in particular, the 
diffeomorphisms in the image of $\mathbf{F}$ constructed in this way) must be on the boundary of the set of 
Morse-Smale diffeomorphisms.

 This remark is related to the open question \ref{q.MS} in Section \ref{s.questions}. 
 \end{remark}

\section{Five dimensional diffeomorphisms}\label{s.5}

Here we prove Theorem \ref{main result}, and hence Theorem \ref{dim at least five}.

\subsection{Statement of the result}

Recall that the space \hyperlink{graphs}{$Graphs$} is given by the collection of symmetric maps $\mathbb{N}\times \mathbb{N}\to \{0,1\}$ endowed with a  topology that makes it homeomorphic to a Cantor set.

\begin{theorem}\label{t.main}
There exists a continuous map 
\[\mathbf{R}:Graphs\to \text{Diff}^\infty(\mathbb{R}^5)\]
 such that $\mathbf{R}(E_1)$ and $\mathbf{R}(E_2)$ are topologically conjugate if and only if the graphs $G_{E_1}$ and $G_{E_2}$ are isomorphic.
\end{theorem}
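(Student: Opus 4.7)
My plan is to extend the two-dimensional construction of Section \ref{s.2} by exploiting the three additional dimensions to encode the entire graph structure via disjoint ``edge connectors.'' The building blocks are (a) \emph{vertex gadgets} $V_n$ indexed by $n \in \mathbb{N}$ and supported in small balls $B_n = B(v_n, r_n) \subset \mathbb{R}^5$, with $v_n \to p$ and $r_n \to 0$ exponentially; and (b) for each unordered pair $\{i,j\}$, an \emph{edge gadget} $E_{ij}$ supported in a thin tubular neighborhood $T_{ij}$ of a smooth arc $\gamma_{ij}$ joining $v_i$ to $v_j$. All vertex gadgets are pairwise topologically conjugate and carry a fixed distinguishing signature (for example, an irrational rotation of a prescribed rotation number in an invariant 2-disc, trivial in transverse directions) that marks $\{v_n\}$ as a topologically recognizable set of fixed points. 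The edge gadget $E_{ij}$, when present, creates a topologically distinguishable invariant structure linking identifiable features of $V_i$ and $V_j$, e.g.\ a heteroclinic orbit between prescribed auxiliary fixed points of $V_i$ and $V_j$. Since $\mathbb{R}^5$ has codimension $4$ with respect to curves, the $\gamma_{ij}$ can be chosen pairwise disjoint and disjoint from the balls $B_n$, with $\rho_{ij} := \text{radius}(T_{ij})$ decaying exponentially in $i+j$.

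I would then define $\mathbf{R}(E) \in \text{Diff}^\infty(\mathbb{R}^5)$ as the composition of all vertex gadgets with those edge gadgets $E_{ij}$ for which $E(i,j)=1$, extended by the identity off their (disjoint) supports. Exponential decay of the sizes $r_n, \rho_{ij}$ and of the $C^\infty$-norms of the gadgets ensures that $\mathbf{R}(E)$ is $C^\infty$ and flat at $p$. Continuity of $\mathbf{R}$ in the $C^\infty$ topology is then immediate: if $E$ and $E'$ agree on all pairs $(i,j)$ with $\max(i,j) \le N$, then $\mathbf{R}(E)$ and $\mathbf{R}(E')$ differ only on edge gadgets $E_{ij}$ with $\max(i,j) > N$, whose combined $C^k$-deviation from the identity vanishes as $N \to \infty$.

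For the forward direction of the reduction, given a graph isomorphism $\sigma: G_{E_1} \to G_{E_2}$, I would build a homeomorphism $h$ of $\mathbb{R}^5$ sending $B_n$ to $B_{\sigma(n)}$ (so that $h$ conjugates $V_n$ to $V_{\sigma(n)}$ by a prescribed local equivalence) and sending $\gamma_{ij}$ to $\gamma_{\sigma(i)\sigma(j)}$ (so that it carries $E_{ij}$ onto $E_{\sigma(i)\sigma(j)}$). This is feasible because simple arcs in $\mathbb{R}^5$ with prescribed endpoints form a single isotopy class, so the edge tubes can be rerouted, and because the positions $\{v_n\}$ can be chosen so that every permutation of $\{v_n\}$ extends to a homeomorphism continuous at $p$. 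Conversely, any topological conjugacy $h$ between $\mathbf{R}(E_1)$ and $\mathbf{R}(E_2)$ must preserve the topological invariants of the construction: it must permute the vertex balls, identified as the only loci of the prescribed vertex signature, and it must carry edge gadgets to edge gadgets, identified as the only connecting structures of the prescribed type, inducing a bijection $\sigma$ of vertex indices such that $(i,j)$ is an edge of $G_{E_1}$ iff $(\sigma(i),\sigma(j))$ is an edge of $G_{E_2}$.

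The most delicate step, and the one I expect to be the main obstacle, is coherently realizing an arbitrary bijection $\sigma$ of $\mathbb{N}$ as a homeomorphism of $\mathbb{R}^5$ that simultaneously permutes the accumulating family of vertex balls and reroutes all countably many edge tubes while remaining continuous at $p$. This requires placing the $v_n$ along a sufficiently symmetric (essentially self-similar) pattern so that arbitrary permutations extend continuously at $p$, choosing the $\gamma_{ij}$ with enough ambient freedom that permutations induce ambient isotopies of the entire disjoint family at once, and arranging the $\rho_{ij}$ so that rerouted tubes remain pairwise disjoint after $h$ is applied. The separation of scales between successive generations of gadgets, combined with the high-codimension flexibility of curves in $\mathbb{R}^5$, is what should allow the construction to close up continuously at $p$; getting each of these three ingredients to cooperate simultaneously is the heart of the argument.
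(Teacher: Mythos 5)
Your overall architecture (gadgets along segments joining a sequence of marked points accumulating at a single point, with the dynamics on each gadget distinguishing edges, so that a conjugacy is forced to induce a graph isomorphism) is close in spirit to the paper's construction. But there is a genuine gap exactly at the step you yourself flag as the heart of the argument: the forward direction. You assert that, given an isomorphism $\sigma$, one can find a single homeomorphism of $\mathbb{R}^5$ that simultaneously permutes the infinitely many vertex balls, reroutes all countably many edge tubes, conjugates the gadget dynamics, and is continuous at the accumulation point, and you justify this by appeal to ``self-similar placement'', unknottedness of arcs in codimension $\ge 3$, and separation of scales. Unknottedness gives you each finite rerouting, but it gives no mechanism for making the infinite family of adjustments converge to a homeomorphism: the obvious strategies (extend a homeomorphism of the embedded graph to the ambient space, or compose infinitely many isotopy-extension moves) do not work in general -- the paper explicitly warns about this (Remark after Proposition \ref{p.key}, citing Example 1.14 of \cite{M}) and for that reason builds the conjugacy by hand. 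Concretely, what is missing in your proposal is the content of Lemma \ref{l.permrepr} and Proposition \ref{p.key}: a decomposition $\sigma=\lim \varphi_n\circ\cdots\circ\varphi_1$ into transpositions in which each index is moved only finitely often and ``contamination'' sets are uniformly finite; a way of routing the $i$-th swap through a region (the paper uses the extra two dimensions $\mathbb{E}_2$ and pairwise disjoint cones $K_i$) so that the supports of different swaps meet only in controlled places; a shrink--move--re-expand scheme so that transported tubes stay inside the target tubes; and the two convergence estimates, $\sum_i \mathrm{dist}_{C^0}(\mathrm{id},h_i)<\infty$ and the fact that every point is moved by only finitely many $h_i$, which together make $H=\lim h_n\circ\cdots\circ h_1$ a homeomorphism conjugating the two diffeomorphisms. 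Without some version of this machinery the forward implication is unproved, and it is the hard half of the theorem.

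A secondary, more easily repaired, problem is in your converse direction. You take the edge tubes $T_{ij}$ disjoint from the vertex balls $B_i,B_j$, with the edge dynamics supported inside $T_{ij}$; then the dynamics carries no record of which two vertices a given edge component is attached to, since a conjugacy only has to respect dynamical data (components of the non-fixed set, their closures, invariants of the restricted maps), not mere proximity of disjoint supports. If at the same time you want a heteroclinic orbit joining auxiliary fixed points of $V_i$ and $V_j$, the supports cannot be disjoint as stated. The paper avoids this by letting the closure of each cigar $\mathbf{U}_{m,n}$ contain the two vertices $x_m,x_n$, and by putting nontrivial dynamics ($f^{+}_{m,n}$ or $f^{-}_{m,n}$) on \emph{every} pair, so the fixed-point set and the component structure are graph-independent and the conjugacy is forced, via Proposition \ref{f+aresame}, to send $f^{+}$-cigars to $f^{+}$-cigars; you would need an analogous attachment mechanism. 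Finally, note that your gadget supports accumulate not only at $p$ but along limit arcs from each $v_i$ to $p$, so smoothness must be arranged along those limit sets as well (the paper's flat function $\Theta$ on $\mathbf{I}$), not only at $p$; your exponential-decay condition can be made to handle this, but it should be stated along the whole accumulation set.
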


\begin{remark}\label{r.gen}
In Theorem \ref{t.main}, $\text{Diff}^\infty(\mathbb{R}^5)$ can be replaced by $\text{Diff}^\infty(M)$ for any smooth manifold of dimension at least 5. Indeed, in our construction any diffeomorphism in the range of $\mathbf{R}$ is an identity outside of a finite ball, and therefore one can replace $\mathbb{R}^5$ by a single chart of a manifold $M$.  This is explained further in  Remark \ref{r.compact}.
\end{remark}

\subsection{Construction of the map $\mathbf{R}$}

The idea is to embed a complete countable graph into a bounded domain in $\mathbb{R}^5$ and to associate a given countable graph with a diffeomorphism that is supported in a union of small domains around the edges of the embedded graph. Each such domain is either a basin of attraction of a segment of fixed points (for the edges of the graph) or a repelling domain (for those edges of the complete graph that are missing in the given one).

Let us start by placing the vertices of the complete countable graph into $\mathbb{R}^5$. In fact, we will place them all into a three dimensional subspace, and the extra two dimensions will be used to construct a topological conjugacy between the diffeomorphisms defined by isomorphic graphs. For this purpose we adopt the following
 \hypertarget{Es}{notation}:

 We denote the subset of $\mathbb R^5$ given by $\{(x_1, x_2, x_3, 0, 0):x_i\in \mathbb R\}$ by ${\mathbb E}_3$.  Its orthogonal complement, $\{(0,0,0, x_4, x_5):x_i\in \mathbb R\}$ is denoted ${\mathbb E}_2$.  
 \hypertarget{defofr}{Note that ${\mathbb E}_3\perp {\mathbb E}_2$} and {they span $\mathbb R^5$.}

\begin{prop}\label{p.1}
In ${\mathbb E}_3$ there exists a bounded sequence $\{x_n\}_{n\in \mathbb{N}}$, a point $r\notin \{x_n\}_{n\in \mathbb{N}} $ and a sequence of positive numbers $\{\eta_n\}$ such that

\

1) $x_n\ne x_m$ if $n\ne m$;

\

2)  $\text{\rm dist}(x_n, r)< 2^{-n-1}$, so the sequence  $\{x_n\}_n$ converges to $r$,

\

3) No three points from the set $\{r, \{x_n\}_{n\in \mathbb{N}}\}$ are on the same line;

\

4) No four points from the set $\{r, \{x_n\}_{n\in \mathbb{N}}\}$ are on the same two dimensional plane.

\

5) The sequence of directions in the tangent space $T_r\mathbb{R}^5$ given by vectors $\{x_n-r\}$ converges to some direction that does not belong to this sequence.

\

\hypertarget{ImnandIn}{6)} Let \hypertarget{In}{$I_{m,n}$} be the closed interval with the end points $x_m$ and $x_n$, and  $\mathbf{I}_n$ be the closed interval with the end points $x_n$ and $r$. For any $n\ne k, m$, and  any point $y\in I_{k,m}$ we have $\angle ([r, y], \mathbf{I}_n)>\eta_n$.

\

\end{prop}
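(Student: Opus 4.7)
The plan is an inductive construction in the three-dimensional space ${\mathbb E}_3$, followed by a compactness argument on $S^2$ to extract the $\eta_n$'s of Condition~6. Fix the base point $r \in {\mathbb E}_3$ and a unit vector $v \in {\mathbb E}_3$ that will serve as the common limit of the directions $(x_n - r)/|x_n - r|$. At stage $n$ I choose $x_n$ inside $B(r, 2^{-n-1})$ with the angle between $(x_n - r)/|x_n - r|$ and $v$ less than $2^{-n}$, subject to the finitely many avoidance conditions:
\begin{itemize}
\item $x_n \notin \{x_1, \dots, x_{n-1}\}$ (for Condition~1);
\item $x_n$ lies on no line through two distinct points of $\{r, x_1, \dots, x_{n-1}\}$ (for Condition~3);
\item $x_n$ lies in no $2$-plane spanned by three points of $\{r, x_1, \dots, x_{n-1}\}$ (for Condition~4);
\item $x_n - r$ is not a positive multiple of $v$, so $v$ itself is never realized as a direction in the sequence (for Condition~5);
\item $x_n - r$ does not lie in the $2$-plane $\mathrm{span}(x_k - r,\, v)$ for any $k < n$; I call this auxiliary condition $(\ast)$.
\end{itemize}
Each forbidden set is a finite union of subsets of codimension at least one in the three-dimensional space ${\mathbb E}_3$, so the allowed region is open and dense in the admissible neighborhood and a valid $x_n$ always exists. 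Conditions~1--5 then follow directly from the construction.

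For Condition~6, translate the angle $\angle([r,y], \mathbf{I}_n)$ into the spherical distance on $S^2$ between the unit vectors $(y - r)/|y - r|$ and $(x_n - r)/|x_n - r|$. Since $y - r$ is a positive convex combination of $x_k - r$ and $x_m - r$, as $y$ runs over $I_{k,m}$ the former direction traces the geodesic arc on $S^2$ from $(x_k - r)/|x_k - r|$ to $(x_m - r)/|x_m - r|$. Set
\[
K \;=\; \{v\} \,\cup\, \bigl\{(x_\ell - r)/|x_\ell - r| \colon \ell \neq n\bigr\},
\]
which is a compact subset of $S^2$ by Condition~5. The map sending $(p, q) \in K \times K$ to the spherical distance from $(x_n - r)/|x_n - r|$ to the geodesic arc from $p$ to $q$ is continuous (all elements of $K$ lie within angle one of $v$, so no antipodality issues arise), and it is strictly positive everywhere: Condition~4 handles arcs with both endpoints in the sequence; $(\ast)$ handles arcs with one sequence endpoint and the other equal to $v$; and Conditions~3 and~5 handle the degenerate diagonal pairs $(p,p)$. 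A continuous strictly positive function on the compact set $K \times K$ attains a positive minimum, which serves as a valid $\eta_n$.

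The main obstacle is recognizing that Condition~6 is really a statement about angular geometry near the limit direction $v$: no individual sequence direction equals $v$, but $v$ must be included as an honorary endpoint to compactify the family of arcs and apply continuity. This forces the auxiliary genericity $(\ast)$ into the induction, ensuring that even the limit arcs from sequence directions to $v$ avoid $(x_n - r)/|x_n - r|$. Once this is set up, everything reduces to standard dimension-count genericity in ${\mathbb E}_3$ and a routine compactness argument on $S^2$.
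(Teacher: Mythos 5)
Your proposal is correct, and its skeleton is the same as the paper's: fix $r$ and a limit ray, then choose the $x_n$ inductively and generically, with the angle to the ray shrinking, so that the finitely many degeneracies forbidden by 1)--4) are avoided at each stage. Where you differ is in the treatment of property 6), and the difference is substantive. The paper's proof stops at ``a sequence constructed in this way satisfies all the required properties,'' whereas you add the auxiliary genericity $(\ast)$ --- $x_n-r$ avoids every plane spanned by $v$ and an earlier $x_k-r$ --- and then run a compactness argument on $S^2$ with $K=\{v\}\cup\{(x_\ell-r)/|x_\ell-r|:\ell\ne n\}$ to extract $\eta_n$. This is not mere bookkeeping: conditions 1)--5) together with the paper's angle decay do \emph{not} by themselves force 6). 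For instance, if $x_n-r$ happens to lie in the plane spanned by $x_k-r$ and the limit direction $v$, angularly between them, then the arcs of directions coming from $I_{k,m}$ accumulate on the direction of $\mathbf{I}_n$ as $m\to\infty$, each individual arc misses it (consistent with 4)), yet no uniform $\eta_n$ exists; your $(\ast)$ is exactly what rules this out, and by symmetry of linear dependence it covers both $k<n$ (imposed at stage $n$) and $k>n$ (imposed at stage $k$), as your sketch implicitly uses. Two cosmetic points: take $\eta_n$ to be, say, half the positive minimum so the strict inequality in 6) is literal, and note that the diagonal pairs and the pair $(v,v)$ are handled exactly as you say by 3) and by $u_n\ne v$. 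So your route is the paper's route done carefully, and it in fact supplies a verification of 6) that the paper's one-line argument leaves implicit (and would need something like $(\ast)$ to complete).
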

\pf Fix a point $r\in {\mathbb E}_3$ and a ray with end point $r$. Let $x_1$ be any point not on the ray. If $\{x_1, x_2, ..., x_n\}$ are constructed, let us choose $x_{n+1}\in {\mathbb E}_3$ in such a way that for $\{x_1, \ldots, x_{n+1}\}$ the properties 1)-4) are satisfied, and the angle between the initial ray and an interval $[r, x_{n+1}]$ is positive but smaller than $2^{-n-1}$. A sequence $\{x_1, x_2, \ldots\}$ constructed in this way satisfies all the required properties.
\qed

\noindent Notice that the properties in Proposition \ref{p.1} imply also the following:

\

7) We have
$$
I_{m,n}\cap I_{k,l}=\emptyset\ \ \ \text{if} \ \ \{k,l\}\cap \{m, n\}=\emptyset,
$$
and $I_{m,n}\cap I_{k,l}$ consists of at most one point (that must be an end point of each interval) if $\{m,n\}\ne \{k,l\}$,

\

8) The interval $\mathbf{I}_n$ is disjoint from $I_{m,k}$ if $n\ne m,k$.

\

\noindent \hypertarget{bfI}{From now on} we fix a sequence of vertices $\{x_n\}$ provided by Proposition \ref{p.1}. A picture of the lines $I_{m,n}$ and $\mathbf{I}_n$ is given in figure \ref{basic3Dpicture}.

\begin{definition}\label{bigI}
Let $\mathbf{I}=\bigcup_{n}\mathbf{I}_{n}$.
\end{definition}
\noindent  Notice that $\mathbf{I}$ is a compact set, and that
\begin{eqnarray*}\overline{\cup_{m,n}I_{m,n}}&=&\mathbf{I}\cup\left(\cup_{m,n}I_{m,n}\right), \\ \mathbf{I}\cap\left(\cup_{m,n}I_{m,n}\right)&=&\{x_n\}_{n\in \mathbb{N}}.
\end{eqnarray*}

\begin{remark}\label{r.compact}
The points $\{x_n\}_n$ represent the vertices of a countable graph and the interval $I_{m,n}$ connects $x_n$ to $x_m$. The intervals $I_{m,n}$ represent the edges of the countable complete  graph on $\{x_n\}_n$. All the diffeomorphisms, homotopies, and conjugacies that we will consider are  supported inside a ball of radius 1 around the point $r$. This justifies Remark \ref{r.gen} above.
\end{remark}

	\begin{figure} 
	\begin{center}
\includegraphics[width=0.4\textwidth]{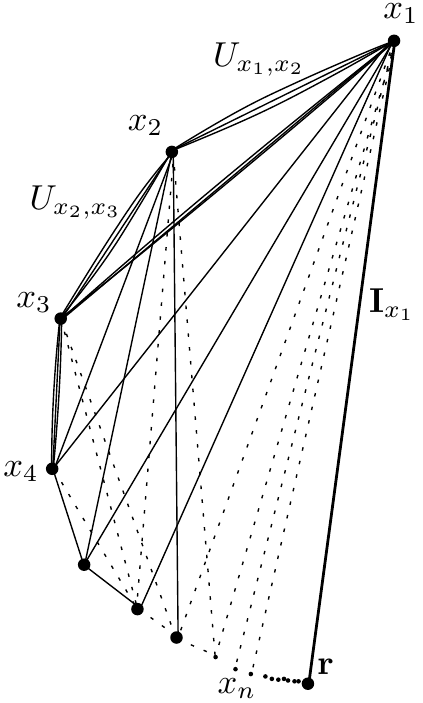}
\end{center}
		\caption{{The vertices of an embedded graph.}}
		\label{basic3Dpicture}
	\end{figure}

\hypertarget{alphamn}{Fix $m, n$ and set}
\begin{equation}\notag
\alpha_{m,n}=\frac{1}{1000}\inf_{\{k, l\}\cap \{n,m\} =\emptyset}\min\{\angle(I_{m,k}, I_{m,n}), \angle(I_{n,k}, I_{m,n}), \text{dist}(I_{m,n}, I_{k,l})\}.
\end{equation}
\begin{lemma} For a sequence $\{x_n\}$ as above, for all $m, n$,  $\alpha_{m,n}>0$.
\end{lemma}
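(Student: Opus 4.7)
The plan is to verify, for fixed $m\neq n$, that each of the three quantities inside the minimum in the definition of $\alpha_{m,n}$ is bounded below by a strictly positive constant uniformly over admissible index pairs $(k,l)$ with $\{k,l\}\cap\{m,n\}=\emptyset$. Once this is done, positivity of $\alpha_{m,n}$ is immediate.

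First, I would record the pointwise positivity of each quantity for an individual admissible pair $(k,l)$. Since no three of the points $\{r, x_j\}$ are collinear (property 3), the triple $x_m, x_n, x_k$ is non-collinear, so the intervals $I_{m,k}$ and $I_{m,n}$ share only the endpoint $x_m$ and meet at a strictly positive angle; likewise $\angle(I_{n,k}, I_{m,n})>0$. The intervals $I_{m,n}$ and $I_{k,l}$ are disjoint by property 7 (since $\{k,l\}\cap\{m,n\}=\emptyset$), so $\text{dist}(I_{m,n},I_{k,l})>0$. Consequently, the only way the infimum can vanish is along a sequence of admissible pairs $(k_j, l_j)$ in which one or both indices escape to infinity.

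To rule this out, I would use a compactness argument in the Hausdorff metric on closed subsets of the bounded ball containing all the relevant intervals. Because $x_k\to r$ as $k\to\infty$, every Hausdorff limit point of the family $\{I_{k,l}:\{k,l\}\cap\{m,n\}=\emptyset\}$ is either an interval $I_{k,l}$ in the family itself, a ``boundary'' interval $\mathbf{I}_k$ (which appears when exactly one of the indices escapes to infinity), or the singleton $\{r\}$ (when both escape). Similarly the only possible limit of $I_{m,k}$ as $k\to\infty$ is $\mathbf{I}_m$, and of $I_{n,k}$ is $\mathbf{I}_n$. On each of these limit objects the three quantities are still strictly positive: $\angle(\mathbf{I}_m,I_{m,n})>0$ and $\angle(\mathbf{I}_n,I_{m,n})>0$ again by property 3 (applied to the triples $r,x_m,x_n$); $\text{dist}(I_{m,n},\mathbf{I}_k)>0$ by property 8; and $\text{dist}(I_{m,n},\{r\})>0$ once more by property 3. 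Combining the pointwise positivity on the original family with positivity on every Hausdorff limit, and using continuity of the angle and distance functionals under Hausdorff convergence of the non-degenerate intervals at play, compactness yields uniform lower bounds on all three quantities, so $\alpha_{m,n}>0$.

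The only mildly delicate point, and what I would check carefully, is the continuity of $\angle$ and $\text{dist}$ under these Hausdorff limits; this is routine because the intervals involved never collapse to a point in a way that makes direction vectors ill-defined (all limiting endpoints lie in the fixed set $\{r\}\cup\{x_j\}_{j\in\mathbb{N}}$, whose points are pairwise distinct), and the fixed endpoints $x_m$ and $x_n$ never vary.
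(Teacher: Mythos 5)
Your argument is correct and follows essentially the same route as the paper's proof, which argues by contradiction: any sequence of admissible pairs $(k_j,l_j)$ along which one of the three quantities tended to $0$ would have to escape to infinity, but since $x_k\to r$ while $x_m,x_n$ are fixed, the limits of those quantities are the angles/distances involving $\mathbf{I}_m$, $\mathbf{I}_n$, $\mathbf{I}_{k_0}$, or $\{r\}$, which are positive by properties 3, 4, 7, 8. Your version is more careful than the paper's very terse two-sentence treatment (which loosely asserts the infimum "is achieved"): you explicitly split the Hausdorff limit points into the three cases (both indices escape, one escapes, neither escapes) and identify the precise property of Proposition \ref{p.1} that gives positivity in each, which patches the small imprecision in the paper and makes the compactness argument airtight.
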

\pf If not there would be a sequence of $k$'s  and $l$'s going to infinity such that either
\begin{enumerate}[a.)]
\item for one of $m$ or $n$, $\angle(I_{m,k}, I_{m,n}) \to_k 0$ or $\angle (I_{n,k}, I_{m,n})\to_k 0$ or
\smallskip

\item $dist(I_{m,n}, I_{k,l}) \to_{k,l} 0$
\end{enumerate}
In both cases, because the $x_m$ and $x_n$ are fixed and $x_k, x_l$ converge to $r$,  the minimum of the sequences in a.) or b.) is achieved by some $k,l$ in the sequence.\qed

Let us denote by $\psi$  the function $\psi:[-1,1]\to \mathbb{R}$ given by
	\begin{equation}\psi(x)=\frac{1-x^2}{2}.
	\label{psidef}
	\end{equation}
 Notice that
$\psi$ is an upside down parabola symmetric about $x=0$ with

\begin{enumerate}

\item  $0<\psi(x)\le\frac{1}{2}$ if $x\in (-1,1)$, and $\psi(-1)=\psi(1)=0$;

\item $\psi$ is convex;

\item $\psi'(-1)=1$, $\psi'(1)=-1$.
\end{enumerate}

\begin{definition}\label{cigars} \hypertarget{cigars}{We} now give the definitions used  to define the supports of our diffeomorphisms.
\begin{description}
\item[Central Axes] Given $x,y\in \mathbb{R}^5$, $x\ne y$, denote by $\gamma_{x,y}$ the line  connecting $x$ and  $y$ given by
$$
\gamma_{x,y}(t)=\frac{x+y}{2}+t\frac{y-x}{2}.
$$
Note that $\gamma_{x,y}(-1)=x$ and $\gamma_{x,y}(1)=y$.
\item[Normal Hyperplanes] For $t\in \mathbb R$, let $\Pi_{x,y}(t)$ the affine 4-dimensional subspace that contains the point $\gamma_{x,y}(t)$ and is orthogonal to the line $\gamma_{x,y}$.
\item[Atomic Domains (``cigars'')]
\hypertarget{Us}{Denote by} $U_{x,y, \varepsilon}\subset \mathbb{R}^5$ an open set such that $\Pi_{x,y}(t)\cap U_{x,y, \varepsilon}=\emptyset$ if $|t|\ge 1$, and
 $\Pi_{x,y}(t)\cap U_{x,y, \varepsilon}$ is an open ball in $\Pi_{x,y}(t)$ of radius $\varepsilon \psi(t)$ centered at $\gamma_{x,y}(t)$ if $|t|< 1$.
 \end{description}
\end{definition}

\hypertarget{gammamn}{To simplify the notation, let us also denote}
$$
\gamma_{m,n}=\gamma_{x_m,x_n}, \ \ \Pi_{m,n}(t)=\Pi_{x_m,x_n}(t), \ \ and \ \ U_{m,n, \varepsilon}=U_{x_m, x_n, \varepsilon}.
$$
\noindent We will occasionally informally refer to $U_{m,n,\varepsilon}$ as the  $\varepsilon$-\emph{cigar} between $x_m$ and $x_n$.

\begin{figure}[htp] \hfill\includegraphics[width=0.9\textwidth]{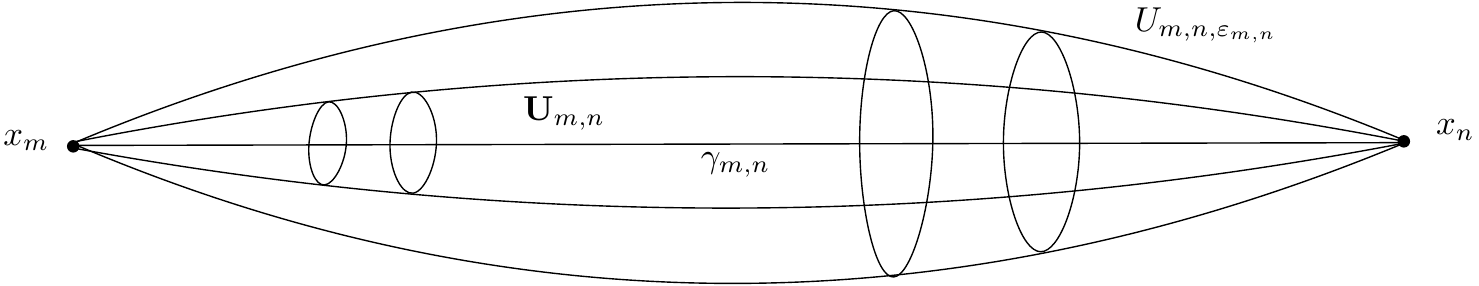}\hspace*{\fill}
		\caption{{``Cigars'' $U_{x_m, x_n, \varepsilon_{m,n}}$ and $\mathbf{U}_{m,n}$.}}
		\label{Cigarpicture}
\end{figure}

\hypertarget{U no epsilon}{Let us set
\begin{equation}
\varepsilon_{m,n} =\min{(0.001\alpha_{m,n}, 2^{-n-m})},
\end{equation}
and denote $\mathbf{U}_{m,n}=U_{x_m, x_n, \frac{1}{2}\varepsilon_{m,n}}$.}  See figure  \ref{Cigarpicture}
for illustration.
\begin{remark}
\hypertarget{Uprimes}{Later} in the proof we will also consider cigars of smaller sizes $\varepsilon_{m,n}'$ and 
$\varepsilon_{m,n}''$, so that $\varepsilon_{m,n}''\ll\varepsilon_{m,n}'\ll \varepsilon_{m,n}$, and therefore 
$U_{m,n, \varepsilon_{m,n}''}\subset U_{m,n, \varepsilon_{m,n}'}\subset  U_{m,n, \varepsilon_{m,n}}$. We will 
also use the notation $\mathbf{U}_{m,n}'=U_{x_m, x_n, \frac{1}{2}\varepsilon_{m,n}'}$ and 
$\mathbf{U}_{m,n}''=U_{x_m, x_n, \frac{1}{2}\varepsilon_{m,n}''}$.
\end{remark}
The fact that  $\alpha_{m,n}>0$ guarantees that the following statement holds:
\begin{prop}\label{p.3}
For each $\{m, m\}$, $\{k, l\}$:
\begin{enumerate}

\item $U_{m,n, \varepsilon_{m,n}}\cap U_{k,l, \varepsilon_{k,l}}=\emptyset$ if $\{m,n\}\ne \{k,l\}$;

\item If $m=k$, $n\ne l$, then 
$$
\overline{U_{m,n, \varepsilon_{m,n}}}\cap \overline{U_{k,l, \varepsilon_{k,l}}}=x_m;
$$

\item If the edges $I_{m,n}$ and $I_{k,l}$ are disjoint, then the sets $\overline{U_{m,n, \varepsilon_{m,n}}}$ and $\overline{U_{k,l, \varepsilon_{k,l}}}$
 are also disjoint.
 \end{enumerate}
\end{prop}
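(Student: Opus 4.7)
The plan is to prove all three items by combining two elementary geometric features of the cigar $U_{m,n,\varepsilon_{m,n}}$. First, every point of $\overline{U_{m,n,\varepsilon_{m,n}}}$ lies within distance $\varepsilon_{m,n}/2$ of the axis $I_{m,n}$, because the cross-sectional radius $\varepsilon_{m,n}\psi(t)$ is at most $\varepsilon_{m,n}/2$; call this observation (A). Second, viewed from the endpoint $x_m$, the set $\overline{U_{m,n,\varepsilon_{m,n}}}\setminus\{x_m\}$ is contained in the solid cone with apex $x_m$, axis $x_n-x_m$, and half-angle at most $\arctan\bigl(2\varepsilon_{m,n}/|x_n-x_m|\bigr)$; call this observation (B). To verify (B), note that a point $p=\gamma_{m,n}(t)+v$ with $v\perp(x_n-x_m)$ and $|v|\le \varepsilon_{m,n}\psi(t)$ satisfies $\gamma_{m,n}(t)-x_m=\tfrac{1+t}{2}(x_n-x_m)$, whence $\tan\angle(p-x_m,\,x_n-x_m)\le \varepsilon_{m,n}\psi(t)/\bigl(\tfrac{1+t}{2}|x_n-x_m|\bigr)=\varepsilon_{m,n}(1-t)/|x_n-x_m|$, maximized on $[-1,1]$ at $2\varepsilon_{m,n}/|x_n-x_m|$. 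An analogous cone estimate holds at $x_n$.

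First I would prove (3). When $\{m,n\}\cap\{k,l\}=\emptyset$, the definition of $\alpha_{m,n}$ together with $\varepsilon_{m,n}\le 0.001\alpha_{m,n}$ gives $d(I_{m,n},I_{k,l})\ge 1000\alpha_{m,n}\ge 10^6\varepsilon_{m,n}$, and symmetrically for $\varepsilon_{k,l}$. Observation (A) then yields
\[
d\bigl(\overline{U_{m,n,\varepsilon_{m,n}}},\ \overline{U_{k,l,\varepsilon_{k,l}}}\bigr)\ \ge\ d(I_{m,n},I_{k,l})-\tfrac{1}{2}(\varepsilon_{m,n}+\varepsilon_{k,l})\ >\ 0,
\]
proving (3). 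This also handles the subcase of (1) with disjoint index sets.

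Next I would turn to (2) and the remaining subcase of (1), in which the two edges share exactly one vertex; without loss of generality $m=k$ and $n\ne l$. Because $\psi(\pm 1)=0$, the open cigars miss every vertex, so the only possible common point of $\overline{U_{m,n,\varepsilon_{m,n}}}$ and $\overline{U_{m,l,\varepsilon_{m,l}}}$ is the shared endpoint $x_m$. By (B), these two cigars lie in cones from $x_m$ of half-angles $\theta_{m,n}\le 2\varepsilon_{m,n}/|x_n-x_m|$ and $\theta_{m,l}\le 2\varepsilon_{m,l}/|x_l-x_m|$ respectively, while the definition of $\alpha_{m,n}$, applied with any disjoint pair $\{l,l'\}$ for $l'\notin\{m,n,l\}$, gives $\angle(I_{m,n},I_{m,l})\ge 1000\alpha_{m,n}$, and symmetrically $\ge 1000\alpha_{m,l}$. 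Once the inequality $\theta_{m,n}+\theta_{m,l}<\angle(I_{m,n},I_{m,l})$ is established, the two cones meet only at the apex $x_m$, so the cigars meet only at $x_m$, completing (2) and (1).

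The crux is the last inequality: because $|x_n-x_m|$ may itself shrink when both indices are large, the bound $\varepsilon_{m,n}\le 0.001\alpha_{m,n}$ alone is not enough to force $\theta_{m,n}\ll \alpha_{m,n}$. This is precisely the role of the auxiliary constraint $\varepsilon_{m,n}\le 2^{-n-m}$ in the definition: the quantities $|x_n-x_m|$ and $\alpha_{m,n}$ are fixed once the vertex sequence is chosen via Proposition~\ref{p.1} (and are controlled, for instance, by the angle parameters $\eta_n$ from item (6)), so the superexponential decay of $\varepsilon_{m,n}$ makes each opening half-angle $\theta_{m,n},\theta_{m,l}$ negligible compared to the angular separation $1000\max(\alpha_{m,n},\alpha_{m,l})$ between the axes.
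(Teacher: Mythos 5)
The paper gives no proof of Proposition~\ref{p.3}; after defining $\alpha_{m,n}$ and $\varepsilon_{m,n}$ it simply asserts that ``$\alpha_{m,n}>0$ guarantees'' the statement, so there is no argument to compare against. Your two observations (A) (transverse radius $\le\varepsilon_{m,n}/2$) and (B) (containment in a cone from the endpoint with $\tan$ of half-angle $\le 2\varepsilon_{m,n}/|x_n-x_m|$) are both correctly computed and give the right decomposition: (A) handles item (3) together with the disjoint-index subcase of (1), since $\text{dist}(I_{m,n},I_{k,l})\ge 1000\alpha_{m,n}\ge 10^6\varepsilon_{m,n}$ (and symmetrically $\ge 10^6\varepsilon_{k,l}$) makes the two tubular neighborhoods disjoint; (B) reduces item (2) and the shared-vertex subcase of (1) to the cone-separation inequality $\theta_{m,n}+\theta_{m,l}<\angle(I_{m,n},I_{m,l})$.

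The gap is exactly where you flag it, and your closing paragraph does not in fact close it. The assertion that ``$|x_n-x_m|$ and $\alpha_{m,n}$ are fixed once the vertex sequence is chosen\ldots so the superexponential decay of $\varepsilon_{m,n}$ makes each opening half-angle negligible'' is not an argument: these being fixed constants does not prevent $|x_n-x_m|$ (and with it $\alpha_{m,n}$, which is bounded above by $\frac{1}{1000}\text{dist}(I_{m,n},I_{k,l})$ for large $k,l$) from decaying as fast as, or faster than, $2^{-n-m}$. Proposition~\ref{p.1} only imposes the upper bound $\text{dist}(x_n,r)<2^{-n-1}$; nothing there gives a lower bound on $|x_n-x_m|$, so a sequence with $|x_n-x_m|\ll 2^{-n-m}$ is permitted, and then $2\varepsilon_{m,n}/|x_n-x_m|$ need not be small compared with $\angle(I_{m,n},I_{m,l})$. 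To make the cone-separation step rigorous one needs either an extra quantitative hypothesis on $\{x_n\}$ (for instance $|x_n-x_m|\gtrsim 2^{-\min(m,n)}$, which a careful choice in Proposition~\ref{p.1} can certainly arrange) or a cross-sectional radius of the cigar that is scaled by $|x_n-x_m|$, so that the aperture is controlled directly by $\varepsilon_{m,n}$. Since the source itself supplies no proof, this is an imprecision you inherit rather than resolve; your cone estimate is the right tool, but the inequality $\theta_{m,n}+\theta_{m,l}<\angle(I_{m,n},I_{m,l})$ must be derived from stated hypotheses, not asserted.
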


\paragraph{\bf The dynamics on the atomic
domains $\mathbf{U}_{m,n}$.}
We start by defining maps $f^+_{m,n}$ and $f^-_{m,n}$, that serve as basic building blocks
for the final diffeomorphisms.  They each send
$\mathbf{U}_{m,n}$ to itself and are the identity on the complement of $\mathbf{U}_{m,n}$. The map $f^+_{m,n}$ smoothly moves the elements of $\mathbf{U}_{m,n}$ \emph{towards} the centerline 
$\gamma_{m,n}$ while preserving $\Pi_{m,n}$, and $f^-$ moves the elements $\mathbf{U}_{m,n}$ towards the outer boundary (away from the centerline) while preserving $\Pi_{m,n}$. 
\smallskip

{\noindent{\bf In more detail:}}

\begin{enumerate}
\item Let $\lambda_{m,n}:\mathbb{R}^5\to \mathbb{R}$ be $C^\infty$ function such that $\lambda>0$ in $\mathbf{U}_{m,n}$, $\lambda=0$ in $\mathbb{R}^5\backslash {\bf U}_{m,n}$, and $\|\lambda_{m,n}\|_{\infty}\le 1$.

\item Let $w_{m,n}$ be the vector field orthogonal to $\gamma_{m,n}$, directed towards $\gamma_{m,n}$, and such that $\|w_{m,n}(x)\|=\text{dist}\,(x,\gamma_{m,n})$.

\hypertarget{flatfixed}{.}
\item \label{propsoftheta} Let $\Theta\in C^{\infty}(\mathbb{R}^5)$ be a smooth function such that
	\begin{enumerate}
	\item $\Theta(x)\in [0,1]$ for all $x\in \mathbb{R}^5$;

	\item $\Theta(x)=0$ if and only if $x\in $\hyperlink{bfI}{$\mathbf{I}$}; 

	\item Every point of the set $\mathbf{I}$ is a flat zero point of $\Theta$.\label{thetaisflat}
	\end{enumerate}
\end{enumerate}
\hypertarget{fpm}{We can now define  $f^+$ and $f^-$}.
\begin{definition}\label{plusnminus}
Let $v_{m,n}$ be  {the} vector field  {on} $\overline{\mathbf{U}_{m,n}}$ given by
\hypertarget{vmn}{\begin{equation}v_{m,n}=\Theta\cdot \lambda_{m,n}\cdot w_{m,n}.\label{vmn}
\end{equation}}
Let {{ $f^+_{m,n}$} be time-1 shift associated with the vector field $v_{m,n}$, and { $f^-_{m,n}$} be time-1 shift associated with the vector field $(-v_{m,n})$.}
\end{definition}

Notice that while the properties above do not define the choice of $\lambda_{m,n}$ and $\Theta$ completely, dynamics of $f^+_{m,n}$ (or $f^-_{m,n}$) does not depend on this choice (up to a continuous change of coordinates), as the following statement shows.

\begin{prop}\label{f+aresame}
Fix two pairs $(m,n)$ and $(m^*, n^*)$ and let $g$ be either $f^+_{m,n}$ or $f^-_{m,n}$ and $h$ be
$f^+_{m^*,n^*}$ or $f^-_{m^*,n^*}$. Then $g$ is topologically conjugate to $h$ if and only either
\begin{itemize}
	\item $g=f^+_{m,n}$ and $h=f^+_{m^*, n^*}$ or
	\item  $g=f^-_{m,n}$ and $h=f^-_{m^*, n^*}$.
\end{itemize}
\end{prop}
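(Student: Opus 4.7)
The proposition splits into two parts: when the signs of $g$ and $h$ agree, they are topologically conjugate; when they differ, they are not. I would handle these by two different kinds of argument---an explicit construction of a conjugacy in the first case, and a topological invariant of the dynamics in the second.

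\textbf{Matching signs---constructing a conjugacy.} To build a homeomorphism $H:\mathbb{R}^5\to\mathbb{R}^5$ with $H\circ f^+_{m,n}=f^+_{m^*,n^*}\circ H$, I would exploit the product structure of the cigars. In coordinates $(t,z)\in[-1,1]\times\mathbb{R}^4$ with $z$ in the open ball of radius $\varepsilon_{m,n}\psi(t)/2$ around $\gamma_{m,n}(t)$, the vector field $v_{m,n}$ from \eqref{vmn} is tangent to the transverse plane $\Pi_{m,n}(t)$ and in each such plane points radially toward $z=0$, so the time-$1$ map acts fiberwise as a radial contraction. The corresponding structure for $(m^*,n^*)$ is the same up to rescaling. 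I would first pick an ambient homeomorphism of $\mathbb{R}^5$ carrying the axis $\gamma_{m,n}$ to $\gamma_{m^*,n^*}$, the cigar to the cigar, and the fixed arcs $\mathbf{I}\cap\overline{\mathbf{U}_{m,n}}$ to $\mathbf{I}\cap\overline{\mathbf{U}_{m^*,n^*}}$---the configuration ``axis plus two short transverse fixed arcs emanating from the endpoints inside a $5$-ball'' is topologically the same for both cigars, once $\varepsilon$ is taken small enough that no $\mathbf{I}_k$ with $k\notin\{m,n\}$ enters the cigar. Then inside the cigar I would compose with a fiberwise radial reparametrization $(t,z)\mapsto(t,\varphi_t(|z|)\,z/|z|)$, where $\varphi_t$ is obtained by choosing a fundamental domain for the radial contraction of $f^+_{m,n}$ along a ray and mapping it to a fundamental domain of $f^+_{m^*,n^*}$, then extending by iteration. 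Continuity at the axis, the cigar boundary, and the $\mathbf{I}$-arcs follows from the flat vanishing of $v_{m,n}$ guaranteed by property \ref{propsoftheta} of $\Theta$. The case $(f^-,f^-)$ is identical with the directions of the radial flows reversed.

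\textbf{Opposite signs---a topological obstruction.} To rule out $f^+_{m,n}\sim_{top}f^-_{m^*,n^*}$, I would use the conjugacy invariant $W^s(p,f)=\{q:f^n(q)\to p\}$, together with its topological dimension. For $f^+_{m,n}$ I would pick $t_0\in(-1,1)$ such that $\Pi_{m,n}(t_0)$ does not meet $\mathbf{I}$ inside $\mathbf{U}_{m,n}$; such $t_0$ exists because the projection to the axis of $\mathbf{I}\cap\mathbf{U}_{m,n}$ is a countable union of short intervals (coming only from $\mathbf{I}_m,\mathbf{I}_n$ once $\varepsilon_{m,n}$ is small) that does not cover $(-1,1)$, and by properties 3, 7 and 8 of Proposition \ref{p.1} the axis itself avoids $\mathbf{I}$ in its interior. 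Then $p:=\gamma_{m,n}(t_0)$ is a fixed point, $\Theta$ and $\lambda_{m,n}$ are strictly positive on $\Pi_{m,n}(t_0)\cap\mathbf{U}_{m,n}$ away from $p$, and the radial contraction sends the whole open $4$-ball $\Pi_{m,n}(t_0)\cap\mathbf{U}_{m,n}$ asymptotically to $p$. Thus $W^s(p,f^+_{m,n})$ contains an embedded open topological $4$-disk. In contrast, I claim that for $f^-_{m^*,n^*}$ every fixed point $p$ has $\dim W^s(p,f^-_{m^*,n^*})\le 1$. Since the flow preserves the planes $\Pi_{m^*,n^*}(t)$ and expands radially in each plane, a case analysis on the location of $p$ yields: for $p$ on the axis interior or on a $\mathbf{I}$-arc in the interior of the cigar, all nonfixed nearby orbits move outward and converge to boundary points, so $W^s(p)=\{p\}$; for $p\in\partial\mathbf{U}_{m^*,n^*}$ in the interior of the axis-parameter, $W^s(p)$ equals precisely the half-open radial segment of $\Pi_{m^*,n^*}(t_p)$ from $\gamma_{m^*,n^*}(t_p)$ to $p$, which is $1$-dimensional; and for $p$ strictly outside the cigar, surrounding fixed points prevent any orbit from converging to $p$, so again $W^s(p)=\{p\}$. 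A conjugacy $H$ would map $W^s(p,f^+_{m,n})$ homeomorphically onto $W^s(H(p),f^-_{m^*,n^*})$, but the former contains an embedded $4$-disk while the latter has topological dimension at most $1$; this contradicts invariance of dimension.

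\textbf{Main obstacle.} The principal delicate point lies in the matching-sign construction: the ambient homeomorphism must simultaneously carry the axis, the transverse foliation, and the flat fixed $\mathbf{I}$-arcs to their counterparts, while the fiberwise map must conjugate radial dynamics whose contraction rate varies with $\Theta$. Each ingredient is standard in isolation, but they must be glued so that the composite remains continuous at the fixed set of the cigar, where the vector field vanishes flatly. In the opposite-sign half, the subtle case is verifying $W^s(p,f^-)=\{p\}$ for the flat fixed points $p\in\mathbf{I}\cap\overline{\mathbf{U}_{m^*,n^*}}$ lying strictly inside the cigar; this follows because plane-preservation means any converging orbit must lie in $\Pi_{m^*,n^*}(t_p)$, but the radial outward flow there sends orbits to the boundary of the transverse disk, not to an interior point.
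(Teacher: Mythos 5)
Your proposal is correct, and its two halves relate to the paper's proof differently. For the matching-sign direction you do essentially what the paper does: normalize one cigar onto the other (the paper uses an explicit affine map, which automatically matches the axes and the transverse hyperplanes $\Pi_{x,y}(t)$, so your separate concern about carrying the $\mathbf{I}$-arcs along is not really needed---outside the cigars both maps are the identity, so the conjugacy is unconstrained there) and then conjugate the two fiberwise radial contractions by the method of fundamental domains; the paper's version is exactly your $\varphi_t$ construction, carried out on the boundary of $U_{0,1,\epsilon_{0,1}/4}$ and propagated by iteration, with continuity at the centerline justified, as in your sketch, by monotone convergence of orbits. For the opposite-sign direction your route is genuinely different in packaging: you compare stable sets of fixed points and invoke invariance of dimension (an open $4$-disk inside $W^s$ of an axis point for $f^+$ versus dimension at most one for every fixed point of $f^-$), whereas the paper argues more economically with a single orbit: for any non-fixed $x$ the forward $f^+$-orbit converges to a point of the centerline, so a conjugacy would force the forward $f^-$-orbit of the image point to converge to a centerline point, which is impossible since $f^-$ moves every non-fixed point monotonically away from the centerline. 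Both arguments rest on the same dynamical fact; the paper's needs no dimension theory and no case analysis over all fixed points, while yours is somewhat more robust (it does not require tracking where the conjugacy sends the centerline) at the cost of that case analysis---note that in your analysis the stable set of a fixed point of $f^-$ lying on an $\mathbf{I}$-arc in the interior of a transverse disk is a radial segment rather than a singleton, but this still has dimension one, so your conclusion is unaffected (and, as in the paper, for the intended small $\varepsilon_{m,n}$ the needles meet the closed cigars only at the tips).
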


\pf  Suppose first that $g=f^+_{m,n}$ and $h=f^+_{m^*, n^*}$.  The case $g=f^-_{m,n}$ and $h=f^-_{m^*, n^*}$ is exactly analogous. Note that it  suffices to show the result for $m^*=0$ and $n^*=1$.

 There is an affine map $\phi$ taking the cigar
 $\overline{U}_{m,n}$ to $\overline{U}_{0,1}$.  The map $\phi$
 conjugates $g$ to a diffeomorphism $g^*$ of
  $\overline{U}_{0,1}$ that only has fixed points on the boundary and on the line $\gamma_{0,1}$.
  Moreover $g^*$ moves points directly towards the centerline through the subspaces $\Pi_{x,y}(t)$. To
  finish showing that $g$ is conjugate to $h$ it suffices to show that $g^*$ is conjugate to $h$ by a map
  $\psi$.

  To show that $g^*$ and $h$ are conjugate we use the method of \emph{fundamental domains}.
Let $V$ be the boundary of $U_{0,1,\frac{\epsilon_{0,1}}{4}}$.  If $p\in V$ then $p$ lies in a hyperplane
$\Pi_{x,y}(t)$  and $g^*(p)$ lies in the same $\Pi_{x,y}(t)$ and on the line segment between $p$ the centerline of $U_{0,1}$. For $j\in \poZ$ let  $\psi((g^*)^j(p))=h^j(p)$.  For each point $q$ belonging to  the half open  line segment between $p$ and $g^*(p)$ let $\psi((g^*)^j(q))=h^j(q)$.
We claim that  $\psi$ is a homeomorphism of $\overline{U}_{0,1}$ that conjugates $g^*$ to $g$. 

Note that for each $q$ not on the centerline, $(g^*)^j(q)$ converges monotonically to 
the centerline and $j$ goes to $\infty$ and to the boundary as $j$ goes to $-\infty$. For all $q\in U_{0,1}$  there is a unique $p\in V$ on the line segment connecting the centerline to the boundary that passes through $q$. There is a unique $k\in \poZ$ such that $(g^*)^k(q)$ lies in the half open line segment between $p$ and $g^*(p)$.  
The analogous facts hold for $h$ as well. 

Hence $\psi$ is well-defined on all of $U_{0,1}$ and since both $g^*$ and $h$ converge monotonically along the line segments to the centerline $\psi$ is a conjugacy.

\medskip
Now suppose that $g=f^+_{m,n}$ and $h=f^-_{m^*,n^*}$ and $\phi$ is a homeomorphism from $\overline{U}_{m,n}$ to $\overline{U}_{m^*, n^*}$ such that $\phi\circ g=h\circ \phi$.

Since $\phi$ must take the fixed points of $g$ to the  fixed points of $h$,  $\phi$ takes
 $\gamma_{m,n}$ to
$\gamma_{m^*, n^*}$. Let $x\in U_{m,n}$ and consider the sequence $\la g^n(x):n\in\nn\ra$.  By compactness this sequence has a limit point $x^*$ in $\bar{U}_{m,n}$. Since $g$ is continuous $g(x^*)=x^*$.  Since $g$ takes points towards
$\gamma_{m,n}$ along a line orthogonal to $\gamma_{m, n}$, $x^*$ must be on $\gamma_{m,n}$ and be the limit of $\la g^n(x):n\in\nn\ra$.

It follows that $\{h^{n}(\phi(x)):n\in\nn\}$ converges to a point on $\gamma_{m^*, n^*}$.  However no point not already on $\gamma_{m^*, n^*}$ has a point on $\gamma_{m^*, n^*}$ as a limit point of the orbit of $f^-_{m^*, n^*}$.  This is a contradiction.
\qed

\paragraph{\bf The definition of the reduction $\mathbf{R}$.}
Recall that the space  \hyperlink{graphs}{\emph{Graphs}} consists of symmetric functions $E:\mathbb{N}\times \mathbb{N}\to \{0,1\}$  that encode a graph $G_E$ with vertices $\nn$ and $n$ is connected to $m$ if and only $E(m,n)=1$.
\smallskip

\noindent Now Theorem \ref{t.main} can be reformulated more explicitly:
\begin{prop}\label{p.reform}

Consider the map
$$
\mathbf{R}:Graphs \to \text{Diff\,}^\infty(\mathbb{R}^5)
$$
be defined so  that $\mathbf{R}(E)$ is a diffeomorphism supported on $\cup_{m,n}\mathbf{U}_{m,n}$, and

\begin{equation}\mathbf{R}(E)|_{\mathbf{U}_{m,n}}=\left\{
                                    \begin{array}{ll}
                                      f^+_{m,n}, & \hbox{if $E(m,n)=1$;} \\
                                      f^-_{m,n}, & \hbox{if $E(m,n)=0$.}
                                    \end{array}
                                  \right.
                                  \label{definitionofdiffeo}
\end{equation}
Then $\mathbf{R}(E_1)$ and $\mathbf{R}(E_2)$ are topologically conjugate if and only if $E_1$ and $E_2$ are isomorphic graphs.
\end{prop}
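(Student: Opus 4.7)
The plan is to verify both implications of Proposition \ref{p.reform} separately, using Proposition \ref{f+aresame} to recognize local cigar dynamics and Proposition \ref{p.3} to control the geometry of the cigars.

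For the \emph{if} direction, suppose $\sigma:\mathbb{N}\to\mathbb{N}$ realizes an isomorphism of $G_{E_1}$ onto $G_{E_2}$. I would construct the conjugating homeomorphism $h:\mathbb{R}^5\to\mathbb{R}^5$ in two stages. First, because each line segment $I_{m,n}$ has codimension $4$ in $\mathbb{R}^5$ and the two extra dimensions $\mathbb{E}_2$ provide ample room for isotopy, I would build an ambient homeomorphism $h_0$ of $\mathbb{R}^5$ which fixes $r$, sends $x_n$ to $x_{\sigma(n)}$, and maps $\overline{\mathbf{U}_{m,n}}$ onto $\overline{\mathbf{U}_{\sigma(m),\sigma(n)}}$; the decay $\varepsilon_{m,n}\le 2^{-m-n}$ together with the convergence $x_n\to r$ makes continuity of $h_0$ at $r$ automatic. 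Second, within each cigar $\mathbf{U}_{m,n}$ I would replace $h_0$ by the conjugacy furnished by Proposition \ref{f+aresame}: since $E_1(m,n)=E_2(\sigma(m),\sigma(n))$, the local dynamics on the two sides are simultaneously of $f^+$-type or $f^-$-type, so such a conjugacy exists. A standard matching step on the cigar boundaries, where both diffeomorphisms act as the identity, glues these local conjugacies to $h_0$ outside the cigars.

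For the \emph{only if} direction, let $h:\mathbb{R}^5\to\mathbb{R}^5$ conjugate $\mathbf{R}(E_1)$ to $\mathbf{R}(E_2)$; my goal is to extract a graph isomorphism from $h$. The non-fixed-point set of $\mathbf{R}(E_i)$ is $\bigcup_{m,n}(\mathbf{U}_{m,n}\setminus(\gamma_{m,n}\cup\mathbf{I}))$, whose connected components are indexed precisely by pairs $\{m,n\}$ (since removing a $1$-dimensional subset from an open $5$-ball leaves it connected). Because $h$ preserves fixed points, it permutes these components and therefore the closures $\overline{\mathbf{U}_{m,n}}$. I would then argue that $h$ must fix $r$ and permute the vertex set $\{x_n\}$ by some bijection $\sigma$: each $x_n$ is topologically distinguished as a common endpoint of infinitely many cigar closures, whereas $r$ is merely an accumulation point of cigars that do not touch it. This forces $h(\overline{\mathbf{U}_{m,n}})=\overline{\mathbf{U}_{\sigma(m),\sigma(n)}}$. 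Finally, Proposition \ref{f+aresame} guarantees that $h$ preserves the sign $\pm$ of the cigar dynamics, which translates directly into $E_1(m,n)=E_2(\sigma(m),\sigma(n))$, so that $\sigma$ is a graph isomorphism.

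The main obstacle I anticipate is the construction of $h_0$ in the \emph{if} direction: realizing an arbitrary bijection of the countably many cigars by a single ambient homeomorphism, while respecting the delicate accumulation behavior at $r$, requires more than just the disjointness provided by Proposition \ref{p.3} and needs a careful inductive construction in the two transverse dimensions $\mathbb{E}_2$. The other direction is more conceptual and should follow readily once the topological distinction between $r$ and the vertices $\{x_n\}$ is made rigorous via the local structure of cigar endpoints.
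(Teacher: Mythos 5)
Your \emph{only if} direction is essentially the paper's ``easy part'' and is fine: the conjugacy $h$ preserves the fixed-point set, hence permutes the connected components of its complement and therefore the closures $\overline{\mathbf{U}_{m,n}}$; the vertices $x_n$ are recognized as the points lying on the boundary of more than one atomic domain, so $h$ induces a permutation of $\{x_n\}$, and Proposition \ref{f+aresame} then forces the $\pm$ type of each cigar to be preserved, yielding a graph isomorphism. This is the same argument as in the paper.

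The genuine gap is in the \emph{if} direction, which is the heart of the proposition and occupies almost all of Section \ref{s.5}. You posit an ambient homeomorphism $h_0$ realizing the permutation $\sigma$ on the cigar configuration and assert that continuity at $r$ is ``automatic,'' deferring the construction to ``a careful inductive construction in the two transverse dimensions.'' But the existence of such an $h_0$ is precisely what has to be proved, and it is not routine: the paper explicitly warns (in the remark citing Example 1.14 of \cite{M}) that a motion of the embedded graph need not extend to a homeomorphism of the ambient space, so one cannot obtain $h_0$ from general extension principles, and disjointness of the cigars (Proposition \ref{p.3}) is far from sufficient. The paper's actual route is to decompose $\varphi$ into an infinite product of transpositions with strong finiteness properties (Lemma \ref{l.permrepr}: each integer is moved at most twice and each contamination set is finite), to interpolate a sequence of graphs $E^{(i)}$ and diffeomorphisms $G_i=\mathbf{R}(E^{(i)})$, and to build one conjugacy $h_i$ between $G_i$ and $G_{i+1}$ per transposition. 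Each $h_i$ is itself a composition of six maps, including a preliminary shrinking and a final re-expansion of the affected cigars (Lemma \ref{l.lcon}), and its support is steered into a cone $K_i\subset\mathbb{E}_2$ disjoint from the cones of all other $h_j$. Only with all of this in place can one prove Proposition \ref{p.key}: the displacements $\mathrm{dist}_{C^0}(id,h_i)$ are summable (via Lemma \ref{l.sumofdi}) and every point of $\mathbb{R}^5$ is moved by only finitely many $h_i$ (the sequence of lemmas about frozen vertices, relaxed cigars and combs), which is what makes the limit $H=\lim_n h_n\circ\cdots\circ h_1$ a homeomorphism conjugating $F_1$ to $F_2$. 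None of these ingredients appear in your sketch, and your final ``matching step'' gluing the conjugacies of Proposition \ref{f+aresame} to $h_0$ along cigar boundaries is also not automatic, since those fundamental-domain conjugacies come with their own boundary behavior that need not agree with $h_0$. As written, the forward implication is a statement of intent rather than a proof.
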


\begin{remark}
Notice that the map $\mathbf{R}(E)$ defined above is $C^\infty$ diffeomorphism. Indeed, it is clear that it is smooth at the points outside the set of accumulation points of the collection of the sets  $\mathbf{U}_{m,n}$ (which is
 exactly \hyperlink{bfI}
{$\mathbf{I}$}). Property \ref{thetaisflat} of the function $\Theta$ implies
 that the map $\mathbf{R}(E)$ has a flat fixed point at every point of $\mathbf{I}$.
 
 Moreover the map $R$ is a continuous map with respect to the topologies on $Graphs$ and 
 $\text{Diff\,}^\infty(\mathbb{R}^5)$
\end{remark}

Now we need to prove Proposition \ref{p.reform}.

\subsection{The Easy Part}

Here we prove that if $\mathbf{R}(E_1)$ and $\mathbf{R}(E_2)$ are topologically conjugate, then $E_1$ and $E_2$ are isomorphic.

 Suppose $\mathbf{R}(E_1)=F_1$ and $\mathbf{R}(E_2)=F_2$ are topologically conjugate, i.e. there exists a homeomorphism $H:\mathbb{R}^5\to \mathbb{R}^5$ such that $F_2\circ H=H\circ F_1$. In this case $H(\text{Fix}(F_1))=\text{Fix}(F_2)$, and $H(\mathbb{R}^5\backslash \text{Fix}(F_1))=\mathbb{R}^5\backslash \text{Fix}(F_2)$, where $\text{Fix}(F_i)$ is the set of fixed points of the map $F_i$. Moreover, since $H$ is a homeomorphism, it must send every connected component of $\mathbb{R}^5\backslash \text{Fix}(F_1)$ into a connected component of $\mathbb{R}^5\backslash \text{Fix}(F_2)$. Hence $H$ sends $\mathbf{U}_{m,n}$ to $\mathbf{U}_{k,l}$ for some $k,l\in \mathbb{N}$, and $\partial \mathbf{U}_{m,n}$ to $\partial \mathbf{U}_{k,l}$. 
 Moreover, the set of points that belong to the boundary of more than one of the domains $\{\mathbf{U}_{m,n}\}$ must be invariant under $H$, but this is exactly the sequence $\{x_n\}_{n\in \mathbb{N}}$. Therefore $H$ restricted to $\{x_n\}_{n\in \mathbb{N}}$ permutes these points and thus induces a permutation  $\varphi(H)=\varphi:\mathbb{N}\to \mathbb{N}$.

 Let us show that $\varphi$ must be an isomorphism between the graphs $E_1$ and $E_2$. Indeed, if
 $\varphi(n)=k$ and $\varphi(m)=l$, then $H(\mathbf{U}_{m,n})=\mathbf{U}_{k,l}$\ , so $H(\overline{\mathbf{U}_{m,n}})=\overline{\mathbf{U}_{k,l}}$, and $H|_{\overline{\mathbf{U}_{m,n}}}$ conjugates $F_1|_{\overline{\mathbf{U}_{m,n}}}$ and $F_2|_{\overline{\mathbf{U}_{k,l}}}$.

 By Proposition \ref{f+aresame}, if $F_1|_{\overline{\mathbf{U}_{m,n}}}=f^+_{m,n}$ then  $F_2|_{\overline{\mathbf{U}_{k,l}}}=f^+_{k,l}$ and similarly if $F_1|_{\overline{\mathbf{U}_{m,n}}}=f^-_{m,n}$.  It follows that $m$ and $n$ are connected in $E_1$ just in case $k$ and $l$ are connected in $E_2$. Hence $\phi$ is an isomorphism of $E_1$ with $E_2$.\qed

\subsection{Construction of conjugacy}

Suppose now that the graphs $G_{E_1}$ and $G_{E_2}$ are isomorphic, i.e. there exists a permutation $\varphi:\mathbb{N}\to \mathbb{N}$ such that
$$
E_1(m,n)=1\ \ \ \Leftrightarrow \ \ \ E_2(\varphi(m),\varphi(n))=1.
$$
Let us show that diffeomorphisms $F_1=\mathbf{R}(E_1)$ and $F_2=\mathbf{R}(E_2)$ are topologically conjugate.
First we will present the permutation $\varphi:\mathbb{N}\to \mathbb{N}$  as an infinite composition of transpositions (Lemma \ref{l.permrepr} below). Since $\phi$ is an isomorphism between the graphs $E_1$ and $E_2$, this presentation can be viewed as giving a sequence of intermediate graphs $E^{(i)}$. For each transposition 
$\phi_i$, we construct a conjugacy between transformations corresponding to $E^{(i)}$ and  $E^{(i+1)}$.  This is done in a manner that  the infinite sequence of intermediate conjugacies converges to a conjugacy between $\mathbf{R}(E_1)$ and $\mathbf{R}(E_2)$.
 
\begin{remark}
There is a temptation to start with a construction of a homotopy of the graph, and then use general results to extend it to a homotopy of an ambient space. Unfortunately  some simple, but surprising, examples show that such an extension does not have to exist in general,  e.g. see Example 1.14 from \cite{M}. Therefore, we give an explicit construction.
\end{remark}

\paragraph{\bf Decomposing Permutations.}
In order to construct such a conjugacy $H:\mathbb{R}^5\to \mathbb{R}^5$ we will use a representation of the permutation $\varphi:\mathbb{N}\to \mathbb{N}$ as an infinite converging composition of transpositions \begin{equation}\label{e.transpos}
\varphi=\ldots \circ\ldots \varphi_2\circ \varphi_1.
\end{equation}
Each transposition $\phi_i$ will correspond to a homeomorphism $h_i$ and, setting
\[H_n=h_n\circ h_{n-1} \circ \dots h_2\circ h_1\]
we will have $H=\lim_nH_n$.

In order for that infinite composition of $h_i$'s to converge to a homeomorphism we need to choose a representation of a permutation $\varphi:\mathbb{N}\to \mathbb{N}$ to have some special properties. This is what we do now.

Recall that for functions $\varphi, \la \Phi_n:n\in\nn\ra$ mapping from $\nn$ to $\nn$,
\[\varphi=\lim_{n\to \infty}\Phi_n\]
if and only if for all $m$ there is an $N$ for all $n>N, \Phi_n(m)=\Phi_N(m)$ and $\varphi(m)=\Phi_N(m)$. If each $\Phi_n$ is a permutation then $\varphi$ is a permutation if and only if $\lim_{n\to \infty}\Phi_n^{-1}$ also converges (in which case the limit is $\varphi^{-1}$).
\begin{definition}\label{d.contvert}
Suppose that $\varphi:\mathbb{N}\to \mathbb{N}$ is a permutation of the natural numbers, and
$\varphi=\lim_{n\to \infty}\Phi_n$, where
$\Phi_n=\varphi_{n}\circ \varphi_{n-1}\circ \varphi_{n-2}\cdots\circ \varphi_1$ and each $\varphi_i$ is a transposition. For each $m\in \mathbb{N}$ define the {\rm contamination set} $Cont_m$ as
\begin{multline*}
    Cont_m=\{m\}\cup\ \{n\in \mathbb{N}\ |\ \exists k\in \mathbb{N}, \ \text{and}\  \{n_1, \ldots, n_{k+1}\},\ \text{with}\ n_1=m,\\ n_{k+1}=n,  
\text{\rm and}\  i_1<i_2<\ldots<i_k, \ \text{\rm such that}\ \varphi_{i_j} \ \text{\rm permutes}\ n_{j}\ \text{\rm and}\ n_{j+1}\}.
\end{multline*}
\end{definition}

\begin{lemma}\label{l.permrepr}
Let $\varphi:\nn\to \nn$ be a permutation of the natural numbers.  Then there is a sequence of transpositions
$\la \varphi_n:n\in\nn\ra$,  such that the following holds:

\begin{enumerate}[1)]

\item If $\Phi_n=\varphi_{n}\circ \varphi_{n-1}\circ \varphi_{n-2}\cdots\circ 	
		\varphi_1$, then
		\begin{equation} 
	\lim_{n\to \infty}\Phi_n=\varphi.
	\end{equation}

\item Any given transposition is encountered at most once,

\vspace{5pt}
\item For each $m$, there are at most two $i$'s with $\phi_i(m)\ne m$.
	
\vspace{5pt}

	\item For each $m\in \mathbb{N}$  the set $Cont_m$ is contains at most four elements;

\vspace{5pt}

\item Each $k$ belongs to only finite number of the sets $\{Cont_m\}_{m\in \mathbb{N}}$.

\end{enumerate}
\end{lemma}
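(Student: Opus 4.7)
The strategy is to decompose $\varphi$ into its disjoint orbits, construct a transposition sequence for each orbit separately with the five clauses holding inside the orbit, and then interleave these sequences across orbits. Since transpositions from distinct orbits have disjoint supports they commute and never appear together in any $\textrm{Cont}_m$-chain, so all five clauses for the global sequence reduce to the orbit-local versions once the interleaving is done in the natural way: enumerate the orbits $O_1,O_2,\ldots$ and at stage $n$ append the next unused transposition of each of $O_1,\ldots,O_n$ that still has one to contribute.

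For a finite cycle of length $k\le 4$ the reverse adjacent decomposition $\varphi_1=(a_{k-1},a_k),\ldots,\varphi_{k-1}=(a_1,a_2)$ realises the cycle, uses each element in at most two transpositions, and gives $|\textrm{Cont}_m|\le k\le 4$ by direct inspection. For a cycle of length five the single-chain version gives $|\textrm{Cont}_m|=5$, but the non-adjacent order $\varphi_1=(a_3,a_4),\ \varphi_2=(a_4,a_5),\ \varphi_3=(a_1,a_2),\ \varphi_4=(a_2,a_3)$ produces a five-cycle with every $|\textrm{Cont}_m|\le 4$ (a short direct check), and a relabelling inside the orbit realises any prescribed five-cycle. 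For longer finite cycles and for infinite $\mathbb{Z}$-orbits I iterate: partition the orbit into short ``blocks'' of at most four elements, realise each block by a similar short non-adjacent chain, and schedule the blocks in the global sequence so that the strict monotonicity condition $i_1<i_2<\cdots$ in the definition of $\textrm{Cont}_m$ forces every chain to terminate inside one block, together with at most a controlled transition across a single shared-boundary element.

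Given the orbit-local construction, clauses (1)--(5) for the global sequence follow quickly. Each $\Phi_n|_{O_j}$ depends only on the $O_j$-transpositions and therefore converges orbit-wise to $\varphi|_{O_j}$, giving (1); no transposition is repeated since none is locally and different orbits use disjoint supports, giving (2); each element belongs to one orbit and to at most two transpositions of its orbit's sequence, giving (3); a chain contributing to $\textrm{Cont}_m$ stays inside the orbit of $m$ because consecutive transpositions in a chain share an element, and the block scheduling keeps its total length at most four, giving (4); and $k\in\textrm{Cont}_m$ forces $m$ and $k$ to share an orbit and to lie within bounded block-distance, giving (5).

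The principal obstacle is clause (4) for cycles of length at least five and for infinite orbits: the naive reverse-adjacent chain produces arbitrarily long $\textrm{Cont}_m$-sets, and a disjoint decomposition into short sub-cycles is impossible since a long cycle is not a product of disjoint short ones. The resolution is the short non-adjacent chain illustrated above for length five together with the block-plus-schedule recipe, whose combinatorics (the relabelling inside each block and the exact block ordering that prevents cross-block chain extension through shared elements) is intricate but essentially reduces to an induction exploiting the strictly increasing chain indices. Once the orbit-local construction is available, the interleaving and the remaining clauses are routine bookkeeping.
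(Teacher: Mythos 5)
Your reduction to single orbits, the interleaving across orbits, and the small-cycle gadgets are fine (the length~$5$ gadget does give a $5$-cycle with all $|Cont_m|\le 4$ after relabelling), and this much agrees with the paper, which also splits $\varphi$ into orbits and dovetails. But the heart of the lemma is a single infinite $\mathbb{Z}$-orbit (and, secondarily, long finite cycles), and there your proposal contains no construction at all --- only the promise that a ``block-plus-schedule recipe'' with blocks of at most four orbit elements and a shared boundary element will keep the contamination sets small. That unspecified step is exactly what the paper has to do explicitly, and its construction is fundamentally \emph{non-local}: writing the orbit as $\la m_j : j\in\mathbb{Z}\ra$ with $m_j=\varphi^j(m_0)$, it takes $\varphi_{2k-1}=(m_{-k},m_k)$ and $\varphi_{2k}=(m_{-k+1},m_k)$, i.e.\ transpositions joining points arbitrarily far apart along the orbit; then each element is an entry of at most two transpositions, each $Cont_m$ has at most four points, and the partial products converge to the shift.

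Moreover, the gap is not just a missing detail: the local strategy you describe provably cannot work on an infinite orbit. Suppose all transpositions join elements within a bounded distance along the orbit (blocks of at most four consecutive orbit elements, even with shared boundary points, force this). Fix a cut into left half $\{m_j: j\le n\}$ and right half $\{m_j: j>n\}$. Convergence in the stabilization sense forces every element to be an entry of only finitely many transpositions (otherwise the image of its $\varphi$-preimage never stabilizes), so only finitely many transpositions cross the cut, since any crossing transposition has both entries in a bounded window around the cut. Each partial product $\Phi_k$ has finite support, hence the set $A_k$ of left elements sent right and the set $B_k$ of right elements sent left satisfy $|A_k|=|B_k|$; after the last crossing transposition these sets stop changing, while pointwise convergence to the shift forces the limiting $A$ to be exactly $\{m_n\}$ and the limiting $B$ to be empty --- a contradiction. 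So any correct proof must use transpositions of unbounded width along the orbit, which is precisely the idea (pairing $\varphi^{-k}(m_0)$ with $\varphi^{k}(m_0)$) that your outline does not supply; the same issue leaves the long finite cycles unproved as well, since neither the block ordering that realizes the prescribed cycle nor the bound $|Cont_m|\le 4$ across shared boundary elements (which also threatens clause (3)) is ever exhibited.
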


\pf Every permutation $f$ of the natural numbers induces a countable partition\\
	 $\nn=\bigcup_kA_k$ such that:
	\begin{enumerate}
	\item If $A_k$ is infinite, then for each $m\in A_k$, $A_k=\{\varphi^{n}(m):m\in \poZ\}$
	\item If $A_k$ is finite then $\phi$ induces a single cycle on $A_k$.
	\end{enumerate}
	By dovetailing, one can treat each $A_k$ separately. We start by assuming that we have a single $A_k$ and it is infinite. Since we are ignoring the other elements of the partition we can assume that every element of $\nn$ is moved by $\phi$, hence $A_k=\nn$. Write $\nn$ as the double ended sequence
	\[\nn=\la \dots m_{-n}, m_{-n+1}, \dots m_{-2}, m_{-1}, m_0, m_1, \dots m_{n}, m_{n+1}, \dots \ra,\]
	where $m_{\pm n}=\varphi^{\pm n}(m_0)$, $n\in \nn$.
	
	We construct the $\varphi_n$'s in the following way:
\begin{multline*}
  \varphi_1=(-1, 1), \varphi_2=(0, 1), \varphi_3=(-2, 2), \varphi_4=(-1, 2), \ldots, \\
\varphi_{2k-1}=(-k, k), \varphi_{2k}=(-k+1, k), \ldots
\end{multline*}

It is easy to see that the sequence of transpositions
$\la \varphi_n:n\in\nn\ra$ constructed in this way satisfies  the required properties.
\smallskip

\hypertarget{nimi}{A very similar  construction works for finite cycles.} \qed
\noindent{\bf Notation:} Denote pair of natural numbers transposed by $\phi_i$ by $n_i$ and $m_i$.
\medskip

\hypertarget{contspairs}{Let us} formulate several definitions that we will use later in the text:

\begin{definition}\label{d.contedges}
Given a permutation of the natural numbers $\varphi:\mathbb{N}\to \mathbb{N}$ with a chosen representation $\varphi=\ldots \circ \varphi_n\circ \varphi_{n-1}\circ \ldots \circ \varphi_1$, and a pair $\{n,m\}$, $n, m\in \mathbb{N}$, $n \ne m$, define
$$
Cont(n,m)=\left\{ (n', m')\ |\ n'\in Cont_n, \ m'\in Cont_m\ \right\}.
$$
\end{definition}

\begin{remark}
 We will be constructing a sequence of homeomorphisms such that each homeomorphism will swap a pair of vertices. In order to control the orbits that initiate in a small neighborhood of a vertex, we need to take into account that some points could move to a neighborhood of an image (so that second vertex becomes ``contaminated''), while some points could stay in a small neighborhood of the initial vertex. If one of the homeomorphisms from our sequence moves one of the previously ``contaminated'' vertices, some points from its neighborhood can be moved to a neighborhood of another vertex, which is therefore also becomes  ``contaminated''. Definition \ref{d.contvert} defines the maximal set of vertices $Cont_n$  that can become ``contaminated'' in this process, starting with the vertex $n$. Definition \ref{d.contedges} serves a similar purpose, but for edges instead of vertices.
\end{remark}

\begin{definition}\label{d.Stablei}
Let us say that $n\in \mathbb{N}$ is frozen at stage $i$, if $\varphi_j(n)=n$ for any $j\ge i$.
\end{definition}

\begin{definition}\label{d.JTn}
Define
$$
J(n)=\min\{j\in \mathbb{N} \ |\ \forall m\in Cont_n, m \ \text{is frozen at stage $j$}\}%\in Stable_j\}
$$
and
$$
T(n)=\max\{n_i, m_i \ |\ i\le J(n)\},
$$
where $n_i, m_i$ are the elements of $\mathbb{N}$ that are swapped by $\varphi_i$.
\end{definition}

Let $d_i$ be the distance between $x_{m_i}$ and
$x_{n_i}$ (where $n_i$ and $m_i$ are the pair swapped by $\phi_i$. By property 2 of Proposition \ref{p.1} and the triangle inequality,
$d_i\le 2^{-n_i-1}+2^{-m_i-1}$. Property 3 of the sequence of transpositions $\{\varphi_i\}$ provided by Lemma \ref{l.permrepr} together with the choice of the sequence of vertices $\{x_n\}\subset \mathbb{R}^5$ given by Proposition \ref{p.1} imply the following:
\begin{lemma}\label{l.sumofdi}
$$
\sum_{i=1}^\infty
d_i<\infty$$
\end{lemma}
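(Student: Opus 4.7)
The plan is to use the bound $d_i\le 2^{-n_i-1}+2^{-m_i-1}$ already noted in the text (which comes from property~2 of Proposition~\ref{p.1} and the triangle inequality applied to the triangle with vertices $x_{m_i}$, $x_{n_i}$, $r$), and then interchange the order of summation so that the bounded multiplicity coming from property~3 of Lemma~\ref{l.permrepr} kicks in.

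First I would rewrite the estimate as a sum indexed by integers rather than by transpositions. For each $k\in\mathbb{N}$, let
\[
c_k=\#\{\,i\in\mathbb{N}:k\in\{n_i,m_i\}\,\}
\]
be the number of transpositions $\varphi_i$ in the sequence given by Lemma~\ref{l.permrepr} that move $k$. By property~3 of Lemma~\ref{l.permrepr}, there are at most two $i$'s with $\varphi_i(k)\ne k$, so $c_k\le 2$ for every $k\in\mathbb{N}$.

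Now interchanging the order of summation,
\[
\sum_{i=1}^\infty d_i \;\le\; \sum_{i=1}^\infty \bigl(2^{-n_i-1}+2^{-m_i-1}\bigr)\;=\;\sum_{k=1}^\infty c_k\,2^{-k-1}\;\le\;\sum_{k=1}^\infty 2\cdot 2^{-k-1}\;=\;\sum_{k=1}^\infty 2^{-k}\;=\;1,
\]
which gives the desired convergence.

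There is no real obstacle here: the whole point of building $\varphi_1,\varphi_2,\ldots$ with property~3 of Lemma~\ref{l.permrepr} was precisely to keep $c_k$ uniformly bounded, so that the geometric decay $\mathrm{dist}(x_n,r)<2^{-n-1}$ built into the embedding in Proposition~\ref{p.1} translates into an absolutely summable series of displacements. This summability is what will later allow the infinite composition $H=\lim_n h_n\circ\cdots\circ h_1$ of conjugating homeomorphisms (each $h_i$ essentially supported near $x_{n_i}$ and $x_{m_i}$, with displacement controlled by $d_i$) to converge uniformly to a homeomorphism.
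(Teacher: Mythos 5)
Your argument is correct and is essentially the paper's own proof: both use the bound $d_i\le 2^{-n_i-1}+2^{-m_i-1}$ from property~2 of Proposition~\ref{p.1} together with the multiplicity bound from property~3 of Lemma~\ref{l.permrepr} (each natural number is moved by at most two transpositions) to compare the sum with a convergent geometric series. Your bookkeeping via $c_k$ and the interchange of summation is just a slightly more explicit version of the paper's estimate $\sum_i d_i\le \sum_k 2\cdot 2^{-k}$.
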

\pf
We have
$$
d_i=\text{dist}\,(x_{n_i}, x_{m_i})\le 2^{-n_i-1}+2^{-m_i-1}\le 2^{-\min(n_i, m_i)}.
$$
By property 3 of  Lemma \ref{l.permrepr}), $(n_1, m_1), (n_2, m_2), \ldots$ each natural number appears at most 2 times.  This implies that
$$
\sum_{i=1}^\infty d_i\le \sum_{k=1}^\infty 2\cdot 2^{-k}=2.
$$
\qed

\paragraph{\bf Interpolating graphs.}
Suppose we are given symmetric maps \\ $E_1:\nn\times \nn\to \{0,1\}$ and $E_2:\nn\times \nn\to \{0,1\}$ and a permutation $\varphi:\nn\to \nn$
giving an isomorphism between the corresponding graphs. 
 Let us consider the sequence of graphs coded by
$
E^{(1)}, E^{(2)}, \ldots, E^{(i)}, \ldots
$
such that $E^{(1)}=E_1$, and for $i\ge 1$ and any $m,n\in \mathbb{N}$ we have
$$
 E^{(i+1)}(m,n)= E^{(i)}(\varphi^{-1}_i(m), \varphi^{-1}_i(n))=E^{(1)}((\varphi_i\ldots \varphi_1)^{-1}(m), (\varphi_i\ldots \varphi_1)^{-1}(n)).
$$
Since $\varphi_i\circ\ldots \circ \varphi_1(n)=\varphi(n)$ for all sufficiently large $i$, for given $m,n\in \mathbb{N}$ and large $i$ we also have
\begin{multline*}
E_2(\varphi(m), \varphi(n))=E_1(m,n)=E^{(1)}(m,n)= \\
E^{(i+1)}(\varphi_i\circ \dots \circ \varphi_1(m), \varphi_i\circ \dots \circ \varphi_1(n))=E^{(i+1)}(\varphi(m), \varphi(n)).
\end{multline*}
Hence the sequence of graphs $\{E^{(i)}\}$ converges to $E_2$.

Recall $R$ is the map defined in Proposition \ref{p.reform}.
Denote $G_i=\mathbf{R}(E^{(i)})\in \text{Diff}^\infty(\mathbb{R}^5)$. Then $G_1=F_1=\mathbf{R}(E_1)$, and $G_i$ converge to $F_2=\mathbf{R}(E_2)$ in ${C^0}$-topology\footnote{In fact, in our construction a sequence $\{G_i\}$ converges to $F_2$ in ${C^\infty}$-topology; we are not going to use it.}. We will complete the proof of Theorem \ref{t.main} by proving the following statement:
\begin{prop}\label{p.key}
There exists a sequence of homeomorphisms $\{h_i\}_{i\in \mathbb{N}}$, $h_i:\mathbb{R}^5\to \mathbb{R}^5$, $i=1, 2, \ldots, $ such that:
\vspace{5pt}

1) for each $i=1, 2, \ldots$ we have $h_i\circ G_i=G_{i+1}\circ h_i$, i.e. the $h_i$ is a topological conjugacy between  diffeomorphisms  $G_{i+1}$ and $G_i$;

\vspace{5pt}

2) one has $\sum_{i\in \mathbb{N}}\text{dist}_{C^0}(id, h_i)<\infty$ and

\vspace{5pt}

3) for any $x\in \mathbb{R}^5$, there exists $N=N(x)\in \mathbb{N}$ such that for any $n\ge N$ one has
$$
h_n\circ\ldots \circ h_1(x)=h_N\circ\ldots \circ h_1(x).
$$
\end{prop}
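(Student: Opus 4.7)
The strategy is to build each $h_i$ as a compactly supported homeomorphism of $\mathbb{R}^5$ that realizes the transposition $\varphi_i=(n_i,m_i)$ at the level of the embedded graph, i.e.\ $h_i$ interchanges the vertex $x_{n_i}$ with $x_{m_i}$, sends the cigar $\mathbf{U}_{n_i,k}$ onto $\mathbf{U}_{m_i,k}$ (and vice versa) for every $k\notin\{n_i,m_i\}$, sends $\mathbf{U}_{n_i,m_i}$ to itself while reversing its centerline, and is the identity on every other cigar (and outside a small neighborhood of the union of the touched cigars). Once such an $h_i$ is produced with $h_i|_{\overline{\mathbf{U}_{n_i,k}}}$ chosen to conjugate $G_i|_{\overline{\mathbf{U}_{n_i,k}}}$ to $G_{i+1}|_{\overline{\mathbf{U}_{m_i,k}}}$, condition (1) is automatic: by the definition of $E^{(i+1)}$ the $\pm$ signs on both sides match, so Proposition \ref{f+aresame} guarantees that a conjugating homeomorphism between the two cigars exists, and this is what we implement locally.

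\textbf{Construction of $h_i$.} The homeomorphism $h_i$ will be the time-$1$ map of an ambient isotopy $H_i^t$, $t\in[0,1]$, supported in a small compact neighborhood $\Omega_i$ of the compact set
\[
K_i=\overline{\mathbf{U}_{n_i,m_i}}\cup\bigcup_{k\neq n_i,m_i}\bigl(\overline{\mathbf{U}_{n_i,k}}\cup\overline{\mathbf{U}_{m_i,k}}\bigr),
\]
with $C^0$-diameter bounded by a universal constant times $d_i=\text{dist}(x_{n_i},x_{m_i})$. The key geometric input is the decomposition $\mathbb{R}^5=\mathbb{E}_3\oplus\mathbb{E}_2$: all vertices $x_k$ and all edges $I_{m,n}$ lie in $\mathbb{E}_3$, so there is a full $2$-dimensional space of transverse directions available that contains no other graph data. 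The isotopy pushes $x_{n_i}$ and $x_{m_i}$ slightly into $\mathbb{E}_2$, rotates the pair by $180^{\circ}$ inside the $2$-plane spanned by $x_{m_i}-x_{n_i}$ and a unit vector in $\mathbb{E}_2$, and returns them to $\mathbb{E}_3$. The cigars attached to $x_{n_i}$ are carried along rigidly by a cutoff isometry of the ambient space (this is a standard isotopy-extension type construction), so each $\overline{\mathbf{U}_{n_i,k}}$ sweeps out a tube and lands on $\overline{\mathbf{U}_{m_i,k}}$. Shrinking $\Omega_i$ sufficiently in the directions transverse to $K_i$, Proposition \ref{p.3} ensures $\Omega_i$ misses every cigar not in $K_i$ and the lift into $\mathbb{E}_2$ keeps the swept-out cigars from colliding during the isotopy.

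\textbf{Verifying (2) and (3).} Since $h_i$ is supported in an $O(d_i)$-neighborhood of $K_i$, and the cigars themselves have radii bounded by $\varepsilon_{n_i,k},\varepsilon_{m_i,k}\le 2^{-n_i-k}+2^{-m_i-k}$, a direct estimate yields $\mathrm{dist}_{C^0}(\mathrm{id},h_i)\le C(d_i+2^{-\min(n_i,m_i)})$, and summability follows from Lemma \ref{l.sumofdi} together with property (3) of Lemma \ref{l.permrepr}. For (3), fix $x\in\mathbb{R}^5$. If $x\in\overline{\mathbf{U}_{a,b}}$ for some unique pair, property (4) of Lemma \ref{l.permrepr} shows that only the finitely many transpositions involving $Cont_a\cup Cont_b$ can affect the forward image of $x$ under successive $h_i$'s, and property (5) shows the relevant index set is finite; after the last such index, all subsequent $h_j$ are the identity on the cigar currently containing the image of $x$. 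If $x$ lies outside all cigars, one exploits the fact that the supports $\Omega_i$ shrink and only finitely many can meet a given neighborhood of $x$.

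\textbf{Main obstacle.} The hard part is unquestionably the construction of $h_i$: we must exhibit an ambient homeomorphism of $\mathbb{R}^5$ that simultaneously swaps the countable family of cigars $\{\overline{\mathbf{U}_{n_i,k}}\}_k$ with $\{\overline{\mathbf{U}_{m_i,k}}\}_k$, reverses $\overline{\mathbf{U}_{n_i,m_i}}$ on itself, and conjugates the internal $f^\pm$-dynamics correctly, all while remaining the identity outside a small neighborhood of $K_i$ and not intersecting any of the other, frozen, cigars. In three ambient dimensions this is obstructed because two cigars sharing an endpoint at $x_{n_i}$ and pointing in different $\mathbb{E}_3$-directions cannot be exchanged by an ambient isotopy of $\mathbb{E}_3$ without passing through one another or through one of the remaining cigars; the two extra dimensions of $\mathbb{E}_2$ are used precisely to lift the swap off of $\mathbb{E}_3$ and execute it by a planar rotation that has enough codimension to avoid every other piece of data. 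Properly bounding the support (so that (2) and (3) hold) while matching the dynamics at the level of Proposition \ref{f+aresame} on each of the infinitely many swapped cigars is the delicate bookkeeping that must be carried out carefully.
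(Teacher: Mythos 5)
Your overall architecture is the right one (transport the cigars attached to $x_{n_i}$ onto those attached to $x_{m_i}$ through the extra directions $\mathbb{E}_2$, conjugate the internal $f^\pm$-dynamics locally, get summability from $\sum d_i<\infty$, and get eventual stabilization from finiteness of the contamination sets), but the two places you yourself flag as delicate are exactly where the proposal has genuine gaps rather than bookkeeping. First, the construction of $h_i$: a ``cutoff isometry'' cannot carry $\overline{\mathbf{U}_{n_i,k}}$ rigidly onto $\overline{\mathbf{U}_{m_i,k}}$, since these cigars have different lengths and different widths $\varepsilon_{n_i,k}\ne\varepsilon_{m_i,k}$, and their common endpoint $x_k$ must stay fixed while the other endpoint moves; the paper instead writes $h_i=R_i\circ\hat h_i\circ h_i^{\uparrow}\circ h_i^{\rightarrow}\circ h_i^{\uparrow}\circ S_i$, first shrinking the cigars ($S_i$), pushing them along the explicit rectangular path $Q_i$ in the $w_i$-direction, correcting the internal dynamics by Lemma \ref{l.lcon}, and re-expanding ($R_i$), with the quantitative disjointness of the transit regions $V_{k,i}$, $W_i$ and the needle coatings established in Proposition \ref{p.support} and Lemma \ref{l.O3}; an appeal to ``standard isotopy-extension'' does not give this, because infinitely many cigars emanate from $x_{n_i}$ and accumulate on the needles $\mathbf{I}$. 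Relatedly, your bound $\mathrm{dist}_{C^0}(\mathrm{id},h_i)\le C(d_i+2^{-\min(n_i,m_i)})$ is not automatic from Proposition \ref{f+aresame}: the conjugacy that proposition produces (by fundamental domains) can displace a point of $U_{x_k,x_{n_i}}$ by order $\mathrm{dist}(x_k,x_{n_i})$, which is not summable; one needs the local conjugacies to approximately preserve the relative position along the cigar, which is the content of the $Pos$-control in Propositions \ref{p.Pos}--\ref{p.xiPos} and Corollary \ref{cor.Pos} and is built into the choice of the sizes $\varepsilon'_{m,n}$, $\xi_{i,k}$, $\xi_i$.

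Second, property 3) is essentially asserted rather than proved. A point moved by $h_i$ may be stranded in the transit region (the swept tube) rather than in a cigar, and you give no reason it is never moved again; the paper arranges this by choosing the vectors $w_i$ so that the cones $K_i\subset\mathbb{E}_2$ are pairwise disjoint, whence any point left outside $Base_i$ is fixed by all later $h_j$ (Lemma \ref{l.supphi}). For points that do land in cigars, the claim that ``all subsequent $h_j$ are the identity on the cigar currently containing the image of $x$'' is false in general, because that cigar may be attached to a vertex that is moved later; controlling the resulting cascade is precisely what the frozen/contamination/comb/$i$-relaxed machinery and Lemmas \ref{l.ball}, \ref{l.cigarfinite}, \ref{l.postmiss}, \ref{l.comb} accomplish, and it again uses the $Pos$-control so that a point starting near $x_k$ stays near $x_k$. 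Finally, your last sentence (only finitely many supports meet a neighborhood of a point outside all cigars) fails near the accumulation set $\mathbf{I}\cup\{r\}$: cigars $U_{x_k,x_{n_i}}$ with $k$ large have endpoints accumulating at $r$, so the supports of infinitely many $h_i$ enter every neighborhood of $r$; this is why the paper separately coats the needles with the cigars $U_{r,x_n,\theta_n}$ and treats them in Lemma \ref{l.postmiss}. So the proposal captures the paper's strategy in outline, but the quantitative transport construction, the $C^0$/position control, and the orbit-finiteness bookkeeping all still need to be supplied.
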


Notice that Proposition \ref{p.key} implies that the maps $F_1$ and $F_2$ are topologically conjugate.
Indeed, the property $2)$ implies that the sequence of compositions $h_n\circ\ldots\circ h_1$ converges
as $n\to \infty$ to a continuous map $H:\mathbb{R}^5\to \mathbb{R}^5$ in the space
$C^0(\mathbb{R}^5, \mathbb{R}^5)$. The property $3)$ implies that $H$ is one-to-one, and because it is
a continuous bijection whose support is contained in a compact subset of $\mathbb R^5$,  it must be a
homeomorphism. Finally, the property $1)$ implies that
$$
h_n\circ\ldots\circ h_1\circ G_1=G_{n+1}\circ h_n\circ\ldots \circ h_1,
$$
and, since $G_{n+1}$ converges to $F_2$ as $n\to \infty$,  we get $H\circ F_1=F_2\circ H$, i.e. the homeomorphism $H$ conjugates the maps $F_1$ and $F_2$. This concludes the proof of Theorem \ref{t.main} modulo Proposition \ref{p.key}.

\

Now we need to prove Proposition \ref{p.key}. To motivate the construction we list in advance some properties of the $h_i$'s. 
	\begin{enumerate}
	\item Each $h_i$ is equal to $\phi_i$ on the set of vertices $\{x_n\}_n$.
	\item Each $h_i$ takes cigars to cigars. 
	\item Every $x\in \mathbb R^5$ is only moved by finitely many $h_i$.
	\end{enumerate}

\subsubsection{\bf Initial choices and building blocks: definition of $\{B_{Q_i}\}$, $\{V_{k, i}\}$, and $\{W_i\}$.}\label{ss.BVW}

\hypertarget{rhon}{We start by choosing the neighborhoods that will contain the supports of the homeomorphisms $h_i$.}

\medskip
\paragraph{\bf Choice of neighborhoods of the vertices.} Choose a sequence of balls $\{B_{x_n}\}$, such that $B_{x_n}$ is centered at $x_n$, and radius of $B_{x_n}$ is smaller than
\begin{equation}\label{e.rho}
\rho_n=\frac{1}{1000}\min_{k,m\ne n}\left(\text{dist}(x_n, x_m), \ \text{dist}(x_n, I_{m,k})\right).
\end{equation}

\paragraph{\bf Choice of vectors $\{w_i\}$ in $\mathbb{E}_2$.}
Let us recall that we are given a sequence of transpositions $\varphi_1, \varphi_2, \ldots $, such that if $\Phi_n=\varphi_n\circ \ldots \varphi_2\circ \varphi_1$ then
$\varphi=\lim_{N\to \infty}\Phi_N$, and  $\varphi_i$ is the transposition of $m_i$ and $n_i$.
 Let us assign to each $\varphi_i$ a positive number $0<q_i<\frac{1}{2}$. We can do so in such a way that for any $i\in \mathbb{N}$ we have $0<q_{i+1}<q_i$. We also require $q_i\to 0$ as $i\to \infty$.

Set $w_i=(\cos \pi q_i, \sin \pi q_i)$. Notice that $w_i\to (1,0)\in \mathbb{R}^2 \cong {\mathbb E_2}$ as $i\to \infty$. {Recall} that we consider the whole space $\mathbb{R}^5$ as $\mathbb{R}^5={\mathbb E_3}\oplus{\mathbb E_2}$, with ${\mathbb E_3}\times {\{(0,0)\}}$ being the hyperspace where all the points $\{x_n\}$ belong, and $\{(0, 0, 0)\}\times {\mathbb E_2}$ being the space where the vectors $w_i$ belong.  Note \hypertarget{conesK}{that the $(0, 0, 0, w_i)$ are orthogonal} to every element of $\mathbb E_3$.

\begin{definition}\label{d.coneK}
Denote by $\beta_j>0$ the minimum among all the angles between $w_j$ and any other vector  $w_i$, $i\ne j$. Denote by $K_j$ the cone in $\mathbb{E}_2$ with central axis the line containing  the origin and $w_j$ and having angle $\frac{1}{3}\beta_j$.
\end{definition}

We now define the sets that will contain all points that will be moved by $h_i$. In order to do so we start by defining some ``building blocks'' that will be used for each $h_i$, and locate them in $\mathbb{R}^5$ in such a way that the only intersections are for  obvious reasons (e.g. two of them contain the same vertex or the same edge of an embedded graph). This is where we need to use the additional dimensions in $\mathbb{R}^5=\mathbb{E}_3\oplus \mathbb{E}_2$, and our choice of the vectors $\{w_i\}\subset \mathbb{E}_2$.

\medskip

\hypertarget{QandBQ}{\noindent{\bf Defining $\{Q_i\}$ and $\{B_{Q_i}\}$.}}
Recall that $d_i$ is the distance between $x_{m_i}$ and $x_{n_i}$ where $\phi_i$ transposes $m_i$ and $n_i$. Denote by $Q^u_i$ the union of three line segments,
$$
Q^u_i = I_{x_{n_i}, x_{n_i}+w_id_i}\cup  I_{x_{n_i}+w_id_i, x_{m_i}+w_id_i}\cup I_{x_{m_i}+w_id_i, x_{m_i}},
$$
and $Q^d_i$ the union of the corresponding  line segments:
$$
Q^d_i=I_{x_{m_i}, x_{m_i}-w_id_i}\cup I_{ x_{m_i}-w_id_i, x_{n_i}-w_id_i}\cup I_{x_{n_i}-w_id_i x_{n_i}}.
$$
and let $Q_i=Q^u_i\cup Q^d_i$.  

Thus $Q_i$ is a rectangle of width $d_i$ and height $2w_id_i$.  Informally $Q^u_i$ and $Q^d_i$ are the upper and lower portions of the rectangle.
\smallskip

Let $B_{Q_i}$ be $\delta_i$-neighborhood of $Q_i$, where
$\{\delta_i\}$ is a fast decaying sequence of small positive numbers such that for $i\ne j$:
	\begin{itemize} 
		\item if $\{m_i, n_i\}\cap \{m_j, n_j\}=\emptyset$, then $B_{Q_i}\cap B_{Q_j}=\emptyset$.
		\item otherwise $B_{Q_i}\cap B_{Q_j}$ is a subset of either $B_{x_{n_i}}$ or $B_{x_{m_i}}$. 
	\end{itemize}
In other words, $B_{Q_i}$ is a small neighborhood of $Q_i$ such that intersection of two such neighborhoods must be inside of  the small neighborhoods of the vertices that were specified above. 
\medskip

\hypertarget{Delta}{\noindent{\bf Defining the open sets $\Delta$.}}
{Let points $x, y\in \mathbb{R}^5$, $x\ne y$}, and a vector $w\in \mathbb{R}^5$ that is not parallel to $(y-x)$ be given.
Let $L(x, y, w)$ be a three dimensional plane that contains $x$ and is orthogonal to
the plane that contains the points $x, y$, and $y+w$, see Figure \ref{f.Delta}.
\begin{definition}\label{d.vfield}
Let $v=v(x, y, w)$ be an affine vector field in $\mathbb{R}^5$ defined by the following properties:
\begin{enumerate}

\item $v|_{L}=0$;

\item $v(y)=w$;

\item $\frac{\partial}{\partial v}(v)=0$ (in other words, non-singular orbits of $v$ are straight lines, and restricted to each of these lines, $v$ is constant).
\end{enumerate}
\end{definition}

\begin{remark}\label{r.psibyv}
\hypertarget{lcpsi}{The vector field $v(x, y, w)$} can be represented as a product of a constant vector field $\bar w$ such that $\bar w(z)=w$ for all $z\in \mathbb{R}^5$ and a $C^\infty$ function $\psi:\mathbb{R}^5\to \mathbb{R}$, $\psi=\psi_{x, y, w}$. We will use this notation later.
\end{remark}

The vector field $v$ with these properties for given $x, y, w$ is uniquely defined. Denote by
$\{g^t_{v}\}$ the flow defined by the vector field $v$.

{Let $U_{x,y,\varepsilon}$ be the cigar defined}  in definition \ref{cigars}. Set 
$\Delta=\Delta(x, y, \varepsilon, w)$ as
$$
\Delta=\cup_{t=0}^1 g_{v}^t(U_{x,y,\varepsilon}).
$$

\begin{figure}[htp]
\centering
 \includegraphics[width=0.7\textwidth]{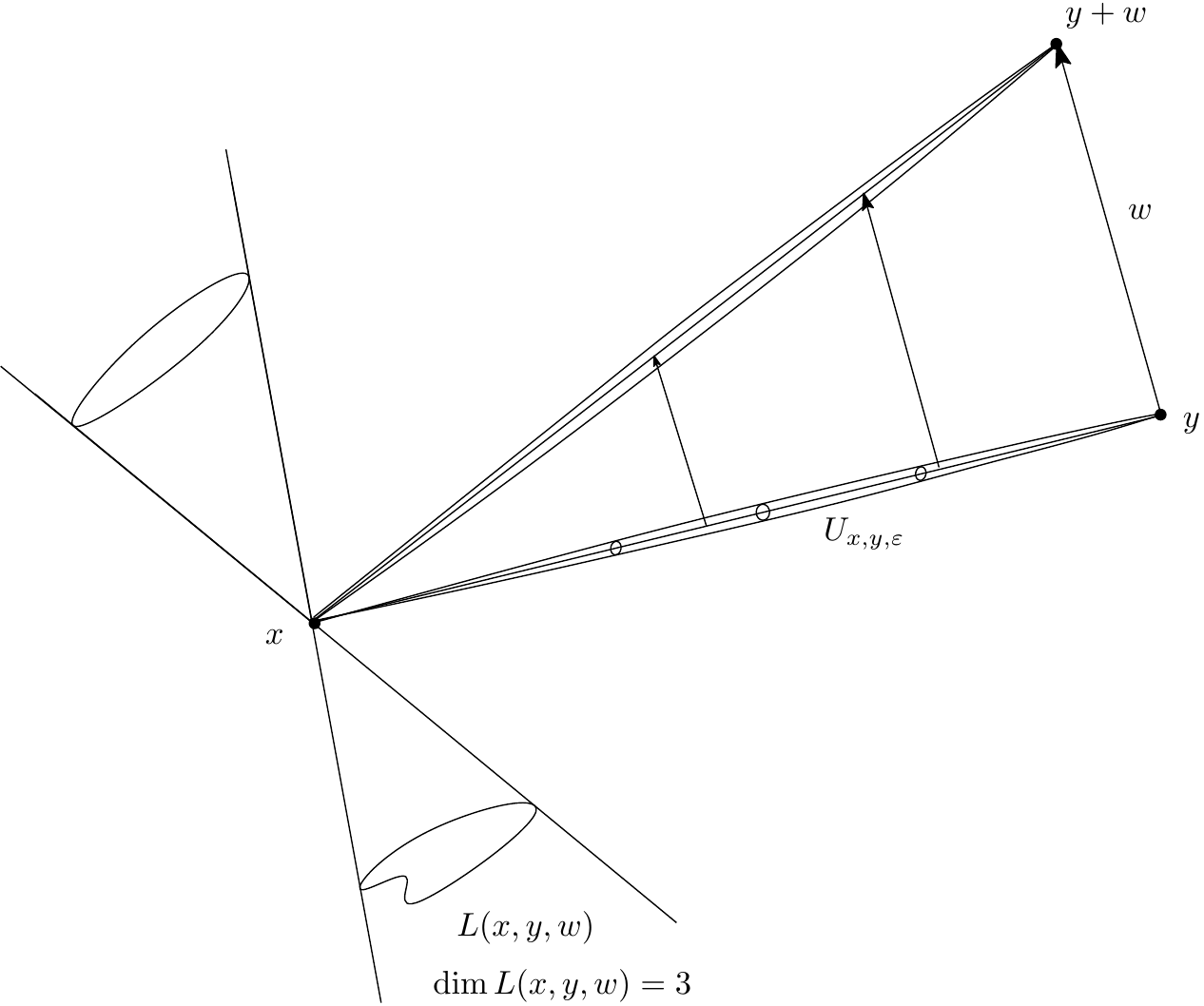}
 \caption{The construction of $\Delta(x, y, w)$.}\label{f.Delta}
\end{figure}

\newpage

\begin{remark}
 \hypertarget{Deltaxyeps}{The set $\Delta=\Delta(x, y, \varepsilon, w)$} is defined to be the union of images of $U_{x,y,\varepsilon}$ when one fixes one end point (in this case $x$), and ``moves" another end point along the vector $w$. In this and other similar definitions (such as the definition of $V(r, x_{n_i}, x_{m_i}, \eta_i, d_iw_i)$ below)
 the first argument
represents the end point that is fixed.
\end{remark}

\paragraph{\bf Defining the open sets $V_{k,i}$ and $W_i$.}
We now define the sets that will contain  the points moved by $h_i$ and are not on cigars connected to the point $r$.

Given points $x, y, z\in \mathbb{E}_3$, $\xi>0$, and a vector $w\in \mathbb{E}_2$, denote
\begin{multline*}
    V(z, x, y, \xi, w) =\Delta(z, x, \xi, w) \cup \Delta(z, x+w, \xi, y-x)\cup \Delta(z, y, \xi, w)\cup \\
    \cup \Delta(z, y, \xi, -w)\cup \Delta(z, y-w, \xi, x-y)\cup \Delta(z, x, \xi, -w).
\end{multline*}

\begin{figure}[htp]
\centering
 \includegraphics[width=0.55\textwidth]{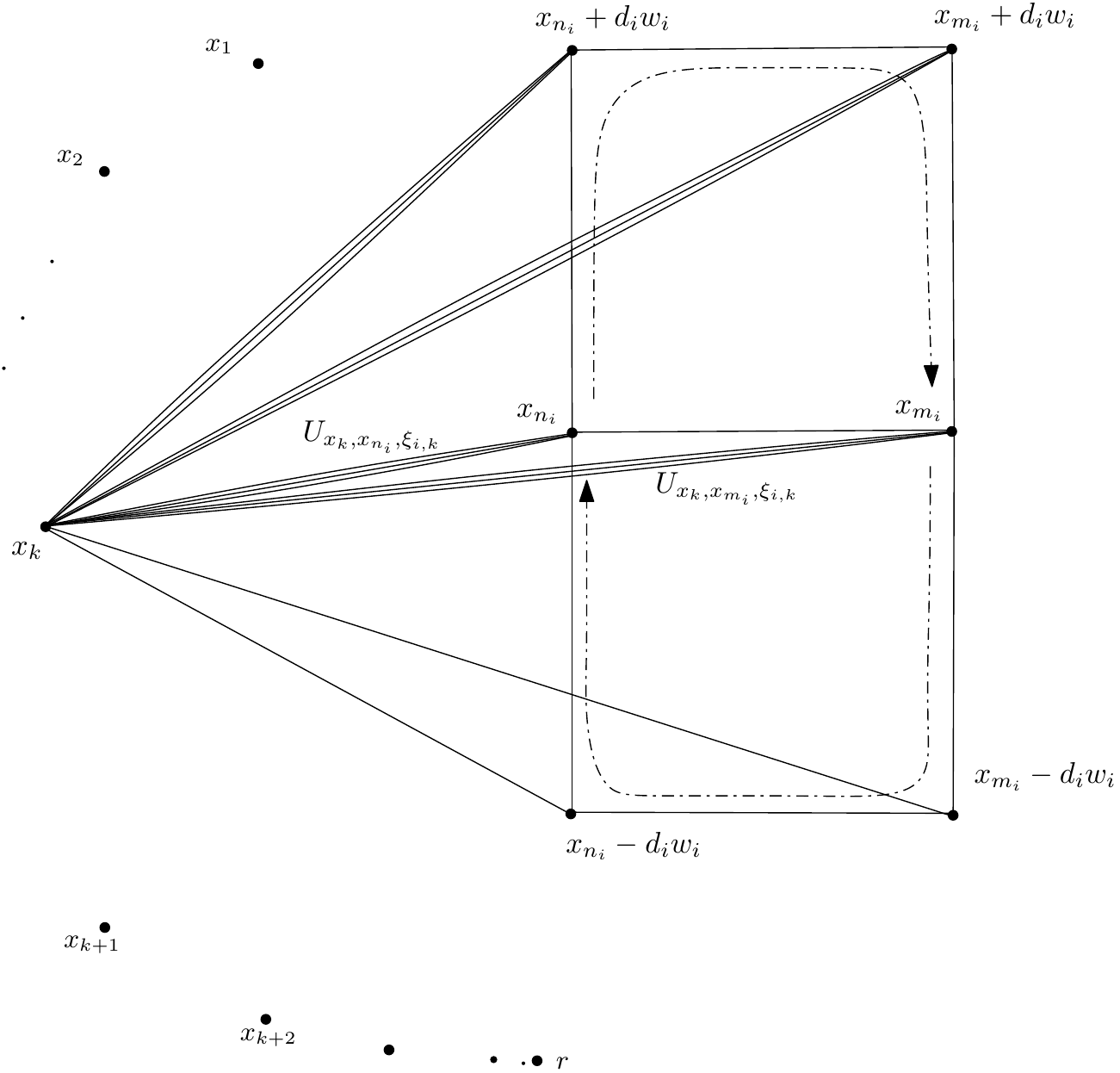}
 \caption{The construction of $V(x_k, x_{n_i}, x_{m_i}, \xi_{i,k}, d_iw_i)$.}
\end{figure}

Also, given points $x, y\in \mathbb{E}_3$,  $\xi>0$, and vector $w\in \mathbb{E}_2$, using definition \ref{d.vfield} we set $\tilde w=v(\frac{x+y}{2}, x, w)$, $\tilde u= v(\frac{x+y}{2}, x+w, y-x)$,  and denote
$$
W(x, y,  \xi, w)=\left(\cup_{t=-1}^1g_{\tilde w}^t(U_{x, y, \xi})\right)\cup \left(\cup_{t=0}^{1}g^t_{\tilde u} (U_{x+w, y-w, \xi})\right)
$$

We  will denote
  $$
  V_{k, i}=V(x_k, x_{n_i}, x_{m_i}, \xi_{i,k},  d_iw_i), \ \ W_i=W(x_{n_i}, x_{m_i}, \xi_i,  d_iw_i),
  $$
where $\xi_{i, k}$, $\xi_i$ are small constants to be chosen later, in Section \ref{ss.absence}.

\subsubsection{\bf Preliminary step: shrinking the cigars.}\label{ss.shrink}  In the construction of the map 
$\mathbf{R}:Graph\to \text{Diff}^\infty\, (\mathbb{R}^5)$ in Proposition \ref{p.reform} the choice of the cigars 
$U_{m,n, \varepsilon_{m,n}}$ was uniform, independent of the graph $E$.  The homeomorphisms $h_i$ are 
built in such a manner that every point in $\mathbb R^5$ is only moved by finitely many $h_i$'s. The 
transformation $h_i$ must swap $m_i$ and $n_i$ and thus move every cigar having either as an endpoint. To 
avoid moving unnecessary points (and points already considered and in the appropriate places) each $h_i$ is a 
composition of maps.  The first of the maps contracts the relevant cigars  diameters. Then the cigars are 
swapped and finally re-expanded. The contraction and expansion preserves the conjugacy classes of the 
relevant maps.

The map used to shrink the cigars will be denoted $S$. 
For any family of positive small constants $\{\varepsilon_{m,n}'\}$, $\varepsilon_{m,n}'< \varepsilon_{m,n}$, we can choose $S$ to be a homeomorphism of $\mathbb{R}^5$ supported on $\overline{\cup_{m\ne n}U_{m,n, \varepsilon_{m,n}}}$ such that $S|_{\mathbf{U}_{m,n}}$ is a contraction towards $\gamma_{m,n}$, and maps \hyperlink{U no epsilon}{$\mathbf{U}_{m,n}$} to $\mathbf{U}_{m,n}'$.  The size of $\epsilon_{m,n}'$ is determined by the transpositions $\phi_i$ built in the decomposition of $\phi$ in Lemma \ref{l.permrepr}.

\subsubsection{\bf Distortion property of homeomorphisms $h_i$.}
It is crucial to be able to make sure that in our construction a point from $U_{x_k, x_{n_i}}$ that was close to $x_k$ and was mapped by $h_i$ to $U_{x_k, x_{m_i}}$ will remain close to $x_k$. In order to give a precise statement and justify it let us introduce some extra notation.

Suppose $U_{x, y, \varepsilon}$ is a \hyperlink{Us}{cigar}. Let us introduce a function on $U_{x, y, \varepsilon}$ that would show whether a given point $z\in U_{x, y, \varepsilon}$ is closer to $x$ or to $y$. Namely, let us recall that an interval $[x, y]$ is parameterized by $t\in [-1, 1]$,
$$
\gamma_{x, y}(t)=\frac{x+y}{2}+t\frac{y-x}{2},
$$
so $\gamma_{x,y}(-1)=x$ and $\gamma_{x,y}(1)=y$. Also, $U_{x,y, \varepsilon}$ was defined as
$$
U_{x,y, \varepsilon} =\bigcup_{-1\le t\le 1}\left\{z\in \Pi_{x, y}(t)\ |\ |z-\gamma_{x,y}(t)|<\varepsilon \psi(t)\right\}
$$
\noindent where $\psi$ is given in equation \ref{psidef}.
\smallskip

Let us define
\begin{equation}\label{e.Pos}
Pos:U_{x,y, \varepsilon}\to [-1,1], \ Pos(z)=t \ \ \text{if}\ \ z\in U_{x,y, \varepsilon}\cap \Pi_{x,y}(t).
\end{equation}
Then if $Pos(z)$ is close to $-1$, then $z$ is close to $x$, and if $Pos(z)$ is close to 1, that $z$ is close to $y$. The function $Pos$ depends on the cigar $U_{x, y , \epsilon}$, and   which of the vertices of the ``cigar'' ($x$ or $y$ in our case) is chosen as first ($Pos(x)=-1$), and which one as a second ($Pos(y)=1$).

\begin{remark}\label{r.35}
The  choice of $\rho_n$ in equation (\ref{e.rho}) implies that for any $x\in B_{x_n}\cap U_{x_n, x_m, \varepsilon_{m,n}}$ we have  $Pos(x)\in [-1, -1+ \frac{1}{1000})$, where $Pos(x)$ is the relative position of $x$ in $U_{x_n, x_m, \varepsilon_{m,n}}$.
\end{remark}
\begin{proposition}\label{p.Pos}
Suppose points $x, y\in \mathbb{R}^5$, and a vector $w\in \mathbb{R}^5, w\ne 0$, $w$ is not collinear to $y-x$, are given. For any $\delta>0$ there exists $\varepsilon_0>0$ such that the following holds. If  $\varepsilon\le \varepsilon_0$, $z_1\in U_{x,y, \varepsilon}$, $z_2\in U_{x,y+w, \varepsilon}$, and the vector $z_2-z_1$ is collinear with $w$, then
$$
|Pos_2(z_2)-Pos_1(z_1)|<\delta
$$
where $Pos_1$ is the position in $U_{x,y, \varepsilon}$ and $Pos_2$ is the position in
$U_{x,y+w, \varepsilon}$.
\end{proposition}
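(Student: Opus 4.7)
The plan is to write both $z_1$ and $z_2$ in ``axis + transverse error'' coordinates relative to the central axes of the two cigars, use the collinearity condition $z_2-z_1=\lambda w$ to derive a linear relation between $t_2-t_1$ and the transverse errors, and then invoke the linear independence of $y-x$ and $w$ to solve for $t_2-t_1$.

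More precisely, set $t_i=Pos_i(z_i)$ for $i=1,2$. By the definition of the cigars (Definition \ref{cigars}) and the fact that $0<\psi\le 1/2$ on $(-1,1)$, we can write
\[
z_1=\gamma_{x,y}(t_1)+p_1=x+\tfrac{1+t_1}{2}(y-x)+p_1,\qquad z_2=\gamma_{x,y+w}(t_2)+p_2=x+\tfrac{1+t_2}{2}(y+w-x)+p_2,
\]
where $p_1\perp(y-x)$, $p_2\perp(y+w-x)$, and $|p_i|\le\varepsilon\psi(t_i)\le\varepsilon/2$. Subtracting and using $z_2-z_1=\lambda w$, a direct computation gives the vector identity
\[
\tfrac{t_2-t_1}{2}(y-x)+\bigl(\tfrac{1+t_2}{2}-\lambda\bigr)w = p_1-p_2,
\]
and $|p_1-p_2|\le\varepsilon$.

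Since by hypothesis $w$ is not collinear with $y-x$, the pair $\{y-x,w\}$ is linearly independent, so the linear map $T:\mathbb{R}^2\to\mathbb{R}^5$ defined by $T(a,b)=a(y-x)+bw$ is injective. Hence $T$ is a homeomorphism onto its image, and there exists a constant $C=C(x,y,w)>0$ such that $|a|\le C\|T(a,b)\|$ for all $(a,b)\in\mathbb{R}^2$. Applying this with $a=(t_2-t_1)/2$ and $b=(1+t_2)/2-\lambda$ to the identity above yields
\[
\tfrac{|t_2-t_1|}{2}\le C\bigl\|p_1-p_2\bigr\|\le C\varepsilon,
\]
so $|t_2-t_1|\le 2C\varepsilon$. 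Given $\delta>0$, choosing $\varepsilon_0=\delta/(2C)$ completes the proof.

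The only nontrivial ingredient is the constant $C$ coming from the linear independence of $y-x$ and $w$; this is precisely where the hypothesis that $w$ is not collinear with $y-x$ enters. I do not see any real obstacle: the bound is entirely linear-algebraic, and the transverse errors are automatically controlled by $\varepsilon$ via the definition of the cigars. Note that $C$ depends on the angle between $y-x$ and $w$ and on $|y-x|$, $|w|$, but not on $\varepsilon$, which is why we can choose $\varepsilon_0$ depending only on $\delta$ and on the fixed data $x,y,w$.
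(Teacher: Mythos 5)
Your proof is correct, and it takes a mildly different route from the paper's. The paper first projects everything to the two-dimensional plane through $x$, $y$, $y+w$ (observing that this projection does not change $Pos$), and then argues synthetically in that plane: it intersects the line through $z_1,z_2$ (which has direction $w$) with the two central axes at auxiliary points $z_1',z_2'$, notes that $Pos(z_1')=Pos(z_2')$ (this is exactly where the non-collinearity of $w$ and $y-x$ is used), and bounds $|Pos(z_i)-Pos(z_i')|$ by expressions of the form $\frac{2}{\mathrm{dist}(x,y)}\cdot\frac{\varepsilon}{|\tan\alpha|}$, which yields an explicit admissible $\varepsilon_0$ in terms of the angles between $w$ and the two axes. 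You instead stay in $\mathbb{R}^5$, decompose each $z_i$ into an axis point plus a transverse error of size at most $\varepsilon/2$, and extract the coefficient of $y-x$ from a single vector identity via the bounded-inverse constant $C$ of the injective linear map $(a,b)\mapsto a(y-x)+bw$; the transversality hypothesis enters through $C$ rather than through tangents of angles. Both arguments rest on the same quantitative transversality; yours is shorter, avoids the projection step and the auxiliary points, and adapts essentially verbatim to the companion Proposition \ref{p.PosW}, while the paper's version produces a concrete formula for $\varepsilon_0$. One cosmetic point: with $\varepsilon_0=\delta/(2C)$ you only conclude $|Pos_2(z_2)-Pos_1(z_1)|\le\delta$; either shrink $\varepsilon_0$ by a factor of $2$ or use that the transverse bounds are strict (the cigars are open, so $|p_i|<\varepsilon\psi(t_i)$) to obtain the strict inequality in the statement.
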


\begin{proof}[Proof of Proposition \ref{p.Pos}.]
Projection of a point in $U_{x, y, \varepsilon}$ to the 2-dimensional plane that contains points $x$, $y$, and $y+w$ does not change the value of the function $Pos$. Therefore, without loss of generality we can assume that both $z_1$ and $z_2$ are the points on that plane.
\begin{figure}[htp]
\centering
 \includegraphics[width=0.7\textwidth]{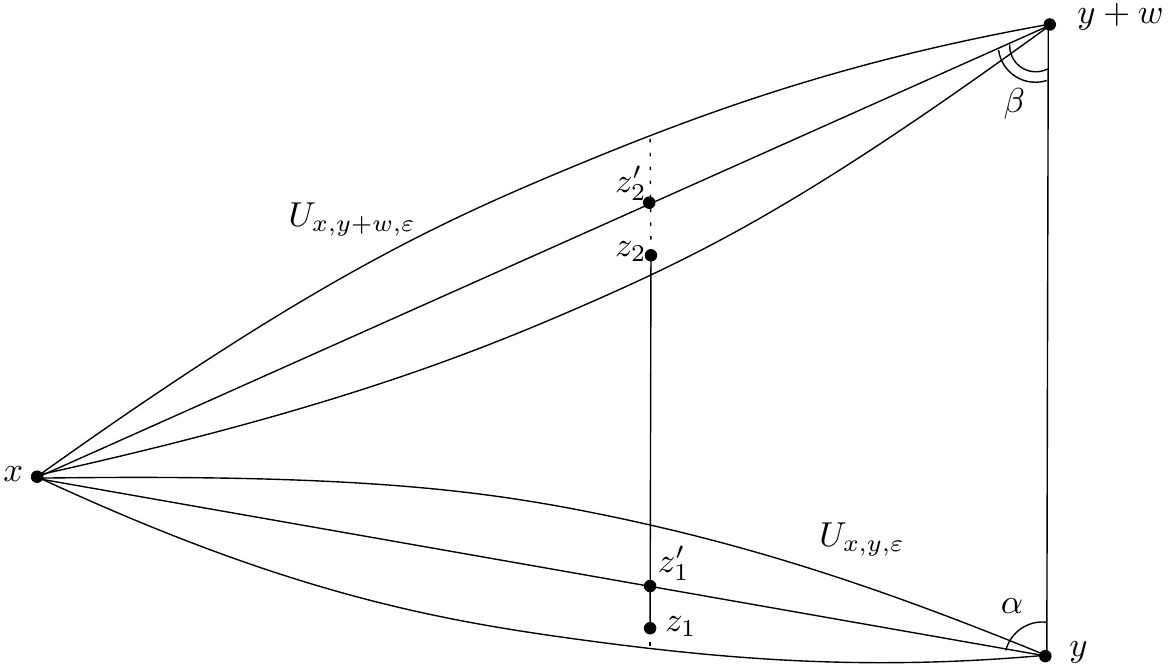}
 \caption{Proof of Proposition \ref{p.Pos}.}
\end{figure}
It is enough to take $$\varepsilon<\frac{1}{4}\min(\text{dist}(x, y), \text{dist}(x, y+w))\min(|\tan\alpha|, |\tan\beta|)\delta,$$
 where $\alpha$ is an angle between an interval $[x,y]$ and the vector $w$, and $\beta$ is an angle between an interval $[x, y+w]$ and the vector $w$. Let $z_1'\in [x, y]$ be a point of the intersection of the interval $[x, y]$ with the line containing $z_1$ and $z_2$, and $z_2'\in [x, y]$ be a point of the intersection of the interval $[x, y+w]$ with that line. Then we have
 $$
 |Pos(z_1)-Pos(z_1')|\le \frac{2}{\text{dist}(x, y)}\frac{\varepsilon}{|\tan \alpha|}\le \frac{\delta}{2},\ \ \text{and}$$
 $$\ \  |Pos(z_2)-Pos(z_2')|\le \frac{2}{\text{dist}(x, y+w)}\frac{\varepsilon}{|\tan \beta|}\le \frac{\delta}{2},
 $$
  and since $Pos(z_1')=Pos(z_2')$, we have
  $$
  |Pos(z_2)-Pos(z_1)|\le  |Pos(z_1)-Pos(z_1')|+|Pos(z_2)-Pos(z_2')|\le \delta.
  $$
\end{proof}

Similarly to Proposition \ref{p.Pos}, the following statement holds:

\begin{proposition}\label{p.PosW}
Suppose points $x, y\in \mathbb{R}^5$, and a vector $w\in \mathbb{R}^5, w\ne 0$, $w$ is not collinear to $y-x$, are given. For any $\delta>0$ there exists $\varepsilon_0>0$ such that the following holds. If  $\varepsilon\le \varepsilon_0$, $z_1\in U_{x,y, \varepsilon}$, $z_2\in U_{x+w,y-w, \varepsilon}$, and the vector $z_2-z_1$ is collinear with $w$, then
$$
|Pos(z_2)-Pos(z_1)|<\delta
$$
\end{proposition}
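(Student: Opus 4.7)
The proposition is the direct analogue of Proposition \ref{p.Pos}: the only difference is that the two cigars $U_{x,y,\varepsilon}$ and $U_{x+w, y-w, \varepsilon}$ no longer share a vertex, since both endpoints of the second cigar are displaced by $\pm w$. The plan is to mimic the three-step argument used for Proposition \ref{p.Pos}. First, I would project to the affine 2-plane $\Pi$ containing $\{x, y, x+w, y-w\}$, using that orthogonal projection to $\Pi$ preserves $Pos_i$ on each cigar (because it preserves the projection to the corresponding central axis), so one may assume $z_1, z_2 \in \Pi$. Next, I would locate the intersections $z_1' \in [x, y]$ and $z_2' \in [x+w, y-w]$ of the $w$-parallel line through $z_1, z_2$ with the two central axes. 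Finally, I would bound $|Pos_i(z_i) - Pos_i(z_i')|$ using the width of the cigars and combine with $Pos_1(z_1') = Pos_2(z_2')$ via the triangle inequality.

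The central algebraic step is to verify $Pos_1(z_1') = Pos_2(z_2')$. Writing $z_1' = x + s_1(y-x)$ and $z_2' = (x+w) + s_2\bigl((y-x) - 2w\bigr)$, the condition $z_2' - z_1' \parallel w$ becomes
\[
(s_2 - s_1)(y-x) + (1 - 2s_2 - \lambda)\,w \;=\; 0
\]
for some scalar $\lambda$. Since $y-x$ and $w$ are linearly independent by hypothesis, this forces $s_1 = s_2$. Because $Pos$ on each axis is the affine map $s \mapsto 2s - 1$, the two values agree: $Pos_1(z_1') = Pos_2(z_2') = 2s_1 - 1$.

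For the trigonometric estimate, let $\alpha$ denote the angle between $w$ and the segment $[x,y]$, and $\beta$ the angle between $w$ and $[x+w, y-w]$. Both are strictly positive: the vanishing of $\beta$ would require $(y-x)-2w$ collinear with $w$, which would force $y-x$ collinear with $w$, contrary to hypothesis. Since $z_i$ lies within distance $\varepsilon/2$ of its axis, the same elementary trigonometry as in Proposition \ref{p.Pos} yields
\[
|Pos_i(z_i) - Pos_i(z_i')| \;\le\; \frac{\varepsilon}{|\text{axis length}|\cdot \tan(\text{angle})},
\]
and choosing $\varepsilon_0$ so small that each of the two terms is at most $\delta/2$ concludes the argument. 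I do not anticipate any substantive obstacle — the entire proof reduces to checking that the algebraic coincidence $s_1 = s_2$ survives the replacement of the shared vertex $x$ by the displaced endpoint $x+w$, and it does, since the $w$-contributions from the two endpoints of the second cigar cancel in the $(y-x)$-direction.
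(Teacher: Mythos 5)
Your proof is correct, and it is precisely the argument the paper gestures at: the authors write only that ``the proof is very similar to the proof of Proposition~\ref{p.Pos}, so we will skip it,'' and your write-up supplies the missing details faithfully. The one genuinely new ingredient relative to Proposition~\ref{p.Pos} --- verifying that the displaced axis $[x+w,\,y-w]$ still yields $Pos_1(z_1')=Pos_2(z_2')$ because the two $w$-contributions cancel, forcing $s_1=s_2$ by linear independence of $y-x$ and $w$ --- you handle correctly, and you correctly note why the second angle $\beta$ is nonzero.
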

The proof is very similar to the proof of Proposition \ref{p.Pos}, so we will skip it.

\begin{figure}[htp]
\centering
 \includegraphics[width=0.5\textwidth]{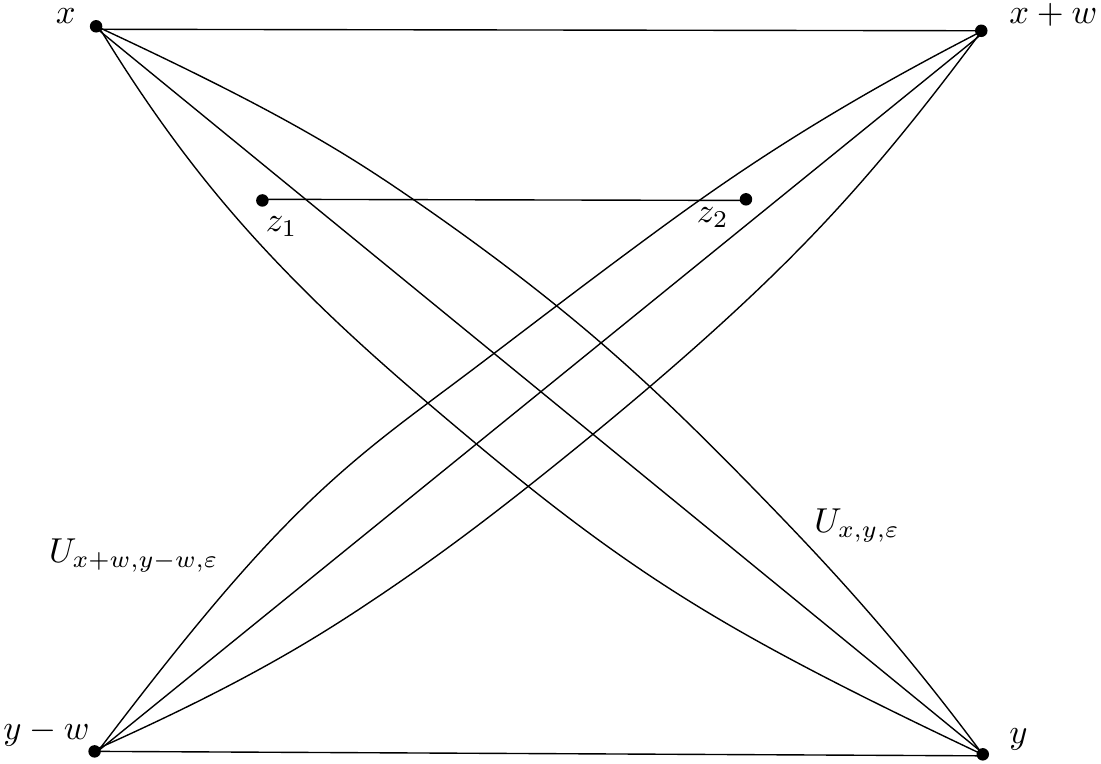}
 \caption{Proposition \ref{p.PosW}.}
\end{figure}

\paragraph{\bf Choice of the sizes of cigars.} As remarked in section \ref{ss.shrink}  the initial sizes of 
cigars can be chosen as small as needed. There are several conditions that we want the choice to satisfy. 
As one of them, we want to make sure that after we constructed a homeomorphism $h_i$ that sends one 
cigar to another, any point from the first cigar that is mapped by $h_i$ to the second cigar does not 
change its position  within a ``cigar'' very much.

Let us start by noticing that Proposition \ref{p.Pos} immediately implies the following statement:
\begin{proposition}\label{p.PosV1to4}
Given points $x, y, t\in \mathbb{E}_3$ that do not belong to the same line, and a vector $w\in \mathbb{E}_2$, for any $\delta>0$ there exists $\varepsilon>0$ such that the following holds.  For any points
$$
z_1\in U_{t, x, \varepsilon}, \ z_2\in U_{t, x+w, \varepsilon}, \ z_3\in U_{t, y+w, \varepsilon}, \ z_4\in U_{t, y, \varepsilon}, \
$$
such that
$$
(z_2-z_1) \parallel w, \ \ \ (z_3-z_2) \parallel (y-x), \ \ \ (z_4-z_3) \parallel (- w), \
$$
we have $$|Pos(z_4)-Pos(z_1)|<\delta,$$ where $Pos(z_4)$ is a position of $z_4$ in $U_{t, y, \varepsilon}$, and $Pos(z_1)$ is a position of $z_1$ in $U_{t, x, \varepsilon}$, as defined in (\ref{e.Pos}).
\end{proposition}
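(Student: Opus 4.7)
The plan is to chain three applications of Proposition \ref{p.Pos}, one for each of the three ``moves'' that take $z_1 \to z_2 \to z_3 \to z_4$, and then conclude by the triangle inequality.

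More concretely, for any target $\delta > 0$, I would first invoke Proposition \ref{p.Pos} three times with tolerance $\delta/3$, once with the data $(t, x, w)$, once with the data $(t, x+w, y-x)$, and once with the data $(t, y, w)$. This yields three constants $\varepsilon^{(1)}_0, \varepsilon^{(2)}_0, \varepsilon^{(3)}_0 > 0$, and I would take $\varepsilon = \min\{\varepsilon^{(1)}_0, \varepsilon^{(2)}_0, \varepsilon^{(3)}_0\}$. Then each of the three steps falls directly under Proposition \ref{p.Pos}: the pair $(z_1, z_2)$ lies in $(U_{t,x,\varepsilon}, U_{t,x+w,\varepsilon})$ with $z_2 - z_1 \parallel w$, giving $|Pos(z_2) - Pos(z_1)| < \delta/3$; the pair $(z_2, z_3)$ lies in $(U_{t,x+w,\varepsilon}, U_{t,y+w,\varepsilon})$ with $z_3 - z_2 \parallel (y-x)$, since $(x+w) + (y-x) = y+w$, giving $|Pos(z_3) - Pos(z_2)| < \delta/3$; and the pair $(z_4, z_3)$ lies in $(U_{t,y,\varepsilon}, U_{t,y+w,\varepsilon})$ with $z_3 - z_4 \parallel w$, giving $|Pos(z_4) - Pos(z_3)| < \delta/3$. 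Here of course each $Pos$ is interpreted relative to the appropriate cigar, as in the statement.

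Before closing, one needs to check that each invocation of Proposition \ref{p.Pos} is legitimate, i.e.\ that the relevant ``moving'' vector is not collinear with the corresponding centerline. This is where the hypotheses $t, x, y \in \mathbb{E}_3$ not collinear and $w \in \mathbb{E}_2$ are used: in the first and third applications $w \in \mathbb{E}_2$ is orthogonal to $x - t$ and to $y - t$ respectively (both nonzero vectors in $\mathbb{E}_3$), and in the second application the vector $y - x \in \mathbb{E}_3$ cannot be parallel to $(x - t) + w$, since decomposing along $\mathbb{E}_3 \oplus \mathbb{E}_2$ would force the $\mathbb{E}_2$-component $\lambda w$ to vanish (giving $\lambda = 0$) and then $y - x = 0$, contradicting non-collinearity of $t, x, y$.

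Combining the three bounds by the triangle inequality,
\[
|Pos(z_4) - Pos(z_1)| \leq |Pos(z_4) - Pos(z_3)| + |Pos(z_3) - Pos(z_2)| + |Pos(z_2) - Pos(z_1)| < \delta,
\]
which completes the proof. There is no real obstacle here; the statement is essentially a four-term telescoping version of Proposition \ref{p.Pos}, and the only slightly delicate point is the non-collinearity check in the middle step, which is handled cleanly by the orthogonal splitting $\mathbb{R}^5 = \mathbb{E}_3 \oplus \mathbb{E}_2$.
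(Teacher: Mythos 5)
Your proposal is correct and matches the paper's intent exactly: the paper simply states that Proposition \ref{p.PosV1to4} follows immediately from Proposition \ref{p.Pos}, and your three-fold application of that proposition (with the non-collinearity checks via the splitting $\mathbb{R}^5=\mathbb{E}_3\oplus\mathbb{E}_2$) together with the triangle inequality is precisely the argument being left implicit.
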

Similarly, Proposition \ref{p.PosW} implies that following statement:
\begin{proposition}\label{p.PosW1to4}
Given distinct points $x, y\in \mathbb{E}_3$ and a vector $w\in \mathbb{E}_2$, for any $\delta>0$ there exists $\varepsilon>0$ such that the following holds.  For any points
$$
z_1\in U_{x, y, \varepsilon}, \ z_2\in U_{x+w, y-w, \varepsilon}, \ z_3\in U_{y+w, x-w, \varepsilon}, \ z_4\in U_{y, x, \varepsilon}, \
$$
such that
$$
(z_2-z_1) \parallel w, \ \ \ (z_3-z_2) \parallel (y-x), \ \ \ (z_4-z_3) \parallel (- w), \
$$
we have $$|Pos(z_4)-Pos(z_1)|<\delta,$$ where $Pos(z_4)$ is a position of $z_4$ in $U_{x, y, \varepsilon}$, and $Pos(z_1)$ is a position of $z_1$ in $U_{y, x, \varepsilon}$, as defined in (\ref{e.Pos}).
\end{proposition}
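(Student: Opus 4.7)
The plan is to deduce Proposition \ref{p.PosW1to4} from three successive applications of Proposition \ref{p.PosW}, in complete analogy with the way Proposition \ref{p.PosV1to4} follows from Proposition \ref{p.Pos}.

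First I would recognize each of the three jumps $z_1\to z_2$, $z_2\to z_3$, $z_3\to z_4$ as a single instance of the hypothesis of Proposition \ref{p.PosW}. Taking $(a,b)=(x,y)$ and shift vector $s=w$, the formation $U_{a+s,b-s,\varepsilon}=U_{x+w,y-w,\varepsilon}$ matches the jump $z_1\to z_2$. Taking $(a,b)=(x+w,y-w)$ and $s=y-x$ yields $U_{a+s,b-s,\varepsilon}=U_{y+w,x-w,\varepsilon}$, matching $z_2\to z_3$. Taking $(a,b)=(y+w,x-w)$ and $s=-w$ yields $U_{a+s,b-s,\varepsilon}=U_{y,x,\varepsilon}$, matching $z_3\to z_4$. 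The three parallelism hypotheses $(z_2-z_1)\parallel w$, $(z_3-z_2)\parallel(y-x)$, $(z_4-z_3)\parallel(-w)$ are exactly the collinearity conditions required by Proposition \ref{p.PosW} at each step, and the endpoint labellings are arranged so that the ``$-1$-endpoint'' of each output cigar is the image of the previous ``$-1$-endpoint''; hence the successive $Pos$ parameters chain together without an unintended sign flip.

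Before invoking Proposition \ref{p.PosW} one must verify at each stage that the shift is not collinear with the axis of the starting cigar. This is immediate from the orthogonal decomposition $\mathbb{R}^5=\mathbb{E}_3\oplus\mathbb{E}_2$: since $w\in\mathbb{E}_2\setminus\{0\}$ and $y-x\in\mathbb{E}_3\setminus\{0\}$ are nonzero and mutually orthogonal, a brief computation shows that $w$ is not proportional to $y-x$, that $y-x$ is not proportional to $(y-w)-(x+w)=(y-x)-2w$, and that $-w$ is not proportional to $(x-w)-(y+w)=-(y-x)-2w$.

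Given $\delta>0$, I would then apply Proposition \ref{p.PosW} to each of the three steps with tolerance $\delta/3$, obtaining upper bounds $\varepsilon_0^{(1)}, \varepsilon_0^{(2)}, \varepsilon_0^{(3)}$ on the allowed cigar radius, and take $\varepsilon\le\min_k \varepsilon_0^{(k)}$. The triangle inequality then gives
\[
|Pos(z_4)-Pos(z_1)|\le\sum_{k=1}^{3}|Pos(z_{k+1})-Pos(z_k)|<\delta.
\]
I expect no serious obstacle here; the only mildly delicate point is the bookkeeping of endpoint orientations across the three intermediate cigars, which the chosen ordering of their endpoints handles automatically.
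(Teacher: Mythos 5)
Your proposal is correct and is essentially the paper's own argument: the paper simply asserts that Proposition \ref{p.PosW1to4} follows from Proposition \ref{p.PosW} in the same way Proposition \ref{p.PosV1to4} follows from Proposition \ref{p.Pos}, and your three applications of Proposition \ref{p.PosW} (with the non-collinearity checks and the triangle inequality with tolerance $\delta/3$) just spell this out. The endpoint bookkeeping indeed causes no trouble, since your chain bounds $|Pos(z_4)-Pos(z_1)|$ with $z_1$ referenced in $U_{x,y,\varepsilon}$ and $z_4$ in $U_{y,x,\varepsilon}$, and reversing both reference orientations simultaneously negates both $Pos$ values, leaving the absolute difference in the statement unchanged.
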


{Take distinct $n, m$ and $k$ in $\mathbb N$. From item 3 of  Lemma \ref{l.permrepr}, there are at most four values $i\in \mathbb{N}$ such that the transposition $\varphi_i$ moves either $n$ or $m$ (or both). Let us recall that $\rho_n>0$ and $\rho_m>0$ were defined in \hyperlink{rhon}{equation (\ref{e.rho})}. Choose
$\varepsilon_{n,m}>0$ sufficiently small  that if for some $i\in \mathbb{N}$ the transposition $\varphi_i$ moves $n$ and does not move $m$ (or moves $m$ and does not move $n$), then Proposition \ref{p.PosV1to4} is applicable to the points $x=x_n, y=x_k, t=x_m$ (or to the points  $x=x_m, y=x_k,  t=x_n$)} with $\delta=0.001\min(\rho_n, \rho_m)$, and if $\varphi_i$ transposes $n$ and $m$, Proposition \ref{p.PosW1to4} is applicable with the same $\delta>0$.

This choice of the sizes of the cigars will be used in the proof of  to  Corollary \ref{cor.Pos}.

\subsubsection{\bf Absence of intersections.}\label{ss.absence}
  We would like to be able to control the sets of points that are being moved by homeomorphisms $h_i$ in order to justify the property 3) from Proposition \ref{p.key}. This can be done via the following statements:

\begin{proposition}\label{p.support}
  \hypertarget{VkiWi}{There exists a collection of positive numbers $\xi_{i,k}$, $\xi_i$, $i\in \mathbb{N}$, $k\in \mathbb{N}$,} such that the following holds.
Recall the notation:
  $$
  V_{k, i}=V(x_k, x_{n_i}, x_{m_i}, \xi_{i,k},  d_iw_i), \ \ W_i=W(x_{n_i}, x_{m_i}, \xi_i,  d_iw_i),
  $$
  then for $i\ne j$: 
  	\begin{enumerate}
   \item $V_{k, i}\cap V_{l,j}\ne \emptyset$ if and only if one of the pairs $\{n_i, k\}$, $\{m_i, k\}$ coincides with one of the pairs $\{n_j, l\}$, $\{m_j, l\}$, 
    \item $W_i\cap W_j=\emptyset$ if $i\ne j$,
  \item $W_i\cap V_{k, j}\ne \emptyset$ if and only if $\{n_i, m_i\}=\{n_j, k\}$ or $\{n_i, m_i\}=\{m_j, k\}$.
  
   \end{enumerate}
\end{proposition}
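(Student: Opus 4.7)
The approach is a two-stage argument: first, an analysis of the limiting ``skeletons'' of $V_{k,i}$ and $W_i$ as $\xi_{i,k}, \xi_i \to 0$, which exhibits the desired intersection pattern at the skeleton level; then a continuity/tapering argument that transfers this to the open sets; concluded by a diagonalization over the countable index set. The skeletons are 2-dimensional: $\widetilde V_{k,i}$ is the union of six triangles forming the lateral surface of the ``pyramid'' with apex $x_k$ over the rectangular base $R_i$ whose corners are $x_{n_i} \pm d_i w_i$, $x_{m_i} \pm d_i w_i$; $\widetilde W_i$ is the rectangle $R_i$ itself (each $\Delta(x,y,\xi,w)$ collapses to $\mathrm{Conv}(x,y,y+w)$, and $W_i$ collapses to the full rectangle).

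The crucial observation is that every point of either skeleton has ${\mathbb E}_2$-component of the form $\gamma w_i d_i$ for some $\gamma\in[-1,1]$. Since the $w_i\in {\mathbb E}_2$ are pairwise non-parallel (Definition \ref{d.coneK}), a common point of an $i$-indexed and a $j$-indexed skeleton with $i\neq j$ must have zero ${\mathbb E}_2$-component, hence lie in ${\mathbb E}_3$. Computing the $\gamma=0$ slices gives $\widetilde V_{k,i}\cap {\mathbb E}_3 = [x_k,x_{n_i}]\cup[x_k,x_{m_i}]$ and $\widetilde W_i\cap {\mathbb E}_3 = [x_{n_i},x_{m_i}]$. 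By items (3) and (4) of Proposition \ref{p.1} (no three points collinear, no four coplanar), two such segments either coincide (iff their endpoint pairs coincide), share a single endpoint $x_n$ (iff their endpoint pairs share exactly one element), or are disjoint. This establishes the ``if and only if'' statement at the skeleton level.

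To transfer to the open sets, note that when the skeletons are disjoint, compactness gives an open separation preserved for small $\xi$. When skeletons meet at a shared vertex $x_n$, we use that open cigars do not contain their endpoints, so none of the sets $V_{k,i}$ or $W_i$ contain $x_n$ itself; near $x_n$ each set is contained in a narrow conic neighborhood of a 2-plane through $x_n$ determined by $x_n$, one neighboring vertex, and a vector $w_i$, and by item (4) of Proposition \ref{p.1} together with the non-parallel choice of the $w_i$'s these 2-planes are pairwise transverse, so sufficiently small $\xi$ separates the cones. The most delicate case is $V_{k,i}\cap V_{l,i}$ with $k\neq l$: here the skeletons share the full boundary $\partial R_i$. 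However, each $\Delta$ is a swept cigar whose transverse radius $\xi\psi(s)$ vanishes at its endpoints, so $V_{k,i}$ is tapered to zero thickness along $\partial R_i$. Near any interior point $p$ of an edge of $R_i$, the set $V_{k,i}$ lies in a narrow cone with axis pointing from $p$ toward $x_k$, while $V_{l,i}$ lies in one pointing toward $x_l$; since $x_k\neq x_l$ and the angular gap between these directions is uniformly positive by Proposition \ref{p.1}, the tapering rate $\psi(s)\sim 1-|s|$ ensures disjointness of the cones for $\xi$ small. This tapered-cone estimate is the main technical obstacle.

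Finally, since the collection of sets is countable and each pairwise constraint yields an explicit positive upper bound on the relevant $\xi$'s, we choose $\xi_{i,k}$ and $\xi_i$ decaying fast enough (e.g.\ bounded by $2^{-i-k}$ times appropriate geometric constants arising from the $\alpha_{m,n}$'s, $\rho_n$'s, and $\beta_j$'s) so that all pairwise conditions are simultaneously satisfied. The ``if'' directions of items (1) and (3) follow immediately from the definitions: when the endpoint pairs coincide, the two sets share an entire $\Delta$ piece (or, in the mixed $W$-$V$ case, the $t=0$ cross-section of a $W$ sweep coincides with a $\Delta$ axis inside a $V$), which is a nonempty open set.
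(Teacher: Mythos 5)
Your argument is correct, and it reaches the proposition by a genuinely different organization than the paper's proof. The paper works directly with the open sets through the two projections $\pi_3:\mathbb{R}^5\to\mathbb{E}_3$ and $\pi_2:\mathbb{R}^5\to\mathbb{E}_2$: for two sets with the same index $i$ and different apexes it separates the $\pi_3$-images using the positive angles between the planes $\mathcal{P}_{i,k}$ through $x_{n_i},x_{m_i},x_k$ (positivity of the infimum coming from convergence of these planes to the limit plane through $r$), while for $i\ne j$ it shows that the ``moving'' parts of $V_{k,i}$ and $W_i$ have $\pi_2$-image inside the pairwise disjoint cones $K_i$ of Definition \ref{d.coneK}, and the parts near the base are absorbed into the static cigars and balls $B_{x_n}$, whose disjointness is Proposition \ref{p.3}. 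Your skeleton analysis replaces the cone device by the sharper observation that every skeleton point has $\mathbb{E}_2$-component parallel to $w_i$, so cross-index intersections are forced into $\mathbb{E}_3$, where the general-position properties of Proposition \ref{p.1} (and the consequences 7), 8) listed after it) decide everything; your compactness/taper transfer then plays the role of the paper's ``$\xi$ sufficiently small'' assertions, and the tapered-cone estimate you flag as the main obstacle is exactly what the paper elides when it asserts $\pi_3(V_{k,i})\cap\pi_3(V_{k',i})=\emptyset$, so the two arguments are at a comparable level of rigor. Three small cautions, none affecting correctness: near a shared apex $x_k$ the local model is not a single $2$-plane but the tangent cone over $Q_i$ (six planar faces), and the exclusions you need there are the no-three-collinear condition, item 3) of Proposition \ref{p.1} \emph{including the point $r$} --- this is also what gives a threshold on $\xi_{i,k}$ that is uniform over the infinitely many later indices $j$, since for large $j$ the skeletons of $V_{k,j}$, $W_j$ concentrate in thin cones about the needle direction $r-x_k$ (the paper's analogue is the limit plane $\mathcal{P}_{i,\infty}$); ``share an entire $\Delta$ piece'' in the ``if'' direction should read ``contain coaxial cigars along the common segment,'' which already gives a nonempty open intersection; and the statement is restricted to $i\ne j$, so your delicate case $V_{k,i}\cap V_{l,i}$ is strictly extra (the paper also proves it, together with $W_i\cap V_{k,i}=\emptyset$, because these same-index facts are used later in the construction).
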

\begin{proof}[Proof of Proposition \ref{p.support}]
Let us denote by $\pi_2:\mathbb{R}^5\to \mathbb{E}_2$ the projection to the two-dimensional subspace, and by $\pi_3:\mathbb{R}^5\to \mathbb{E}_3$ the projection to the three dimensional subspace. We first consider the case $k\ne l$. For given $i\in \mathbb{N}$, consider the plane $\mathcal{P}_{i,k}$ that contains $x_{n_i}, x_{m_i}$, and $x_k$.
 The family  of planes $\mathcal{P}_{i,k}$ converges to the plane that contains points $x_{n_i}, x_{m_i}$, and \hyperlink{defofr}{$r$}  as $k\to \infty$.  Denote the limit by $\mathcal{P}_{i,\infty}$.  It follows that for each $k$ the infimum of the angles between $\mathcal{P}_{i,k}$ and any $\mathcal{P}_{i,k'}$, $k'\ne k$, is positive. This implies that if $\{\xi_{i,k}\}_{k\in \mathbb{N}}$ are sufficiently small, then for any $k\ne k'$ we have $\pi_3(V_{k, i})\cap \pi_3(V_{k', i})=\emptyset$, hence $V_{k, i}\cap V_{k', i}=\emptyset$.

If $k=l$ and none of the pairs coincide, then $\{n_i, m_i\}\cap \{n_j, m_j\}=\emptyset$. For $\xi_{i, k}, \xi_{j, l}$ small enough,  we have $V_{k,i}\cap V_{l, j}=\emptyset$. Indeed, consider $U_{x_k,  x_{n_i}, \varepsilon_{n_i, k}}\cup B_{x_{n_i}}$. There exists $\delta>0$ such that if $\xi_{i,k}$ is sufficiently small, then for any $t\in [-\delta, \delta]$ we have
$$
g^t_{v}(U_{x_k, x_{n_i}, \xi_{i,k}}) \subset U_{x_k, x_{n_i},  \varepsilon_{n_i, k}}\cup B_{k},
$$
where the vector field $v=v(x_k, x_{n_i}, d_iw_i)$ is given by definition \ref{d.vfield}, and for any 
$t\in [-1, -\delta]\cup [\delta, 1]$ we have that
$
\pi_2(g^{t}_{v}(U_{x_k, x_{n_i}, \xi_{i,k}}))
$
is in the \hyperlink{conesK}{cone $K_i$} (the cone in $\mathbb{E}_2$  of size $\frac{\beta_i}{3}$ around $w_i$, given in  definition \ref{d.coneK} above). Similarly, we will have for any $t\in [-\delta, \delta]$
$$
g^t_{v}(U_{x_k, x_{m_i}, \xi_{i,k}}) \subset U_{x_k, x_{m_i},  \varepsilon_{m_i, k}}\cup B_{x_{m_i}},
$$
where $v=v(x_k, x_{m_i}, d_iw_i)$, and for any $\tau\in [-1, -\delta]\cup [\delta, 1]$ we will have that
$
\pi_2(g^{\tau}_{v}(U_{x_k, x_{m_i}, \xi_{i,k}}))
$
is in the cone $K_i$. 
Finally, by choosing $\xi_{i,k}$ sufficiently small, we can guarantee that for any $s\in [0,1]$ the projection
$
\pi_2(g^{s}_{v}(U_{x_k, x_{n_i}+d_i w_i, \xi_{i,k}})),
$
 where $v=v(x_k, x_{n_i}+d_iw_i, x_{m_i}-x_{n_i})$, is in the cone $K_i$. 
  Since the cones $K_i$ and $K_j$ 
  are disjoint if $i\ne j$, this implies that if $\{n_i, m_i\}\cap \{n_j, m_j\}=\emptyset$, then 
  $V_{k, i}\cap V_{l, j}=\emptyset$ for any $k\in \mathbb{N}\backslash \{n_i, m_i\}$ and any 
  $l\in \mathbb{N}\backslash \{n_j, m_j\}$.
This proves the first item.

 Similar arguments show that if $\{\xi_n\}$ are positive are sufficiently small and go to zero fast enought, then $W_i\cap W_j=\emptyset$ if $i\ne j$.   Indeed, 
  if $\xi_i>0$ are small, then  projections of $W_i$ and $W_j$ to $\mathbb{E}_3$ must be disjoint. Hence the second item holds.

 What is left is to show that
 $W_i\cap V_{k, j}= \emptyset$ if  $\{n_i, m_i\}\ne\{n_j, k\}$ and $\{n_i, m_i\}\ne\{m_j, k\}$. Notice that if $\xi_i$ is small, then $\pi_3(W_i)\subseteq U_{x_{n_i}, x_{m_i}, \varepsilon_{n_i, m_i}}$. In particular, for any $\{n_i, m_i\}\ne\{n_j, k\}$ and $\{n_i, m_i\}\ne\{m_j, k\}$ we have
 \begin{eqnarray}
 \left(\cup_{t=-1}^{1}g^t_{v(x_k, x_{n_j}, d_jw_j)}(U_{x_k, x_{n_j}, \varepsilon_{k, n_j}})\right)\cap&W_i=\emptyset, \notag \\
 \mbox{and}&\notag\\
  \left(\cup_{t=-1}^{1}g^t_{v(x_k, x_{m_j}, d_jw_j)}(U_{x_k, x_{m_j}, \varepsilon_{k, m_j}})\right)\cap&W_i=\emptyset. \notag
\end{eqnarray}
Now, if $\xi_i$ is sufficiently small, for any point $p\in W_i\backslash U_{x_{n_i}, x_{m_i}, \varepsilon_{n_i, m_i}}$ the projection $\pi_2(p)$ is in the cone $K_i$.  Therefore, for any $j\ne i$ we have
$$
\left(\cup_{t=0}^1g^t_{v(x_k, x_{n_j}+d_jw_j, x_{m_j}-x_{n_j})}(U_{x_k, x_{n_j}+d_jw_j, 
 \xi_j })\right)\cap W_i=\emptyset,
$$
and
$$
\left(\cup_{t=0}^1g^t_{v(x_k, x_{m_j}-d_jw_j,  x_{n_j}-x_{m_j})}(U_{x_k, 
x_{m_j}-d_jw_j, \xi_j })\right)\cap W_i=\emptyset.
$$
This implies that $V_{k, j}\cap W_i = \emptyset$.

Finally, since the angles between planes containing the points $x_{n_i}$, $x_{m_i}$, and  $x_{k}$ and the plane containing points $x_{n_i}$, $x_{m_i}$, and  $x_{n_i}+d_iw_i$ are non-zero, if $\xi_i$ and $\xi_{i,k}$ are sufficiently small, we also have
$$
V(x_k, x_{n_i}, x_{m_i}, \xi_{i, k}, d_iw_i)\cap W_i=\emptyset.
$$
Proof of Proposition \ref{p.support} is complete.
\end{proof}
We also would like to choose the sizes $\{\xi_{i,k}\}$ and $\{\xi_i\}$ so small that after we construct the homeomorphisms $\{h_i\}$, a map $h_i$ will not change a relative position $Pos$ of a point too much. Formally, it can be done by the following statement (that follows immediately from Propositions \ref{p.PosV1to4} and \ref{p.PosW1to4}):
\begin{proposition}\label{p.xiPos}
  There exists a collection of positive numbers $\xi_{i,k}$, $\xi_i$, $i\in \mathbb{N}$, $k\in \mathbb{N}$, such that additionally to the conditions stated in Proposition \ref{p.support} the following holds:

  For any points
$$
z_1\in U_{x_k, x_{n_i}, \xi_{i,k}}, \ z_2\in U_{x_k, x_{n_i}+d_iw_i, \xi_{i,k}}, \ z_3\in U_{x_k, x_{m_i}+d_iw_i, \xi_{i,k}}, \ z_4\in U_{x_k, x_{m_i}, \xi_{i,k}}, \
$$
such that
$$
(z_2-z_1) \parallel w_i, \ \ \ (z_3-z_2) \parallel (x_{m_i}-x_{n_i}), \ \ \ (z_4-z_3) \parallel (- w_i), \
$$
we have $$|Pos(z_4)-Pos(z_1)|<\frac{1}{500\cdot 2^i},$$ where $Pos(z_4)$ is a position of $z_4$ in $U_{x_k, x_{m_i}, \xi_{i,k}}$, and $Pos(z_1)$ is a position of $z_1$ in $U_{x_k, x_{n_i}, \xi_{i,k}}$, as defined in (\ref{e.Pos}).

   Also, for any points
$$
z_1\in U_{x_{n_i}, x_{m_i}, \xi_{i}}, \ z_2\in U_{x_{n_i}+d_iw_i, x_{m_i}-d_iw_i, \xi_i}, \ z_3\in U_{x_{n_i}+d_iw_i, x_{m_i}-d_iw_i, \xi_i}, \ z_4\in U_{x_{m_i}, x_{n_i}, \xi_i}, \
$$
such that
$$
(z_2-z_1) \parallel w_i, \ \ \ (z_3-z_2) \parallel (x_{m_i}-x_{n_i}), \ \ \ (z_4-z_3) \parallel (- w_i), \
$$
we have $$|Pos(z_4)-Pos(z_1)|<\frac{1}{500\cdot 2^i},$$ where $Pos(z_4)$ is a position of $z_4$ in $U_{x_{m_i}, x_{n_i}, \xi_i}$, and $Pos(z_1)$ is a position of $z_1$ in $U_{x_{n_i}, x_{m_i}, \xi_i}$, as defined in (\ref{e.Pos}).
\end{proposition}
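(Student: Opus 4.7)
The plan is to invoke Propositions \ref{p.PosV1to4} and \ref{p.PosW1to4} with a carefully chosen $\delta$ that depends on $i$, and then take a pointwise minimum with the upper bounds on $\xi_{i,k}$ and $\xi_i$ that were already used to establish Proposition \ref{p.support}. This is essentially bookkeeping: the hypotheses of the two previously proved propositions match the geometric setup here verbatim.

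First, for each fixed pair $(i,k)$ with $k\notin\{n_i,m_i\}$, I would apply Proposition \ref{p.PosV1to4} to the three noncollinear points $x = x_{n_i}$, $y = x_{m_i}$, $t = x_k$ of $\mathbb E_3$ (noncollinearity follows from property 3 of Proposition \ref{p.1}) and to the vector $w = d_i w_i \in \mathbb E_2$. With the choice
\[ \delta_i \;=\; \frac{1}{500\cdot 2^i}, \]
this produces some $\varepsilon_0^V(i,k)>0$ such that whenever $\xi_{i,k}\le \varepsilon_0^V(i,k)$, the four-cigar position bound in the first half of the statement of Proposition \ref{p.xiPos} holds. Analogously, for each $i$ I would apply Proposition \ref{p.PosW1to4} with $x=x_{n_i}$, $y=x_{m_i}$, $w = d_i w_i$, and $\delta=\delta_i$, producing $\varepsilon_0^W(i)>0$ such that $\xi_i \le \varepsilon_0^W(i)$ gives the second half of the statement.

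Next, let $\xi^{sup}_{i,k}$ and $\xi^{sup}_i$ denote the positive constants supplied by Proposition \ref{p.support} guaranteeing the disjointness statements $(1)$--$(3)$ there. Simply set
\[ \xi_{i,k} \;=\; \min\bigl(\xi^{sup}_{i,k},\,\varepsilon_0^V(i,k)\bigr), \qquad \xi_i \;=\; \min\bigl(\xi^{sup}_i,\,\varepsilon_0^W(i)\bigr). \]
Since the domains $V_{k,i}$ and $W_i$ constructed from the cigars $U_{x_k,x_{n_i},\xi_{i,k}}$, $U_{x_k,x_{m_i},\xi_{i,k}}$ and $U_{x_{n_i},x_{m_i},\xi_i}$ shrink monotonically with the size parameter, the disjointness properties from Proposition \ref{p.support} are preserved, while the new upper bound guarantees the position distortion estimates. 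The hypotheses on collinearity of the differences $z_2-z_1$, $z_3-z_2$, $z_4-z_3$ are exactly those appearing in Propositions \ref{p.PosV1to4} and \ref{p.PosW1to4}, so the conclusions transfer directly.

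The only minor subtlety I anticipate, and the one worth flagging, is verifying that the noncollinearity and nondegeneracy hypotheses of the preceding two propositions are all met uniformly: specifically that $d_i w_i\in\mathbb E_2$ is never parallel to $x_{m_i}-x_{n_i}\in\mathbb E_3$ (immediate since $\mathbb E_2\perp\mathbb E_3$ and $d_iw_i\ne 0$), and that $x_k,x_{n_i},x_{m_i}$ are noncollinear in $\mathbb E_3$ whenever the first alternative applies (this is property 3 of Proposition \ref{p.1}). Once these are checked, there is nothing further to prove, since the choice of $\delta_i=1/(500\cdot 2^i)$ is exactly the bound appearing in the conclusion. I expect no substantive obstacle; the work has already been done in Propositions \ref{p.support}, \ref{p.PosV1to4}, and \ref{p.PosW1to4}.
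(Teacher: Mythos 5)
Your proposal is correct and matches the paper's argument: the paper simply asserts that the proposition ``follows immediately from Propositions \ref{p.PosV1to4} and \ref{p.PosW1to4},'' and your write-up spells out exactly that application (with $\delta=1/(500\cdot 2^i)$ and $w=d_iw_i$) together with taking minima against the constants from Proposition \ref{p.support}. The nondegeneracy checks you flag (noncollinearity from Proposition \ref{p.1} and $\mathbb{E}_2\perp\mathbb{E}_3$) are the right ones and pose no obstacle.
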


Now we would like to choose the sizes of the initial cigars so small that after we construct a homeomorphism $h_i$, their images will still  be contained in the cigars of sizes dictated by Propositions  \ref{p.support}  and \ref{p.xiPos}. Let us start with the following simple statement:

\begin{proposition}
  Suppose a collection of positive numbers $\xi_{i,k}$, $\xi_i$, $i\in \mathbb{N}$, $k\in \mathbb{N}$, satisfies the conditions stated in Propositions \ref{p.support} and \ref{p.xiPos}. Then there exists a collection of positive numbers $\xi_{i,k}'$, $\xi_i'$, $i\in \mathbb{N}$, $k\in \mathbb{N}$, such that the following inclusions hold for all $i\in \mathbb{N}$ and $k\ne n_i, m_i$:
  $$
  g^1_{v(x_k, x_{n_i}, d_jw_j)}(U_{x_k, x_{n_i}, \xi_{i, k}'})\subset U_{x_k, x_{n_i}+d_iw_i, \frac{1}{2}\xi_{i, k}},
  $$
  $$
  g^1_{v(x_k,  x_{n_i}+d_iw_i, x_{m_i}-x_{n_i})}\circ g^1_{v(x_k, x_{n_i}, d_iw_i)}(U_{x_k, x_{n_i}, \xi_{i, k}'})\subset U_{x_k, x_{m_i}+d_iw_i, \frac{1}{2}\xi_{i, k}},
  $$
 $$
 g^1_{v(x_k, x_{n_i}+d_iw_i, -d_iw_i)} \circ g^1_{v(x_k,  x_{n_i}+d_iw_i, x_{m_i}-x_{n_i})}\circ g^1_{v(x_k, x_{n_i}, d_iw_i)}(U_{x_k, x_{n_i}, \xi_{i, k}'})\subset U_{x_k, x_{m_i}, \frac{1}{2}\xi_{i, k}},
  $$
  $$
  g^1_{v(x_k, x_{m_i}, -d_jw_j)}(U_{x_k, x_{m_i}, \xi_{i, k}'})\subset U_{x_k, x_{m_i}-d_iw_i, \frac{1}{2}\xi_{i, k}},
  $$
  $$
  g^1_{v(x_k,  x_{m_i}-d_iw_i, x_{n_i}-x_{m_i})}\circ g^1_{v(x_k, x_{m_i}, -d_iw_i)}(U_{x_k, x_{m_i}, \xi_{i, k}'})\subset U_{x_k, x_{n_i}-d_iw_i, \frac{1}{2}\xi_{i, k}},
  $$
 $$
 g^1_{v(x_k, x_{n_i}-d_iw_i, d_iw_i)} \circ g^1_{v(x_k,  x_{m_i}-d_iw_i, x_{n_i}-x_{m_i})}\circ g^1_{v(x_k, x_{m_i}, -d_iw_i)}(U_{x_k, x_{m_i}, \xi_{i, k}'})\subset U_{x_k, x_{n_i}, \frac{1}{2}\xi_{i, k}},
  $$
   $$
  g^1_{v(\frac{1}{2}(x_{n_i}+x_{m_i}), x_{n_i}, d_jw_j)}(U_{x_{n_i}, x_{m_i}, \xi_{i}'})\subset U_{x_{n_i}+d_iw_i, x_{m_i}-d_iw_i, \frac{1}{2}\xi_{i}},
  $$
 $$
  g^1_{v(\frac{1}{2}(x_{n_i}+x_{m_i}), x_{n_i}+d_iw_i, x_{m_i}-x_{n_i})}\circ\\
   g^1_{v(\frac{1}{2}(x_{n_i}+x_{m_i}), x_{n_i}, d_jw_j)}(U_{x_{n_i}, x_{m_i}, \xi_{i}'})\subset U_{x_{m_i}+d_iw_i, , x_{n_i}-d_iw_i, \frac{1}{2}\xi_{i}},
  $$

\begin{eqnarray*}
 g^1_{v(\frac{1}{2}(x_{n_i}+x_{m_i}), x_{m_i}+d_iw_i, -d_iw_i)} \circ g^1_{v(\frac{1}{2}(x_{n_i}+x_{m_i}), x_{n_i}+d_iw_i, x_{m_i}-x_{n_i})}\circ\\
  g^1_{v(\frac{1}{2}(x_{n_i}+x_{m_i}), x_{n_i}, d_jw_j)}(U_{x_{n_i}, x_{m_i}, \xi_{i}'})\subset U_{x_{m_i}, x_{n_i}, \frac{1}{2}\xi_{i}}.
\end{eqnarray*}
\end{proposition}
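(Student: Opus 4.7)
The nine displayed inclusions reduce to iterations of a single elementary lemma: for each affine vector field $v = v(a,b,w)$ as in Definition \ref{d.vfield} and each $\eta > 0$, there is $\zeta_0 = \zeta_0(v,\eta) > 0$ such that $g^1_v(U_{a,b,\zeta}) \subset U_{a,b+w,\eta}$ for every $\zeta \le \zeta_0$. Three of the stated inclusions are a single application of this lemma, three are a composition of two, and three are a composition of three. Granting the lemma, the plan is to work backwards along each chain to produce an admissible starting size: fix the ultimate target size $\tfrac12\xi_{i,k}$ at the last flow, apply the lemma to obtain the required initial size for that flow, treat this as the target size for the next-to-last flow, apply the lemma again, and continue until the beginning of the chain. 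Setting $\xi_{i,k}'$ equal to this innermost size (and $\xi_i'$ analogously for the three chains built from $\mathbf{W}_i$) produces the required collection.

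To prove the elementary lemma, I write $v = w\phi$ as in Remark \ref{r.psibyv}, where $\phi = \phi_{a,b,w}$ is affine with $\phi(a) = 0$ and $\phi(b) = 1$. Condition (3) of Definition \ref{d.vfield} reads $(v\cdot\nabla)v = 0$; since $(v\cdot\nabla)v = w\phi(w\cdot\nabla\phi)$ and $\phi \not\equiv 0$, this forces $w\cdot\nabla\phi = 0$, so $\phi$ is constant in the direction of $w$. Therefore $\phi$, and hence $v$ itself, is constant along its own orbits, so the time-one map is given explicitly by $g^1_v(z) = z + w\phi(z)$, an affine bijection of $\mathbb{R}^5$ that fixes $a$ and sends $b$ to $b+w$. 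A direct calculation using the linear interpolation $\phi(\gamma_{a,b}(t)) = (1+t)/2$ shows $g^1_v(\gamma_{a,b}(t)) = \gamma_{a,b+w}(t)$ for every $t \in [-1,1]$, so the central axis of $U_{a,b,\zeta}$ is carried onto the central axis of $U_{a,b+w,\eta}$ parameter for parameter.

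Let $C$ denote the operator norm of the linear part $I + w\otimes\nabla\phi$ of $g^1_v$. A point $z\in U_{a,b,\zeta}\cap\Pi_{a,b}(t)$ lies within distance $\zeta\psi(t)$ of $\gamma_{a,b}(t)$, so $g^1_v(z)$ lies within distance $C\zeta\psi(t)$ of $\gamma_{a,b+w}(t)$ in the tilted hyperplane $g^1_v(\Pi_{a,b}(t))$. Re-expressing the image in the coordinates $(s,\mathrm{offset})$ adapted to $U_{a,b+w,\eta}$ yields $s = t + O(\zeta)$, an offset of norm at most $C\zeta\psi(t)(1+O(\zeta))$, and an admissible radius $\eta\psi(s) = \eta\psi(t)(1+O(\zeta))$. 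Because $\psi'(\pm 1)\ne 0$, the pinching of image cigar and target cigar is matched at linear order in $1\mp t$ near the endpoints, so the inclusion holds once $\zeta \le \eta/(3C)$ and is shrunk further to absorb the $O(\zeta)$ errors. For each pair $(i,k)$ the chain has length at most three and its geometric data is fixed by $\{x_n\}$ and $\{w_i\}$, so iterating the lemma poses no uniformity problem. The main obstacle is exactly the matched pinching at the endpoints of the cigars: near each anchor, both the image offset and the admissible target radius must vanish at the same linear rate in the distance to the endpoint, and this works out only because $g^1_v$ is affine, fixes one anchor, and sends the other endpoint precisely to the corresponding target endpoint.
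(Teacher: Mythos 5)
Your argument is correct, and it is worth noting that the paper gives no proof of this proposition at all: it is introduced as a ``simple statement'' and used immediately, so your write-up supplies exactly the verification the authors left implicit. Your key observations are the right ones: condition (3) of Definition \ref{d.vfield} forces the affine factor to be constant in the direction of $w$, so the time-one map is the affine map $z\mapsto z+\phi(z)w$, and an affine map carries $\gamma_{p,q}(t)$ to $\gamma_{g(p),g(q)}(t)$ parameter-for-parameter; this is precisely why the linear pinching of the image cigar and of the target cigar match at the endpoints, after which the inclusion follows for $\zeta$ small compared with $\eta$ and the fixed geometric data, and the backwards induction along each chain of at most three flows yields admissible $\xi'_{i,k}$, $\xi'_i$. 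Two small adjustments should be made. First, your lemma is formulated only for cigars anchored at the fixed point $a$ of the field, which covers the six $V$-type inclusions; in the three $W_i$-inclusions the source cigar is $U_{x_{n_i},x_{m_i},\xi_i}$ and both of its endpoints move (the field vanishes on the plane through the midpoint $\tfrac{1}{2}(x_{n_i}+x_{m_i})$, with $\phi(x_{n_i})=1$ and $\phi(x_{m_i})=-1$). Nothing breaks, because the only property actually used is that the affine time-one map sends the two endpoints of the source cigar onto the two endpoints of the target cigar, which forces axis-to-axis parameter matching; but the lemma should be stated in that form rather than with ``fixes one anchor.'' Second, the proposition also asserts the inclusions for the partial compositions, all with target size $\tfrac{1}{2}\xi_{i,k}$ (resp.\ $\tfrac{1}{2}\xi_i$), so in the backwards induction the intermediate target sizes must be capped by $\tfrac{1}{2}\xi_{i,k}$, or equivalently $\xi'_{i,k}$ should be taken as the minimum of the sizes produced for the three chains sharing a given source cigar; this is a one-line fix.
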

Notice that the same pair of indices of vertices can serve as a pair $(n_i, k)$ for some $i\in \mathbb{N}$, and at the same time, as a pair $(n_j, m _j)$ for some $j\ne i$, which gives several restrictions on the size of an initial ``cigar'' with vertices $x_n$ and $x_m$. So we will set
$$
  \varepsilon_{m,n}'=\min\{\xi_{i,k}', \xi_i'\ |\ \{m,n\}=\{n_i, k\}\ \text{or}\ \{m,n\}=\{m_i, k\} \ \text{or}\  \{m,n\}=\{n_i, m_i\}\ \}
$$
Because any $n$ is only moved by finitely many $\{\varphi_i\}$ 
the minimum above is taken over a finite set. Now we can apply the arguments from Section \ref{ss.shrink} and start with the initial cigars of sizes $\{\varepsilon_{m,n}'\}$.

\subsubsection{\bf Coating the needles}

Here we choose a sequence of cigars $U_{r, x_n, \tilde \theta_n}$ around the ``needles'' of $\mathbf{I}$. After that we will make the sizes of those cigars even smaller, so we reserve the notation $\{\theta_n\}$ for the preliminary sizes of those neighborhoods, and will use $\{\tilde \theta_n\}$ for the adjusted sizes.

\begin{lemma}\label{l.O3}
There exists a sequence $\{\tilde\theta_n\}_{n\in \mathbb{N}}$ of positive numbers such the following hold: \\
 1) For any $m\ne n$ one has $U_{r, x_n, \tilde\theta_n} \cap U_{r, x_m, \tilde\theta_m} =\emptyset$ \\
 2) 
 For any distinct $m, n, k$ that are all different we have 
 \[U_{r, x_n, \tilde\theta_n} \cap U_{x_m, x_k, \varepsilon_{m,k}}=\emptyset\]
 3) If $\{n_i, m_i\}\cap \{n_j, m_j\}=\emptyset$, then
 $$
 V(r, x_{n_i}, x_{m_i}, \tilde\theta_{n_i}, d_iw_i) \cap V(r, x_{n_j}, x_{m_j}, \tilde\theta_{n_j}, d_jw_j) =\emptyset,
 $$
 and for any $k\in \mathbb{N}$
  $$
 V(r, x_{n_i}, x_{m_i}, \tilde\theta_{n_i}, d_iw_i) \cap V(x_k, x_{n_j}, x_{m_j}, \xi_{j, k}, d_jw_j) =\emptyset
 $$
 4) For any $i, j\in \mathbb{N}$ (including the case when $j=i$) we have $$W_j\cap V(r, x_{n_i}, x_{m_i}, \tilde\theta_{n_i}, d_iw_i)=\emptyset$$
\end{lemma}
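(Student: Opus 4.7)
\medskip
\noindent\emph{Proof proposal.} The plan is to construct the sequence $\{\tilde\theta_n\}$ inductively, at stage $n$ shrinking $\tilde\theta_n$ to satisfy all four conditions restricted to indices $\le n$. Each condition is an open constraint in $\tilde\theta_n$, and only finitely many such constraints involve the index $n$ at stage $n$; conditions pairing $n$ with some future index $m>n$ will be guaranteed later by choosing $\tilde\theta_m$ still smaller. The central geometric input is property (6) of Proposition \ref{p.1}: for each $n$ the needle $\mathbf{I}_n$ makes angle at least $\eta_n>0$ with every ray $[r,y]$ for $y\in I_{k,m}$ and $n\notin\{k,m\}$. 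Combined with the fact that the radius profile $\psi(t)=(1-t^2)/2$ forces the cigar $U_{r,x_n,\tilde\theta_n}$ to have radius $\tilde\theta_n\psi(t)$ vanishing at both endpoints, this lets us trap $U_{r,x_n,\tilde\theta_n}$ inside any prescribed narrow cone around $\mathbf{I}_n$ near $r$, and inside any prescribed ball around $x_n$.

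For property (1), two distinct needles $\mathbf{I}_n, \mathbf{I}_m$ meet only at $r$ (which lies on the boundary of either cigar), and make a positive angle at $r$ by properties (3) and (5) of Proposition \ref{p.1}; away from $r$ the needles have positive distance since $x_n\neq x_m$ and by property 7/8. Taking $\tilde\theta_n,\tilde\theta_m$ smaller than a fixed multiple of $\sin\angle(\mathbf{I}_n,\mathbf{I}_m)$ and of $\mathrm{dist}(\mathbf{I}_n,\mathbf{I}_m)$ (away from $r$) gives disjointness. For property (2), the edge $I_{m,k}$ with $n\notin\{m,k\}$ is compact, disjoint from $\mathbf{I}_n$, and angularly separated from $\mathbf{I}_n$ at $r$ by $\eta_n$. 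Since $\overline{U_{x_m,x_k,\varepsilon_{m,k}}}$ is a compact thickening of $I_{m,k}$ that does not accumulate at $r$, a small enough $\tilde\theta_n$ makes $U_{r,x_n,\tilde\theta_n}$ miss it.

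For (3) and (4), I would unpack $V(r,x_{n_i},x_{m_i},\tilde\theta_{n_i},d_i w_i)$ as the union of six $\Delta$-regions, each a sweep of a cigar along a vector either in $\mathbb{E}_2$ (namely $\pm d_i w_i$) or along the edge direction $x_{m_i}-x_{n_i}$. Because $w_i\in\mathbb{E}_2$ while $r,x_{n_i},x_{m_i}\in\mathbb{E}_3$, the projection $\pi_3(V)$ lies in a neighborhood of $\mathbf{I}_{n_i}\cup\mathbf{I}_{m_i}\cup I_{n_i,m_i}$, while the $\mathbb{E}_2$-projection of the off-needle pieces sits in the cone $K_i$ from Definition \ref{d.coneK}. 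When $\{n_i,m_i\}\cap\{n_j,m_j\}=\emptyset$, the $\mathbb{E}_3$-projections of the needle pieces are disjoint by properties (7), (8), so for small enough $\tilde\theta_{n_i}$ the needle portions are disjoint, while the cigar-off-needle portions project into disjoint cones $K_i,K_j$ and are thus separated as in the proof of Proposition \ref{p.support}. The second half of (3) and property (4) follow by the same cone/projection dichotomy using the $\xi_{j,k}$ and $\xi_j$ already fixed. The main obstacle is purely bookkeeping: every needle $\mathbf{I}_n$ meets every other at $r$, so the construction must exploit the vanishing of $\psi$ at the endpoints and the positive (though possibly tiny) pairwise angles at $r$; once one sees that all four disjointness requirements are open in $\tilde\theta_n$ with only finitely many live at each inductive stage, the inductive shrinking closes the proof.
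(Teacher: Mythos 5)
Your proposal is correct and follows essentially the same route as the paper: conditions 1) and 2) come from property 6) of Proposition \ref{p.1} (the uniform angular gap $\eta_n$ around the needle $\mathbf{I}_n$, together with the tapering of the cigars) by taking $\tilde\theta_n$ sufficiently small, and conditions 3) and 4) are handled by repeating the cone/projection arguments ($K_i$, $\pi_2$, $\pi_3$) from the proof of Proposition \ref{p.support}. Your inductive bookkeeping merely makes explicit what the paper leaves as a two-sentence sketch.
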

\begin{proof}[Proof of Lemma \ref{l.O3}]
Let us recall that the vertices $\{x_n\}\subset \mathbb{R}^5$ were chosen in such a way that properties listed in Proposition \ref{p.1} are satisfied. In particular, due to property 6) from Proposition \ref{p.1}, it is enough to choose some $\theta'_n<\eta_n$ for each $n\in \mathbb{N}$ to make sure that conditions 1) and 2) %
 from Lemma \ref{l.O3} are satisfied for any $\tilde\theta_n\le \theta_n'$.

The arguments to show that for sufficiently small $\tilde \theta_n$ the properties 3) and 4) hold are parallel to the arguments used in the proof of Lemma \ref{p.support}, so we will omit them here.
\end{proof}

Let us now explain how we are going to choose the actual sizes $\{\theta_n\}$ of the neighborhoods $U_{r, x_n, \theta_n}$. Recall from  definition \ref{d.JTn}:
$$
J(n)=\min\{j\in \mathbb{N} \ |\ \text{ for all }  m\in Cont_n, \ m \ \text{is frozen at stage $j$} \}
$$
and
$$
T(n)=\max\{n_i, m_i \ |\ i\le J(n)\},
$$
where $n_i, m_i$ are the elements of $\mathbb{N}$ that are swapped by $\varphi_i$.

For each $n\in \mathbb{N}$ choose small $\theta_n>0$ such that $\theta_n\le \tilde \theta_n$, and $U_{x_n, r, \theta_n}\cap U_{x_n, x_m, \varepsilon_{n,m}}=\emptyset$ for all $m\le T(n)$.
\begin{remark}\label{r.mnmoving}
 Suppose $U_{x_n, r, \theta_n}\cap U_{x_n, x_k, \varepsilon_{n,k}}\ne\emptyset$. Then:
 \begin{enumerate}
 \item $k\ge T(n)$.
\item If $h_i$ moves $x_k$, then for any $j\ge i$ the map $h_j$ does not move $x_n$ (in other words, $n$ is frozen at stage $i$). 
\end{enumerate}
\end{remark}

Let us define the sets $Base_i\subset \mathbb{R}^5$ as
\begin{equation}\label{e.Basei}
Base_i=\left(\bigcup_{n\in \mathbb{N}}B_{x_n}\right)\bigcup \left(\bigcup_{n\ne m}U_{x_n, x_m, \varepsilon_{n,m}'}\right)\bigcup \left(\bigcup_{n \text{ is not frozen at stage $i$}}U_{x_n, r, \theta_n}\right)
\end{equation}

\subsubsection{\bf Construction of homeomorphisms $\{h_i\}$.}
Here we present the construction of the homeomorphisms $\{h_i\}$. Fix an $i$ for the rest of this subsection.
\smallskip

We will need the following simple statement:
\begin{lemma}\label{l.contpsi}
Suppose $U\subset \mathbb{R}^d$ is bounded open set, and $\mathfrak{K}\subset \bar U$ is compact. Suppose $\psi:\mathfrak{K}\to \mathbb{R}$ is a continuous function such that $\psi(x)=0$ for any point $x\in \mathfrak{K}\cap \partial U$. Then there exists a continuous function $\Psi:\bar U\to \mathbb{R}$ such that $\Psi|_{\mathfrak{K}}=\psi$, and $\Psi|_{\partial U}=0$.
\end{lemma}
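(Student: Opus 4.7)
The plan is to reduce this to a direct application of the Tietze extension theorem after verifying that the ``obvious'' extension to $\mathfrak{K}\cup\partial U$ is well-defined and continuous.

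First I would define an auxiliary function $\tilde\Psi:\mathfrak{K}\cup\partial U\to\mathbb{R}$ by
\[
\tilde\Psi(x)=\begin{cases}\psi(x) & \text{if } x\in\mathfrak{K},\\ 0 & \text{if } x\in\partial U.\end{cases}
\]
The hypothesis that $\psi(x)=0$ on $\mathfrak{K}\cap\partial U$ ensures the two clauses agree on the overlap, so $\tilde\Psi$ is well-defined. Both $\mathfrak{K}$ (compact, hence closed in $\bar U$) and $\partial U$ (closed in $\mathbb{R}^d$, hence closed in $\bar U$) are closed subsets of $\bar U$, and $\tilde\Psi$ restricts to a continuous function on each (the given $\psi$ on $\mathfrak{K}$, and the zero function on $\partial U$). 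By the pasting lemma for finite closed covers, $\tilde\Psi$ is continuous on $\mathfrak{K}\cup\partial U$.

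Next I would observe that $\mathfrak{K}\cup\partial U$ is a closed subset of $\bar U$, and $\bar U$ is a closed subset of $\mathbb{R}^d$, hence a normal topological space (in fact metrizable). The Tietze extension theorem therefore produces a continuous extension $\Psi:\bar U\to\mathbb{R}$ of $\tilde\Psi$. By construction $\Psi|_{\mathfrak{K}}=\psi$ and $\Psi|_{\partial U}\equiv 0$, which is exactly the conclusion.

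There is no real obstacle here; the only thing to be careful about is the well-definedness at points of $\mathfrak{K}\cap\partial U$ (which is precisely why the hypothesis $\psi|_{\mathfrak{K}\cap\partial U}\equiv 0$ was imposed) and the verification that $\mathfrak{K}\cup\partial U$ is closed in $\bar U$ so that Tietze applies. If one wanted to avoid invoking Tietze, one could give a hands-on construction using a partition of unity subordinate to a countable cover of $\bar U\setminus(\mathfrak{K}\cup\partial U)$ by balls whose radii are a fixed fraction of their distance to $\mathfrak{K}\cup\partial U$, weighting $\tilde\Psi$ at a nearest point; but invoking Tietze is cleaner.
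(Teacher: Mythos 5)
Your argument is correct: the pasted function on the closed set $\mathfrak{K}\cup\partial U$ is well-defined (thanks to the hypothesis $\psi|_{\mathfrak{K}\cap\partial U}\equiv 0$) and continuous by the gluing lemma, and Tietze then yields the desired $\Psi$ on $\bar U$. The paper states Lemma \ref{l.contpsi} without proof, calling it a simple statement, and your Tietze-based extension is exactly the standard justification the authors evidently had in mind, so nothing further is needed.
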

We will construct $h_i$ as a composition of six maps,
$$
h_i=R_i\circ \hat h_i\circ h^{\uparrow}_i\circ h^{\rightarrow}_i\circ h_i^{\uparrow}\circ S_i.$$

The homeomorphism $S_i$ will shrink the sizes of the cigars connecting the $x_k$'s and $r$ to the points $x_{m_i}$ and $x_{n_i}$ and the cigar connecting $x_{m_i}$ to $x_{n_i}$. This is done to make them small enough that moving them along the paths determined by the superscripted $h_i$'s keeps their intersections with the other cigars very small.   The final homeomorphism $R_i$ will increase the sizes back to match the sizes of the target cigars.

Let us describe each of them separately.

\vspace{4pt}

\hypertarget{Si}{\noindent{\bf Definition of $S_i$.}}\\

The homeomorphism $S_i$ will be supported on
\begin{equation}\label{e.union}
\left(\bigcup_{k\ne n_i, m_i}\left(U_{x_k, x_{n_i}, \varepsilon_{k, n_i}}\cup U_{x_k, x_{m_i}, \varepsilon_{k, m_i}}\right)\right)\cup U_{x_{n_i}, x_{m_i}, \varepsilon_{n_i, m_i}}
\end{equation}
All sets in this union are pairwise disjoint, so one can define $S_i$ on each of them separately. For example, construct $S_i$ on $U_{x_k, x_{n_i}, \varepsilon_{k, n_i}}$. This will depend on a small positive parameter $\delta$ which in turn depends on $k$ and $i$. The size of $\delta$ will be specified later. 
Let $S_i$ restricted to 
\hyperlink{U no epsilon}{$\mathbf{U}_{x_k, x_{n_i}}=U_{x_k, x_{n_i}, \frac{1}{2}\varepsilon_{k, n_i}}$} 
be a contraction toward $\gamma_{x_k, x_{n_i}}$ (the line that connects $x_{k}$ and $x_{n_i}$) with coefficient 
$\frac{2\delta}{\varepsilon_{k, n_i}}$, so that $S_i(\mathbf{U}_{x_k, x_{n_i}})=U_{x_k, x_{n_i}, \delta}$. 
Also, we will require that $S|_{\partial U_{x_k, x_{n_i}, \varepsilon_{k, n_i}}}=id$, and for any ray 
$\mathfrak{r}$ that originates at a point of $\gamma_{x_k, x_{n_i}}$ and is orthogonal to it, $S_i$ is defined on $\mathfrak{r}\cap \left(U_{x_k, x_{n_i}, \varepsilon_{k, n_i}}\backslash \mathbf{U}_{x_k, x_{n_i}}\right)$ as an affine map that sends 
it to $\mathfrak{r}\cap \left(U_{x_k, x_{n_i}, \varepsilon_{k, n_i}}\backslash U_{x_k, x_{n_i}, \delta}\right)$. Notice that these properties define $S_i$ on $U_{x_k, x_{n_i}, \varepsilon_{k, n_i}}$ uniquely.

\vspace{4pt}

\noindent{\bf Definition of $h^{\uparrow}_i$.}\\
Let us apply Lemma \ref{l.contpsi} to an open set $U$ given by the union
\begin{equation}\label{e.U}
U=B_{Q_i}\cup W_i\cup \left(\cup_{k\ne n_i, m_i}V_{k, i}\right)\cup  V(r, x_{n_i}, x_{m_i}, \theta_i, d_iw_i)
\end{equation}
\hyperlink{Deltaxyeps}{Define}
	\begin{eqnarray*}
	\mathfrak{K}=&\overline{\bigcup_{k\ne n_i, m_i}\left({\Delta(x_k, x_{n_i}, \frac{1}{2}\xi_{k, i}, \pm d_iw_i)}	
	\cup \Delta(x_k, x_{m_i}, \frac{1}{2}\xi_{k, i}, \pm d_iw_i) \right)}
	\\
 	& \cup\ \overline{W(x_{n_i}, x_{m_i}, \frac{1}{2}\xi_i, d_iw_i)}
	\end{eqnarray*}
Notice that $\mathfrak{K}\subset \bar U$, and $\mathfrak{K}\cap \partial U=\{x_k\}_{k\ne n_i, m_i }\cup \{r\}$. 
Now  define $\psi:\mathfrak{K}\to \mathbb{R}$ in the following way:

\begin{itemize}
	\item If $x\in \Delta(x_k, x_{n_i}, \frac{1}{2}\xi_{k, i}, d_iw_i)\cup \Delta(x_k, x_{n_i}, \frac{1}{2}\xi_{k, i}, -d_iw_i)$, then $\psi$ is given by Remark \ref{r.psibyv} applied to $v=v(x_k, x_{n_i}, d_iw_i)$.
	\item If $x\in \Delta(x_k, x_{m_i}, \frac{1}{2}\xi_{k, i}, -d_iw_i)\cup \Delta(x_k, x_{m_i}, \frac{1}{2}\xi_{k, i}, d_iw_i)$, then $\psi$ is given by Remark \ref{r.psibyv} applied to $v=v(x_k, x_{m_i}, -d_iw_i)$.
	\item If $x\in \overline{W(x_{n_i}, x_{m_i}, \frac{1}{2}\xi_i, d_iw_i)}$, then $\psi$ is given by Remark \ref{r.psibyv} applied to $v=v(\frac{x_{n_i}+x_{m_i}}{2}, x_{n_i}, d_iw_i)$.
\end{itemize}

By continuity, $\psi$ is defined on the whole set $\mathfrak{K}$, including the needles 
$U_{r, x_{m_i}, \tilde{\theta}_{m_i}}$ and $U_{r, x_{n_i}, \tilde{\theta}_{n_i}}$. 
Since $\mathfrak{K}\cap \partial U=\{x_k\}_{k\ne n_i, m_i}\cup\{r\}$, it is clear that $\psi |_{\mathfrak{K}\cap \partial U}=0$. Moreover using the techniques involving the function \hyperlink{flatfixed}{$\Theta$} (Item \ref{propsoftheta}) used in Definition \ref{plusnminus}, we can assume that every element of $\mathbf{I}$ is a flat fixed point of $\psi$.

So we can apply Lemma \ref{l.contpsi} to construct a continuous function $\Psi:\mathbb{R}^5\to \mathbb{R}$, such that $\Psi|_{\mathfrak{K}}=\psi$ and $\Psi|_{\mathbb{R}^5\backslash U}=0$. 

Define $h^{\uparrow}_i$ to be  a time-one shift along the vector filed $\Psi\cdot \bar w_i$, where $\bar w_i$ is a constant vector field in the direction of $w_i$.

\vspace{4pt}

\noindent{\bf Definition of $h^{\rightarrow}_i$.}\\
 We will use a similar strategy. First, define $U$ in the same way as in (\ref{e.U}).
Now define
\begin{align*}
\mathfrak{K}=&\overline{\bigcup_{k\ne n_i, m_i}\Delta(x_k, x_{n_i}+w_i, \frac{1}{2}\xi_{k,i}, x_{m_i}-x_{n_i})}\cup \overline{\bigcup_{k\ne n_i, m_i}\Delta(x_k, x_{m_i}-w_i, \frac{1}{2}\xi_{k,i}, x_{n_i}-x_{m_i})}\cup\\
 &\overline{W(x_{n_i}, x_{m_i}, \frac{1}{2}\xi_i, d_iw_i)}
\end{align*}
Notice that $\mathfrak{K}\subset \bar U$, and 
	\begin{equation*}
	\mathfrak{K}\cap \partial U=\{x_k\}_{k\ne n_i, m_i }\cup \{r\}
	\end{equation*}
Let us define a function $\psi:\mathfrak{K}\to \mathbb{R}$ in the following way:
	\begin{itemize}
	\item If $x\in \Delta(x_k, x_{n_i}+w_i, \frac{1}{2}\xi_{k,i}, x_{m_i}-x_{n_i})$, then $\psi$ is given by 
	Remark \ref{r.psibyv} applied to $v=v(x_k, x_{n_i}+d_iw_i, x_{m_i}-x_{n_i})$.\\
	\item If $x\in \Delta(x_k, x_{m_i}-w_i, \frac{1}{2}\xi_{k,i}, x_{n_i}-x_{m_i})$, then  $\psi$ is given by 
	Remark \ref{r.psibyv} applied to $v=v(x_k, x_{n_i}-d_iw_i, x_{n_i}-x_{m_i})$.\\
	\item If $x\in \overline{W(x_{n_i}, x_{m_i}, \frac{1}{2}\xi_i, d_iw_i)}$, then $\psi$ is given by 
	Remark \ref{r.psibyv} applied to $v=v(\frac{x_{n_i}+x_{m_i}}{2}, x_{n_i}+d_iw_i, x_{m_i}-x_{n_i})$.
	\end{itemize}

Exactly in the same way as in definition of $h_i^{\uparrow}$, by continuity $\psi$ is defined on the whole set $\mathfrak{K}$. Since $\mathfrak{K}\cap \partial U=\{x_k\}_{k\ne n_i, m_i}\cup\{r\}$, it is clear that $\psi|_{\mathfrak{K}\cap \partial U}=0$. So we can apply Lemma \ref{l.contpsi} to construct a continuous function $\Psi:\mathbb{R}^5\to \mathbb{R}$, such that $\Psi|_{\mathfrak{K}}=\psi$ and $\Psi|_{\mathbb{R}^5\backslash U}=0$.

Define $h^{\rightarrow}_i$ to be a time-one shift along the vector filed $\Psi\cdot v$, where $v$ is a constant vector field in the direction of $x_{m_i}-x_{n_i}$.

\

\noindent{\bf Definition of $\hat h_i$.}\\
To define $\hat h_i$, we will need the following statement:
\begin{lemma}\label{l.lcon}
Let $L:\mathbb{R}^5\to \mathbb{R}^5$ be an affine map that sends an interval $I=[x, y]$ into the interval $I'=[x', y']$. Suppose open sets $U_{x_1, x_2, \varepsilon}$ and  $U_{x_1', x_2', \varepsilon'}$ for some $\varepsilon, \varepsilon'>0$ are two cigars given via definition \ref{cigars}. Fix a small $\mu>0$.  Then for any sufficiently small $\delta, \delta'>0$ the following holds.

Let $\Xi$ (respectively, $\Xi'$) be a real valued $C^\infty$ function  that it is positive in   $U_{x, y, \varepsilon}$ (respectively, in $U_{x', y', \varepsilon'}$) and vanishes outside of it. Let $w$ (respectively, $w'$) be a vector field orthogonal to the line $\gamma_{x, y}$ containing points $x$ and $y$  (respectively, to the line $\gamma_{x', y'}$), directed towards this line, and such that $\|w(z)\|=\text{dist}(z, \gamma_{x, y})$ (respectively, $\|w'(z)\|=\text{dist}(z, \gamma_{x', y'})$). Define  $f$ (respectively $f'$) to be the time-1 shift along the flow defined by the vector field $\Xi\cdot w$ (respectively, $\Xi'\cdot w'$).

Define a map $S^*:U_{x, y, \varepsilon}\to U_{x, y, \delta}$ as a contraction toward the line $\gamma_{x, y}$ with coefficient $\frac{\delta}{\varepsilon}<1$. Similarly, define a map $R^*:U_{x', y', \delta'}\to U_{x', y', \varepsilon'}$ as an inverse of a contraction toward the line $\gamma_{x', y'}$ with coefficient $\frac{\delta'}{\varepsilon'}<1$. 
There exists a homeomorphism
$$
\hat h:\overline{U_{x', y', \varepsilon'}}\to \overline{U_{x', y', \varepsilon'}}, \ 
$$
 such that

\vspace{3pt}
\begin{enumerate}
	\item $\hat h|_{\partial U_{x', y', \varepsilon}}=\text{id}$,
	\item $\hat h(L(U_{x, y, \delta}))=U_{x', y', \delta'}$,
	\item   if $\tilde h=R^*\circ \hat h\circ L\circ S^*$, then $f'\circ \tilde h=\tilde h\circ f$,
	
	\item $\text{dist}_{C^0}(id, \hat h)<\mu$.
\end{enumerate}
\end{lemma}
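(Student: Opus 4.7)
The strategy is to reduce the construction of $\hat h$ to that of a conjugacy between two radial-contraction maps on the small cigars $\overline{U_{x,y,\delta}}$ and $\overline{U_{x',y',\delta'}}$, exactly the kind of situation covered by the fundamental-domain argument in the proof of Proposition \ref{f+aresame}. Set $f_\delta := S^*\circ f\circ (S^*)^{-1}$ on $\overline{U_{x,y,\delta}}$ and $f'_{\delta'} := (R^*)^{-1}\circ f'\circ R^*$ on $\overline{U_{x',y',\delta'}}$. A direct computation shows that the intertwining identity $f'\circ\tilde h = \tilde h\circ f$ required by (3) is equivalent to
\[
g\circ f_\delta = f'_{\delta'}\circ g,
\]
where $g := \hat h\circ L$, viewed as a map from $\overline{U_{x,y,\delta}}$ onto $\overline{U_{x',y',\delta'}}$; property (2) is the statement that $g$ indeed lands in $\overline{U_{x',y',\delta'}}$. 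So the plan is: (i) produce a conjugacy $g$ between $f_\delta$ and $f'_{\delta'}$; (ii) \emph{define} $\hat h := g\circ L^{-1}$ on $L(\overline{U_{x,y,\delta}})$, which automatically yields (2) and (3); and (iii) extend $\hat h$ to a homeomorphism of $\overline{U_{x',y',\varepsilon'}}$ equal to the identity on $\partial U_{x',y',\varepsilon'}$, which gives (1).

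Step (i) is an application of (the proof of) Proposition \ref{f+aresame}. Both $f_\delta$ and $f'_{\delta'}$ preserve the foliation by perpendicular hyperplanes ($\Pi_{x,y}(t)$ and $\Pi_{x',y'}(t)$ respectively), both fix the outer boundary and the centerline pointwise, and both move non-centerline interior points strictly toward the centerline. The fundamental-domain construction produces $g$ explicitly: choose any homeomorphism of a transverse cross-shell such as $V := \partial U_{x,y,\delta/2}$ onto $\partial U_{x',y',\delta'/2}$ that respects the cross-section parameter $t$, extend it continuously across the half-open ``slab'' between $V$ and $f_\delta(V)$, and propagate through integer iterates using the intertwining identity. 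This yields the desired homeomorphism $g$ of the closed cigars.

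For step (iii), the complement $\overline{U_{x',y',\varepsilon'}}\setminus L(U_{x,y,\delta})$ is a topological shell whose inner boundary $\partial L(U_{x,y,\delta})$ has already been mapped homeomorphically onto $\partial U_{x',y',\delta'}$ (via $g\circ L^{-1}$), while the outer boundary $\partial U_{x',y',\varepsilon'}$ is to map to itself identically; a radial interpolation along the perpendicular foliation $\{\Pi_{x',y'}(t)\}$ provides such an extension. The main technical point is the $C^0$-closeness in (4). This is arranged by choosing $\delta, \delta'$ small and picking the initial homeomorphism on $V$ in step (i) so that cross-sections are matched according to the affine map $L$; one then has $|g(q)-L(q)| = O(\delta + \delta')$ on $\overline{U_{x,y,\delta}}$, hence $|\hat h(p) - p| = O(\delta + \delta')$ on $L(\overline{U_{x,y,\delta}})$. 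The radial interpolation in the shell can be tapered monotonically so that this bound decays to zero on $\partial U_{x',y',\varepsilon'}$, yielding $\text{dist}_{C^0}(id, \hat h) < \mu$ whenever $\delta, \delta'$ are small enough.
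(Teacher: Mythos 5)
Your proposal is correct and follows essentially the same route as the paper's proof: shrink to the small cigars via $S^*$ and $R^*$, conjugate the two radial contractions by the fundamental-domain argument (as in Proposition \ref{f+aresame}), extend to $\overline{U_{x', y', \varepsilon'}}$ by an interpolation that is the identity on the outer boundary, and obtain the $C^0$ bound from the smallness of $\delta, \delta'$. The only difference is presentational: the paper factors $\hat h = h^*\circ L'$ through an explicit affine correction $L'$ that carries $L(U_{x, y, \delta})$ onto $U_{x', y', \delta'}$ and aligns the orthogonal foliations before running the fundamental-domain argument inside the single cigar $U_{x', y', \delta'}$, whereas you build the conjugacy $g$ directly across $L$ and absorb that alignment into the choice of the cross-shell matching.
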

\smallskip

\noindent Notice that $S^*=S^*(x, y, \varepsilon, \delta)$ is a function of $x, y, \varepsilon$ and $\delta$,  and 
similarly $R^*=R^*(x', y', \varepsilon', \delta')$ is a function of $x', y', \varepsilon'$ and $\delta'$.

%%%%%%%%%%%%%%%

\begin{remark}
In Lemma \ref{l.lcon}  we do not assume that $\overline{U_{x, y, \varepsilon}}$ and $\overline{U_{x', y', \varepsilon'}}$ are disjoint. For example, it is possible that $x=x'$.
\end{remark}
\begin{proof}[Proof of Lemma \ref{l.lcon}]
For given small $\delta, \delta'$ there exists an affine invertible orientation preserving map $L'$ such that the line $\gamma_{x', y'}$ is a line of fixed points for $L'$, $L'\circ L(U_{x, y, \delta})=U_{x', y', \delta'}$, and the composition $L'\circ L$ sends subspaces orthogonal to $\gamma_{x, y}$ to subspaces orthogonal to $\gamma_{x', y'}$. The $C^0$-norm of restriction $L'|_{L(U_{x, y, \delta})}$ tends to zero as $\delta, \delta'\to 0$. Moreover, the restriction $L'|_{L(U_{x, y, \delta})}$ can be extended to a homeomorphism $H:\overline{U_{x', y', \varepsilon'}}\to \overline{U_{x', y', \varepsilon'}}$ such that $H|_{L(U_{x, y, \delta})}=L'|_{L(U_{x, y, \delta})}$, and $\|H\|_{C^0}\to 0$ as $\delta, \delta'\to 0$.  The composition $L'\circ L\circ S^*$ is an affine map that sends the vector field $w$ to a vector field $\tilde w$ in $U_{x', y', \delta'}$ directed towards the line $\gamma_{x', y'}$. The push-forward of the vector field $w'$ under the map $(R^*)^{-1}$ (let us denote it by $\tilde w'$) is also a vector field in $U_{x', y', \delta'}$ directed towards the line $\gamma_{x', y'}$. The standard fundamental domain arguments show that the time-1 shift along the flows defined by $\tilde w$ and $\tilde w'$ are conjugate, and $C^0$-norm of the conjugacy is not greater than $\delta'$. Let us denote that conjugacy by $h^*$. Then $\hat h=h^*\circ L'$ satisfies all the conditions of Lemma \ref{l.lcon} if $\delta, \delta'$ are sufficiently small.
\end{proof}
Finally, there exists a collection of sufficiently small numbers $\delta_{k,i}>0$ and $\delta_i>0$  such that setting \hyperlink{Si}{$S_i$ to be the $S$ constructed with $\delta_i$ and $\delta_{k,i}$}, the restriction of $h^{\uparrow}_i\circ h_i^{\rightarrow}\circ h^{\uparrow}_i\circ S_{\delta_{k,i}}$ to $U_{x_k, x_{n_i}, \varepsilon_{k,i}}$ is an affine map. An application of Lemma \ref{l.lcon} to these choices finishes the construction of $\hat h_i$.

\vspace{5pt}

\noindent{\bf Definition of $R_i$.}\\
The constructions of $R_i$ is very similar to the construction of $S_i$ described above, except that it is an expansion rather than a contraction.
%%%%%%%%%%%%%%%%
\vspace{5pt}

Notice that together with Propositions  \ref{p.PosV1to4} and \ref{p.PosW}, the construction of homeomorphisms $\{h_i\}$ guarantees that the following holds:

\begin{coro}\label{cor.Pos}
The sizes of the cigars $\{\varepsilon_{n,m}'\}$ can be chosen small enough to guarantee that if $x\in U_{x_n, x_m, \varepsilon_{n,m}'}$, and $h_i(x)\in U_{x_{\bar n}, x_{\bar m}, \varepsilon_{\bar n, \bar m}'}$, then $|Pos(h_i(x))-Pos(x)|<\frac{1}{500\cdot 2^{i+1}}$. As a result, if an orbit of $x\in U_{x_n, x_m, \varepsilon_{n,m}'}$ is inside of $\cup_{n,m}U_{x_n, x_m, \varepsilon_{n,m}'}$, then the relative position of any point in the orbit of $x$ differs from $Pos(x)$ by at most $\frac{1}{500}$.
\end{coro}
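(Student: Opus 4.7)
The plan is to exploit the explicit factorization $h_i = R_i \circ \hat h_i \circ h_i^{\uparrow} \circ h_i^{\rightarrow} \circ h_i^{\uparrow} \circ S_i$ and analyze the effect of each factor on the position function $\mathrm{Pos}$. First, on each cigar $U_{x_n,x_m,\varepsilon'_{n,m}}$ the maps $S_i$ and $R_i$ are, by their definitions, radial contractions (respectively expansions) toward the central axis $\gamma_{x_n,x_m}$: they send every normal hyperplane $\Pi_{x_n,x_m}(t)$ into itself. Consequently, they leave $\mathrm{Pos}$ invariant and contribute nothing to the change we are estimating.

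Next, the composite $h_i^{\uparrow} \circ h_i^{\rightarrow} \circ h_i^{\uparrow}$ is, by construction, the time-one flow concatenation that transports a point $z_1$ lying in a cigar with endpoint $x_{n_i}$ first along the direction $w_i$ into a cigar with endpoint $x_{n_i}+d_iw_i$, then along $x_{m_i}-x_{n_i}$ into a cigar with endpoint $x_{m_i}+d_iw_i$, and finally along $-w_i$ into a cigar with endpoint $x_{m_i}$. The resulting four points $z_1,z_2,z_3,z_4$ are connected by vectors parallel, respectively, to $w_i$, $x_{m_i}-x_{n_i}$, and $-w_i$, so they satisfy exactly the hypotheses of Propositions \ref{p.PosV1to4} and \ref{p.PosW1to4}. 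By Proposition \ref{p.xiPos} the radii $\xi_{i,k}$ and $\xi_i$ were chosen so that this three-step excursion distorts $\mathrm{Pos}$ by at most $\tfrac{1}{500\cdot 2^i}$. The remaining factor $\hat h_i$ was built through Lemma \ref{l.lcon} with a free parameter $\mu$; by taking $\mu$ (equivalently the coefficients $\delta_{k,i}$, $\delta_i$) small enough, $\hat h_i$ can be made uniformly $C^0$-close to the identity and hence can only perturb $\mathrm{Pos}$ by an arbitrarily small amount, say at most $\tfrac{1}{500\cdot 2^{i+2}}$. A single further shrinkage of the $\xi$'s (to improve the Proposition \ref{p.xiPos} bound to $\tfrac{1}{500\cdot 2^{i+2}}$) then yields the per-step estimate $|\mathrm{Pos}(h_i(x)) - \mathrm{Pos}(x)| < \tfrac{1}{500\cdot 2^{i+1}}$.

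For the final statement, suppose the forward orbit $x,\, h_1(x),\, h_2\circ h_1(x),\ldots$ stays inside $\bigcup_{n,m} U_{x_n,x_m,\varepsilon'_{n,m}}$. Applying the per-step estimate between each consecutive pair of orbit points and telescoping, the total drift in $\mathrm{Pos}$ along the entire orbit is bounded by
\[
\sum_{i=1}^{\infty} \frac{1}{500\cdot 2^{i+1}} \;=\; \frac{1}{500}\cdot\frac{1}{2} \;=\; \frac{1}{1000} \;<\; \frac{1}{500}.
\]
The main technical obstacle will be the bookkeeping: one must verify that every geometric case in which $h_i$ moves a point between cigars — the $V_{k,i}$ case, the $W_i$ case, and the needle-cigar case $V(r,x_{n_i},x_{m_i},\tilde\theta_{n_i},d_iw_i)$ — really does fit the four-point template of Propositions \ref{p.PosV1to4} or \ref{p.PosW1to4}, so that a single uniform constant is obtained. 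This is a routine but slightly delicate case analysis based on the piecewise definitions of $h_i^{\uparrow}$, $h_i^{\rightarrow}$, and $\hat h_i$ on each of the disjoint building blocks identified in Proposition \ref{p.support}.
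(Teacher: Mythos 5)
Your proposal is correct and takes essentially the paper's route: the paper asserts the corollary as a direct consequence of the construction of $h_i$ together with Propositions \ref{p.PosV1to4}, \ref{p.PosW1to4} and the choice of constants in Proposition \ref{p.xiPos}, which is precisely the factor-by-factor analysis you spell out ($S_i$ and $R_i$ preserve $Pos$ because they act along rays inside the normal hyperplanes, the three shift maps realize the four-point template, and $\hat h_i$ is made harmless via the free parameter $\mu$ in Lemma \ref{l.lcon}). Your rebalancing of constants to reconcile the $\frac{1}{500\cdot 2^{i}}$ bound of Proposition \ref{p.xiPos} with the $\frac{1}{500\cdot 2^{i+1}}$ per-step claim, and the telescoping estimate $\sum_i \frac{1}{500\cdot 2^{i+1}}=\frac{1}{1000}<\frac{1}{500}$, supply exactly the details the paper leaves implicit.
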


Also, let us notice that due to the construction of $h_i$ presented above we immediately have the following estimate:
\begin{proposition}\label{p.coest}
For each $i\in \mathbb{N}$ we have
$$\text{dist}_{C^0}(id, h_i)\le 2\max_{k\ne n_i, m_i}(\varepsilon_{k, n_i}, \varepsilon_{k, m_i})+2\varepsilon_{n_i, m_i}+6d_i$$
\end{proposition}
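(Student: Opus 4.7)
The plan is to decompose $h_i$ using the definition $h_i = R_i\circ \hat h_i\circ h^{\uparrow}_i\circ h^{\rightarrow}_i\circ h_i^{\uparrow}\circ S_i$ and apply the elementary inequality $\|g\circ f - \mathrm{id}\|_{C^0} \le \|g - \mathrm{id}\|_{C^0} + \|f - \mathrm{id}\|_{C^0}$ repeatedly to reduce the problem to six separate $C^0$-estimates, one for each factor.

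For $S_i$ and $R_i$, the relevant observation is that each is supported on the pairwise disjoint union \eqref{e.union} of cigars of radius parameter $\varepsilon_{k, n_i}$, $\varepsilon_{k, m_i}$, or $\varepsilon_{n_i,m_i}$, and acts on each component as a contraction (respectively expansion) toward the central axis within each cross-section. Since the cross-section of $U_{x,y,\varepsilon}$ has radius at most $\varepsilon \psi(t) \le \varepsilon/2$, any point can move at most the diameter $\varepsilon$ of its cross-section under such a radial map. A single point lies in at most one cigar of the disjoint union, so both $\|S_i - \mathrm{id}\|_{C^0}$ and $\|R_i - \mathrm{id}\|_{C^0}$ are bounded by $\max\{\max_k \varepsilon_{k,n_i},\,\max_k \varepsilon_{k,m_i},\,\varepsilon_{n_i,m_i}\}$; using $\max(a,b,c)\le \max(a,b)+c$ their sum is bounded by $2\max_k(\varepsilon_{k,n_i}, \varepsilon_{k,m_i}) + 2\varepsilon_{n_i,m_i}$, matching the first two terms of the claimed bound.

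For $h_i^{\uparrow}$ and $h_i^{\rightarrow}$, each is a time-one shift along a vector field $\Psi\cdot \bar v$ where $\Psi\colon \mathbb{R}^5 \to [0,1]$ (this is part of the specification via $\psi_{x,y,w}$ in Remark \ref{r.psibyv}, where $\psi$ values at the ``target'' points equal $1$) and $\bar v$ is a constant vector field whose magnitude is $\|d_i w_i\| = d_i$ in the vertical case (recall $w_i$ is a unit vector) and $|x_{m_i}-x_{n_i}|=d_i$ in the horizontal case. Since integral curves are straight line segments (by property 3 of Definition \ref{d.vfield}) and the flow time is one, no point moves farther than $d_i$. This gives $\|h_i^{\uparrow} - \mathrm{id}\|_{C^0},\,\|h_i^{\rightarrow} - \mathrm{id}\|_{C^0} \le d_i$, contributing at most $3 d_i$ across the three superscripted factors.

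Finally, $\hat h_i$ comes from Lemma \ref{l.lcon}, which guarantees $\mathrm{dist}_{C^0}(\mathrm{id}, \hat h_i) < \mu$ for a parameter $\mu$ that is free to choose. Taking $\mu \le 3d_i$ (which is permitted since $\mu$ is chosen after the transpositions $\varphi_i$ and the spacings $d_i$ are fixed) and combining all six bounds via the triangle inequality yields
\[
\|h_i - \mathrm{id}\|_{C^0} \le 2\max_{k\ne n_i, m_i}(\varepsilon_{k,n_i},\varepsilon_{k,m_i}) + 2\varepsilon_{n_i,m_i} + 3d_i + \mu \le 2\max_{k\ne n_i, m_i}(\varepsilon_{k,n_i},\varepsilon_{k,m_i}) + 2\varepsilon_{n_i,m_i} + 6d_i,
\]
as claimed. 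There is no real obstacle here beyond bookkeeping; the only mildly delicate point is verifying that the magnitudes of the vector fields driving $h_i^{\uparrow}$ and $h_i^{\rightarrow}$ are indeed bounded by $d_i$, which follows from the way $v(x,y,w)$ is normalized in Definition \ref{d.vfield} together with the bound $\Psi \le 1$.
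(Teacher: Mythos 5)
Your proof is correct and is exactly the bookkeeping the paper leaves implicit (it states the estimate as immediate from the construction of $h_i$): decompose into the six factors, bound the radial maps $S_i$ and $R_i$ by the radii of the cigars in the disjoint union \eqref{e.union}, bound the three shift maps by roughly $d_i$ each since the driving fields have magnitude $\approx d_i$ and flow time one, and absorb $\hat h_i$ into the free parameter $\mu$ of Lemma \ref{l.lcon}. The only imprecision is the assertion that $\Psi\le 1$ exactly: the affine $\psi$ of Remark \ref{r.psibyv} equals $1$ at the endpoint but can slightly exceed $1$ off the central segment of a cigar, and the extension from Lemma \ref{l.contpsi} is not explicitly normalized; however the excess is of order $\xi/\mathrm{dist}(x,y)$ and is absorbed by the slack in the coefficient $6d_i$ (or by taking $\mu$ a bit smaller), so the stated bound still follows.
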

Combined with Lemma \ref{l.sumofdi},  Proposition \ref{p.coest}  gives the following:
\begin{coro}\label{cor.conv}
$$
\sum_{i=1}^\infty\text{dist}_{C^0}(id, h_i)<\infty
$$
\end{coro}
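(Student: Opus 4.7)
My proof plan is to simply bound each summand via Proposition~\ref{p.coest} and show that each of the three contributing families is summable.

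First, recall that in the definition of the cigars we set $\varepsilon_{m,n} = \min(0.001\,\alpha_{m,n},\, 2^{-n-m})$, so in particular $\varepsilon_{m,n} \le 2^{-n-m}$ for all $m \ne n$. Hence
\[
\max_{k \ne n_i, m_i} \varepsilon_{k,n_i} \;\le\; \max_{k \ge 1} 2^{-k - n_i} \;\le\; 2^{-n_i - 1},
\]
and analogously $\max_{k \ne n_i, m_i} \varepsilon_{k,m_i} \le 2^{-m_i - 1}$, while $\varepsilon_{n_i, m_i} \le 2^{-n_i - m_i} \le 2^{-n_i} + 2^{-m_i}$. Plugging these bounds into Proposition~\ref{p.coest} gives a constant $C > 0$ such that
\[
\text{dist}_{C^0}(id, h_i) \;\le\; C\bigl(2^{-n_i} + 2^{-m_i}\bigr) + 6\, d_i.
\]

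Next I would sum each term over $i$. By item (3) of Lemma~\ref{l.permrepr}, every natural number $k$ occurs as one of the transposed pair $\{n_i, m_i\}$ for at most two values of $i$. Therefore
\[
\sum_{i=1}^{\infty}\bigl(2^{-n_i} + 2^{-m_i}\bigr) \;\le\; 2\sum_{k=1}^{\infty} 2 \cdot 2^{-k} \;=\; 4 < \infty.
\]
For the remaining term, Lemma~\ref{l.sumofdi} already asserts that $\sum_{i=1}^\infty d_i \le 2 < \infty$. Combining these two convergent bounds yields $\sum_{i=1}^{\infty} \text{dist}_{C^0}(id, h_i) < \infty$, which is the desired conclusion.

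I do not anticipate any genuine obstacle: every needed estimate is already packaged in earlier results (the exponential decay built into the definition of $\varepsilon_{m,n}$, the bounded-multiplicity property of the decomposition from Lemma~\ref{l.permrepr}, and the summability of $d_i$ from Lemma~\ref{l.sumofdi}). The only thing to be mildly careful about is that the indices $k$ in the first two terms range over $\mathbb{N} \setminus \{n_i, m_i\}$, but since $\varepsilon_{k,n_i}$ decays geometrically in $k$ the maximum is still controlled by $2^{-n_i}$ up to a universal constant.
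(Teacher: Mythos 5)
Your proof is correct and follows essentially the same route as the paper: both apply Proposition \ref{p.coest}, use the bound $\varepsilon_{m,n}\le 2^{-m-n}$ together with the bounded-multiplicity property (item 3 of Lemma \ref{l.permrepr}) to sum the cigar-size terms, and invoke Lemma \ref{l.sumofdi} for $\sum_i d_i$. The paper merely compresses these steps into a few lines, so your version is just a more explicit writing of the same argument.
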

\pf
Let us recall that we chose $\{\varepsilon_{m,n}\}$ such  that $\varepsilon_{m,n}\le 2^{-m-n}$. This implies that $\sum_{i=1}^\infty \max_{k\ne n_i, m_i}(\varepsilon_{k, n_i}, \varepsilon_{k, m_i})<\infty$ and $\sum_{i=1}^\infty \varepsilon_{n_i, m_i}<\infty$.
Combined with Lemma \ref{l.sumofdi} that claims that $\sum_{i=1}^\infty d_i<\infty$, it completes the proof of Corollary \ref{cor.conv}.
\qed

Let us recall that the sets $Base_i$ were defined in (\ref{e.Basei}) as
$$
Base_i=\left(\bigcup_{n\in \mathbb{N}}B_{x_n}\right)\bigcup \left(\bigcup_{n\ne m}U_{x_n, x_m, \varepsilon_{n,m}}\right)\bigcup \left(\bigcup_{n\ \text{is not frozen at stage $i$}
}U_{r, x_n, \theta_n}\right)
$$

\begin{lemma}\label{l.supphi}
If $h_i(x)\ne x$, then either $h_j(h_i(x))=h_i(x)$ for all $j>i$, or $h_i(x)\in Base_i$.
\end{lemma}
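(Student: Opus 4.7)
The plan is as follows. First, since $h_i$ is a homeomorphism, $h_i(x)\ne x$ forces $h_i(x)\in\operatorname{supp}(h_i)$; otherwise $h_i^{-1}$ would fix $h_i(x)$, giving $x=h_i(x)$. From the decomposition $h_i=R_i\circ\hat h_i\circ h^{\uparrow}_i\circ h^{\rightarrow}_i\circ h^{\uparrow}_i\circ S_i$ together with the explicit supports of the six factors, one checks that $\operatorname{supp}(h_i)\subseteq Base_i\cup D_i$, where $D_i$ is the union of the stage-$i$ \emph{detour regions} $\bigcup_{k\ne n_i,m_i}V_{k,i}$, $W_i$ and $V(r,x_{n_i},x_{m_i},\tilde\theta_{n_i},d_iw_i)$. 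The inclusion of the base parts of $\operatorname{supp}(h_i)$ in $Base_i$ uses that $\varphi_i$ moves both $n_i$ and $m_i$, so neither is frozen at stage $i$ and the needle cigars at $x_{n_i}, x_{m_i}$ appear in $Base_i$. If $h_i(x)\in Base_i$ the second alternative of the lemma holds and we are done.

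The remaining case is $h_i(x)\in D_i\setminus Base_i$. I would then establish the first alternative by showing $h_i(x)\notin\operatorname{supp}(h_j)$ for every $j>i$, which yields $h_j(h_i(x))=h_i(x)$. Two facts are needed. First, $Base_j\subseteq Base_i$: any vertex not frozen at stage $j$ is also not frozen at stage $i$ (since $j>i$), so the family of needles in $Base_j$ is a subfamily of the family in $Base_i$, while the vertex balls and edge cigars are the same; hence $h_i(x)\notin Base_j$. Second, the proof of Proposition \ref{p.support} shows that every point of a stage-$i$ detour region lying outside its base component has $\mathbb{E}_2$-projection inside the cone $K_i$ from Definition \ref{d.coneK}, and the analogous points of a stage-$j$ detour region project into $K_j$; since $K_i\cap K_j=\emptyset$ for $i\ne j$, these ``bulk'' parts of stage-$i$ and stage-$j$ detour supports are disjoint.

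The main obstacle is the overlaps that Proposition \ref{p.support} does allow between detour regions at distinct stages when their vertex pairs partially coincide: near a shared vertex $x_k$ the cone-in-$\mathbb{E}_2$ argument degenerates, since the stage-$i$ detour cigar pinches back to $x_k$. I would handle this by a contrapositive. The radii $\rho_k$ in equation (\ref{e.rho}), the sizes $\xi_{i,k},\xi_i$ constrained by Propositions \ref{p.support} and \ref{p.xiPos}, and the needle sizes $\tilde\theta_{n_i}$ from Lemma \ref{l.O3} were chosen precisely so that any point of a stage-$i$ detour piece within distance $\rho_k$ of a participating vertex $x_k$ already lies in $B_{x_k}$ or in some edge cigar $U_{x_k,x_n,\varepsilon_{k,n}}$, both of which belong to $Base_i$ --- contradicting our standing assumption $h_i(x)\notin Base_i$. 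For the needle detour $V(r,x_{n_i},x_{m_i},\tilde\theta_{n_i},d_iw_i)$ the same contrapositive is run with Remark \ref{r.mnmoving} in place of (\ref{e.rho}): if a would-be overlap with $\operatorname{supp}(h_j)$ occurs near $x_{n_i}$ or $x_{m_i}$, Remark \ref{r.mnmoving} forces the corresponding vertex to be frozen from stage $j$ onward in such a way that the relevant needle is already contained in $Base_i$, again contradicting $h_i(x)\notin Base_i$.
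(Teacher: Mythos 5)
Your argument is essentially the paper's own proof: the paper likewise observes that $x,h_i(x)\in\operatorname{supp}h_i\subseteq Base_i\cup\left(B_{Q_i}\cup\bigcup_{k\ne n_i,m_i}V_{k,i}\cup W_i\cup V_{r,i}\right)$ and that, by the choices made in Section \ref{ss.BVW}, any point of this support lying outside $Base_i$ has $\pi_2$-projection in the cone $K_i$, which is disjoint from every cone $K_j$ with $j\ne i$, so $h_i(x)\notin\operatorname{supp}h_j$ for all $j>i$. The only differences are that you make explicit two details the paper leaves implicit (the inclusion $Base_j\subseteq Base_i$ via monotonicity of freezing, and the absorption of the degenerate near-vertex parts of the detour regions into $Base_i$), and that you should also list $B_{Q_i}$ among the stage-$i$ detour regions, although the same base-or-cone dichotomy covers it.
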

\begin{proof}[Proof of Lemma \ref{l.supphi}.]
Suppose $h_i(x)\ne x$, then $x, h_i(x)\in \text{supp}\,h_i$, i.e.
$$
h_i(x)\in Base_i\cup\left(B_{Q_i}\cup \left(\cup_{k\ne m_i, n_i}V_{k,i}\right)\cup W_i \cup V_{r,i} \right).
$$
Due to our choice of $B_{Q_i}, \{V_{k,i}\}, W_i$, and $V_{r,i}$ defined in Section \ref{ss.BVW} in the case $h_i(x)\not \in Base_i$ the projection $\pi_2(h_i(x))$ is in a \hyperlink{conesK}{cone $K_i$} around $w_i$ in $\mathbb{E}_2$ that is disjoint from cones around $w_j$, $j\ne i$, and hence $h_i(x)\not \in \text{supp}\, h_j$ for any $j>i$.

\end{proof}

Define for each $n\in \mathbb{N}$ the set $S(n)$ by
\begin{multline*}
  S(n)=\left\{k\in \mathbb{N}\ |\ k>n \ \text{and}\ \forall i\in \mathbb{N}, \text{if $h_i(x_k)\ne x_k$,} \right.\\
   \left.\text{ then for any $t\in Cont_m$ with $m\le n$ the vertex $x_t$ is frozen at stage $i$}\right\}
\end{multline*}

For each $n\in \mathbb{N}$ (that will correspond to a vertex $x_n$) and $i\in \mathbb{N}$ (that will correspond to a homeomorphism $h_i$) define a "tail" $T_i(n)\subset \mathbb{N}$ in the following way:

\vspace{4pt}

$\bullet$  $T_0(n)=S(n)$ for each $n\in \mathbb{N}$;

\vspace{4pt}

$\bullet$ $T_{i+1}(n)=T_i(n)$ if $\varphi_i(n)=n$;

\vspace{4pt}

$\bullet$ $T_{i+1}(n)=T_{i+1}(m)=T_i(n)\cap T_i(m)$, if $\varphi_i$ transposes $n$ and $m$, $n\ne m$.

 \vspace{4pt}

For each $n\in \mathbb{N}$ and $i\in \mathbb{N}\cup \{0\}$ define a set (that we will informally call a ``comb")

$$
\mathcal{B}_i(n)=U_{r, x_n, \theta_n}\bigcup \left(\bigcup_{k\in T_i(n)} U_{x_n, x_k, \varepsilon_{n,m}'}\right)
$$

\begin{lemma}
If  $n_1\ne n_2$, then for any $i, j\in \mathbb{N}$ we have $\mathcal{B}_i(n_1)\cap \mathcal{B}_j(n_2)=\emptyset$.
\end{lemma}

\begin{definition}
Let us say that a cigar $U_{x_m, x_n, \varepsilon_{n,m}'}$ is $i$-relaxed if it is not a part of $\mathcal{B}_i(k)$ for any $k$, or if it is a part of $\mathcal{B}_i(k)$ with $k$ being frozen at stage $i$.
\end{definition}

The next three statements directly follow from the definitions:

\begin{lemma}
If  $n_1\ne n_2$, then for any $i, j\in \mathbb{N}$ we have $\mathcal{B}_i(n_1)\cap \mathcal{B}_j(n_2)=\emptyset$.
\end{lemma}

\begin{lemma}
If $U_{x_m, x_n, \varepsilon_{n,m}'}$ is $i$-relaxed, then it is also $j$-relaxed for any $j>i$.
\end{lemma}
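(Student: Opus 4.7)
The approach rests on two easy monotonicity observations that follow directly from the recursive construction. First, for every $l$ and every $n$ one has $T_{l+1}(n)\subseteq T_l(n)$: either $\varphi_l(n)=n$ and the two sets are equal, or $\varphi_l$ transposes $n$ with some $m$ and $T_{l+1}(n)=T_l(n)\cap T_l(m)\subseteq T_l(n)$. Iterating yields $T_j(n)\subseteq T_i(n)$ whenever $j\ge i$, and hence $\mathcal{B}_j(n)\subseteq \mathcal{B}_i(n)$, because the summand $U_{r,x_n,\theta_n}$ is identical on both sides. Second, being frozen at stage $i$ means $\varphi_l(n)=n$ for every $l\ge i$, which is obviously preserved when $i$ is replaced by any $j\ge i$; moreover, under this hypothesis the recursion collapses to give $T_j(n)=T_i(n)$ and $\mathcal{B}_j(n)=\mathcal{B}_i(n)$ for all $j\ge i$.

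With these two facts, the plan is to split on the two alternatives in the definition of $i$-relaxed. If the cigar $U_{x_m,x_n,\varepsilon'_{n,m}}$ is not contained in any $\mathcal{B}_i(k)$, then the inclusion $\mathcal{B}_j(k)\subseteq \mathcal{B}_i(k)$ immediately yields that it is not contained in any $\mathcal{B}_j(k)$ either, so it is $j$-relaxed by the first clause of the definition. If instead the cigar lies in some $\mathcal{B}_i(k_0)$ with $k_0$ frozen at stage $i$, then $k_0$ is still frozen at stage $j$ by the second observation, and the collapse $\mathcal{B}_j(k_0)=\mathcal{B}_i(k_0)$ shows that the cigar still lies in $\mathcal{B}_j(k_0)$; this witnesses $j$-relaxedness by the second clause.

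The main obstacle is really just bookkeeping around the definition. One needs to recall that a cigar $U_{x_m,x_n,\varepsilon'_{n,m}}$ with $m\ne n$ can appear in $\mathcal{B}_\bullet(k)$ only when $k\in\{m,n\}$ (it is distinct from the needle-cigar $U_{r,x_k,\theta_k}$), and to track that the frozenness of the relevant index persists to all later stages. No geometric input about the cones $K_i$, the vectors $w_i$, or the detailed construction of $h_i$ enters the argument; the lemma is a purely combinatorial consequence of the definitions of $T_j(n)$, $\mathcal{B}_j(n)$, and \emph{frozen at stage $i$}.
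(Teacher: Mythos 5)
Your argument is correct, and it is exactly the routine verification the paper leaves implicit (the lemma appears among the statements declared to ``directly follow from the definitions,'' with no written proof): the recursion gives $T_{j}(n)\subseteq T_i(n)$ for $j\ge i$, frozenness persists and forces $T_j(k_0)=T_i(k_0)$, and the two clauses of the definition of $i$-relaxed are then handled precisely as you do. Your side remark that the cigar can only lie in combs $\mathcal{B}_\bullet(k)$ with $k\in\{m,n\}$ (together with the disjointness of combs at distinct vertices) also disposes of any ambiguity about which comb witnesses relaxedness, so nothing is missing.
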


\begin{lemma}
If $U_{x_m, x_n, \varepsilon_{n,m}'}$ is $i$-relaxed, then $U_{h_i(x_m), h_i(x_n), \varepsilon_{\varphi_i(m),\varphi_i(n)}'}$  is also $i$-relaxed.
\end{lemma}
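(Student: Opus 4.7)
The plan is to show that $h_i$ permutes the collection of combs $\{\mathcal{B}_i(k)\}_k$ equivariantly with respect to $\varphi_i$, so that each comb is taken to the comb with root $\varphi_i(k)$. Since ``$k$ frozen at stage $i$'' forces $\varphi_i(k) = k$, every frozen comb is preserved setwise, and the $i$-relaxed dichotomy then propagates through $h_i$.

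First I would reduce to the essential case. Since $h_i$ induces the transposition $\varphi_i = (n_i\ m_i)$ on vertices and fixes every other $x_k$, the cases $\{m,n\} \cap \{n_i, m_i\} = \emptyset$ and $\{m,n\} = \{n_i, m_i\}$ are immediate: the image cigar coincides with the original as a set. The remaining case has exactly one endpoint in $\{n_i, m_i\}$; up to symmetry, $n = n_i$ and $m \notin \{n_i, m_i\}$, so the image is $U_{x_m, x_{m_i}, \varepsilon_{m, m_i}'}$. A cigar $U_{x_a, x_b, \varepsilon_{a,b}'}$ lies in $\mathcal{B}_i(k)$ exactly when $k \in \{a,b\}$ and the other endpoint lies in $T_i(k)$, and by the preceding disjointness lemma it lies in at most one comb. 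Because neither $n_i$ nor $m_i$ is frozen at stage $i$, the $i$-relaxed hypothesis leaves only two possibilities for the original: it lies in no comb, or it lies in $\mathcal{B}_i(m)$ with $m$ frozen. To conclude, it suffices to rule out the image being in $\mathcal{B}_i(m_i)$ and to note that ``$m$ frozen'' depends only on $\varphi_i, \varphi_{i+1}, \ldots$, hence transfers unchanged to the image.

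The heart of the proof, and the main obstacle, is establishing the equivariance $\varphi_i(T_i(k)) = T_i(\varphi_i(k))$ for every $k$; equivalently, $T_i(k)$ is $\varphi_i$-invariant whenever $\varphi_i(k) = k$. The plan is an induction on the stage index, exploiting the symmetric recursion $T_{j+1}(a) = T_{j+1}(b) = T_j(a) \cap T_j(b)$ at each transposition $\varphi_j = (a\ b)$, the explicit structure of $S$ via the contamination sets $Cont_m$ (which are themselves permuted consistently by $\{\varphi_j\}$), and item 3 of Lemma \ref{l.permrepr} bounding the number of transpositions in which any index can appear. With this equivariance in hand the case analysis concludes: $m \in T_i(m_i)$ would force $m \in T_i(n_i)$, placing the original in $\mathcal{B}_i(n_i)$ and contradicting the $i$-relaxed hypothesis.
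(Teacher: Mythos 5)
Your skeleton (reduce to the case where exactly one endpoint is moved, note that an $i$-relaxed cigar either lies in no comb or lies in $\mathcal{B}_i(m)$ with the unmoved root $m$ frozen) is fine, and part of the case analysis can in fact be closed cheaply and without any equivariance: if a vertex moved at stage $i$ (i.e.\ $n_i$ or $m_i$) belongs to $T_i(m)\subseteq S(m)$, then the defining implication of $S(m)$, applied at the stage $i$ itself, forces every $t\in Cont_{m'}$ with $m'\le m$ --- in particular $m$ --- to be frozen at stage $i$; hence a comb rooted at the unmoved endpoint $m$ can never witness a failure of relaxedness, on either the original or the image side. You do not use this observation; instead you route everything, including this easy part, through the global claim $\varphi_i(T_i(k))=T_i(\varphi_i(k))$.

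That claim is where the genuine gap lies. In the only nontrivial case, $k\in\{n_i,m_i\}$, your equivariance says precisely that $T_i(n_i)$ and $T_i(m_i)$ agree off $\{n_i,m_i\}$, and (given the cheap argument above) this is essentially equivalent to the lemma itself; yet you only announce a ``plan'' of induction on the stage index, with no argument. Moreover the induction cannot work as described under the literal recursion, because the equalization $T_{i+1}(n_i)=T_{i+1}(m_i)=T_i(n_i)\cap T_i(m_i)$ happens only \emph{after} stage $i$: at stage $i$ the two tails sit inside the different sets $S(n_i)$ and $S(m_i)$, and these genuinely differ below $\max(n_i,m_i)$. Concretely, any index $m$ strictly between $\min(n_i,m_i)$ and $\max(n_i,m_i)$ that is never moved by any $\varphi_j$ satisfies the defining implication of $S$ vacuously, so it lies in $S$ of the smaller endpoint but not in $S$ of the larger one (it fails $k>n$); if such an $m$ survives into $T_i$ of the smaller endpoint, then the cigar joining $x_m$ to the larger endpoint lies in no comb (hence is $i$-relaxed) while its $h_i$-image lies in the comb rooted at the smaller endpoint, whose root is not frozen at stage $i$. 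So to complete the proof you must either show that the earlier intersections coming from the specific decomposition of Lemma \ref{l.permrepr} always remove such indices from the relevant tail, or work with the tails that already incorporate the stage-$i$ intersection $T_i(n_i)\cap T_i(m_i)$ --- which is evidently the reading under which the paper can assert that the statement ``directly follows from the definitions.'' As written, the heart of your argument is missing and, at the crucial point, question-begging.
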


Due to Remark \ref{r.mnmoving} we get the following statement:
\begin{lemma}\label{l.57}
If $n$ is not frozen at stage $i$, and $h_i(x_n)=x_n$, then $\mathcal{B}_i(n)\subseteq \text{Fix}\,(h_i)$.
\end{lemma}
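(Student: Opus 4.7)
The plan is to prove that $\mathcal{B}_i(n) \cap \text{supp}(h_i) = \emptyset$, which yields $\mathcal{B}_i(n) \subseteq \text{Fix}(h_i)$ at once since $h_i$ is the identity off its support. The argument has a combinatorial part (identifying which vertices are moved by $\varphi_i$) and a geometric part (checking disjointness of the building blocks).

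The first step is combinatorial: I extract from the hypotheses that neither $n$ nor any $k \in T_i(n)$ is moved by $\varphi_i$. Since $\varphi_i$ swaps $n_i$ and $m_i$ and $h_i(x_n)=x_n$, we have $n \notin \{n_i, m_i\}$. Fix $k \in T_i(n)$. The inductive construction gives $T_i(n) \subseteq T_0(n) = S(n)$, so by the defining property of $S(n)$ applied with $m = n$ (noting $n \in Cont_n$), if $h_i(x_k) \neq x_k$ then $x_n$ would be frozen at stage $i$, contradicting the hypothesis. Hence $k \notin \{n_i, m_i\}$, so $\varphi_i$ fixes $n$ and every $k \in T_i(n)$.

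The second step is geometric. The support of $h_i$ is contained in the union of the cigars listed in (\ref{e.union}) and the thickened sets $B_{Q_i}$, $W_i$, $V_{k', i}$ for $k' \neq n_i, m_i$, and $V(r, x_{n_i}, x_{m_i}, \theta_i, d_iw_i)$ from (\ref{e.U}). For each cigar $U_{x_n, x_k, \varepsilon'_{n,k}}$ of $\mathcal{B}_i(n)$, Step 1 gives $\{n, k\} \cap \{n_i, m_i\} = \emptyset$, so Proposition \ref{p.3}(1) makes it disjoint from each cigar in (\ref{e.union}); when an endpoint is shared the two open cigars still do not meet, because the defining radius $\varepsilon \psi(t)$ vanishes at $t = \pm 1$. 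Disjointness from $B_{Q_i}$, $W_i$, and each $V_{k', i}$ follows by the angle and projection estimates in the proof of Proposition \ref{p.support}: away from the vertices $\{x_{n_i}, x_{m_i}, x_{k'}\}$ these thickenings project into the cone $K_i \subset \mathbb{E}_2$, whereas $U_{x_n, x_k, \varepsilon'_{n,k}}$ lies essentially in $\mathbb{E}_3$, while near the vertices one uses the positive-angle estimates $\alpha_{m,n}$ together with the smallness of $\xi_{i,k'}$ and $\xi_i$ from Proposition \ref{p.xiPos}. For the needle $U_{r, x_n, \theta_n}$, the assumption that $n$ is not frozen at stage $i$ gives $i < J(n)$, whence $n_i, m_i \leq T(n)$; the defining choice of $\theta_n$ excludes intersection with $U_{x_n, x_{n_i}, \varepsilon_{n, n_i}}$ and $U_{x_n, x_{m_i}, \varepsilon_{n, m_i}}$, and Lemma \ref{l.O3}(1)--(4) disposes of intersections with all other cigars, needles, and $V$- and $W$-sets.

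The main obstacle is verifying in Step 2 that the thickened sets $V_{n, i}$ and $V_{k, i}$, which share a vertex with the corresponding cigar of $\mathcal{B}_i(n)$, meet that open cigar only at the shared vertex --- and therefore, since the cigar is open, do not meet it at all. This relies on property 3 of Proposition \ref{p.1} (positive angle between distinct edges at a common vertex), on the positivity of the constants $\alpha_{m,n}$, and on the smallness of $\xi_{i,k'}$ and $\xi_i$ established in Propositions \ref{p.support} and \ref{p.xiPos}, which together ensure that the tubes degenerate rapidly enough near their endpoints to remain separated outside the shared vertex.
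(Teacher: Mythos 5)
Your proposal is correct and takes essentially the same route as the paper: the paper's own proof is just the terse appeal to Remark \ref{r.mnmoving} together with the definitions of $S(n)$ and $T_i(n)$, and your argument simply unpacks that mechanism --- $T_i(n)\subseteq S(n)$ (with $m=n$, $t=n$) shows no tooth of the comb touches $\{n_i,m_i\}$, while $i<J(n)$ gives $n_i,m_i\le T(n)$ so the choice of $\theta_n$ protects the needle, which is exactly the content of the Remark. The remaining geometric disjointness from $B_{Q_i}$, $W_i$, the $V$-sets and the swapped cigars is handled by you at the same ``sufficiently small'' level of detail (Propositions \ref{p.3}, \ref{p.support}, \ref{p.xiPos}, Lemma \ref{l.O3}) that the paper itself uses, so there is no gap beyond what the paper leaves implicit.
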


It will be convenient to use the following terminology:

\begin{definition}
A point $x_n$ is frozen at stage $i$, if $n$ is frozen at stage $i$. 

\vspace{8pt}

A cigar $U_{x_n, x_m, \varepsilon_{n,m}}$ is frozen at stage $i$ if both $n$ and $m$ are  frozen at stage $i$. .

\vspace{8pt}

A comb $\mathcal{B}_i(n)$ is frozen at stage $i$, if $n$ is frozen at stage $i$. 

\end{definition}

\begin{definition}
Given a point $x\in \mathbb{R}^5$ and $i\in \mathbb{N}$, denote by $O_i(x)$ an orbit of the point $x$ starting with the homeomorphism $h_i$, i.e.
$$
O_i(x)=\{x, h_i(x), h_{i+1}\circ h_i(x), h_{i+2}\circ h_{i+1}\circ h_i(x), \ldots\}
$$
\end{definition}

\begin{lemma}\label{l.fball}
Suppose a vertex $x_k$ is frozen at stage $i$ (i.e. $\varphi_j(k)=k$ for all $j\ge i$). Then for any point $x\in B_{x_k}$ an orbit of the point $x$ starting with the homeomorphism $h_i$ (i.e. $O_i(x)$) is finite.
\end{lemma}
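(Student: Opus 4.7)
The plan is to trace the orbit $O_i(x)$ of a point $x \in B_{x_k}$ using the local geometry around the frozen vertex $x_k$. Since the radius of $B_{x_k}$ is less than $\rho_k$ from \eqref{e.rho}, the only cigars $U_{x_m, x_n, \varepsilon_{m,n}'}$ that meet $B_{x_k}$ are those with an endpoint at $x_k$, together with the needle cigar $U_{r, x_k, \theta_k}$. The first key step is a confinement claim: for every $j \ge i$, the support of $h_j$ restricted to $B_{x_k}$ is contained in the pair of cigars $U_{x_k, x_{n_j}, \varepsilon_{k,n_j}'}$ and $U_{x_k, x_{m_j}, \varepsilon_{k,m_j}'}$, together with small transverse thickenings in the $\mathbb{E}_2$ direction coming from $V_{k,j}$ and the intermediate maps $h_j^\uparrow, h_j^\rightarrow, \hat h_j$. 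This uses crucially that $k \notin \{n_j, m_j\}$ (by frozenness), together with the disjointness properties of Proposition \ref{p.3} and Proposition \ref{p.support}, the needle-avoidance part of Lemma \ref{l.O3}, and the parameter choices of Section \ref{ss.absence}.

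Given confinement, the orbit analysis splits into cases on where $x$ lies in $B_{x_k}$. If $x = x_k$, or if $x \in U_{r, x_k, \theta_k}$, or if $x$ is outside every cigar and needle attached to $x_k$, then $h_j(x) = x$ for every $j \ge i$ and $O_i(x) = \{x\}$. The only nontrivial case is $x \in U_{x_k, x_l, \varepsilon_{k,l}'}$ for some $l \ne k$. Introduce the label sequence $L_i = l$ and $L_{j+1} = \varphi_j(L_j)$. A straightforward induction shows that the $j$-th iterate of $x$ lies in $U_{x_k, x_{L_j}, \varepsilon_{k,L_j}'}$ close to $x_k$: either $L_j \notin \{n_j, m_j\}$, and by the confinement claim together with Proposition \ref{p.3}(2) the iterate is fixed by $h_j$; or $L_j \in \{n_j, m_j\}$, and by the construction of $h_j$ together with Corollary \ref{cor.Pos} the iterate is swapped to the companion cigar at $x_k$ with relative position preserved up to $\tfrac{1}{500 \cdot 2^{j+1}}$. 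Since the initial position lies in $[-1, -1 + \tfrac{1}{1000})$ by Remark \ref{r.35}, the cumulative drift $\sum_j \tfrac{1}{500 \cdot 2^{j+1}} < \tfrac{1}{500}$ keeps every iterate inside $B_{x_k}$, where the confinement claim applies.

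Finiteness now follows from the combinatorics of $(L_j)$. Every value $L_j$ lies in $Cont_l$, which has at most four elements by property 4 of Lemma \ref{l.permrepr}; and by property 3 of the same lemma each natural number is moved by at most two of the transpositions $\varphi_j$. Hence the sequence $(L_j)$ changes at most $4 \cdot 2 = 8$ times and stabilizes at some stage $J \ge i$ to a value $L_J$ not moved by any $\varphi_j$ with $j \ge J$. For $j \ge J$ one has $L_j = L_J \notin \{n_j, m_j\}$, so the first case of the induction forces $h_j$ to fix the iterate; thus $O_i(x)$ contains at most $J - i + 1$ points. The main obstacle is the confinement step, where one must verify that the transverse $\mathbb{E}_2$-directed thickenings in the supports of $h_j^\uparrow, h_j^\rightarrow, \hat h_j$ near $x_k$ do not leak from the pair $U_{x_k, x_{n_j}} \cup U_{x_k, x_{m_j}}$ into a transverse cigar $U_{x_k, x_{L_J}}$: this is exactly what the parameters $\{\xi_{j,k}\}, \{\xi_j\}$ of Proposition \ref{p.support}, the needle sizes $\theta_n$, and the cigar sizes $\varepsilon_{k,\cdot}'$ were selected to ensure.
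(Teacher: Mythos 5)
Your overall skeleton (position control via Remark \ref{r.35} and Corollary \ref{cor.Pos}, plus finiteness of the contamination set to bound how often the ``other'' endpoint of the cigar can change) is the same as the paper's, but there is a genuine gap in your case analysis: the claim that if $x\in B_{x_k}$ lies outside every cigar and needle attached to $x_k$ then $h_j(x)=x$ for all $j\ge i$ is false. The maps $h_j^{\uparrow}$ and $h_j^{\rightarrow}$ are time-one shifts along vector fields $\Psi\cdot \bar w_j$ (resp. $\Psi\cdot v$) where $\Psi$ is only required to agree with $\psi$ on the compact set $\mathfrak{K}$ and to vanish on $\partial U$; on the rest of the open set $U=B_{Q_j}\cup W_j\cup(\cup_{k'}V_{k',j})\cup V(r,\ldots)$ it is an uncontrolled continuous extension. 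Since the wedges $\Delta(x_k, x_{n_j},\tfrac12\xi_{j,k},\pm d_jw_j)\subset V_{k,j}$ emanate from $x_k$ and (because $d_j$ is in general much larger than $\varepsilon_{k,n_j}$) stick out of the cigars near $x_k$, there are points of $B_{x_k}$ in $\mathrm{supp}\,h_j$ that belong to no cigar attached to $x_k$, and likewise points in $U_{x_k,x_{n_j},\varepsilon_{k,n_j}}\setminus U_{x_k,x_{n_j},\varepsilon'_{k,n_j}}$ moved by $S_j$ but not carried to the companion cigar. Your induction and your bound ``at most $J-i+1$ points'' simply do not cover the orbits of such points.

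The missing ingredient is exactly the paper's Lemma \ref{l.supphi}: if $h_j(x)\ne x$, then either $h_j(x)\in Base_j$ --- and then, by the position control you already use, it lies in the set $\mathfrak{F}_k$ of points close to $x_k$ (all of $B_{x_k}$ together with the low-$Pos$ parts of the cigars at $x_k$) --- or the $\mathbb{E}_2$-projection of $h_j(x)$ lies in the cone $K_j$, which is disjoint from the supports of all later $h_l$, so that branch of the orbit terminates after one more step. With this dichotomy in place, one only has to rule out infinite orbits inside $\mathfrak{F}_k$, which follows from finiteness of $Cont_n$, exactly as in your last paragraph. So your combinatorial conclusion is fine, but the confinement step must be replaced by (or supplemented with) the $Base_j$-versus-cone dichotomy of Lemma \ref{l.supphi}; as written, the trichotomy at the start of your second paragraph is where the argument breaks. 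A smaller point: the cumulative drift bound keeps iterates in the region $\{Pos<2\gamma\}$ near $x_k$ (the paper's $\mathfrak{F}_k$), not literally inside the ball $B_{x_k}$, since the swaps pass through paths of diameter about $d_j$.
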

\begin{proof}[Proof of Lemma \ref{l.fball}.]
Let us recall that by the choice of the size of the ball $B_{x_k}$ (see Remark \ref{r.35}), for any point $x\in B_{x_k}\cap U_{x_k, x_n, \varepsilon_{m,n}'}$ we have $Pos(x)<\gamma$. Denote by $\frak{F}_k$ the set
$$
\frak{F}_k=B_{x_k}\cup\left(\cup_{n\ne k}\left\{x\in U_{x_k, x_n, \varepsilon_{m,n}'}, Pos(x)<2\gamma\right\}\right),
$$
where $\gamma$ is the constant from Corollary \ref{cor.Pos}. Due to Lemma \ref{l.supphi}, if $x\in B_{x_k}$, and $h_i(x), h_{i+1}(x), \ldots, h_{i+m}(x)\in Base_i$, then $h_{i+m}(x)\in \frak{F}_k$. Also, if $x\in B_{x_k}$, and $h_j(x)\ne x$ for some $j\ge i$, then either $h_j(x)\in \frak{F}_k$, or $h_l(h_j(x))=h_j(x)$ for all $l>j$. Indeed, this follows from the assumption that $x_k$ is frozen at stage $i$ and Proposition \ref{p.support}. Therefore, we just need to show that it is impossible for a point $x\in B_{x_k}$ to have an infinite orbit inside of $\frak{F}_k$. But if $x\in U_{x_k, x_n, \varepsilon_{k,n}'}$, then this follows from the fact that $Cont_n$ is a finite set.
\end{proof}

\begin{lemma}\label{l.ball}
For any vertex $x_n$, any point $x\in B_{x_n}$, and any $i\in \mathbb{N}$, an orbit of the point $x$ starting with the homeomorphism $h_i$ (i.e. $O_i(x)$) is finite.
\end{lemma}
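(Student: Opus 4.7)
The plan is to reduce Lemma \ref{l.ball} to Lemma \ref{l.fball} by tracing the orbit through a bounded number of ``unstable'' stages, after which every relevant vertex has been frozen and Lemma \ref{l.fball} applies. By item 4 of Lemma \ref{l.permrepr} the set $Cont_n$ has at most four elements, and by item 3 each element of $Cont_n$ is moved by at most two transpositions. Hence by Definition \ref{d.JTn} the stage $J(n)$ is finite. Setting $J = \max(i, J(n))$ and $y_j = h_{j-1} \circ \cdots \circ h_i(x)$ for $i \le j \le J$ (with $y_i = x$), it suffices to show that $y_J$ lies in $\frak{F}_{k}$ for some vertex $k \in Cont_n$ that is frozen at stage $J$; the finiteness of $O_J(y_J)$ will then follow from (the proof of) Lemma \ref{l.fball}, and finiteness of $O_i(x)$ is immediate since it equals the union of the finite set $\{y_i, \ldots, y_J\}$ with $O_J(y_J)$.

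The main work is an induction on $j$ showing that there exists $k_j \in Cont_n$ with $y_j \in \frak{F}_{k_j}$. The base case $j = i$ holds with $k_i = n$ since $B_{x_n} \subseteq \frak{F}_n$ and $n \in Cont_n$. For the inductive step, if $h_j(y_j) = y_j$ we set $k_{j+1} = k_j$. Otherwise $y_j$ lies in the support of $h_j$, and the construction of $h_j$ as the composition $R_j \circ \hat h_j \circ h_j^{\uparrow} \circ h_j^{\rightarrow} \circ h_j^{\uparrow} \circ S_j$, combined with Proposition \ref{p.support} (which controls how the cigars, the $V_{k,j}$'s and the $W_j$'s intersect), forces $y_{j+1}$ to lie in a cigar incident to either $x_{k_j}$ or $x_{\varphi_j(k_j)}$, while Corollary \ref{cor.Pos} guarantees that the $Pos$-coordinate remains small enough that $y_{j+1} \in \frak{F}_{k_{j+1}}$. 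Here $k_{j+1}$ equals $k_j$ (if $y_j$ lies on the ``fixed end'' of a cigar being swapped, i.e.\ $k_j \notin \{n_j, m_j\}$) or $\varphi_j(k_j)$ (if $k_j \in \{n_j, m_j\}$), and in either case $k_{j+1} \in Cont_n$ by Definition \ref{d.contvert}.

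The main obstacle is the inductive step, specifically a case-by-case verification that the composition defining $h_j$ carries $\frak{F}_{k_j}$ into $\frak{F}_{k_{j+1}}$. This will require tracking the effect of each intermediate factor ($S_j$ contracts the relevant cigars, then $h_j^{\uparrow}$ and $h_j^{\rightarrow}$ translate through the bridge regions $B_{Q_j}$ and $V_{k,j}$, $\hat h_j$ performs the swap inside the shrunken cigars, and $R_j$ re-expands), and at each step using Proposition \ref{p.support} to exclude unwanted intersections and Corollary \ref{cor.Pos} to bound the $Pos$ displacement by $1/(500 \cdot 2^{j+1})$. Once this is in place, invoking Lemma \ref{l.fball} at stage $J$ is straightforward: that lemma only uses the starting point being in $\frak{F}_{k_J}$ and the host vertex $x_{k_J}$ being frozen at the starting stage, both of which we will have established.
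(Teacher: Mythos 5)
Your overall strategy is the one the paper uses: track the orbit through the finitely many stages at which vertices of $Cont_n$ are moved (finitely many by Lemma \ref{l.permrepr} and Definition \ref{d.JTn}), and then hand the frozen situation to (the proof of) Lemma \ref{l.fball}. However, your inductive claim ``if $h_j$ moves $y_j$ then $y_{j+1}$ lies in a cigar incident to $x_{k_j}$ or $x_{\varphi_j(k_j)}$, hence $y_{j+1}\in\mathfrak{F}_{k_{j+1}}$'' is false as stated, and the two ways it fails are exactly the cases your sketch omits. First, a point moved by $h_j$ need not land back in $Base_j$ at all: it can be carried into the connecting regions $B_{Q_j}$, $V_{k,j}$, $W_j$, whose $\pi_2$-projection lies in the cone $K_j$ and which are disjoint from the supports of all later $h_l$. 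This is precisely Lemma \ref{l.supphi}, which you never invoke; the paper's proof makes this ``escape'' case the first branch of its trichotomy (if $h_l\circ\cdots\circ h_i(x)\notin Base_i$ the orbit is finite immediately). The case is benign for the conclusion, but your induction hypothesis must be weakened to a disjunction to accommodate it, otherwise the induction simply breaks.

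Second, when $h_l$ does move $x_{k_j}$ (say it transposes $n$ and $m$), the support of $h_l$ also contains $V(r,x_{n},x_{m},\theta,\cdot)$, i.e.\ the ``needle'' cigars $U_{r,x_{n},\theta_{n}}$ get swapped with $U_{r,x_{m},\theta_{m}}$. A point of $B_{x_n}$ lying in such a needle is carried into the needle at the image vertex, which is \emph{not} contained in $\mathfrak{F}_{\varphi_l(n)}$ (that set consists only of the ball $B_{x_{\varphi_l(n)}}$ and the near ends of the edge cigars $U_{x_{\varphi_l(n)},x_k,\varepsilon'}$). The paper handles this by working with the enlarged sets $\mathfrak{F}^*_n=\mathfrak{F}_n\cup U_{x_n,r,\theta_n}$ and, in the frozen endgame, by the needle analysis of Lemma \ref{l.postmiss}; your final appeal to the proof of Lemma \ref{l.fball} presupposes $y_J\in\mathfrak{F}_{k_J}$ and so does not cover orbit points sitting in a needle. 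Both repairs are available from the paper's toolkit (Lemma \ref{l.supphi} for the escape branch, $\mathfrak{F}^*$ together with Lemma \ref{l.postmiss} for the needle branch), but as written your case analysis is incomplete at the very step that carries the content of the lemma, while the parts you do verify (finiteness of $J(n)$, the $Pos$ control via Proposition \ref{p.support} and Corollary \ref{cor.Pos}) match the paper's argument.
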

\begin{proof}[Proof of Lemma \ref{l.ball}.]
Take any $x_n\in B_{x_n}$. If $n$ is frozen at stage $i$, then Lemma \ref{l.fball} can be applied, and hence $O_i(x)$ is finite. Suppose $n$ is not frozen at stage $i$. Let $l\in \mathbb{N}$ be the smallest index $l\ge i$ such that $h_l(x_n)\ne x_n$. Denote by $\frak{F}_n^*$ the set
$$
\frak{F}_n^*=\frak{F}_n\cup U_{x_n, r, \eta_n}.
$$
There are three possibilities:

\vspace{3pt}

1) $h_l\circ \ldots\circ h_i(x)\not \in Base_i$;

\vspace{3pt}

2) $h_l\circ \ldots\circ h_i(x)\in \frak{F}_{n}^*$;

\vspace{3pt}

2) $h_l\circ \ldots\circ h_i(x)\in  \frak{F}_{\varphi_l(n)}^*$.

\vspace{3pt}

In the first case $h_l\circ \ldots\circ h_i(x)$ is a fixed point of any $h_j$ with $j>l$ (due to Lemma \ref{l.supphi}), and hence $O_i(x)$ is finite.

In the second case, if $x_n$ is frozen at stage $l+1$, then the same arguments that were used in the proof of Lemma \ref{l.fball} show that $O_i(x)$ is finite.

In the third case, if $x_{\varphi_l(n)}$ is frozen at stage $l+1$, then, once again, the same arguments that were used in the proof of Lemma \ref{l.fball} show that $O_i(x)$ is finite.

In the second case, if $x_n$ is not frozen at stage $l+1$, let $l'\in \mathbb{N}$ be the smallest index $l'>l$ such that $h_{l'}(x_n)\ne x_n$, and proceed with the same three cases as above.

Similarly, in the third case, if $x_{\varphi_l(n)}$ is not frozen at stage $l+1$, let $l'\in \mathbb{N}$ be the smallest index $l'>l$ such that $h_{l'}(x_n)\ne x_n$, and proceed with the same three cases as above.

Since $Cont_n$ is finite, this process will stop after finitely many steps, hence $O_i(x)$ must be finite.
\end{proof}

\begin{lemma}\label{l.cigarfinite}
If $U_{x_m, x_n, \varepsilon_{m,n}'}$ is $i$-relaxed, then for any point $x\in U_{x_m, x_n, \varepsilon_{m,n}'}$,  an orbit of the point $x$ starting with the homeomorphism $h_i$ (i.e. $O_i(x)$) is finite.
\end{lemma}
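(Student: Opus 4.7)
My approach parallels the proof of Lemma \ref{l.ball}. Set $y_i = x$, $y_{j+1} = h_j(y_j)$ for $j \ge i$, and let $J = \{j \ge i : y_{j+1} \ne y_j\}$ be the set of ``active'' stages. Showing $O_i(x)$ is finite is equivalent to showing $J$ is finite. For each $j \in J$, Lemma \ref{l.supphi} gives a dichotomy: either $y_{j+1}$ is a common fixed point of every $h_{j'}$ with $j' > j$ (so $J$ is finite immediately), or $y_{j+1} \in Base_j$, hence $y_{j+1}$ lies in some ball $B_{x_p}$, some cigar $U_{x_p, x_q, \varepsilon'_{p,q}}$, or some unfrozen needle $U_{r, x_p, \theta_p}$. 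If any $y_{j+1}$ falls into a ball, Lemma \ref{l.ball} kills the remaining tail of the orbit, so the substantive case is when every $y_{j+1}$ with $j \in J$ lies in a cigar or a needle.

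The heart of the plan is to track the evolving endpoint pair $(p_j, q_j)$ of the cigar containing $y_j$, starting at $(p_i, q_i) = (m, n)$. For $h_j$ to move a point in $U_{x_{p_j}, x_{q_j}, \varepsilon'_{p_j, q_j}}$, the transposition $\varphi_j$ must act on at least one of $p_j, q_j$ (by the construction of $h_j$ via the shrinking, translation, and expansion building blocks), and the next cigar is then indexed by $(\varphi_j(p_j), \varphi_j(q_j))$. Corollary \ref{cor.Pos} controls the $Pos$-coordinate along the orbit so that the identification of the cigar containing $y_{j+1}$ is unambiguous and does not drift to neighbouring cigars. By induction $(p_j, q_j) \in (\{m\} \cup Cont_m) \times (\{n\} \cup Cont_n)$ for every $j \ge i$. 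Item 4 of Lemma \ref{l.permrepr} says $Cont_m$ and $Cont_n$ are finite, so only finitely many cigars (and attached needles) can ever contain an orbit point; item 3 (each index is moved by at most two $\varphi_j$) then forces $J$ finite.

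The main obstacle is the needle case: if some $y_{j+1}$ lands in an unfrozen needle $U_{r, x_p, \theta_p}$, the orbit could a priori be re-ignited by a future transposition involving a vertex unrelated to $\{m,n\}$ through the comb $\mathcal{B}_{j'}(p)$. This is precisely where the $i$-relaxed hypothesis is essential. The two preservation lemmas stated immediately before Lemma \ref{l.57} ($i$-relaxation propagates to all $j > i$, and $i$-relaxed cigars are carried by $h_i$ to $i$-relaxed cigars) ensure that the cigar containing each $y_j$ remains $j$-relaxed, so whenever the orbit meets a comb the apex of that comb is \emph{frozen}. Lemma \ref{l.57} then rules out cascades from unfrozen combs, so every ``needle excursion'' is actually triggered by a transposition acting on an index already in $\{m\} \cup Cont_m \cup \{n\} \cup Cont_n$, and the finite-counting argument of the previous paragraph closes out the proof.
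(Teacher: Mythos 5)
Your argument is essentially the paper's own proof: the paper reduces to Lemma \ref{l.ball} whenever the orbit enters one of the balls, applies the dichotomy of Lemma \ref{l.supphi} at each active stage (either the point leaves $Base_l$ and is never moved again, or it lands back in a ball, the old cigar, or the image cigar), and then recurses, terminating because $Cont(m,n)$ is finite --- which is exactly your tracking of the evolving endpoint pair through the contamination sets. Your explicit treatment of needle excursions via the relaxed hypothesis goes beyond what the paper writes inside this particular proof (the paper defers needles and combs to Lemmas \ref{l.postmiss} and \ref{l.comb}), but it is consistent with, and in the spirit of, the same approach.
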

\begin{proof}[Proof of Lemma \ref{l.cigarfinite}.]
Without loss of generality we can assume that $x\in U_{x_m, x_n, \varepsilon_{m,n}'}\backslash (B_{x_m}\cup B_{x_n})$, since otherwise we can refer to Lemma  \ref{l.ball}. Let $l\ge i$ be the smallest index such that $h_l(x_n)\ne x_n$ or $h_l(x_m)\ne x_m$. Then one of the following cases must hold:

\vspace{4pt}

1) $h_l(x)\not\in Base_l$;

\vspace{4pt}

2) $h_l(x)\in B_{x_m}\cup B_{x_n}\cup B_{h_l(x_n)}\cup B_{h_l(x_m)}$;

\vspace{4pt}

3)  $h_l(x)\in U_{x_m, x_n, \varepsilon_{m,n}'}\backslash (B_{x_m}\cup B_{x_n})$;

\vspace{4pt}

4)  $h_l(x)\in U_{h_l(x_m), h_l(x_n), \varepsilon_{\varphi_i(m), \varphi_i(n)}'}\backslash  (B_{h_l(x_n)}\cup B_{h_l(x_m)})$.

\vspace{4pt}

In the first case the orbit $O_i(x)$ is finite by Lemma \ref{l.supphi}.

In the second case the orbit $O_i(x)$ is finite by Lemma \ref{l.ball}.

In the third or fourth case consider the smallest index  $l'>l$ such that $h_{l'}(h_l(x))\ne h_l(x)$. Then consider the four cases similar to those listed above, etc. Since $Cont(m,n)$ is finite, this process will stop after finitely many steps, which proves Lemma \ref{l.cigarfinite}.
\end{proof}

\begin{lemma}\label{l.postmiss}
If $x\in U_{x_n, r, \eta_n}$, and $x_n$ is frozen at stage $i$, then the orbit $O_i(x)$ is finite.
\end{lemma}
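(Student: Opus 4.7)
The proof mirrors the inductive structure used in Lemmas \ref{l.ball} and \ref{l.cigarfinite}. Since $x_n$ is frozen at stage $i$, we have $\varphi_j(n) = n$ for every $j \geq i$, so no transposition underlying $h_j$ (for $j \geq i$) involves $n$, and the needle $U_{r, x_n, \theta_n}$ is absent from the needle component of $Base_j$. The plan is to track the orbit $O_i(x)$: at each stage $l \geq i$ where $h_l(y) \neq y$, Lemma \ref{l.supphi} forces either $h_l(y)$ to be fixed by all $h_{l'}$ with $l' > l$ (terminating the orbit) or $h_l(y) \in Base_l$, which must lie in a ball $B_{x_k}$, an inter-vertex cigar $U_{x_k, x_m, \varepsilon_{k,m}'}$, or a needle $U_{r, x_k, \theta_k}$ for some $x_k$ not yet frozen at stage $l$.

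The ball case is handled directly by Lemma \ref{l.ball}. In the cigar case, by Lemma \ref{l.permrepr} item 3 each of $x_k, x_m$ is moved at most twice, so there is a finite stage $j' \geq l$ after which the cigar is $j'$-relaxed; Lemma \ref{l.cigarfinite} then gives finiteness, while the finitely many intermediate non-trivial actions of $h_{j''}$ can only shuffle the orbit through a bounded number of comparable sets. The needle case is the heart of the argument: because $x_k$ is moved by at most two transpositions, there is a finite stage $j' \geq l$ at which $x_k$ becomes frozen, and we apply the present lemma recursively at stage $j'$. The recursion is well-founded because each pass restricts attention to a vertex in the finite contamination set $Cont_n$ or its iterated versions (at most four elements each, by Lemma \ref{l.permrepr} item 4); alternatively, one can induct on the maximum index at which any vertex in the contamination closure of $n$ is still being moved, which is finite.

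The main obstacle, and the place demanding the most care, is verifying the geometric claim that $U_{x_n, r, \eta_n}$ intersects the support of each relevant $h_j$ only through controlled components---the sets $V(r, x_{n_j}, x_{m_j}, \tilde\theta_{n_j}, d_jw_j)$ together with the cigars and needles inside $Base_j$---so that the case enumeration above is exhaustive. This follows from Lemma \ref{l.O3}, the prior choice of sizes $\theta_n \leq \tilde\theta_n \leq \eta_n$, and the directional separation in $\mathbb{E}_2$ provided by the disjoint cones $K_j$ of Definition \ref{d.coneK}, ensuring that for each orbit point only finitely many indices $j \geq i$ can produce $h_j(y) \neq y$ in the first place.
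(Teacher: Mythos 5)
There is a genuine gap, and it sits exactly where the content of the lemma lies. The paper's proof does not run a recursion or wait for later stages: it pins down that at the \emph{first} stage $l\ge i$ with $h_l(x)\ne x$, the point must lie in a set of the form $V(x_n, x_s, x_t,\cdot\,)$ with apex at the frozen vertex $x_n$, where $h_l$ transposes $s$ and $t$ and $s\in T_i(n)$; consequently $h_l(x)$ either leaves $Base_l$ (Lemma \ref{l.supphi}), lands in $B_{x_n}$ (Lemma \ref{l.ball}), or lands in one of the two cigars $U_{x_n,x_s,\varepsilon_{n,s}'}$, $U_{x_n,x_t,\varepsilon_{n,t}'}$. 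The decisive step is then to show that these two cigars are \emph{already $l$-relaxed}, which is done by examining the combs $\mathcal{B}_l(\cdot)$ and using that $x_n$ is frozen; after that, Lemma \ref{l.cigarfinite} applies immediately and there is no intermediate dynamics to control. Your proposal never establishes this localization (you allow the orbit to land in an arbitrary inter-vertex cigar or in the needle of a non-frozen vertex $x_k$), and in place of the relaxedness argument you write that the cigar ``becomes'' $j'$-relaxed at some later stage while ``the finitely many intermediate non-trivial actions \dots can only shuffle the orbit through a bounded number of comparable sets.'' That sentence is precisely the claim that needs proof: Lemma \ref{l.cigarfinite} only applies to a point sitting in a relaxed cigar at the stage in question, and between $l$ and $j'$ the point may have been moved out of the cigar, so one must either prove relaxedness at stage $l$ itself (as the paper does, via the combs) or control those intermediate images explicitly.

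The needle case of your argument has the same defect in sharper form. You propose to ``apply the present lemma recursively at stage $j'$'' once $x_k$ becomes frozen, but the lemma's hypothesis is that the orbit point lies in $U_{x_k,r,\eta_k}$ at that stage; after the intermediate maps $h_{l'}$, $l<l'<j'$, act, there is no reason the point is still in that needle, so the recursive call is not licensed. Moreover the well-foundedness you invoke (inducting on ``the maximum index at which any vertex in the contamination closure of $n$ is still being moved'') is not justified: as stated, the orbit could in principle keep entering structures attached to new, ever larger vertex indices, and ruling this out is again the substance of the lemma. In the paper this issue never arises because the geometric setup (the choice of $\theta_n$ relative to $T(n)$, Remark \ref{r.mnmoving}, and the disjointness statements of Lemma \ref{l.O3} and Proposition \ref{p.support}) prevents the orbit of a point in the needle of a frozen vertex from ever reaching the needle of a non-frozen vertex; the whole analysis collapses to the two cigars at $x_n$ and a single application of Lemma \ref{l.cigarfinite}. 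Your final paragraph gestures at these disjointness facts, but only to make the case enumeration exhaustive, not to eliminate the problematic cases or to prove the relaxedness claim that the paper's proof turns on.
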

\begin{proof}[Proof of Lemma \ref{l.postmiss}]
If $x_n$ is frozen at stage $i$, $x\in U_{x_n, r, \theta_n}$, and $l\ge i$ is the smallest index such that $h_l(x)\ne x$, then $x$ must belong to $V_{x_n, x_s, x_t, \varepsilon_{s,t}^{final}}$ for some $s, t\in \mathbb{N}$, where either $s$ or $t$ belongs to $T_i(n)$. Without loss of generality assume that $s\in T_i(n)$. If $h_l(x)\not \in Base_i$, then $O_i(x)$ is finite by Lemma \ref{l.supphi}. If $h_l(x)\in Base_l$, then either $h_l(x)\in B_{x_n}$ (and then $O_i(x)$ is finite by Lemma \ref{l.ball}), or $h_l(x)\in U_{x_n, x_s, \varepsilon_{n,s}'}\cup U_{x_n, x_t, \varepsilon_{n,t}'}$. Notice that both $U_{x_n, x_s, \varepsilon_{n,s}'}$ and $U_{x_n, x_t, \varepsilon_{n,t}'}$ are $l$-relaxed. Indeed, $U_{x_n, x_s, \varepsilon_{n,s}'}$ is $l$-relaxed by definition, since $x_n$ is frozen at stage $l$. Let us show that $U_{x_n, x_t, \varepsilon_{n,t}'}$ is $l$-relaxed. If not, it must be either a part of $\mathcal{B}_l(m)$ for some $m\ne t$, or a part of $\mathcal{B}_l(t)$. If $U_{x_n, x_t, \varepsilon_{n,t}'}$ is a part of $\mathcal{B}_l(m)$ for some $m\ne t$, then $x_m$ is frozen at stage $l$ (otherwise $h_l$ cannot move $x_t$), and then $U_{x_n, x_t, \varepsilon_{n,t}'}$ is $l$-relaxed by definition. And if $U_{x_n, x_t, \varepsilon_{n,t}'}$ is a part of  $\mathcal{B}_l(t)$, then $n>t$, and hence $h_l(s)=s$ if $t$ is not frozen at stage $l$, contradiction. Therefore, both $U_{x_n, x_s, \varepsilon_{n,s}'}$ and $U_{x_n, x_t, \varepsilon_{n,t}'}$ are $l$-relaxed, and by Lemma \ref{l.cigarfinite} the orbit $O_i(x)$ must be finite.
\end{proof}

\begin{lemma}\label{l.comb}
If $x\in \mathcal{B}_i(n)$, and then the orbit $O_i(x)$ is finite.
\end{lemma}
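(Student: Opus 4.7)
The plan is to split on whether $n$ is frozen at stage $i$. When $n$ is frozen, the conclusion reduces directly to earlier lemmas: if $x \in U_{r, x_n, \theta_n}$ it is exactly Lemma \ref{l.postmiss}, while if $x$ lies in one of the cigar arms $U_{x_n, x_k, \varepsilon_{n,k}'}$ with $k \in T_i(n)$, then because $n$ is frozen at stage $i$ this cigar is $i$-relaxed by definition, so Lemma \ref{l.cigarfinite} finishes that case.

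When $n$ is not frozen at stage $i$, I would trace the orbit $O_i(x)$ and use the fact that $|Cont_n| \le 4$ (item~4 of Lemma \ref{l.permrepr}) together with item~3 (each natural number is moved by at most two transpositions) to list the finitely many indices $i \le j_1 < j_2 < \cdots < j_N$ at which some $\varphi_{j_s}$ moves a vertex in $Cont_n$; this list has length at most $8$. For indices $j \in [i, \infty) \setminus \{j_1, \dots, j_N\}$, no vertex in $Cont_n$ is moved, and a point whose current location is in $\mathcal{B}_{j}(n')$ for some $n' \in Cont_n$ is either left fixed by $h_j$, is pushed out of $Base$ (finite orbit by Lemma \ref{l.supphi}), lands in a ball $B_{x_k}$ (finite by Lemma \ref{l.ball}), or lands in an $i$-relaxed cigar (finite by Lemma \ref{l.cigarfinite}). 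At each swap index $j_s$ the orbit is either concluded by one of those three lemmas, or it is transported into a new comb $\mathcal{B}_{j_s+1}(n')$ with $n' \in Cont_n$, and we continue the trace. After the last swap $j_N$ affecting $Cont_n$, every vertex of $Cont_n$ is frozen, so we are back in the frozen case and are done as above.

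The main obstacle is verifying the bookkeeping across swaps: when $\varphi_{j_s}$ transposes $n$ and $m$, one must check that the cigar arm containing the current orbit point at stage $j_s$ is indexed by some element of $T_{j_s+1}(n')$ so the inductive step applies. This is exactly why $T_{j+1}$ is defined by the intersection $T_{j+1}(n) = T_{j+1}(m) = T_j(n) \cap T_j(m)$ when $\varphi_j$ swaps $n, m$: the intersection guarantees that the surviving arms at stage $j_s + 1$ are the ones compatible with all the contamination moves yet to come within $Cont_n$, so the orbit, if it has not already terminated via Lemmas \ref{l.supphi}, \ref{l.ball}, \ref{l.cigarfinite}, or \ref{l.postmiss}, really stays inside the finite union $\bigcup_{n' \in Cont_n} \mathcal{B}_{j}(n')$ until every $n' \in Cont_n$ is frozen and the frozen-case argument finishes the proof.
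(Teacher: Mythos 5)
Your proposal follows the same route as the paper: split on whether $n$ is frozen at stage $i$, dispatch the frozen case via Lemma \ref{l.postmiss} for the needle and Lemma \ref{l.cigarfinite} for the ($i$-relaxed) cigar arms, and in the non-frozen case trace the orbit through the finitely many transpositions that move a vertex of $Cont_n$, terminating each branch with Lemmas \ref{l.supphi}, \ref{l.ball}, \ref{l.cigarfinite}, or \ref{l.postmiss} and concluding by the finiteness of $Cont_n$. This is essentially the paper's own argument (the paper organizes the induction step as an explicit six-case analysis at each swap index, including the possibility that the comb $\mathcal{B}_l(n)$ has shrunk, but the content is the same).
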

\begin{proof}[Proof of Lemma \ref{l.comb}]
If $n$ is frozen at stage $i$, then either $x$ belongs to some $U_{x_n, x_m, \varepsilon_{n,m}'}$ which is $i$-relaxed (and then $O_i(x)$ is finite due to Lemma \ref{l.cigarfinite}), or $x$ belongs to $U_{x_n, r, \theta_n}$ (and $O_i(x)$ is finite by Lemma \ref{l.postmiss}).

Suppose that $n$ is not frozen at stage $i$. Let $l\ge i$ be the smallest index such that $h_l(x_n)\ne x_n$. Notice that for any $j<l$ we have $h_j(x)=x$ (due to Lemma \ref{l.57}). One of the following cases must hold:

\vspace{4pt}

1) $h_l(x)\not\in Base_l$;

\vspace{4pt}

2) $x\not \in \mathcal{B}_l(n)$ (which is possible since it is possible that $\mathcal{B}_l(n)\ne \mathcal{B}_i(n)$);

\vspace{4pt}

3) $x \in \mathcal{B}_l(n)$, and $n$ is frozen at stage $l+1$;

\vspace{4pt}

4) $x \in \mathcal{B}_l(n)$, and $n$ is not frozen at stage $l+1$;

\vspace{4pt}

5) $x \in \mathcal{B}_l(\varphi_l(n))$, and $\varphi_l(n)$ is frozen at stage $l+1$;

\vspace{4pt}

6) $x \in \mathcal{B}_l(\varphi_l(n))$, and $\varphi_l(n)$ is not frozen at stage $l+1$.

\vspace{4pt}

In the case 1), the orbit $O_i(x)$ is finite due to Lemma \ref{l.supphi}.

In the case 2), the orbit of $O_i(x)$ is finite due to Lemma \ref{l.cigarfinite}.

In the case 3) or 5), the orbit of $O_i(x)$ is finite due to Lemma \ref{l.cigarfinite} and Lemma \ref{l.postmiss}.

In the case 4) or 6), take the smallest index $l'>l$ such that $h_{l'}(x_n)\ne x_n$ (in the case 4)) or $h_{l'}(x_{\varphi_l(n)})\ne x_{\varphi_l(n)}$ (in the case 6)), and consider the same 6 cases. Since $Cont_n$ is finite, this process will end in a finite number of steps, hence $O_i(x)$ must be finite.
\end{proof}

Combination of  Lemma \ref{l.ball}, Lemma \ref{l.cigarfinite}, and Lemma \ref{l.comb} now shows that an orbit of any point is finite, which completes the proof of Proposition \ref{p.key}, and hence of Theorem \ref{t.main}.

\section{Open questions and conjectures}\label{s.questions}

Our results provoke a long list of farther questions. Let us briefly list some of them here.  More appear in \cite{classDS}.

\begin{enumerate}

\item Let $M$ be a compact manifold without boundary of dimension at least two. Is there a dense $\mathcal G_\delta$ subset of 
$\text{Diff}^\infty(M)$ such that the equivalence relation of topological conjugacy is Borel? 
\smallskip 

One candidate would be the Kupka-Smale diffeomorphisms. (Recall that a diffeomorphism   is Kupka-Smale if every periodic orbit is hyperbolic, and stable and unstable manifolds of any two periodic orbits are transversal. It is known that the set of Kupka-Smale diffeomorphisms of a given manifold $M$ forms a dense $G_{\delta}$ subset of $\text{Diff}^r(M)$ for each $r=1, 2, \ldots, \infty$, see \cite{Kup} and \cite{S1963}.)

\item\label{q.MS} Is the equivalence relation given by existence of a topological conjugacy on the boundary of the set of Morse-Smale diffeomorphisms of a given manifold $M$      Borel\footnote{After this text was written, we realized that the answer to this question is ``no"; we intend to provide a proof of this statement in the final version of the paper.}? (Morse-Smale diffeomorphisms are defined in \ref{d.MS}) 
     
\item Is the collection of diffeomorphisms $f$ in $\text{Diff}^\infty(M)$   that are topologically conjugate to a structurally stable diffeomorphism a Borel set?

\item Let $X$ be the collection of diffeomorphisms of a manifold $M$. Is the equivalence relation of \emph{conjugacy by homeomorphisms} given by a Polish group action? Reducible to an $S^\infty$-action?

\end{enumerate}

\appendix
\section{Locations of notational definitions}
This list gives the notation and the first page it appears on. 
\begin{description}
	\item[Page 21] 
	\[I_{m,n}, \ \ \  {\mathbf{I}_n}\]
	\item[Page 22]
	\[\mathbf{I}=\bigcup_{n}\mathbf{I}_{n},\ \ \ \ \alpha_{m,n}\] 
	\item[Page 23]
	\[\gamma_{x,y}\]
	\item[Page 24]
	\[\Pi_{x,y}(t), \ \ \ \ \gamma_{m,n}, \ \ \ \ U_{x,y,\epsilon},\ \ \ \  U_{m,n, \epsilon},\]
	\[\text{Cigars},\ \ \ \ \mathbf{U}_{m,n}, \ \ \ \ \mathbf{U}'_{m,n},\ \ \ \ \mathbf{U}''_{m,n} 	\]
	\[ \epsilon_{m,n},\ \ \ \ \epsilon'_{m,n}, \ \ \ \ \epsilon''_{m,n}\]
	\item[Page 25]\[f_{m,n}^+,\ \ \ \  f_{m,n}^-,\ \ \ \  \lambda_{m,n}, \ \ \ \ w_{m,n},\ \ \ \ \Theta,\ \ \ \ v_{m,n}\]
	\item[Page 28] \[Cont_m\]
	\item[Page 29]\[Cont(n,m)\]
	\item[Page 30]\[\text{frozen}, \ \ \ \ J(n), \ \ \ \ T(n),\ \ \ \ d_i\]
	\item[Page 31]\[E_1, \ \ \ \ E_2, \ \ \ \ E^{(i)}, \ \ \ \ h_i\]
	\item[Page 32]\[\rho_n, \ \ \ \ w_i,\ \ \ \ \ q_i, \ \ \ \ \beta_j\]
	\[ K_j,\ \ \ \ Q_i\ \ \ \ B_{Q_i}\]
	\item[Page 33]
	\[Q^u_i,\ \ \ \ Q^d_i, \ \ \ \ \delta_i, \ \ \ \ v=v(x,y, w),\ \ \ \ g^t_v\]
	\[\psi=\psi(x,y,\epsilon, w),\ \ \ \ \Delta=\Delta(x,y, \epsilon, w)\]
	\item[Page 35]\[V(z,x,y,\xi,w),\ \ \ \ v_{k,i},\ \ \ \  W(x,y,\xi,w)\]
	\[V_{k,i},\ \ \ \ W_{i}\]
	\item[Page 36]\[Pos(z)\]
	\item[Page 42]\[\tilde{\theta}_n\]
	\item[Page 43] \[Base_i, \ \ \ \ \Psi,\ \ \ \  \mathfrak{k}\]
	\[ R_i,\ \ \ \ \hat h_i,\ \ \ \  h^{\uparrow}_i,\ \ \ \  h^{\rightarrow}_i,\ \ \ \  h_i^{\uparrow},\ \ \ \  S_i\]
	\item[Page 46] \[\Xi,\ \ \ \  \Xi'\]
	\item[Page 48]\[T_i(n),\ \ \ \  i\text{-relaxed}\]
	\item[Page 49]\[\frak{F}, \ \ \ \ \frak{F}^*\]

\end{description}

\end{document}